\title{Functor calculus via non-cubes}
\author{Robin Stoll}
\date{\today}
\begin{document}

\maketitle

\begin{abstract}
  We study versions of Goodwillie's calculus of functors for indexing diagrams other than cubes.
  We in particular construct universal excisive approximations for a larger class of diagrams, which yields an extension of the Taylor tower.
  We prove that the limit of this extension agrees with the limit of the Taylor tower using criteria for the existence of maps between excisive approximations.
  Lastly we investigate in which cases our new notions of excision coincide with classical ones.
\end{abstract}

\tableofcontents

\renewcommand*{\thetheorem}{\Alph{theorem}}

\section{Introduction}

Classical functor calculus was developed by Goodwillie in the series of papers \cite{Goo90, Goo92, Goo03} as a tool to study functors from spaces to spaces or spectra.
Since then it has turned out to be a fruitful theory that has, together with a few variations, found many applications in homotopy theory and elsewhere, e.g.\ to algebraic $K$-theory, chromatic homotopy theory, or embedding spaces of manifolds.
A survey of some of these can be found in \cite{AC}.
Moreover there are generalizations to the setting of model categories (see e.g.\ \cite{Kuh} or \cite{Per}) and to the setting of quasi-categories (see \cite[Section 6]{LurHA}).
The latter framework is the one we use in this paper.

The fundamental notion of the theory is that of an \emph{$n$-excisive} functor: a functor that sends \emph{strongly cocartesian} $n$-cubical diagrams to \emph{cartesian} ones.
Here, one possible definition of strongly cocartesian is that the diagram is a left Kan extension of its restriction to the initial star of the cube; see the following picture for the case of the 3-cube.
\[
\begin{tikzcd}[sep = 10]
& & &  & & \bullet \ar[leftarrow]{dd} \ar{rr} & & \bullet \\
\bullet & & &  & \bullet \ar{ur} \ar[crossing over]{rr} & & \bullet \ar{ur} & \\
& \bullet & & \longhookrightarrow & & \bullet \ar{rr} & & \bullet \ar{uu} \\
\bullet \ar{uu} \ar{ur} \ar{rr} & & \bullet &  & \bullet \ar{uu} \ar{ur} \ar{rr} & & \bullet \ar[crossing over]{uu} \ar{ur} &
\end{tikzcd}
\]

One may now wonder what is special about the cubes here: why not use other shapes of diagrams?
That is the question we investigate in this paper.
To this end we first note that the condition of being strongly cocartesian does not make any reference to the structure of the cube; it only needs the datum of the inclusion above.
In particular this allows us to generalize to arbitrary maps of posets $\sigma \colon \pos P \to \pos Q$ by defining a $\pos Q$-indexed diagram to be \emph{$\sigma$-cocartesian} if it is a left Kan extension along $\sigma$ of its restriction along $\sigma$.
If $\pos Q$ additionally has an initial object, then this yields a notion of excision:
\begin{definition*}
  A functor is \emph{$\sigma$-excisive} if it sends $\sigma$-cocartesian diagrams to cartesian diagrams.
\end{definition*}

Most of this paper is concerned with studying this notion.
The natural first question is whether there exists an analogue of the universal $n$-excisive approximation $\P[n]$ of classical Goodwillie calculus.
Indeed, our first main result states that this is the case for a class of well-behaved maps of posets, called \emph{shapes} (see \Cref{def:preshape,def:shape}), which generalize the inclusions of the classical setting:
\begin{theorem} \label{thm:intro_approx}
  Let $\sigma$ be a full shape and $F$ a functor.
  Then there is an explicit construction of its universal $\sigma$-excisive approximation $\P F$.
\end{theorem}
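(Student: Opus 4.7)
The plan is to mimic Goodwillie's classical construction: build an endofunctor $T_\sigma$ on the functor category together with a natural transformation $\mathrm{id} \to T_\sigma$, and then define $\P F$ as the sequential colimit of the iterates $F \to T_\sigma F \to T_\sigma^2 F \to \cdots$.

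For each object $c$ in the source category, I would first construct a canonical $\sigma$-cocartesian diagram $\Phi_c \colon \pos Q \to \mathcal{C}$ with $\Phi_c(\bot) = c$. In the classical case this is the ``$X \ast U$'' cube, obtained by taking joins with finite sets and left Kan extending along the inclusion of the initial star; in general one would produce a canonical $\pos P$-indexed diagram out of $c$ using the structure of the source (e.g.\ pointedness together with a join- or cone-like construction indexed by $\pos P$) and then left Kan extend along $\sigma$. One then sets
\[
T_\sigma F(c) := \lim_{q \in \pos Q \setminus \{\bot\}} F\bigl(\Phi_c(q)\bigr),
\]
with the natural comparison map $F(c) \to T_\sigma F(c)$ coming from $\Phi_c(\bot) = c$. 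Provided $\Phi_c$ is suitably universal in $c$, the resulting natural transformation $F \to T_\sigma F$ is an equivalence precisely when $F$ is $\sigma$-excisive.

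Having set $\P F := \operatorname{colim}_k T_\sigma^k F$, with transition maps induced by $\mathrm{id} \to T_\sigma$, two things remain to verify. First, $\P F$ should itself be $\sigma$-excisive; this amounts to commuting the sequential colimit past the limit defining $T_\sigma$. Classically this works because the punctured cube is finite, and here one expects the finiteness packaged into the definition of a \emph{shape} (together with the hypothesis that $\sigma$ is \emph{full}) to underwrite exactly this commutation. Second, the map $F \to \P F$ should be universal among maps into $\sigma$-excisive functors: if $G$ is already $\sigma$-excisive then $G \to T_\sigma G$ is an equivalence, hence so is $G \to \P G$, so any map $F \to G$ factors essentially uniquely as $F \to \P F \to \P G \simeq G$.

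I expect the decisive step to be the first part of the verification — showing that $\P F$ is $\sigma$-excisive by commuting the sequential colimit with the defining limit of $T_\sigma$. The analogue for cubes is a standard but nontrivial finiteness argument, and in the generality of shapes it is the place where the hypotheses on $\sigma$ need to do real work. A secondary, more technical, difficulty is giving a clean construction of the universal diagrams $\Phi_c$ that is natural in $c$ and compatible with iteration of $T_\sigma$; this is where I would expect the full hypothesis to play its second role, ensuring enough compatibility between $\pos P$ and $\pos Q$ for the left Kan extension defining $\Phi_c$ to behave well.
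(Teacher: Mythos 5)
Your overall architecture matches the paper's: pad $c$ out to a diagram on the source of $\sigma$ (the paper does this by right Kan extension from the initial vertex, placing terminal objects everywhere else), left Kan extend along $\sigma$ to get the analogue of the join cubes, define $\T(F)(c)$ as the limit over the punctured target poset, iterate, and take the sequential colimit. The universal-property argument at the end is also essentially the paper's: one checks that the unit $\t$, and hence $\p$, is an equivalence on $\sigma$-excisive functors (this is where fullness of $\sigma$ is used, to make the comparison $F(c) \to F(c \join \ini)$ an equivalence), and that $\P$ applied to $\p(F)$ is an equivalence, which together yield the adjunction.

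However, there is a genuine gap in the step you single out as decisive. Proving that $\P F$ is $\sigma$-excisive is \emph{not} accomplished by commuting the sequential colimit past the limit defining $\T$ --- that commutation is simply a standing hypothesis on the target category (it is required to be $\pos S$-differentiable), and neither the shape condition nor fullness plays any role in it. The real content is a factorization lemma in the style of Rezk's proof of the classical statement: for every $\sigma$-cocartesian diagram $D$, the map $F \circ D \to \T(F) \circ D$ factors through a \emph{cartesian} diagram $E$. One builds $E$ from the coproduct-shifted diagrams $t \mapsto D(t \cop s)$, which become cartesian after applying an arbitrary $F$ because $(\blank \cop s)$ is idempotent in a poset, so these diagrams consist of equivalences in the relevant directions; to relate them to the join construction one needs that $D \circ (t \cop \blank)$ is again $\sigma$-cocartesian, and \emph{this} is exactly what the combinatorial contractibility condition in the definition of a shape guarantees --- it is not a finiteness condition. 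Once the factorization is in hand, one interleaves $F \circ D \to \T(F) \circ D \to \cdots$ with the cartesian $E_n$ and only then invokes differentiability to conclude that the colimit is cartesian. Without this factorization your argument does not close up, and some such input is unavoidable: the paper exhibits a full preshape that is not a shape for which $\P F$ fails to be $\sigma$-excisive.
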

\noindent (See \Cref{thm:full_approximation} for the precise statement.)
Here a shape is \emph{full} if it is full as a functor.

In Goodwillie calculus there is, for any functor $F$, a sequence of maps under $F$
\[ F \longto \dots \longto \P[2] (F) \longto \P[1] (F) \longto \P[0] (F) \]
called the \emph{Taylor tower} of $F$.
In good cases this tower \emph{converges}, i.e.\ recovers $F$ in the sense that the canonical map $F \to \holim{n} \P[n] (F)$ is an equivalence (potentially after restricting to some subcategory).
%Moreover, it possesses a wealth of structure that can be used to analyze the functor $F$, even in the most basic case when $F$ is the identity.
%A short overview over the current state of art in this direction can again be found in \cite{AC}.
Clearly one would like to have an analogue of the Taylor tower in our more general framework.
Again, this exists (though it is a bit more technical to construct) and takes the form of the \emph{Taylor graph}: a diagram that contains, for a fixed functor $F$, all of its universal excisive approximations $\P (F)$ and the maps between them induced by their universal properties.
Large parts of the paper are devoted to studying this diagram.
The second main result we prove is the following.
\begin{theorem} \label{thm:intro_Taylor}
  The limits of the Taylor tower and the Taylor graph agree when the latter is restricted to non-inane shapes between finite posets.
\end{theorem}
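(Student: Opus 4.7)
The plan is to prove the equivalence by showing that the Taylor tower sits cofinally inside the restricted Taylor graph. Since the classical $n$-cube inclusions are non-inane shapes between finite posets, the Taylor tower is already a subdiagram of the restricted Taylor graph, and this inclusion induces a canonical comparison map from the graph limit to the tower limit; the task is to prove this map is an equivalence.

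The central input is the criterion, alluded to in the abstract and presumably established earlier in the paper, that produces a natural map between two excisive approximations from their universal properties as soon as one excisivity condition implies the other. Using this, the key lemma I would aim to prove is that for every non-inane shape $\sigma$ between finite posets there exists an integer $n$, depending combinatorially on $\sigma$, such that the universal property yields a map from the $\sigma$-excisive approximation $\P F$ to $\P[n] F$. Equivalently, every classically $n$-excisive functor should be $\sigma$-excisive for this $n$. Finiteness of the source and target posets should provide the numerical bound on $n$ in terms of the combinatorics of the shape, while the non-inane hypothesis rules out the degenerate shapes for which no useful comparison could exist.

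Given the key lemma, the remaining step is to verify that these comparisons are compatible with all morphisms of the Taylor graph, so that they assemble into a cofinality (in the $\infty$-categorical sense appropriate for limits) of the inclusion of the Taylor tower into the restricted Taylor graph. The conclusion that the canonical comparison between the two limits is an equivalence then follows from the standard result that cofinal functors preserve limits.

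The hardest step will be the key lemma. It requires a combinatorial argument translating the $\sigma$-cocartesian condition for a finite non-inane shape $\sigma$ into the classical strongly cocartesian cube condition, so as to verify the hypothesis of the criterion for a suitable $n$. Identifying precisely what the bound on $n$ should be in terms of the underlying posets of $\sigma$, and recognising the non-inane hypothesis as exactly what prevents this comparison from being vacuous, is where I expect most of the subtlety to lie.
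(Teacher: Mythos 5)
Your overall strategy --- exhibit the Taylor tower as a homotopy initial subdiagram of the restricted Taylor graph and conclude that the limits agree --- is exactly the paper's strategy (\Cref{thm:tower_is_initial}). The gap is that your key lemma points in the wrong direction. Since you are computing \emph{limits}, what must be contractible are the comma categories ${\Tower} \slice \sigma$ in $\op{(\Shafinnb)}$; an object of ${\Tower} \slice \sigma$ is an $n$ together with a morphism $\cubeinc{n} \to \sigma$ in $\op{(\Shafinnb)}$, i.e.\ a morphism $\sigma \to \cubeinc{n}$ in $\Shafinnb$, i.e.\ the implication ``$\sigma$-excisive $\Rightarrow (n-1)$-excisive'', which in the diagram is realized by a map $\P[n-1] F \to \P F$ \emph{from} the tower \emph{into} the graph. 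The lemma you propose --- ``$n$-excisive $\Rightarrow \sigma$-excisive'', equivalently a map $\P F \to \P[n] F$ --- would instead show that the tower is homotopy \emph{terminal} in the graph, which is the condition relevant for colimits, not limits. So even if your lemma held, it would not feed into the cofinality argument you describe.

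The direction also matters for feasibility. The implication actually needed, and proved in \Cref{thm:nb-excisive_implies_n-excisive}, is that every finite non-inane full shape $\sigma \colon \gpos S \to \pos S$ satisfies ``$\sigma$-excisive $\Rightarrow (n-1)$-excisive'' with $n$ the number of minimal elements of $\gini{\gpos S}$; this is obtained by comparing $\sigma$ to the free shape $\copcompmap[\gpos S]$ via an indirect map (this is exactly where non-inaneness is used) and identifying free-shape excision with cubical excision. Your converse implication is, for a general shape, precisely the content of the paper's open conjecture; it is established only for shapes with cubical codomain (\Cref{thm:cubes}), by a considerably harder induction. Once the implication is corrected, the remaining ``compatibility'' step you worry about is automatic: the indexing categories are thin, so ${\Tower} \slice \sigma$ is a non-empty downward closed subposet of $\op{(\NN)}$ and hence contractible, and homotopy initiality of the indexing functor already implies the comparison of limits with no further coherence to verify.
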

\noindent (Actually we even show the stronger statement that the evident functor between their indexing categories is homotopy initial; see \Cref{thm:tower_is_initial}.)
Notably this tells us that any convergence criteria for the Taylor tower can also be used for the Taylor graph.
Here, a shape $\sigma$ is \emph{inane} if it fulfills a certain combinatorial condition that implies that any functor is $\sigma$-excisive (see \cref{sec:inane}).
In particular we do not lose any information by discarding them.

To prove \cref{thm:intro_Taylor} we first pursue the naturally arising question of when there exists a map $\P (F) \to \P[\tau] (F)$ in the Taylor graph or, equivalently, when $\tau$-excisive implies $\sigma$-excisive.
It is not true that any (naively defined) morphism of shapes $f \colon \tau \to \sigma$ induces such a map; however we are able to give an explicit combinatorial condition on $f$ for this to be the case (see \cref{sec:indirect}).
This generalizes the classical fact that $n$-excisive implies $(n + 1)$-excisive.
Moreover, maybe surprisingly, we prove that there is another condition on $f$ that guarantees the existence of a map $\P[\tau] (F) \to \P (F)$, i.e.\ in the other direction (see \cref{sec:direct}).
Together, these two conditions turn out to be very useful in studying shapes and their relations (which we do in \cref{section:shapes}).
In particular, by considering shapes freely generated by a finite poset, this leads to a proof of \cref{thm:intro_Taylor}.
However the tools employed, including the two aforementioned conditions on a map of shapes, might be of independent interest as well.

Going back to our original motivation of ``Why cubes?'', \Cref{thm:intro_Taylor} suggests the following question: is the inclusion of the Taylor tower into the Taylor graph an equivalence?
Or, equivalently: given a (non-inane) shape $\sigma$, does there always exist an $n$ such that $\sigma$-excisive is equivalent to $n$-excisive?
If this were true, it would provide compelling evidence that cubes are the ``correct'' indexing diagrams to use, as they would cover all notions of excision arising from shapes.

While we are unable to completely solve this question, we do provide a partial answer.
This takes the form of the following theorem.
\begin{theorem} \label{thm:intro_cubes}
  Let $\sigma$ be a shape with codomain a cube.
  Then $\sigma$-excisive is equivalent to $n_\sigma$-excisive for a certain natural number $n_\sigma$.
\end{theorem}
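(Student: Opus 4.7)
The plan is to use the combinatorial criteria on maps of shapes developed in \cref{sec:direct,sec:indirect} to sandwich $\sigma$ against an appropriate classical inclusion $\sigma_{n_\sigma} \colon \text{(initial star)} \hookrightarrow [1]^{n_\sigma}$, forcing $\sigma$-excisive and $n_\sigma$-excisive to coincide. Here $n_\sigma$ will be defined as a combinatorial invariant of $\sigma$ read off from how its image sits inside the codomain cube $[1]^n$.

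First I would identify $n_\sigma$. Since $[1]^n$ is a product, any coordinate direction in which the image of $\sigma$ is trivial (never reaching the value $1$) can be collapsed without affecting the induced notion of excision; this suggests taking $n_\sigma$ to be the number of genuinely active coordinates of $\sigma$, or more generally an invariant governed by how $\sigma$ interacts with the faces of the cube. I would first confirm that collapsing inactive coordinates really does preserve the notion of $\sigma$-excisive, most likely by a direct verification using the Kan extension characterization of $\sigma$-cocartesian diagrams together with the fact that such an inactive coordinate forces the corresponding face of any $\sigma$-cocartesian cube to be an equivalence in that direction.

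Next I would compare the reduced shape to $\sigma_{n_\sigma}$. For the implication $n_\sigma$-excisive $\Rightarrow\sigma$-excisive, I expect a suitably constructed map of shapes to fulfil the indirect criterion from \cref{sec:indirect}, generalizing the classical fact that the $n$-excisive condition implies excision for shapes ``finer'' than $\sigma_n$. For the reverse implication, I would appeal to the direct criterion from \cref{sec:direct}, exploiting that the product structure of the cube provides enough rigidity to ``bootstrap'' a strongly cocartesian $n_\sigma$-cube from the restricted data seen by $\sigma$. In both cases the maps of shapes I need should be essentially canonical, built from the two extremal shapes over $[1]^{n_\sigma}$ (the identity and $\sigma_{n_\sigma}$) between which $\sigma$ naturally sits.

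The main obstacle I anticipate is the direct implication $\sigma$-excisive $\Rightarrow n_\sigma$-excisive. The direct criterion is the less classical and more subtle of the two, and checking its combinatorial hypothesis for the map of shapes I produce should be the technical heart of the argument. The hypothesis that the codomain is a cube is presumably what makes the argument go through --- the cube's product structure is what allows one to reduce testing cocartesian-ness to a finite collection of coordinate-wise conditions --- and so I expect this structure to enter the verification in an essential way. A secondary subtlety will be dealing carefully with inane directions so that the collapsing step does not violate the shape condition.
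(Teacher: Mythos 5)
There are two genuine gaps here. The first is the invariant itself: the correct $n_\sigma$ is not the number of ``active'' coordinates of $\sigma$ but (up to the usual shift by one) the minimal number of elements of $\im \sigma$ needed to cover $S$, as in \Cref{thm:cubes}. These disagree already for the downward closure of $\set{\set{0,1}, \set{2}}$ in $\Cube{\finset 3}$: all three coordinates are active, yet two elements cover $\finset 3$, and the associated notion of excision is $1$-excision, not $2$-excision. The ``collapse the inactive coordinates'' heuristic therefore cannot produce the right answer, and since the whole theorem is the identification of $n_\sigma$, this is not a cosmetic issue.

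The second and more serious gap is that you have located the difficulty in the wrong direction. The implication $\sigma$-excisive $\Rightarrow (n_\sigma - 1)$-excisive is the easy one: one chooses a pairwise disjoint cover $A_0, \dots, A_{n-1} \in \gpos S$ of $S$, collapses it via $f \colon S \to \finset{n}$, and passes through the intermediate shape $\inv f(\Cubeini{n}) \subseteq \Cube{S}$, which admits a direct map to $\sigma$ and an indirect map to the target (\Cref{lemma:cubes_map}) --- note that \emph{both} criteria are used, applied to an intermediate shape over $\Cube{S}$, not a single map between $\sigma$ and a cube shape. The converse, $(n_\sigma - 1)$-excisive $\Rightarrow \sigma$-excisive, is precisely the direction that is \emph{not} obtained from either criterion: no suitable map of shapes is exhibited, and the paper instead runs a lexicographic induction on the triple (number of non-isolated directions, $\card{S}$, $\card{\gpos S}$), constructing auxiliary left Kan extensions over $\Cube{M} \times \Cube{S}$ and repeatedly invoking \Cref{lemma:gc_equiv} to manufacture new cocartesian diagrams to which the inductive hypothesis applies. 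Your plan offers no mechanism for this step beyond the hope that the product structure of the cube supplies a direct or indirect map, and there is no evidence in the paper that such a map exists; as written, the argument would stall exactly there.
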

\noindent (See \Cref{thm:cubes} for the precise statement.)
We remark that, when $\sigma$ is full, being a shape with codomain a cube is equivalent to being the inclusion of a (non-empty) downward closed subposet of the cube (see \cref{lemma:cube_shape}).

All evidence known to the author, including \cref{thm:intro_Taylor,thm:intro_cubes}, points towards the answer to the question asked above being affirmative.
Hence we propose the following.

\begin{conjecture*}
  Let $\sigma$ be a non-inane shape between finite posets.
  Then $\sigma$-excisive is equivalent to $n_\sigma$-excisive for some natural number $n_\sigma$.
\end{conjecture*}

Let us conclude this introduction by remarking that it would also be very interesting if this conjecture were false.
In that case the Taylor graph would be a finer resolution of the Taylor tower and could potentially contain additional useful information.

\paragraph{Structure of this paper.}

In \Cref{section:notation} we collect the conventions and notations we will use throughout the rest of this paper.
In \Cref{section:excisive} we define $\sigma$-excisive and shapes, give the construction of the universal excisive approximation, and prove \Cref{thm:intro_approx}.
In \Cref{section:maps} we give conditions for morphisms of shapes to induce maps between their universal excisive approximations.
In \Cref{section:shapes} we employ these conditions to study shapes and their relations, introducing the notions of free and inane shapes along the way.
In \Cref{sec:cubes} we study cubical shapes and prove \Cref{thm:intro_cubes}.
In \Cref{section:taylor} we construct the Taylor graph and use the results of the preceding sections to prove \Cref{thm:intro_Taylor}.
In \Cref{section:mates} we recall the calculus of mates of natural transformations.
In \Cref{section:facts} we recall a number of $\infty$-categorical facts that will be used throughout the paper (often without reference).
In \Cref{section:basics} we give references or proofs for these and other needed basic facts about (co)limits and Kan extensions.
In \Cref{section:generalitites} we prove various general facts that are needed but would hinder the flow of the main exposition.

The suggested reading order is to start with \Cref{section:mates} in the case of unfamiliarity with the calculus of mates, then read \Cref{section:notation} and afterwards quickly remind oneself of the statements in \Cref{section:facts}.
Then the reading of the main exposition in \Crefrange{section:excisive}{section:taylor} can begin, with some thumbing forwards to the statements in \Cref{section:basics} and the statements and proofs in \Cref{section:generalitites} when they are referenced.
The proofs in \Cref{section:basics} are included for completeness and only when the author was not able to find a reference; the statements being quite basic, their proofs are not a main part of this paper and reading them is not necessary for understanding the central exposition.

\paragraph{Acknowledgments.}

First and foremost I would like to thank Emanuele Dotto, who was the advisor for my masters' thesis \cite{Sto}, which forms the basis for the majority of this paper.
He suggested the topic of the thesis and guided me through the process of writing it, always being available when I had questions.

Secondly I would like to thank Tashi Walde and Greg Arone for many useful discussions, and Markus Land for providing me with a proof of a technical lemma.

Moreover I would like to thank Kevin Carlson, Denis Nardin, and Dylan Wilson for answering questions of mine on MathOverflow that occurred while working on the thesis.
I am particularly grateful to Kevin Carlson for making me aware of the calculus of mates and the work of Riehl and Verity as well as the usefulness of their interplay.

I would also like to thank Emanuele Dotto, Tashi Walde, Greg Arone, Alexander Berglund, Thomas Blom, and the anonymous referee for useful comments on earlier versions of this paper.

Last but not least I would like to thank Emily Riehl for answering a question of mine in detail, going so far as to add the relevant statement to the draft that would become \cite{RV}.

\renewcommand*{\thetheorem}{\arabic{theorem}}
\numberwithin{theorem}{section}

\section{Notation and conventions} \label{section:notation}

When working with $(\infty, 1)$-categories (which will be the case most of the time) we will use the framework of quasi-categories developed by Joyal, Lurie, and others.
In particular, when we say $\infty$-category we mean quasi-category.
However, we will often work purely in the homotopy 2-category of $\infty$-categories, thereby employing the theory developed by Riehl and Verity in a series of articles starting with \cite{RV15} and concluded with their book \cite{RV}.
Due to this, most of our arguments should not, in any fundamental way, depend on the precise model chosen for $\infty$-categories.

Moreover, in quite a few places we will employ the calculus of mates of natural transformations.
The needed facts are recalled in \Cref{section:mates}, for those unfamiliar with the theory.

In the following we state the conventions and notations we will use throughout this paper.
A few basic facts concerning these notions that we will often use without explicit mention are collected in \Cref{section:facts}.
It is recommended to quickly remind oneself of the statements after finishing this first section.

Generally, if there is a pair of dual definitions or statements, we will only give one of them and leave the other implicit.

\begin{convention}
  To avoid set-theoretic problems, we will throughout assume that there is a sufficient supply of Grothendieck universes, so that any constructions we may employ will make sense in a potentially higher universe (this is the same approach as taken by Lurie; see \cite[Section 1.2.15]{LurHTT}).
  Objects belonging to the first such universe will be called \emph{small}.
  
  We will not assume $\infty$-categories to be small, so that the examples we are interested in (such as the $\infty$-category of all (small) spaces) are actually examples.
  Consequently we will not assume simplicial sets nor categories to be small (nor locally small), so that an $\infty$-category is a simplicial set and its homotopy category a category.
  Posets, however, will be assumed to be small.
\end{convention}

\begin{convention}
  We will implicitly treat posets as categories and categories as $\infty$-categories whenever it is convenient, without a change of notation.
  In particular, we will often just write ``functor'' for an order preserving map between posets.
\end{convention}

\begin{notation}
  We write $\Pos$ for the category of posets, $\Posini$ for the subcategory of posets that have an initial object together with initial object preserving functors, and $\Poscop$ for the subcategory of posets that admit all (small) coproducts together with functors that preserve (small) coproducts.
\end{notation}

\begin{remark}
  Note that requiring a functor $f \colon \pos P \to \pos Q$ of posets to preserve $I$-indexed coproducts is equivalent to requiring equalities $f\left(\coprod_{i \in I} p_i\right) = \coprod_{i \in I} f(p_i)$.
  (For one direction we note that if the canonical map is an isomorphism, then it is already the identity; for the other that if we have the above equality, then the canonical map must be the identity since there are no other endomorphisms.)
\end{remark}

\begin{notation}
  We say a functor between categories is a \emph{homotopy equivalence} if the geometric realization of its nerve is a (weak) homotopy equivalence.
  Similarly, we say a category is \emph{contractible} if the geometric realization of its nerve is (weakly) contractible.
\end{notation}

\begin{notation}
For $\cat I$ a category we denote by $\gini{\cat I} \subseteq \cat I$ the full subcategory spanned by the non-initial objects.
\end{notation}

\begin{notation}
  We denote by $\termCat$ the terminal category and, for an $\infty$-category $\infcat C$ and $c \in \infcat C$, by $\const[c] \colon \termCat \to \infcat C$ the functor representing $c$ (sometimes we will also just write $c$ for $\const[c]$).
  More generally, for a simplicial set $K$, we denote by $\const[c] \colon K \to \infcat C$ the unique map that factors over $\const[c] \colon \termCat \to \infcat C$, and omit the index if the target category is $\termCat$.
\end{notation}

\begin{notation}
  When $\cat I$, $\cat I'$, and $\cat J$ are categories and $f \colon \cat I \to \cat J$ and $f' \colon \cat I' \to \cat J$ are functors, we write $f \slice f'$ for the comma category and denote its objects by tuples $(i, i', f(i) \to f'(i'))$ where $i \in \cat I$ and $i' \in \cat I'$.
  Furthermore, we denote by $\pr[\cat I]$ respectively $\pr[\cat I']$ (or just by $\pr$ (or $\pr[f \slice f']$) if it is clear which one is meant) the forgetful functor from $f \slice f'$ to $\cat I$ respectively $\cat I'$.
  Sometimes we will replace one or both of $f$ and $f'$ either with $\cat J$, in which case we mean the functor $\id[\cat J]$, or with an object $j \in \cat J$, in which case we mean the functor $\const[j] \colon \termCat \to \cat J$.
  In the latter case, we will, when writing an object of the comma category, omit the unique object of $\termCat$ from the tuple.
\end{notation}

\begin{remark}
  When $\cat J$ is a poset, this comma category $f \slice f'$ is canonically isomorphic to the full subcategory of $\cat I \times \cat I'$ spanned by those $(i, i')$ such that $f(i) \le f'(i')$ (since any diagram in a poset automatically commutes).
  In particular we can omit mention of the structure maps $f(i) \to f'(i')$ in this case.
\end{remark}

\begin{notation}
  For $\infcat C$ an $\infty$-category, we denote by $\hcat{\infcat C}$ its homotopy category (as a (1-)category, cf.\ \cite[Section 1.2.3]{LurHTT}) and by $\hcatmap[\infcat C] \colon \infcat C \to \hcat{\infcat C}$ the canonical functor.
\end{notation}

\begin{notation}
  For $K$ a simplicial set, we denote by $\cone{K}$ the cone over $K$, by $\conept$ the cone point, and by $\inc[K] \colon K \to \cone{K}$ the inclusion (we will sometimes drop the index if there is no risk of confusion).
\end{notation}

\begin{notation}
  For $K$ a simplicial set and $\infcat C$ an $\infty$-category, we will denote by $\Fun{K}{\infcat C}$ the $\infty$-category of functors from $K$ to $\infcat C$, i.e.\ the internal hom of simplicial sets.
  
  We will often implicitly identify $\Fun{\termCat}{\infcat C}$ with $\infcat C$ itself.
\end{notation}

\begin{notation}
  Let $f \colon I \to J$ be a map of simplicial sets and $\infcat C$ an $\infty$-category.
  We denote by $\Res{f} \colon \Fun{J}{\infcat C} \to \Fun{I}{\infcat C}$ the restriction along $f$.
\end{notation}

\begin{notation}
  By an \emph{adjunction} of functors between $\infty$-categories we will mean an adjunction in the \emph{homotopy 2-category of $\infty$-categories}, i.e.\ the strict 2-category with objects the $\infty$-categories, morphisms the functors between $\infty$-categories, and 2-morphisms the homotopy classes of natural transformations between those functors (cf.\ \cite[Definition 1.4.1]{RV}).
\end{notation}

\begin{remark}
  This is the definition of an adjunction used by Riehl and Verity (see \cite[Definition 2.1.1]{RV}).
  We chose it since it is very pleasant to work with, in particular in relation to functors of Kan extension.
  That it agrees with the more hands-on definition of Lurie given in \cite[Definition 5.2.2.1]{LurHTT} is shown in \cite[Appendix F.5]{RV}.
\end{remark}

\begin{definition}
  Let $f \colon I \to J$ be a map of simplicial sets.
  An $\infty$-category $\infcat C$ is \emph{weakly left $f$-extensible} if the restriction $\Res{f} \colon \Fun{J}{\infcat C} \to \Fun{I}{\infcat C}$ has a left adjoint.
  In this case we fix such an adjunction $\Lan{f} \dashv \Res{f}$.
  In particular, we fix a unit-counit pair of this adjunction, which will be what we mean when we write ``the'' unit (or counit) of the adjunction.
\end{definition}

\begin{remark}
  If $f = \id[I] \colon I \to I$, then $\Res{\id} = \id$.
  In particular we can choose $\Lan{\id}$, as well as the unit and counit of the adjunction, to be identities as well.
  This is the adjunction we fix in this case.
\end{remark}

\begin{notation}
  Let $I$ be a simplicial set and $\infcat C$ an $\infty$-category.
  Then we write $\Diag \colon \infcat C \to \Fun{I}{\infcat C}$ for the diagonal, i.e.\ the restriction along $\const \colon I \to \termCat$, and say that $\infcat C$ \emph{admits all colimits indexed by $I$} if it is weakly left $\const$-extensible, i.e. if the functor $\Diag$ admits a left adjoint.
  In this case we write ${\colim{I}} \defeq {\Lan{\const}} \colon \Fun{I}{\infcat C} \to \infcat C$.
  Note that in particular we have a fixed adjunction $\colim{I} \dashv \Res{\const}$.
\end{notation}

\begin{definition}
  Let $f \colon \cat I \to \cat J$ be a functor between categories.
  We say that an $\infty$-category $\infcat C$ is \emph{left $f$-extensible} if it admits colimits indexed by $f \slice j$ for all $j \in \cat J$.
\end{definition}

\begin{remark} \label{rem:functorial_ptw_ext}
  By \cite[Corollary 12.3.10]{RV}, an $\infty$-category $\infcat C$ that is left $f$-extensible is weakly left $f$-extensible.
\end{remark}

\begin{remark}
  Note that an $\infty$-category $\infcat C$ that admits colimits indexed by a category $\cat I$ is left $\inc$-extensible, where ${\inc} \colon \cat I \to \cocone{\cat I}$ is the inclusion.
  This follows by considering, for an $a \in \cocone{\cat I}$, the slice category ${\inc} \slice a$.
  If $a = \coconept$, it is isomorphic to $\cat I$.
  If otherwise $a \in \cat I$, it has a terminal object, in which case \Cref{lemma:htpy_terminal_admitting} implies that $\infcat C$ admits colimits indexed by ${\inc} \slice a$.
\end{remark}

\begin{notation}
  Let $I$ be a simplicial set, $\infcat C$ an $\infty$-category, and $p \colon I \to \infcat C$ a diagram.
  We say that a diagram $\cocone{I} \to \infcat C$ is a \emph{colimit diagram extending $p$} if it is an initial object of $\infcat{C}_{p/}$ (cf.\ \cite[Remark 1.2.13.5]{LurHTT}).
\end{notation}

\begin{notation} \label{def:structure_map}
  Let $\cat I$ be a category, $i \in \cat I$, and $\infcat C$ an $\infty$-category that admits colimits indexed by $\cat I$.
  Denote by $t_i \colon \Simplex{1} \to \cocone{\cat I}$ the functor representing the unique morphism $i \to \coconept$.
  Then the functor $\Res{t_i} \colon \Fun{\cocone{\cat I}}{\infcat C} \to \Fun{\Simplex{1}}{\infcat C}$ curries to a natural transformation $\alpha \colon \Res{i} \to \Res{\coconept}$ of functors $\Fun{\cocone{\cat I}}{\infcat C} \to \infcat C$.
  Now we can form the composition (where the first equivalence is provided by \Cref{lemma:fully_faithful_kan_unit} and the last one by \Cref{lemma:colim_is_kan_to_cocone})
  \[ \Res{i} \xlongto{\eq} \Res{i} \Res{\inc} \Lan{\inc} = \Res{i} \Lan{\inc} \xlongto{\alpha} \Res{\coconept} \Lan{\inc} \xlongfrom{\eq} \colim{\cat I} \]
  which we consider to be the \emph{structure map} from the value at $i$ to the colimit.
\end{notation}

\begin{remark} \label{rem:structure_maps}
  We could have defined the structure map as the restriction of the unit $\id \to \Diag \circ \colim{\cat I}$ to $i$, but the definition we gave is easier to compare to the notions of \cite{LurHTT}.
  In particular note that, by \Cref{lemma:cocone_comparison}, the structure map $D(i) \to \colim{\cat I} D$ is an equivalence if and only if all colimit diagrams $\cocone{\cat I} \to \infcat C$ extending $D$ send the unique morphism $i \to \coconept$ to an equivalence (or, equivalently, if there exists one that does so).
\end{remark}

\begin{notation} \label{def:can_map}
  Let $\cat I$ be a category and $\infcat C$ be an $\infty$-category that admits colimits indexed by $\cat I$.
  Then we have the following diagram on the left and its image under $\Fun{\blank}{\infcat C}$ on the right:
  \[
  \begin{tikzcd}[sep = 30]
    \cat I \rar{\inc} \dar & \cocone{\cat I} \dar{\id} \dlar[Rightarrow, shorten < = 15, shorten > = 15, swap]{\xi} & & \Fun{\cocone{\cat I}}{\infcat C} \rar{\Res{\id}} \dar[swap]{\Res{\coconept}} & \Fun{\cocone{\cat I}}{\infcat C} \dar{\Res{\inc}} \dlar[Rightarrow, shorten < = 25, shorten > = 25, swap]{\xi} \\
    \termCat \rar{\coconept} & \cocone{\cat I} & & \infcat C \rar{\Diag} & \Fun{\cat I}{\infcat C}
  \end{tikzcd}
  \]
  where $\xi$ is, at an object $i \in \cat I$, the unique map to the cone point.
  We obtain a mate $\mate \xi \colon \colim{\cat I} \Res{\inc} \to \Res{\coconept} \Lan{\id} = \Res{\coconept}$.
  Evaluated at a diagram $D \colon \cocone{\cat I} \to \infcat C$ this takes the form of a map
  \[ \colim{\cat I} \restrict D {\cat I} \longto D(\coconept) \]
  that is natural in $D$.
  This is what we will mean by the \emph{canonical map} from the colimit.
\end{notation}

\begin{remark} \label{rem:one_point_colimit}
  When $\cat I = \termCat$, then $\colim{\cat I} = \id$ and the natural transformation $\Res{\inc} \to \Res{\coconept}$ is just given by $\xi$, i.e.\ evaluation at the unique morphism $\term \to \coconept$ in $\cocone{\termCat}$.
\end{remark}

\begin{notation}
  Let $\cat I$ be a category with an initial object $\ini$, $\infcat C$ an $\infty$-category that admits colimits indexed by $\gini{\cat I}$, and $D \colon \cat I \to \infcat C$ a diagram.
  Then, noting that the full subcategory of $\cat I$ spanned by $\ini$ and $\gini{\cat I}$ is canonically isomorphic to $\cone{(\gini{\cat I})}$, we obtain, by the dual of what we did in \Cref{def:can_map}, a transformation $\Res{\ini} \to \lim{\gini{\cat I}} \Res{\gini{\cat I}}$, i.e.\ a natural map
  \[ D(\ini) \longto \lim{\gini{\cat I}} \restrict{D}{\gini{\cat I}} \]
  which we see as the \emph{canonical map} in this situation.
\end{notation}

\begin{notation}
  Let $f \colon I \to J$ be a map of simplicial sets and $\infcat C$ an $\infty$-category that admits colimits indexed both by $I$ and by $J$.
  Then we have the following diagram on the left and its image under $\Fun{\blank}{\infcat C}$ on the right:
  \[
  \begin{tikzcd}[sep = 30]
    I \rar{f} \dar & J \dar \dlar[Rightarrow, shorten < = 14, shorten > = 14, swap]{\id} & & \infcat C \rar{\Diag[J]} \dar[swap]{\id} & \Fun{J}{\infcat C} \dar{\Res{f}} \dlar[Rightarrow, shorten < = 17, shorten > = 17, swap]{\id} \\
    \termCat \rar & \termCat & & \infcat C \rar{\Diag[I]} & \Fun{I}{\infcat C}
  \end{tikzcd}
  \]
  whose mate $\mate{\id}$ is a natural transformation $\colim{I} \Res{f} \to \colim{J}$ which we will denote by $f_*$ and call the \emph{induced map} on the colimit.
\end{notation}

\begin{notation}
  We will say a map $f \colon K \to L$ of simplicial sets is \emph{homotopy terminal} if, for each $\infty$-category $\infcat C$ and colimit diagram $p \colon \cocone L \to \infcat C$, the induced map $p \circ \cocone f \colon \cocone K \to \infcat C$ is again a colimit diagram.
  The dual concept will be called \emph{homotopy initial}.
\end{notation}

\begin{remark}
  By \cite[Proposition 4.1.1.8]{LurHTT}, this definition of homotopy terminal is an equivalent characterization of what Lurie calls cofinal (cf.\ \cite[Definition 4.1.1.1]{LurHTT}).
\end{remark}

\begin{notation} \label{def:preservation}
  Let $f \colon I \to J$ be a map of simplicial sets and $F \colon \infcat C \to \infcat D$ a functor between weakly left $f$-extensible $\infty$-categories.
  We say that $F$ \emph{preserves left Kan extension along $f$} if the mate $\chi \colon {\Lan{f}} \circ (F \;\circ) \to (F \;\circ) \circ \Lan{f}$ of the natural transformation
  \[
  \begin{tikzcd}[sep = 30]
    \Fun{J}{\infcat C} \rar{\Res{f}} \dar[swap]{F \circ} & \Fun{I}{\infcat C} \dar{F \circ} \dlar[Rightarrow, shorten < = 20, shorten > = 20, swap]{\id} \\
    \Fun{J}{\infcat D} \rar{\Res{f}} & \Fun{I}{\infcat D}
  \end{tikzcd}
  \]
  is an equivalence.
  
  We say that $F$ \emph{preserves colimits} indexed by a simplicial set $I$ if it preserves left Kan extension along $\const \colon I \to \termCat$.
\end{notation}

\section{Excisive functors} \label{section:excisive}

Classical Goodwilie calculus (as developed originally in the series of papers \cite{Goo90, Goo92, Goo03} and generalized to the $\infty$-categorical context in \cite[Section 6]{LurHA}) studies functors which have certain behaviors with respect to diagrams indexed by cubes, i.e.\ posets of the following form:

\begin{notation}
  Let $S$ be a set.
  We write $\Cube S$ for the poset of subsets of $S$ ordered by inclusion.
  Moreover, for $n \in \NN$, we set $\Cube n \defeq \Cube{\finset n}$ and call it the \emph{$n$-cube} (here $\finset n \defeq \set{0, \dots, n - 1}$).
\end{notation}

Namely, one defines a functor $F \colon \infcat C \to \infcat D$ between sufficiently nice $\infty$-categories to be $n$-excisive if it sends strongly cocartesian $(n+1)$-cubes to cartesian cubes.
Here, cartesian means that the cube is a limit diagram, which makes sense in much greater generality:

\begin{definition} \label{def:cartesian}
  Let $\cat I$ be a category that has an initial object and $\infcat C$ an $\infty$-category that admits limits indexed by $\ginipos I$.
  A diagram $D \colon \cat I \to \infcat C$ is \emph{cartesian} if there is an initial object $\ini \in \cat I$ such that the canonical map
  \[ D(\ini) \longto (\lim[s]{\gini{\cat I}} \Res{\gini{\cat I}}) (D) \]
  is an equivalence (by naturality of the map to the limit this is equivalent to requiring it to be an equivalence for each initial object).
\end{definition}

\begin{remark} \label{rem:cartesian}
  By \Cref{lemma:can_is_counit_of_kan}, this is equivalent to requiring the restriction $\restrict{D}{\cat J}$ of $D$ to the full subcategory $\cat J$ of $\cat I$ spanned by $\ini$ and $\gini{\cat I}$ (which is canonically isomorphic to $\cone{(\gini{\cat I})}$) to fulfill the condition that the unit map $\restrict{D}{\cat J} \to (\Ran{\inc} \Res{\inc}) (\restrict{D}{\cat J})$ is an equivalence, where $\inc \colon \gini{\cat I} \to \cat J$ is the inclusion.
  By \Cref{lemma:cocone_comparison,lemma:extensions_are_cocartesian}, this is in turn equivalent to $\restrict{D}{\cat J}$ being a limit diagram.
  The latter description makes sense even if not all limits indexed by $\gini{\cat I}$ exist, which makes it useful in some circumstances.
\end{remark}

Strongly cocartesian is a slightly more complicated condition: it means that any $2$-face of the cube is cocartesian, i.e.\ a pushout (cf.\ \cite[Definition 2.1]{Goo92}).
However, this can be rephrased in a more abstract way: it is equivalent to requiring the cube to be a left Kan extension of its restriction to the initial star, i.e.\ the following subposet:

\begin{notation}
  Let $S$ be a set.
  We write $\Cubeini S \subseteq \Cube S$ for the full subposet consisting of all subsets of $S$ with cardinality at most $1$.
  Moreover, for $n \in \NN$, we denote the inclusion $\Cubeini n \subseteq \Cube n$ by $\cubeinc n$.
\end{notation}

This rephrased condition does not make reference to the structure of the cube anymore, only to the inclusion $\cubeinc{n}$.
In particular, we can formulate it for arbitrary functors:

\begin{definition}
  Let $f \colon \gen{\cat I} \to \cat I$ be a functor between categories and $\infcat C$ a weakly left $f$-extensible $\infty$-category.
  A diagram $D \colon \cat I \to \infcat C$ is \emph{$f$-cocartesian} if the counit map $(\Ext[f] \Res{f})(D) \to D$ is an equivalence.
\end{definition}

\begin{remark} \label{rem:defs_of_str_cocart}
  That for $f = \cubeinc n$ this actually specializes to the condition of being a strongly cocartesian $n$-cube is shown in \cite[Proposition 6.1.1.15]{LurHA} and in slightly different language in \cite[Proposition 2.2]{Goo92}.
\end{remark}

\begin{remark}
  If $f$ is fully faithful, any diagram in the essential image of $\Lan{f}$ will be $f$-cocartesian, by \Cref{lemma:extensions_are_cocartesian}.
\end{remark}

Now that we have general notions of being ``strongly cocartesian'' with respect to some functor, we obtain a corresponding notion of excision for each of them:

\begin{definition} \label{def:excisive}
  Let $f \colon \gen{\cat I} \to \cat I$ be a functor between categories such that $\cat I$ has an initial object, $\infcat C$ a left $f$-extensible $\infty$-category, and $\infcat D$ an $\infty$-category that admits limits indexed by $\gini{\cat I}$.
  A functor $F \colon \infcat C \to \infcat D$ is \emph{$f$-excisive} if it takes $f$-cocartesian diagrams to cartesian diagrams.
  We write $\Exc{f}{\infcat C}{\infcat D} \subseteq \Fun{\infcat C}{\infcat D}$ for the full subcategory of $f$-excisive functors.
  
  Moreover, for $n \in \ZZge{-1}$, we say that a functor is $n$-excisive if it is $\cubeinc{n+1}$-excisive.
\end{definition}

\begin{remark}
  By \Cref{rem:defs_of_str_cocart}, our definition of an $n$-excisive functor agrees with Goodwillie's original one in \cite[Definition 3.1]{Goo92}.
  In particular our notion of an $f$-excisive functor generalizes the classical one.
\end{remark}

\begin{example} \label{ex:fixed_points}
  Let $G$ be a (discrete) group and set $\cat I_G \defeq \cone{(\B G)}$, i.e.\ the category with an initial object $\conept$ and a single other object $\bullet$, whose automorphisms are given by $G$.
  Furthermore, let $\iota_G \colon \set \conept \to \cat I_G$ denote the inclusion.
  
  Note that, for any $\infty$-category $\infcat C$, a diagram $\cat I_G \to \infcat C$ is $\iota_G$-cocartesian if and only if it is equivalent to a constant diagram.
  Moreover, in the $\infty$-category of spaces, the limit of a constant diagram $\B G \to \Spaces$, i.e.\ the homotopy fixed points of a space $X$ equipped with the trivial $G$-action, is given by $\Map{\B G}{X}$.
  In particular a diagram $\cat I_G \to \Spaces$ that is constant with value $X$ is cartesian if and only if the map $X \to \Map{\B G}{X}$, given by the inclusion of the constant maps, is an equivalence.
  This is the case if $X$ is discrete (when $G = \ZZ$ this is also a necessary condition).
  
  Hence, any functor $\infcat C \to \Spaces$ that takes values in discrete spaces is $\iota_G$-excisive (for all~$G$).
  For example, this is the case for the truncation functor $\pi_0 \colon \Spaces \to \Spaces$.
  However, it is easy to see that this functor is not $n$-excisive for any $n \ge -1$.
  In particular, for all groups $G$ and all $n \ge -1$, being $\iota_G$-excisive does not imply being $n$-excisive.
  
  On the other hand, if $G$ is non-trivial, then the constant functor $\const[\B G] \colon \infcat C \to \Spaces$ is not $\iota_G$-excisive.
  In particular, for any $n \ge 0$, being $n$-excisive does not imply being $\iota_G$-excisive.
\end{example}

\subsection{Preshapes}

Our overarching goal in this section is to show that under some hypotheses on a functor $f \colon \gen{\cat I} \to \cat I$ and the $\infty$-categories $\infcat C$ and $\infcat D$ there is, as in classical Goodwillie calculus, for any functor $F \colon \infcat C \to \infcat D$, a universal $f$-excisive functor approximating $F$ which can be explicitly constructed.

For this, and the rest of this paper, we will focus on functors of the following form, for reasons that will become apparent later.

\begin{definition} \label{def:preshape}
  A \emph{preshape} is a functor $\sigma \colon \gpos S \to \pos S$ between posets such that $\pos S$ has an initial object $\ini$ and $\inv \sigma (\ini)$ is non-empty.
  
  Let $\sigma \colon \gpos S \to \pos S$ and $\tau \colon \gpos T \to \pos T$ be preshapes.
  A \emph{map of preshapes} $\sigma \to \tau$ is a tuple $(f, \gen f)$ consisting of functors $f \colon \pos S \to \pos T$ and $\gen f \colon \gpos S \to \gpos T$ such that $f \circ \sigma = \tau \circ \gen f$ and $\inv f (\ini_{\pos T}) = \set{\ini_{\pos S}}$.
\end{definition}

\begin{remark}
  The collection of preshapes together with maps of preshapes forms a category with composition given by componentwise composition of functors.
\end{remark}

\begin{definition}
  Let $\sigma \colon \gpos S \to \pos S$ be a preshape.
  \begin{enumerate}[label=\alph*)]
    \item It is \emph{finite} if both $\gpos S$ and $\pos S$ are finite.
    \item It is \emph{full} if $\sigma$ is a full functor.
    \item It is \emph{reduced} if $\gpos S$ has an initial object.
  \end{enumerate}
\end{definition}

\begin{remark}
  Since functors between posets are automatically faithful, a full preshape is already fully faithful.
  Furthermore, by \Cref{lemma:pos_full_implies_injective}, it is also injective (on objects).
\end{remark}

\begin{remark}
  Note that a reduced preshape preserves the initial object (as $\inv\sigma(\ini_{\pos S})$ is downward closed).
  Furthermore, a full preshape is automatically reduced since fully faithful functors reflect initial objects.
\end{remark}

\begin{notation}
  When $\sigma \colon \gpos S \to \pos S$ is a reduced preshape, we denote the initial object of $\gpos S$ by $\genini$.
\end{notation}

\subsection{Construction of the excisive approximation}

Our construction of the universal excisive approximation is a generalization of Goodwillie's original construction for topological spaces \cite[Section 1]{Goo03} (which is also used in generalized form by Lurie in \cite[Section 6.1.1]{LurHA}).
There, an important part is played by the cubical diagrams given by mapping a subset $U \subseteq \finset n$ to the join of $X$ and $U$, where $X$ is some space and $U$ is considered as a discrete space (one concrete construction of this join is to take, for each element of $U$, a copy of the cone of $X$ and glue them all together at their bases).
We will now describe a more abstract way of constructing these diagrams (which is basically the same way Lurie does it).

\begin{notation} \label{def:Pad}
  Let $\sigma \colon \gpos S \to \pos S$ be a reduced preshape and $\infcat C$ an $\infty$-category with a terminal object.
  We write $\Pad \defeq \Ran{\inc} \colon \infcat C \longto \Fun{\gpos S}{\infcat C}$, where $\inc \colon \set \genini \to \gpos S$ is the inclusion.
  Note that the categories $\c s \slice \inc$ are, for all $\c s \in \gpos S$, isomorphic to either the empty or the terminal category, and hence our assumption on $\infcat C$ implies that $\Pad$ exists.
\end{notation}

\begin{remark}
  The diagram $\Pad (X)$ has $X$ at $\genini$ and the terminal object of $\infcat C$ at all other points of $\gpos S$ (i.e.\ it pads the diagram with terminal objects).
\end{remark}

\begin{notation}
  Let $\sigma \colon \gpos S \to \pos S$ be a reduced preshape and $\infcat C$ a weakly left $\sigma$-extensible $\infty$-category with a terminal object.
  Then the composition
  \[ \infcat C \xlongto{\Pad} \Fun{\gpos S}{\infcat C} \xlongto{\Ext} \Fun{\pos S}{\infcat C} \]
  curries to a functor $\infcat C \times \pos S \to \infcat C$ which we denote by $\join$.
\end{notation}

\begin{remark}
  By \Cref{lemma:kan_local}, we have, for $X \in \infcat C$ and $s \in \pos S$, the more explicit formula
  \[ X \join s = (\Ext \Pad)(X)(s) \eq \colim{\sigma \slice s} (\Pad (X) \circ \pr[\sigma \slice s]) \]
  (as long as $\infcat C$ is left $\sigma$-extensible).
\end{remark}

\begin{remark} \label{rem:Pad}
  By definition of $\Pad(X)$, it admits a canonical map from any diagram $\gpos S \to \infcat C$ with $X$ at $0$.
  This is its main useful property and will allow us to factor maps into it in a useful way.
\end{remark}

\begin{remark}
  In the case where $\sigma = \cubeinc{n}$ and $\infcat C$ is the $\infty$-category of spaces, the functor $\join$ specializes to the join with a discrete set (we can imagine the terminal objects occurring in $\Pad[\Cubeini{n}] (X)$ to be the cone over X which are then glued together by taking a left Kan extension).
  Hence $\join$ generalizes the cubical diagrams mentioned above.
\end{remark}

The following basic property will be needed later.

\begin{lemma} \label{lemma:join_preserves}
  Let $\sigma \colon \gpos S \to \pos S$ be a reduced preshape, $\cat I$ a contractible category, and $\infcat C$ a left $\sigma$-extensible $\infty$-category that admits colimits indexed by $\cat I$ and has a terminal object.
  Then, for any $s \in \pos S$, the functor $(\blank \join s) = \Res{s} \Ext \Pad$ preserves terminal objects and colimits indexed by $\cat I$.
\end{lemma}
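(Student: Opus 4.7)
The plan is to decompose $(\blank \join s) = \Res{s} \circ \Ext \circ \Pad$ and handle the three factors in turn. The functor $\Res{s}$ is evaluation at $s$, and since both limits and colimits in $\Fun{\pos S}{\infcat C}$ are computed pointwise it preserves terminal objects and all colimits. The functor $\Ext = \Lan{\sigma}$ preserves all colimits since it is a left adjoint. Hence preservation of $\cat I$-indexed colimits by $(\blank \join s)$ reduces to the same claim for $\Pad$, while preservation of the terminal object still requires some extra computation, as $\Ext$ need not preserve it.

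Next I would analyze $\Pad = \Ran{\inc}$ pointwise. By the pointwise formula for the right Kan extension along $\inc \colon \set{\genini} \to \gpos S$, we have $\Pad(X)(\c s) \simeq \lim{\c s \slice \inc}(X \circ \pr)$; and as already observed in the paper, $\c s \slice \inc$ is the terminal category when $\c s = \genini$ and empty otherwise, since $\genini$ is the initial object of $\gpos S$. Therefore $\Pad(X)(\genini) \simeq X$ and $\Pad(X)(\c s) \simeq \term$ for $\c s \ne \genini$; in particular $\Pad(\term) \simeq \const[\term]$. Because $\cat I$-indexed colimits in $\Fun{\gpos S}{\infcat C}$ are pointwise, $\Pad$ preserves them if and only if the constant-at-$\term$ functor $\infcat C \to \infcat C$ does, i.e.\ if and only if $\colim{\cat I} \const[\term] \simeq \term$. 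But $\cat I$ is contractible, so $\cat I \to \termCat$ is homotopy terminal, and this colimit is indeed $\term$.

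Finally, for preservation of the terminal object, combining the above yields $\term \join s \simeq \Ext(\const[\term])(s) \simeq \colim{\sigma \slice s} \const[\term]$ via the pointwise formula for $\Ext = \Lan{\sigma}$. Since $\sigma$ is reduced with $\sigma(\genini) = \ini$, the pair $(\genini, \ini \le s)$ is an initial object of $\sigma \slice s$, rendering that category contractible; the same cofinality argument then yields $\term \join s \simeq \term$. The entire argument is a pointwise Kan extension calculation supplemented by two elementary cofinality observations, namely the contractibility of $\cat I$ and of $\sigma \slice s$, so I do not expect any substantial obstacle.
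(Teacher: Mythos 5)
Your proof is correct and follows essentially the same route as the paper's: reduce along the factorization $\Res{s}\circ\Ext\circ\Pad$, compute $\Pad$ pointwise to see it is either the identity or constant at $\term$, and use contractibility of $\cat I$ (for the colimit statement) and of $\sigma\slice s$ via its initial object $(\genini,\ini\le s)$ (for the terminal-object statement). The only cosmetic differences are that the paper handles the terminal object through $\Pad$ by invoking that right adjoints preserve limits rather than by your direct pointwise computation, and it cites \cite[Proposition 4.3.1.12]{LurHTT} where you use cofinality of $\cat I \to \termCat$; both come to the same thing.
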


\begin{proof}
  Note that, by \Cref{lemma:limits_preserve_limits}, the functor $\Lan{\sigma}$ preserves colimits and the functor $\Pad$ preserves limits.
  Hence, since $\Res{s}$ also preserves colimits, it is enough to show that $\Pad$ preserves colimits indexed by $\cat I$ and that $\Res{s} \Ext$ preserves terminal objects.
  
  For the second statement note that an object of $\Fun{\gpos S}{\infcat C}$ is terminal if and only if it is pointwise terminal.
  In particular, any such terminal object is equivalent to the restriction ${\const[\term]} \circ c$, where $\const[\term] \colon \termCat \to \infcat C$ represents a terminal object of $\infcat C$ and $c \colon \gpos S \to \termCat$ is the constant map.
  Now, by \Cref{lemma:kan_local}, the object $(\Res{s} \Lan{\sigma}) ({\const[\term]} \circ c)$ can be computed by $\colim{\sigma \slice s} ({\const[\term]} \circ c \circ \pr[\sigma \slice s])$.
  But this is the terminal object of $\infcat C$ since $c \circ \pr[\sigma \slice s]$ is homotopy terminal as $\sigma \slice s$ has an initial object and is thus contractible.
  
  For the first statement it is, by \Cref{lemma:pointwise_preservation}, enough to show that, for any $\c s \in \gpos S$, the functor $\Res{\c s} \Pad$ preserves colimits indexed by $\cat I$.
  Note that, again by \Cref{lemma:kan_local}, the functor $\Res{\c s} \Pad$ is equivalent to $\lim{\c s \slice \inc} \circ \Res{\pr[{\c s \slice \inc}]}$, where $\inc$ denotes the inclusion of $\set \genini$ into $\gpos S$.
  But $\c s \slice \inc$ is either the terminal category, in which case $\lim{\c s \slice \inc} = \id$ clearly preserves colimits, or empty.
  In the latter case $\lim{\c s \slice \inc}$ is the functor $\const[\term] \colon \termCat \to \infcat C$ for some terminal object $\term \in \infcat C$.
  This preserves colimits indexed by the contractible category $\cat I$ since the constantly terminal diagram $\const[\term] \colon \cocone{\cat I} \to \infcat C$ is a colimit diagram extending $\const[\term] \colon \cat I \to \infcat C$ by \cite[Proposition 4.3.1.12]{LurHTT} (together with \cite[Proposition 2.4.1.5]{LurHTT}).
\end{proof}

Now we are ready to give the construction of the universal excisive approximation.
Note, however, that it will only have the desired properties after assuming more conditions on $\sigma$ and the $\infty$-categories $\infcat C$ and $\infcat D$.

\begin{construction} \label{def:T}
  Let $\sigma \colon \gpos S \to \pos S$ be a reduced preshape, $\infcat C$ a left $\sigma$-extensible $\infty$-category with a terminal object, and $\infcat D$ an $\infty$-category admitting sequential colimits and limits indexed by $\ginipos S$.
  We write
  \[ \T \colon \Fun{\infcat C}{\infcat D} \longto \Fun{\infcat C}{\infcat D} \]
  for the functor given by
  \[ F \longmapsto \lim[s]{\ginipos S} \circ {\Res{\ginipos S}} \circ (F \;\circ) \circ \Ext \circ \Pad \;. \]
  There is a natural transformation of functors $\infcat C \to \infcat C$
  \[ \tau_{\sigma} \colon \id \xlongfrom{\eq} \Res{\genini} \Pad \longto \Res{\genini} \Res{\sigma} \Ext \Pad = \Res{\ini} \Ext \Pad \]
  coming from the counit of the adjunction $\Res{\genini} \dashv \Pad$ and the unit of $\Res{\sigma} \dashv \Ext$, the first of which is an equivalence since the inclusion $\set \genini \subseteq \gpos S$ is fully faithful (later we will need to assume that $\sigma$ is full precisely because we need $\tau_\sigma$ and thus the mentioned unit to be equivalences).
  We obtain a natural transformation $\t \colon \id \to \T$ of functors $\Fun{\infcat C}{\infcat D} \to \Fun{\infcat C}{\infcat D}$ defined at $F \in \Fun{\infcat C}{\infcat D}$ by the composition
  \[
  \begin{tikzcd}
    F \rar{F \circ \tau_{\sigma}} & F \circ \Res{\ini} \circ \Ext \circ \Pad \dar[equal] &[-10] \T(F) \dar[equal] \\[-12]
     & {\Res{\ini}} \circ (F \;\circ) \circ \Ext \circ \Pad \rar & \lim[s]{\ginipos S} \circ {\Res{\ginipos S}} \circ (F \;\circ) \circ \Ext \circ \Pad
  \end{tikzcd}
  \]
  where the second morphism is the canonical map to the limit.
  Now, by \Cref{lemma:sequential_diagrams}, the sequence of morphisms
  \begin{equation} \label[diagram]{diag:P}
  \id \xlongto{\t} \T \xlongto{\t \circ \T} \T \T \xlongto{\t \circ \T \circ \T} \dots
  \end{equation}
  defines a sequential diagram.
  By our conditions on $\infcat D$, its colimit exists.
  We will denote it by $\P$ and by $\p \colon \id \to \P$ the structure map to the colimit.
\end{construction}

\begin{remark}
  A more explicit formula for computing $\T$ is
  \[ \T(F)(X) \eq \lim[s]{s \in \ginipos S} F(X \join s) \]
  which (in a less general form) was Goodwillie's original definition (cf.\ \cite[Section 1]{Goo03}).
  In this form $\t$ is the composition of $F(X) \to F(X \join \ini)$ and the canonical map into the limit (the first of which is an equivalence if $\sigma$ is full).
\end{remark}

\begin{remark} \label{rem:P_not_functorial}
  The construction of $\T$ (and thus the one of $\P$) is \emph{not} functorial in $\sigma$.
  The problem is that, although a map $(f, \c f) \colon (\sigma \colon \gpos S \to \pos S) \to (\tau \colon \gpos T \to \pos T)$ of reduced preshapes induces maps $X \join_\sigma s \to X \join_\tau f(s)$ and a map $\lim{t \in \ginipos T}(X \join_\tau t) \to \lim{s \in \ginipos S} (X \join_\tau f(s))$, they do not combine into a map between $\T[\sigma]$ and $\T[\tau]$.
  However, there are functorial properties when restricted to certain subcategories.
  This is discussed in \Cref{section:maps}.
\end{remark}

For this construction to work well, we will need to assume further conditions on the target $\infty$-category $\infcat D$, namely that the occurring sequential colimits are compatible with certain limits.

\begin{definition}
  Let $\cat I$ be a category.
  An $\infty$-category $\infcat D$ is \emph{$\cat I$-differentiable} if it admits sequential colimits as well as limits indexed by $\gini{\cat I}$, and taking sequential colimits preserves limits indexed by $\gini{\cat I}$.
  
  It is \emph{differentiable} if it admits sequential colimits as well as all finite limits, and taking sequential colimits preserves finite limits.
\end{definition}

\begin{remark}
  The notion of a differentiable $\infty$-category was introduced by Lurie; see \cite[Definition 6.1.1.6]{LurHA}.
\end{remark}

\begin{remark}
  Note that differentiable implies $\cat I$-differentiable for any finite category $\cat I$ (i.e.\ a category with finitely many objects and morphisms).
\end{remark}

\begin{remark}
  By \Cref{lemma:colim_commutes_lim}, the condition that sequential colimits preserve limits indexed by $\ginipos S$ is equivalent to requiring the functor $\lim{\ginipos S}$ to preserve sequential colimits.
\end{remark}

\begin{example}
  The following are examples of differentiable $\infty$-categories:
  \begin{itemize}
    \item any $\infty$-topos (see \cite[Example 6.1.1.8]{LurHA}), in particular the $\infty$-category of spaces (see \cite[Proposition 6.3.4.1]{LurHTT}).
    
    \item any stable $\infty$-category (cf.\ \cite[Definition 1.1.1.9]{LurHA}) that admits countable coproducts (see \cite[Example 6.1.1.7]{LurHA}).
    
    \item the $\infty$-category $\infcat C_*$ of pointed objects (cf.\ \cite[Definition 7.2.2.1]{LurHTT}) in a differentiable $\infty$-category $\infcat C$.
    In particular this tells us that the $\infty$-category of pointed spaces is differentiable.
    
    To see this, note that $\infcat C_*$ is defined as the full subcategory of $\Fun{\Simplex{1}}{\infcat C}$ spanned by the maps $f \colon \Simplex{1} \to \infcat C$ with $f(0)$ a terminal object of $\infcat C$.
    But this subcategory is closed under the formation, in $\Fun{\Simplex{1}}{\infcat C}$, of finite limits and sequential colimits (the latter fact uses \Cref{lemma:contractible_colimit_over_equivalences}).
    Since fully faithful functors reflect (co)limits by \cite[Proposition 2.4.7]{RV}, this implies that $\infcat C_*$ is differentiable.
  \end{itemize}
\end{example}

We can already prove some elementary properties of $\T$ and $\P$ that we will need later.

\begin{lemma} \label{lemma:T_and_P_elementary}
  Let $\sigma \colon \gpos S \to \pos S$ be a reduced preshape, $\infcat C$ a left $\sigma$-extensible $\infty$-category with a terminal object, and $\infcat D$ an $\pos S$-differentiable $\infty$-category.
  \begin{enumerate}[label=\alph*)]
    \item If $\sigma$ is full, then, for any $\sigma$-excisive functor $F \colon \infcat C \to \infcat D$, both $\t(F)$ and $\p(F)$ are equivalences.
    \item Both $\T$ and $\P$ preserve limits indexed by $\ginipos S$.
    \item Both $\T$ and $\P$ preserve sequential colimits.
    \item Let $\infcat C'$ be another left $\sigma$-extensible $\infty$-category with a terminal object and $F \colon \infcat C' \to \infcat D$ a functor.
    Then the functors ${\P} \circ (F \;\circ)$ and $(\P(F) \;\circ)$ from $\Fun{\infcat C}{\infcat C'}$ to $\Fun{\infcat C}{\infcat D}$ are equivalent when restricted to the full subcategory consisting of those functors $G \colon \infcat C \to \infcat C'$ that preserve terminal objects and left Kan extension along $\sigma$.
  \end{enumerate}
\end{lemma}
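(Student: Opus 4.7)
The plan is to handle the four parts in order, with parts (b)--(d) each first treating $\T$ by a pointwise calculation and then extending to $\P$ by using the $\pos S$-differentiability of $\infcat D$ (which says sequential colimits commute with $\ginipos S$-indexed limits) together with the description of $\P$ as a sequential colimit of iterates of $\T$.

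For part (a), the first constituent $F \circ \tau_\sigma$ of $\t(F)$ is an equivalence because $\tau_\sigma$ itself is: the counit $\Res{\genini} \Pad \to \id$ is flagged as an equivalence in the construction of $\T$, while fullness of $\sigma$ implies fully faithfulness, and by \Cref{lemma:fully_faithful_kan_unit} this makes the unit $\id \to \Res{\sigma} \Ext$ an equivalence. The second constituent, the canonical map $F \circ \Res{\ini} \Ext \Pad \to \T(F)$, is pointwise at $X$ the canonical map from $F(\Ext\Pad(X))(\ini)$ to $\lim_{\ginipos S} F(\Ext\Pad(X))|_{\ginipos S}$; since $\Ext\Pad(X)$ lies in the essential image of $\Lan{\sigma}$ with $\sigma$ fully faithful, it is $\sigma$-cocartesian by the remark following the definition of $f$-cocartesian, and so $\sigma$-excisiveness of $F$ forces this canonical map to be an equivalence. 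Combining, $\t(F)$ is an equivalence. For $\p(F)$, I would induct on $n$ to show that the transition map $\t(\T^n F) \colon \T^n F \to \T^{n+1} F$ in the sequential diagram defining $\P$ is an equivalence: naturality of $\t$ applied to the morphism $\t(\T^n F)$ gives a commuting square whose top and left sides are equivalences by the inductive hypothesis and whose bottom side $\T(\t(\T^n F))$ is an equivalence since $\T$ sends equivalences to equivalences, forcing the right side $\t(\T^{n+1} F)$ to be an equivalence. Thus $\P(F)$ is a sequential colimit along equivalences and $\p(F)$ is an equivalence.

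For part (b), since limits in functor categories are computed pointwise, I would reduce to the explicit formula $\T(F)(X) \simeq \lim_{s \in \ginipos S} F(X \join s)$ and observe that, for a diagram $\{F_i\}_{i \in \ginipos S}$, the Fubini exchange of the two limits yields $\T(\lim_i F_i) \simeq \lim_i \T(F_i)$. For $\P$, iterating the $\T$ case gives $\T^n(\lim_i F_i) \simeq \lim_i \T^n(F_i)$ for all $n$, and then $\pos S$-differentiability of $\infcat D$ exchanges $\colim_n$ and $\lim_i$ to complete the argument. Part (c) is dual: the pointwise statement for $\T$ reduces to $\lim_{\ginipos S}$ commuting with sequential colimits in $\infcat D$, which is again $\pos S$-differentiability, while the statement for $\P$ follows from iterating $\T$ and then using Fubini for the two sequential colimits.

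For part (d), the key pointwise computation is to produce, for $G$ preserving terminal objects and left Kan extension along $\sigma$, a natural equivalence $G(X \join_\sigma s) \simeq G(X) \join_\sigma s$. This splits as $G \circ \Pad \simeq \Pad \circ G$ (using the pointwise description of $\Pad X$ as $X$ at $\genini$ and terminal elsewhere, together with $G$ preserving terminal objects) followed by $G \circ \Lan{\sigma} \simeq \Lan{\sigma} \circ (G \circ -)$, which is precisely the assumption that $G$ preserves left Kan extension along $\sigma$. This yields $\T(F \circ G) \simeq \T(F) \circ G$; iterating gives the same for all $\T^n$, and since precomposition with $G$ is a pointwise operation it commutes with the sequential colimit defining $\P$, yielding $\P(F \circ G) \simeq \P(F) \circ G$. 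The main obstacle will be tracking naturality in both $F$ and $G$ through the inductive construction and the final passage to the sequential colimit, so that the resulting equivalence is indeed a natural equivalence of functors from the full subcategory of $\Fun{\infcat C}{\infcat C'}$ described in the statement to $\Fun{\infcat C}{\infcat D}$; the mate calculus recalled in \Cref{section:mates} is essentially designed to handle this bookkeeping cleanly, by setting each equivalence up as the mate of an appropriate 2-cell and composing mates along the way.
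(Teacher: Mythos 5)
Your proposal is correct and follows essentially the same route as the paper's proof: part (a) via the two constituents of $\t$ and the fact that $\Lan{\sigma}\Pad(X)$ is $\sigma$-cocartesian, parts (b) and (c) by treating $\T$ first and then commuting the sequential colimit defining $\P$ past the relevant (co)limits using $\pos S$-differentiability, and part (d) by showing $G$ commutes with $\Pad$ (from preservation of terminal objects) and with $\Lan{\sigma}$ (by hypothesis) and then propagating the resulting equivalence $\T(F)\circ G \simeq \T(F\circ G)$ through the iterates and the colimit. Your inductive argument for $\p(F)$ in (a) via naturality of $\t$ is a harmless variant of the paper's observation that each $\T^n F$ remains $\sigma$-excisive, and your closing remark about using the mate calculus to handle the coherence of the equivalences $\alpha_n$ with the transition maps is exactly the bookkeeping the paper carries out via its compatibility lemma for units.
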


\begin{proof} \leavevmode
\begin{enumerate}[label=\alph*)]
  \item The map $\t(F)$ is the composition of $F \circ \tau_{\sigma}$, which is an equivalence when $\sigma$ is full, and the canonical map to the limit.
  The latter map is an equivalence by construction since, for any $X \in \infcat C$, the diagram $(\Ext \Pad)(X)$ is $\sigma$-cocartesian by \Cref{lemma:extensions_are_cocartesian} and hence $F \circ (\Ext \Pad)(X)$ cartesian.
  Thus $\p(F)$ is also an equivalence by \Cref{lemma:contractible_colimit_over_equivalences} as each map in \cref{diag:P} is an equivalence (using that $(\T)^n(F)$ is, by induction, equivalent to $F$ and hence $\sigma$-excisive).
  
  \item \label{item:T_P_preserve limits} It follows directly from \Cref{lemma:fun_preserving_limits,lemma:limits_preserve_limits} that $\T$ preserves limits indexed by $\ginipos S$.
  This also implies that the functor $\NN \to \Fun{\Fun{\infcat C}{\infcat D}}{\Fun{\infcat C}{\infcat D}}$ described by \cref{diag:P} sends each $n \in \NN$ to a functor that preserves limits indexed by $\ginipos S$.
  Hence, by \Cref{lemma:pointwise_preservation}, the associated functor $\Fun{\infcat C}{\infcat D} \to \Fun{\NN}{\Fun{\infcat C}{\infcat D}}$ preserves limits indexed by $\ginipos S$.
  Now, since $\infcat D$ is $\pos S$\=/differentiable, the functor ${\colim{}} \colon \Fun{\NN}{\Fun{\infcat C}{\infcat D}} \to \Fun{\infcat C}{\infcat D}$ preserves limits indexed by $\ginipos S$ (the category $\Fun{\infcat C}{\infcat D}$ is again differentiable by \Cref{lemma:kan_and_currying,lemma:fun_preserving_limits}).
  As the composition of these two functors is precisely $\P$, this implies that $\P$ preserves these limits as well.
  
  \item This follows similarly to \ref{item:T_P_preserve limits} by noting that ${\lim{}} \colon \Fun{\ginipos S}{\infcat D} \to \infcat D$ preserves sequential colimits when $\infcat D$ is $\pos S$-differentiable.
  
  \item Using (the dual of) \Cref{lemma:preserving_Kan_extensions}, we see that any such $G$ preserves right Kan extensions along the inclusion $\inc \colon \set \genini \to \gpos S$ (using that, for all $\c s \in \gpos S$, the comma categories $\c s \slice \inc$ are either empty or the terminal category).
  This implies the corresponding statement for $\T$, i.e.\ that there is an equivalence $\alpha \colon (\T(F) \;\circ) \to {\T} \circ (F \;\circ)$ (when restricted to the subcategory).
  Furthermore, by two applications of \Cref{lemma:preservation_comp_unit}, we obtain that the diagram
  \begin{equation} \label[diagram]{diag:t_preserving}
  \begin{tikzcd}
    (F \;\circ) \rar{\t(F)} \dar[swap]{\id} & (\T(F) \;\circ) \dar{\alpha} \\
    (F \;\circ) \rar{\t} & {\T} \circ (F \;\circ)
  \end{tikzcd}
  \end{equation}
  commutes up to homotopy.
  
  Now we can inductively define equivalences $\alpha_n \colon ((\T)^n(F) \;\circ) \to (\T)^n \circ (F \;\circ)$ by setting $\alpha_0 = \id$ and
  \[ \alpha_{n+1} \colon ((\T)^{n+1}(F) \;\circ) \xlongto{\alpha} {\T} \circ ((\T)^n(F) \;\circ) \xlongto{\alpha_n} {\T} \circ (\T)^n \circ (F \;\circ) \;. \]
  Now note that, since the diagram
  \[
  \begin{tikzcd}
    ((\T)^n(F) \;\circ) \rar{\t} \dar[swap]{\id} & (\T ((\T)^n(F)) \;\circ) \dar{\alpha} \\
    ((\T)^n(F) \;\circ) \rar{\t} \dar[swap]{\alpha_n} & {\T} \circ ((\T)^n(F) \;\circ) \dar{\alpha_n} \\
    (\T)^n \circ (F \;\circ) \rar{\t} & {\T} \circ (\T)^n \circ (F \;\circ)
  \end{tikzcd}
  \]
  commutes up to homotopy (using that the upper square is a special case of \cref{diag:t_preserving}), the $\alpha_n$ assemble into an equivalence from the sequential diagram defining $(\P(F) \;\circ)$ to the sequential diagram defining ${\P} \circ (F \;\circ)$ (using \Cref{lemma:sequential_diagrams}).
  \qedhere
\end{enumerate}
\end{proof}

\begin{remark} \label{rem:weaker_excisive}
  Note that in the first part of the previous lemma we did not use the full strength of $F$ being $\sigma$-excisive, only that it sends diagrams in the essential image of $\Lan{\sigma} \Pad$ to cartesian diagrams.
  Since we will show that (under certain conditions) the functor $\P(F)$ is $\sigma$-excisive (see \Cref{lemma:P_is_excisive}), this implies that (under these conditions) the a priori weaker property above is actually equivalent to being $\sigma$-excisive.
\end{remark}

\subsection{Shapes}

Unfortunately, it is not true that, for any preshape $\sigma$, the functor $\P (F)$ is a universal $\sigma$-excisive approximation to $F \colon \infcat C \to \infcat D$ (even if the $\infty$-categories $\infcat C$ and $\infcat D$ are as nice as we want).
This is shown by the following example:

\begin{example} \label{ex:P_not_always_exc}
  Consider the full subposet $\gpos D \defeq \set{\emptyset, \set{1,2}} \subseteq \Cube{2}$ and denote the inclusion by $\delta$.
  It is clear that $\delta$ is a full preshape.
  Now let $\infcat C$ and $\infcat D$ be two $\infty$-categories.
  By \Cref{lemma:extensions_are_cocartesian}, a diagram $\Cube{2} \to \infcat C$ is $\delta$-cocartesian if and only if it is equivalent to one of the form $\Lan{\delta} (E)$ for some diagram $E \colon \gpos D \to \infcat C$ (the diagram $\Lan{\delta} (E)$ looks like
  \[
  \begin{tikzcd}
    X \rar{\id} \dar[swap]{\id} & X \dar{f} \\
    X \rar{f} & Y
  \end{tikzcd}
  \]
  where $f$ is the morphism of $\infcat C$ represented by $E \colon \Simplex{1} \iso \gpos D \to \infcat C$).
  In particular, for a functor $F \colon \infcat C \to \infcat D$ to be $\delta$-excisive it has to send any such $\Lan{\delta} (E)$ to a cartesian diagram.
  For $Y$ the terminal object of $\infcat C$ this would imply, if $F$ preserves terminal objects, that
  \[ F(X) \xlongfrom{\id} F(X) \xlongto{\id} F(X) \]
  exhibits $F(X)$ as a product of $F(X)$ with itself.
  In the case where $\infcat D$ is the $\infty$-category $\ptSpaces$ of pointed spaces, this can only be the case if $F(X)$ is weakly contractible (by considering its homotopy groups).
  This shows that a $\delta$-excisive functor $\infcat C \to \ptSpaces$ that preserves terminal objects has its image contained in the terminal objects.
  
  Now let $F \colon \infcat C \to \ptSpaces$ be any functor that preserves terminal objects.
  Since $\Pad[\gpos D]$ is given by sending $X \in \infcat C$ to a map $X \to \term$, we obtain that $\T[\delta](F)$ is the functor $X \mapsto F(X) \times F(X)$ and that $\t[\delta]$ is given, at $X$, by the diagonal $F(X) \to F(X) \times F(X)$.
  In particular, we have that $\P[\delta](F)(X)$ is given by the colimit of the sequence
  \[ F(X) \xlongto{\Diag} F(X) \times F(X) \xlongto{\Diag} (F(X) \times F(X)) \times (F(X) \times F(X)) \longto \cdots \]
  and hence is not weakly contractible when $F(X)$ is not weakly contractible since homotopy groups commute with products and sequential (homotopy) colimits (for 1-categorical sequential colimits over inclusions this can be found in \cite[Chapter 9.4]{May}; for sequential homotopy colimits it follows from the 1-categorical case by taking a cofibrant replacement).
  Thus, if the image of $F$ is not contained in the terminal objects, then $\P[\delta](F)$ cannot be $\delta$-excisive.
\end{example}

\begin{remark}
  If, in the definition of a preshape, we relax the condition of being a functor of posets to being a functor of categories, then the functor $\iota_\ZZ$ of \cref{ex:fixed_points} is another example such that $\P[\iota_\ZZ]$ is not necessarily $\iota_\ZZ$-excisive.
\end{remark}

However, we can put conditions on $\sigma$ so that $\P$ is a functor of universal $\sigma$-excisive approximation.
This is done in the next definition.
We will see that $\cubeinc{n}$ fulfills these conditions, so that our statements actually generalize the classical ones.

\begin{definition} \label{def:shape}
  A preshape $\sigma \colon \gpos S \to \pos S$ is a \emph{shape} if $\pos S$ has all (small) coproducts, and, for all $s, t \in \pos S$ and $\c k \in \gpos S$ such that $\sigma(\c k) \le t \cop s$, the full subposet
  \[ \gpos{S}_{s,t,\c k} := \set*{\c s \in \gpos S \mid \sigma(\c s) \le s \text{ and } \sigma(\c k) \le t \cop \sigma(\c s)} \subseteq \gpos S \]
  is contractible.
  A \emph{map of shapes} is a map between the underlying preshapes.
\end{definition}

\begin{remark}
  For everything we do with shapes in this section it would be enough to only require $\pos S$ to admit finite coproducts.
  However, we use the stronger version since it makes the constructions in \Cref{section:free_shapes} easier to work with (though one should be able to work around this, so that in the end (almost) all statements we will make should also hold with the weaker requirements).
\end{remark}

The following is an easy-to-check sufficient criterion for a reduced preshape to be a shape.

\begin{lemma} \label{lemma:easy_shape_condition}
  Let $\sigma \colon \gpos S \to \pos S$ be a full preshape.
  Assume that, for all $a, b \in \pos S$ and $\c c \in \gpos S$ such that $\sigma(\c c) \le a \cop b$, we have $\sigma(\c c) \le a$ or $\sigma(\c c) \le b$.
  Then $\sigma$ is a shape.
\end{lemma}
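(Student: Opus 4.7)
The plan is to verify the combinatorial condition in the definition of shape by exhibiting an initial object of each subposet $\gpos{S}_{s,t,\c k}$; since any (small) category with an initial object is contractible, this suffices. (The coproduct condition on $\pos S$ is implicit in the hypothesis and needs no separate argument.) Two preliminary observations will be used throughout. First, the remarks immediately after \Cref{def:preshape} note that a full preshape is automatically reduced, so $\gpos S$ has an initial object $\genini$ with $\sigma(\genini) = \ini_{\pos S}$. Second, since $\sigma$ is full and both source and target are posets, $\sigma$ is order-reflecting: $\sigma(\c a) \le \sigma(\c b)$ if and only if $\c a \le \c b$.

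Fix $s, t \in \pos S$ and $\c k \in \gpos S$ with $\sigma(\c k) \le t \cop s$. The plan is to split on whether $\sigma(\c k) \le t$. In the affirmative case, I claim $\genini$ lies in $\gpos{S}_{s,t,\c k}$ and is initial there. Indeed, $\sigma(\genini) = \ini \le s$, and $\sigma(\c k) \le t = t \cop \ini = t \cop \sigma(\genini)$, so $\genini$ belongs to the subposet; since $\genini$ is already initial in $\gpos S$, it is initial in every subposet that contains it.

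In the other case, $\sigma(\c k) \not\le t$, so the hypothesis applied with $a = t$, $b = s$, $\c c = \c k$ forces $\sigma(\c k) \le s$. This shows $\c k$ itself lies in $\gpos{S}_{s,t,\c k}$, because $\sigma(\c k) \le \sigma(\c k) \le t \cop \sigma(\c k)$ holds trivially. To check that $\c k$ is initial in the subposet, let $\c s \in \gpos{S}_{s,t,\c k}$ be arbitrary; the defining inequality $\sigma(\c k) \le t \cop \sigma(\c s)$ together with a second application of the hypothesis (now with $b = \sigma(\c s)$) gives $\sigma(\c k) \le t$ or $\sigma(\c k) \le \sigma(\c s)$. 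The former is ruled out by assumption, and the latter translates, via the order-reflecting property of $\sigma$, into $\c k \le \c s$.

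If there is any obstacle here, it is a conceptual one rather than a technical one: recognizing that a single uniform choice of initial element does not work, but that the case split yields $\genini$ in one case and $\c k$ itself in the other, with the hypothesis doing double duty in the second case — both to place $\c k$ inside the subposet and to provide the comparison $\c k \le \c s$ for every other element. No further analysis is needed, and in particular no $\infty$-categorical machinery enters the argument.
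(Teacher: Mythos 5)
Your proof is correct and follows essentially the same route as the paper's: the same case split on whether $\sigma(\c k) \le t$, with $\genini$ serving as the initial object of $\gpos{S}_{s,t,\c k}$ in the first case and $\c k$ itself in the second, where the hypothesis is applied twice (once with $b = s$ to place $\c k$ in the subposet, once with $b = \sigma(\c s)$ to get $\sigma(\c k) \le \sigma(\c s)$ and hence $\c k \le \c s$ by fullness). The only difference is that you spell out the intermediate verifications more explicitly than the paper does.
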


\begin{proof}
  We want to show that, for all $s, t \in \pos S$ and $\c k \in \gpos S$ such that $\sigma(\c k) \le t \cop s$, the poset $\gpos{S}_{s,t,\c k}$ is contractible.
  For this first assume that $\sigma(\c k) \le t$.
  In this case ${\gpos S}_{s,t,\c k}$ has $\genini$ as an initial object and is thus contractible.
  Otherwise $\sigma(\c k) \not\le t$ and our assumption implies $\sigma(\c k) \le s$ and hence $\c k \in {\gpos S}_{s,t,\c k}$.
  Thus we have that $\sigma(\c k) \le \sigma(\c s)$ for any $\c s \in {\gpos S}_{s,t,\c k}$.
  As $\sigma$ is full this implies that $\c k$ is an initial object of ${\gpos S}_{s,t,\c k}$, which finishes the proof.
\end{proof}

\begin{example} \label{ex:shapes}
  We give some (non-)examples for \Cref{def:shape}:
  \begin{itemize}
    \item The preshape $\cubeinc{n}$ is a shape.
    This follows directly from \Cref{lemma:easy_shape_condition}.
    (A more general version of this statement will be proven in \Cref{lemma:univ_is_full_shape}.)
    
    \item More generally, let $S$ be a set and $\gpos S$ any non-empty full subposet of $\Cube S$.
    Then the inclusion $\gpos S \to \Cube S$ is a shape if and only if $\gpos S$ is downward closed in $\Cube S$.
    This will be proven in \Cref{lemma:cube_shape}.
    
    \item The preshape $\delta$ from \Cref{ex:P_not_always_exc} is not a shape.
    For example the poset ${\gpos D}_{s,t,\c k}$ for $s = \set 1$, $t = \set 2$, and $\c k = \set{1, 2}$ is empty and thus not contractible.
  \end{itemize}
\end{example}

The main motivation for the definition of a shape is that it is precisely what we need for the next lemma.
However, before we can state it, we need a way to assume, depending on the shape, enough colimits to exist.
This will be achieved by the following definition.
It is somewhat stronger than what we will actually need, but a lot more convenient to work with (it would be possible to track the precise requirements; however we chose to not do so in favor of increased readability).

\begin{definition} \label{def:nice}
  Let $f \colon \pos I \to \pos J$ be a functor between posets and $\infcat C$ an $\infty$-category.
  If both $\pos I$ and $\pos J$ are finite, we say that $\infcat C$ is \emph{$f$-nice} if it admits all finite colimits.
  Otherwise we say that $\infcat C$ is \emph{$f$-nice} if it admits colimits of size up to the maximum of the cardinalities of $\pos I$ and $\pos J$.
\end{definition}

\begin{remark}
  Note that an $f$-nice $\infty$-category is automatically left $g$-extensible for any functor $g$ with source $\pos I$ or $\pos J$ and target a poset (since the corresponding slice categories have a cardinality bounded by the cardinality of $\pos I$ respectively $\pos J$).
\end{remark}

\begin{lemma} \label{lemma:shifted_is_cocartesian}
  Let $\sigma \colon \gpos S \to \pos S$ be a shape, $\infcat C$ a $\sigma$-nice $\infty$-category, and $D \colon \gpos S \to \infcat C$ a diagram.
  Then, for any $t \in \pos S$, the diagram $\Lan{\sigma} (D) \circ (t \cop \blank)$ is $\sigma$-cocartesian.
  In particular $\Lan{\sigma} (D)$ is $\sigma$-cocartesian, even though $\sigma$ is not necessarily full (in which case \Cref{lemma:extensions_are_cocartesian} would imply the statement).
  
  This also implies that, for any $\sigma$-cocartesian diagram $D' \colon \pos S \to \infcat C$ and $t \in \pos S$, the diagram $D' \circ (t \cop \blank)$ is again $\sigma$-cocartesian.
\end{lemma}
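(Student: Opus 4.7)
My plan is to prove the first (main) assertion, from which the two subsequent claims follow formally. Fix $s \in \pos S$; by \Cref{lemma:kan_local}, being $\sigma$-cocartesian at $s$ amounts to the comparison map
\[
  \colim_{\c s \in \sigma \slice s} \colim_{\c k \in \sigma \slice (t \cop \sigma(\c s))} D(\c k) \longto \colim_{\c k \in \sigma \slice (t \cop s)} D(\c k)
\]
being an equivalence. I would rewrite the iterated colimit on the left as a single colimit indexed on the poset
\[
  \cat E \defeq \set{(\c s, \c k) \in \gpos S \times \gpos S \mid \sigma(\c s) \le s,\, \sigma(\c k) \le t \cop \sigma(\c s)}
\]
via a Fubini-style identification, after which the comparison map is the one induced on colimits by the well-defined projection $\phi \colon \cat E \to \sigma \slice (t \cop s)$, $(\c s, \c k) \mapsto \c k$. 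Hence it suffices to show that $\phi$ is homotopy terminal.

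By the $\infty$-categorical analogue of Quillen's theorem A, I would then reduce to showing, for each $\c k_0 \in \sigma \slice (t \cop s)$, that the slice $\c k_0 \slice \phi = \set{(\c s, \c k) \in \cat E \mid \c k_0 \le \c k}$ is contractible. For this I would exhibit the map $\iota \colon \gpos{S}_{s, t, \c k_0} \to \c k_0 \slice \phi$, $\c s \mapsto (\c s, \c k_0)$, as a left adjoint to the projection $r \colon (\c s, \c k) \mapsto \c s$ (which is well-defined since $\sigma(\c k_0) \le \sigma(\c k) \le t \cop \sigma(\c s)$). Indeed $r \iota = \id$ and, in the poset $\c k_0 \slice \phi$, the inequalities $(\c s, \c k_0) \le (\c s, \c k)$ provide a natural transformation $\iota r \Rightarrow \id$. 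Since adjoint pairs induce homotopy equivalences of nerves, $\c k_0 \slice \phi \simeq \gpos{S}_{s, t, \c k_0}$, and the latter is contractible by the very definition of a shape.

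The ``in particular'' assertion is the case $t = \ini$, since $\ini$ is the initial object of $\pos S$ and hence $\ini \cop \blank = \id[\pos S]$. The concluding claim follows by applying the first part to $D \defeq \Res{\sigma} D'$, using that $\sigma$-cocartesian means $D' \simeq \Lan{\sigma} \Res{\sigma} D'$. The main technical obstacle is the Fubini-style identification of the iterated colimit on $\sigma \slice s$ of pointwise colimits of $\Lan{\sigma} D$ with a single colimit over $\cat E$; this is a routine $\infty$-categorical manipulation but needs some care to set up cleanly. Everything else is a direct application of the shape hypothesis once the cofinality reduction is in place.
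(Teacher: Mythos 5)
Your proof is correct, and it is essentially the paper's argument: the paper deduces this lemma from the general \Cref{lemma:gc_equiv}, whose proof performs exactly your reduction --- it identifies the cocartesianness counit at each $s$ with the map on colimits induced by the projection from the double comma poset (your $\cat E$) to $\sigma \slice (t \cop s)$, and verifies homotopy terminality of that projection via the adjunction identifying each slice with $\gpos{S}_{s,t,\c k}$, contractible by the shape axiom. The Fubini-style identification you flag as the remaining technical point is carried out there by the comma-square equivalence of \Cref{lemma:comma_square_mate} together with the pasting law for mates and \Cref{lemma:kan_mate}.
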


\begin{proof}
  This is a special case of the (technical) next lemma.
  More precisely, we apply it to the situation
  \[
  \begin{tikzcd}
    \gpos S \dar{\sigma} & \gpos S \rar{D} \dar{\sigma} &[10] \infcat C \\
    \pos S \rar{t \cop \blank} & \pos S &
  \end{tikzcd}
  \]
  for which we need that, for all $s \in \pos S$ and $\c k \in \gpos S$ such that $\sigma(\c k) \le (t \cop \blank)(s) = t \cop s$, the poset
  \[ \set*{\c s \in \gpos S \mid \sigma(\c s) \le s \text{ and } \sigma(\c k) \le t \cop \sigma(\c s)} = \gpos{S}_{s,t,\c k} \]
  is contractible.
  But this is precisely the assumption on $\sigma$ for it to be a shape.
\end{proof}

\begin{lemma} \label{lemma:gc_equiv}
  Let $\pos I$, $\pos J$, $\pos K$, and $\pos L$ be posets, and $f$, $g$, and $h$ functors as the diagram
  \[
  \begin{tikzcd}
    \pos J \dar{g} & \pos I \dar{f} \\
    \pos K \rar{h} & \pos L
  \end{tikzcd}
  \]
  specifies and $\infcat C$ an $f$-nice and $g$-nice $\infty$-category.
  Furthermore, assume that, for all $k \in \pos K$ and $i \in \pos I$ such that $f(i) \le h(k)$, the full subposet
  \[ \set{j \in \pos J \mid f(i) \le h(g(j)) \text{ and } g(j) \le k} \subseteq \pos J \]
  is contractible.
  Then, for any diagram $D \colon \pos I \to \infcat C$, the diagram $(\Res{h} \Lan{f})(D)$ is $g$-cocartesian.
\end{lemma}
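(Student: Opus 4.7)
The plan is to verify pointwise that the counit map
\[ \mu_k \colon (\Lan{g} \Res{g} \Res{h} \Lan{f} D)(k) \longto (\Res{h} \Lan{f} D)(k) \]
is an equivalence for each $k \in \pos K$. By the pointwise formula for left Kan extensions (\Cref{lemma:kan_local}), the target becomes $\colim{i \in f \slice h(k)} D(i)$ while the source becomes the iterated colimit $\colim{j \in g \slice k} \colim{i \in f \slice h(g(j))} D(i)$. Setting $\pos M_k \defeq \set{(j, i) \in \pos J \times \pos I \mid g(j) \le k,\ f(i) \le h(g(j))}$, the first projection $\pos M_k \to g \slice k$ is a cocartesian fibration of posets whose fiber over $j$ is $f \slice h(g(j))$, so a Fubini / Grothendieck-construction argument (essentially \cite[Proposition 4.2.3.10]{LurHTT}) rewrites the iterated colimit as $\colim{(j, i) \in \pos M_k} D(i)$; under this identification $\mu_k$ is induced by the second projection $\beta \colon \pos M_k \to f \slice h(k)$, $(j, i) \mapsto i$, which is well-defined because $f(i) \le h(g(j)) \le h(k)$.

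It therefore suffices to show that $\beta$ is homotopy terminal. By the pointwise criterion for cofinality (Joyal's version of Quillen's Theorem A, cf.\ \cite[Theorem 4.1.3.1]{LurHTT}), this reduces to proving that, for each $i_0 \in f \slice h(k)$, the under-slice $i_0 \slice \beta = \set{(j, i) \in \pos M_k \mid i_0 \le i}$ is contractible. Let $\pos H_{i_0} \defeq \set{j \in \pos J \mid g(j) \le k,\ f(i_0) \le h(g(j))}$, which by the standing hypothesis (applicable because $f(i_0) \le h(k)$) is contractible. The first projection $r \colon i_0 \slice \beta \to \pos H_{i_0}$, $(j, i) \mapsto j$, is well-defined since $f(i_0) \le f(i) \le h(g(j))$, and admits a section $\iota \colon \pos H_{i_0} \to i_0 \slice \beta$, $j \mapsto (j, i_0)$. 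Since $i_0 \le i$ for every $(j, i) \in i_0 \slice \beta$, the inequalities $(j, i_0) \le (j, i)$ assemble into a natural transformation $\iota \circ r \Rightarrow \id$; together with $r \circ \iota = \id$ this exhibits $r$ and $\iota$ as a deformation retract pair on nerves, so $i_0 \slice \beta$ is homotopy equivalent to $\pos H_{i_0}$ and hence contractible.

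The main obstacle is the Fubini step: one must invoke the cocartesian-fibration structure on $\pos M_k \to g \slice k$ carefully to rewrite the iterated colimit as a single colimit over $\pos M_k$. Once that identification is in place, everything else is a clean cofinality argument that uses the combinatorial hypothesis at precisely one point, namely to identify $\pos H_{i_0}$ as contractible.
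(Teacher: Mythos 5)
Your proof is correct, and the combinatorial heart of it coincides exactly with the paper's: the poset you call $\pos M_k$ is the comma category $(g \circ \pr[\pos J]) \slice k$ appearing in the paper's proof, your $\beta$ is the paper's functor $r$, and your contractibility argument for $i_0 \slice \beta$ (a section $\iota$ of the projection to $\pos H_{i_0}$ together with the comparison $\iota \circ r \le \id$) is the same adjunction the paper uses to retract $i \slice r$ onto the hypothesized contractible poset. Where you differ is in the formal route to that cofinality statement. You compute the counit pointwise at each $k$, unfold both sides into (iterated) colimits via the pointwise Kan extension formula, and then invoke a Fubini/Grothendieck-construction identification over the cocartesian fibration $\pos M_k \to g \slice k$; this is precisely the intuition the paper records in the remark following the lemma. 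The paper instead works globally with the comma category $f \slice (h \circ g)$ and the calculus of mates: \Cref{lemma:comma_square_mate} (Beck--Chevalley for the exact comma square) packages your Fubini step, and \Cref{lemma:kan_mate} together with the pasting law handles the identification of the counit with the map induced by $r$ on colimits. That machinery is exactly what makes rigorous the two points you pass over somewhat quickly — that the iterated colimit agrees coherently with the colimit over $\pos M_k$, and that under all the pointwise identifications the counit really is the map induced by $\beta$ — so if you wanted to write your version out in full in this $\infty$-categorical setting, you would end up reproving those compatibilities, whereas the mate formalism gives them for free. Both routes are valid; yours is more transparent about where the hypothesis enters, the paper's is more economical about coherence.
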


\begin{remark}
  The counit of the adjunction $\Lan{g} \dashv \Res{g}$ has, precomposed with $\Res{h} \Lan{f}$ and evaluated at $k \in \pos K$, the form
  \[ \colim{j \in g \slice k} \colim{f \slice h(g(j))} \longto \colim{f \slice h(k)}. \]
  The conditions of \Cref{lemma:gc_equiv} precisely guarantee that the collection $(f \slice h(g(j)))_{j \in g \slice k}$ is a cover of $f \slice h(k)$ that is nice enough to force the above map to be an equivalence (cf.\ \cite[Corollary 4.2.3.10 and Remark 4.2.3.9]{LurHTT}).
\end{remark}

\begin{proof}[Proof of \Cref{lemma:gc_equiv}]
  Consider the diagram
  \[
  \begin{tikzcd}[sep = 35]
  f \slice (h \circ g) \rar{p} \dar[swap]{q} & \pos I \dar{f} \dlar[Rightarrow, shorten < = 20, shorten > = 20][swap]{\alpha} \\
  \pos J \rar{h \circ g} \dar[swap]{g} & \pos L \dar{\id} \dlar[Rightarrow, shorten < = 20, shorten > = 20][swap]{\id} \\
  \pos K \rar{h} & \pos L
  \end{tikzcd}
  \]
  where $p = \pr[\cat I]$ and $q = \pr[\cat J]$ are the two projections and $\alpha$ comes from $f(p(i, j)) = f(i) \le h(g(j)) = h(g(q(i, j)))$.
  Applying $\Fun{\blank}{\infcat C}$ yields the following diagram on the left and subsequently taking mates the one on the right:
  \[
  \begin{tikzcd}[sep = 35]
  \Fun{f \slice (h \circ g)}{\infcat C} & \Fun{\pos I}{\infcat C} \lar[swap]{\Res{p}} \dlar[Rightarrow, shorten < = 25, shorten > = 25][swap]{\alpha} &[-15] &[-15] \Fun{f \slice (h \circ g)}{\infcat C} \dar[swap]{\Lan{q}} \drar[Rightarrow, shorten < = 25, shorten > = 25]{\mate \alpha} & \Fun{\pos I}{\infcat C} \lar[swap]{\Res{p}} \dar{\Lan{f}} \\
  \Fun{\pos J}{\infcat C} \uar{\Res{q}} & \Fun{\pos L}{\infcat C} \lar[swap]{\Res{h \circ g}} \uar[swap]{\Res{f}} \dlar[Rightarrow, shorten < = 25, shorten > = 25][swap]{\id} & & \Fun{\pos J}{\infcat C} \dar[swap]{\Lan{g}} \drar[Rightarrow, shorten < = 25, shorten > = 25]{\mate \id} & \Fun{\pos L}{\infcat C} \lar[swap]{\Res{h \circ g}} \dar{\Lan{\id}} \\
  \Fun{\pos K}{\infcat C} \uar{\Res{g}} & \Fun{\pos L}{\infcat C} \lar[swap]{\Res{h}} \uar[swap]{\Res{\id}} & & \Fun{\pos K}{\infcat C} & \Fun{\pos L}{\infcat C} \lar[swap]{\Res{h}}
  \end{tikzcd}
  \]
  (where all occurring Kan extensions exist by our assumptions on $\infcat C$).
  
  Now note that since, by definition, the mate $\mate \id$ is given by the composition
  \[ \Lan{g} \Res{h \circ g} = \Lan{g} \Res{h \circ g} \Res{\id} \Lan{\id} = \Lan{g} \Res{g} \Res{h} \Lan{\id} \xlongto{\epsilon} \Res{h} \Lan{\id} = \Res{h} \]
  applying $\mate \id$ to a diagram $D \colon \pos L \to \infcat C$ results precisely in the map from the definition of $g$-cocartesianness for $\Res{h}(D)$.
  In particular it is enough to show that $\mate \id \circ \Lan{f}$ is an equivalence, which is one of the maps that occur in the paste $\mate \id \paste \mate \alpha = \mate{({\id} \paste \alpha)}$ (where the equality (in the homotopy 2-category of $\infty$-categories) comes from the pasting law for mates).
  Since $\mate \alpha$ is an equivalence by \Cref{lemma:comma_square_mate}, it is thus enough to show that $\mate{({\id} \paste \alpha)} \colon \Lan{g \circ q} \Res{p} \longto \Res{h} \Lan{{\id} \circ f}$ is one as well.
  
  For this it is, by \Cref{lemma:kan_mate}, enough to show that for any $k \in \pos K$, the map
  \[ \colim[s]{(g \circ q) \slice k} \Res{\pr[(g \circ q) \slice k]} \Res{p} \xlongto{r_*} \colim[s]{f \slice h(k)} \Res{\pr[f \slice h(k)]} \]
  induced by the functor $r \colon (g \circ q) \slice k \to f \slice h(k)$ given by $p$ is an equivalence.
  We claim that $r$ is homotopy terminal, for which we need that, for all $i \in f \slice h(k)$, the poset
  \[ i \slice r = \set{(i', j') \in \pos I \times \pos J \mid f(i') \le h(g(j')) \text{ and } g(j') \le k \text{ and } i \le i'} \]
  is contractible.
  For this note that the map
  \[ \pos Q \defeq \set{j' \in \pos J \mid f(i) \le h(g(j')) \text{ and } g(j') \le k} \longto i \slice r \]
  given by $j' \mapsto (i, j')$ is left adjoint to the projection $i \slice r \to \pos Q$ given by $(i', j') \mapsto j'$ and hence a homotopy equivalence.
  Thus, it is enough to show that the poset on the left is contractible, which is true by assumption.
\end{proof}

\subsection{Proof of the excisive approximation}

We can now formulate the main result of this section.

\begin{theorem} \label{thm:full_approximation}
  Let $\sigma \colon \gpos S \to \pos S$ be a full shape, $\infcat C$ a $\sigma$-nice $\infty$-category with a terminal object, and $\infcat D$ an $\pos S$-differentiable $\infty$-category.
  Then there is an adjunction with left adjoint $\P \colon \Fun{\infcat C}{\infcat D} \to \Exc{\sigma}{\infcat C}{\infcat D}$, right adjoint the inclusion $\inc \colon \Exc{\sigma}{\infcat C}{\infcat D} \to \Fun{\infcat C}{\infcat D}$, and unit $\p \colon \id \to \inc \circ \P$.
\end{theorem}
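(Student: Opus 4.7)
The plan splits the proof into two parts: first establishing that $\P F$ is $\sigma$-excisive for every functor $F$ (the main technical content), and then deducing the adjunction from the standard theory of reflective localizations.

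For the second, easier part, assume the key claim. \Cref{lemma:T_and_P_elementary}(a) tells us that $\p(G) \colon G \to \P G$ is an equivalence whenever $G$ is already $\sigma$-excisive. Granting the key claim, we obtain a factorization $\P \colon \Fun{\infcat C}{\infcat D} \to \Exc{\sigma}{\infcat C}{\infcat D}$, and both $\p(\P F)$ and $\P(\p(F))$ are equivalences --- the first directly from \Cref{lemma:T_and_P_elementary}(a) applied to the $\sigma$-excisive $\P F$, the second by naturality of $\p$ combined with the same lemma. This provides the triangle identities identifying $\p$ as the unit of an adjunction $\P \dashv \inc$; compare \cite[Proposition 5.2.7.4]{LurHTT}.

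For the key claim, let $D \colon \pos S \to \infcat C$ be $\sigma$-cocartesian; we want $\P F \circ D$ to be cartesian. Since $\P F = \colim_n \T^n F$ and $\infcat D$ is $\pos S$-differentiable, the functor $\lim_{\ginipos S}$ commutes with this sequential colimit, so the comparison map
\[ \P F(D(\ini)) \longto \lim_{s \in \ginipos S} \P F(D(s)) \]
is the sequential colimit of maps $\alpha_n \colon \T^n F(D(\ini)) \to \lim_{s} \T^n F(D(s))$. It therefore suffices to show that the $\alpha_n$ become equivalences in the colimit, which I plan to do by producing a coherent diagonal factorization at each level, exhibiting $\alpha_n$ as a composition through $\lim_{s} \T^{n+1} F(D(s))$ in a way compatible with the structure maps of the sequential diagram, so that the $\alpha_n$ and these structure maps become mutually inverse after passing to the colimit.

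The main obstacle will be producing these factorizations. The key technical lemma ought to state: for any $G \colon \infcat C \to \infcat D$ and any $\sigma$-cocartesian $D$, there is a natural map $\Phi_G \colon \T G(D(\ini)) \to \lim_{s} G(D(s))$ which interpolates the canonical maps $G(D(\ini)) \to \lim_{s} G(D(s))$ and $\T G(D(\ini)) \to \lim_{s} \T G(D(s))$ via pre- and postcomposition with the components of $\t$, respectively. The shape property of $\sigma$, made manifest in \Cref{lemma:shifted_is_cocartesian} (which tells us that $s \mapsto D(s \cop t)$ is $\sigma$-cocartesian for any $t \in \pos S$), is precisely what permits the reorganization of the double limit $\T G(D(\ini)) = \lim_{t \in \ginipos S} G(D(\ini) \join t)$ needed to construct such a $\Phi_G$. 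Verifying all of the naturality and coherence required for the iteration is likely to demand careful use of the calculus of mates from \Cref{section:mates} together with the formal properties of $\join$ collected in \Cref{lemma:join_preserves}.
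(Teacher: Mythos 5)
The main gap is in what you call the easier half. Granting that $\P F$ is $\sigma$-excisive, \Cref{lemma:T_and_P_elementary}(a) does give that $\p(\P F)$ is an equivalence, but your claim that $\P(\p(F))$ is then an equivalence ``by naturality of $\p$ combined with the same lemma'' does not follow: naturality of $\p$ only yields $\P(\p(F)) \circ \p(F) \eq \p(\P F) \circ \p(F)$, and $\p(F)$ cannot be cancelled since it is not an equivalence in general. The two hypotheses of \cite[Proposition 5.2.7.4]{LurHTT} --- that $\eta_{LX}$ and $L(\eta_X)$ are equivalences --- are independent for a general pointed endofunctor, and $\T$ is not well-pointed (the maps $\T(\t(F))$ and $\t(\T F)$ need not agree), so there is no formal way around this. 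It is exactly here that the fullness of $\sigma$ is used and that the paper has to work: \Cref{lemma:P(t)_is_equiv} shows that $\P(\t(F))$ is an equivalence by commuting $\P$ past the limit over $\ginipos S$ and past the functors $(\blank \join s)$ (which requires parts (b) and (d) of \Cref{lemma:T_and_P_elementary} together with \Cref{lemma:join_preserves,lemma:preserving_Kan_extensions}) and then applying the excisiveness of $\P F$ to the $\sigma$-cocartesian diagram $(X \join \blank)$; \Cref{lemma:P(p)_is_equiv} then deduces that $\P(\p(F))$ is an equivalence using that $\P$ preserves sequential colimits. Your proof needs an argument of this kind; without it the adjunction is not established.

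On the key claim, your strategy is in essence the paper's (\Cref{lemma:cartesian_factorization}, adapted from Rezk), but two points need attention. The interpolating map naturally goes the other way: for $\sigma$-cocartesian $D$ the unit of $\Res{\genini} \dashv \Pad$ gives a map $\Res{\sigma} D \to \Pad(D(\ini))$ and hence $D(s) \eq \colim{\sigma \slice s}(\Res{\sigma}D \circ \pr) \to D(\ini) \join s$, which induces a natural map from $\lim{\ginipos S}(G \circ D)$ to $\T G(D(\ini))$; a map $\T G(D(\ini)) \to \lim{\ginipos S}(G \circ D)$ as you propose would require maps $D(\ini) \join s \to D(s)$, which do not exist in general. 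Moreover, your two-sided interleaving requires the diagonal to be compatible with the ladder on both sides. The paper instead factors the structure map $(\T)^n(F) \circ D \to (\T)^{n+1}(F) \circ D$ of $\pos S$-indexed diagrams through a cartesian diagram $E_n$ (whose value at $\ini$ is precisely $\lim{\ginipos S}((\T)^n(F) \circ D)$), and then only needs that cartesian diagrams are closed under sequential colimits in the $\pos S$-differentiable $\infcat D$ together with the cofinality of the even and odd indices in $\NN$; this requires checking only one triangle, and is where \Cref{lemma:shifted_is_cocartesian} and \Cref{cor:coprod_cartesian} enter, exactly as you anticipate.
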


\begin{remark}
  Later we will also obtain \Cref{thm:finite_approximation}, a version of this theorem for shapes which are finite but not necessarily full.
\end{remark}

The main input in proving this theorem is the following lemma, whose proof is adapted from Rezk's streamlined proof (see \cite{Rez}) of the corresponding statement for ordinary Goodwillie calculus.

\begin{lemma} \label{lemma:cartesian_factorization}
  Let $\sigma \colon \gpos S \to \pos S$ be a reduced shape, $\infcat C$ a $\sigma$-nice $\infty$-category with a terminal object, and $\infcat D$ an $\infty$-category admitting sequential colimits and limits indexed by $\ginipos S$.
  Furthermore, let $F \colon \infcat C \to \infcat D$ be a functor and $D \colon \pos S \to \infcat C$ a $\sigma$-cocartesian diagram.
  Then there is a homotopy commutative diagram
  \[
  \begin{tikzcd}[row sep = 12, column sep = 20]
    F \circ D \ar{rr}{\t (F)} \drar & & \T (F) \circ D \\
     & E \urar &
  \end{tikzcd}
  \]
  such that $E \colon \pos S \to \infcat D$ is cartesian.
\end{lemma}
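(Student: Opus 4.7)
My plan is to construct $E$ explicitly as a double limit: set $E(s) := \lim_{r \in \ginipos S} F(D(s \cop r))$, which is functorial in $s$ via the coproduct on $\pos S$ and exists by the hypothesis on $\infcat D$. The map $\beta \colon F \circ D \to E$ comes from the obvious cone whose $r$-th leg is $F$ of the morphism $D(s) \to D(s \cop r)$ induced by $s \le s \cop r$. To construct $\gamma \colon E \to \T(F) \circ D$, observe that by \cref{lemma:shifted_is_cocartesian} the diagram $D \circ (s \cop \blank)$ is $\sigma$-cocartesian for every $s$, hence agrees with $\Lan{\sigma}(D \circ (s \cop \sigma(\blank)))$. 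Applying $\Lan{\sigma}$ to the natural transformation $D \circ (s \cop \sigma(\blank)) \to \Pad(D(s))$ that is the identity at $\genini$ and the unique map to the terminal object elsewhere yields natural maps $D(s \cop r) \to D(s) \join r$, and applying $F$ and $\lim_{\ginipos S}$ produces the desired $\gamma$; naturality in $s$ is automatic.

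To check $\gamma \beta \simeq \t(F) \circ D$, the universal property of the limit reduces the task to verifying, for each $r \in \ginipos S$, that the composite $F(D(s)) \to F(D(s \cop r)) \to F(D(s) \join r)$ agrees with the corresponding leg $F(D(s)) \to F(D(s) \join \ini) \to F(D(s) \join r)$ of $\t(F)(D(s))$. Both are $F$ applied to morphisms $D(s) \to D(s) \join r$, and one identifies each of these with the structure map of the colimit $D(s) \join r = \colim_{\c t \in \sigma \slice r} \Pad(D(s))(\c t)$ at the index $\genini$: for the $\t$-side by unpacking $\tau_{\sigma}$ and the cocone compatibility of $D(s) \join \ini \to D(s) \join r$, and for the $\gamma$-side by using that our chosen transformation was the identity at $\genini$ and tracing through the cocartesian identification. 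I expect this matching to be the main technical obstacle, as it requires carefully tracking how several adjunction units interact with the relevant colimit cocones.

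For $E$ cartesian, I would apply Fubini to identify the canonical map $E(\ini) \to \lim_{r \in \ginipos S} E(r)$ with the map $\lim_{r \in \ginipos S} F(D(r)) \to \lim_{(r,r') \in \ginipos S \times \ginipos S} F(D(r \cop r'))$ induced by $\phi \colon \ginipos S \times \ginipos S \to \ginipos S$, $\phi(r, r') := r \cop r'$. Then $\phi$ is homotopy initial since its slice $\{(r, r') \mid r \cop r' \le s\}$ over any $s \in \ginipos S$ has $(s, s)$ as a maximum (because $r \cop r' \le s$ forces $r, r' \le s$) and is thus contractible; hence the map on limits is an equivalence, showing that $E$ is cartesian.
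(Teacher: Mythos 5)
Your construction of $E$ and of both maps is essentially the one the paper uses (following Rezk): $E(s) = \lim{r \in \ginipos S} F(D(s \cop r))$, with the map to $\T(F) \circ D$ extracted from the $\sigma$-cocartesianness of $D \circ (s \cop \blank)$ (\Cref{lemma:shifted_is_cocartesian}) together with the projection $\Res{\sigma}(D \circ (s \cop \blank)) \to \Pad(D(s))$. The one cosmetic difference is the order of operations: the paper defines $E$ via $D_s(t) \defeq (\Lan{\sigma}\Res{\sigma})(D \circ (t \cop \blank))(s)$, so that the map to $D(t) \join s$ is literally $\Lan{\sigma}$ applied to the unit of $\Res{\genini} \dashv \Pad$ and naturality in the $\pos S$-variable is automatic, and only afterwards identifies $D_s(t) \eq D(t \cop s)$; your version has to invert the counit equivalence naturally, which is fine but is best packaged exactly that way, since the pointwise description ``identity at $\genini$, terminal elsewhere'' does not by itself produce a natural transformation. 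Your reading of the compatibility $\gamma\beta \simeq \t(F) \circ D$ --- both legs are $F$ of the structure map at the index $\genini$ into $\colim{\sigma \slice r}(\Pad(D(s)) \circ \pr)$ --- is the correct content of the triangle-identity diagram the paper writes down for this step.

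Where you genuinely diverge is the cartesianness of $E$. The paper fixes $r \in \ginipos S$, observes that $s \mapsto F(D_r(s))$ sends every morphism $s \to s \cop r$ to an equivalence by idempotence of $(\blank \cop r)$, concludes that this diagram is cartesian by \Cref{cor:coprod_cartesian}, and then passes to the limit over $r$ via \Cref{lemma:pointwise_cartesian}. You instead use Fubini together with homotopy initiality of $(r, r') \mapsto r \cop r'$ on $\ginipos S \times \ginipos S$; your slice computation is correct (the slice over $t$ is the poset of pairs $(r,r')$ with $r, r' \le t$, which has $(t,t)$ as terminal object), the identification of the canonical map $E(\ini) \to \lim{\ginipos S} \Res{\ginipos S}(E)$ with the restriction along this functor follows from the cone compatibilities, and the needed limits over $\ginipos S \times \ginipos S$ exist as iterated limits over $\ginipos S$. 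Both arguments work with the stated hypotheses; yours is more symmetric and bypasses \Cref{cor:coprod_cartesian} and \Cref{lemma:pointwise_cartesian} entirely, at the cost of hiding the ``diagram of equivalences'' mechanism that makes the classical proof (and its robustness under an arbitrary $F$) transparent.
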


\begin{proof}
  The general strategy is to define, dependent on $D$, the following data:
  \begin{itemize}
  	\item a functor $\pos S \times \pos S \to \infcat C$ written $(s, t) \mapsto D_s(t)$ and $D' \colon \pos S \to \Fun{\pos S}{\infcat C},\; t \mapsto D_\blank(t)$ after currying, 
  	\item a natural transformation $\alpha \colon D' \to {\Ext} \circ {\Pad} \circ D$ (which has the pointwise form $\alpha \colon D_s(t) \to D(t) \join s$),
  	\item and a natural transformation $\beta \colon D \to {\Res{\ini}} \circ D' = D_\ini(\blank)$
  \end{itemize}
  such that
  \begin{enumerate}
  	\item the composition $({\Res{\ini}} \circ \alpha) \beta \colon D \to {\Res{\ini}} \circ {\Lan{\sigma}} \circ {\Pad} \circ D$ is homotopic to $\tau_{\sigma} \circ D$ (where $\tau_{\sigma}$ is as in \Cref{def:T}),
  	\item and for each $s \in \ginipos S$ the diagram $F \circ {\Res{s}} \circ D' = {\Res{s}} \circ (F \;\circ) \circ D' \colon \pos S \to \infcat D$, which is more explicitly given by $t \mapsto F(D_s(t))$, is cartesian.
  \end{enumerate}
  Assuming this exists, we obtain the homotopy commutative diagram
  \[
  \begin{tikzcd}[column sep = 20]
     & F \circ D \dlar[bend right = 12][swap]{\beta} \dar{\tau_{\sigma}} \ar[bend left = 40]{dddr}{\t (F)} &[-12] \\
    F \circ {\Res{\ini}} \circ D' \rar{\alpha} \dar[equal] & F \circ {\Res{\ini}} \circ {\Ext} \circ {\Pad} \circ D \dar[equal] & \\
    {\Res{\ini}} \circ (F \;\circ) \circ D' \rar{\alpha} \dar & {\Res{\ini}} \circ (F \;\circ) \circ {\Ext} \circ {\Pad} \circ D \dar & \\
    \lim{\ginipos S} \circ {\Res{\ginipos S}} \circ (F \;\circ) \circ D' \rar{\alpha} & \lim{\ginipos S} \circ {\Res{\ginipos S}} \circ (F \;\circ) \circ {\Ext} \circ {\Pad} \circ D \rar[equal] & \T(F) \circ D
  \end{tikzcd}
  \]
  which proves the claim by setting $E \defeq \lim{\ginipos S} \circ {\Res{\ginipos S}} \circ (F \;\circ) \circ D'$ and noting that $E$ is cartesian by \Cref{lemma:pointwise_cartesian} and the assumption that ${\Res{s}} \circ (F \;\circ) \circ D'$ is cartesian for each $s \in \ginipos S$.
  
  Now for the construction.
  We will use the following idea: let $s \in \ginipos S$, and assume that, for all $t \in \pos S$, applying $D_s$ to the morphism $t \to t \cop s$ yields an equivalence.
  Then $F \circ D_s$ is cartesian by \Cref{cor:coprod_cartesian}, which can be found after this proof (this is where we essentially need that $\pos S$ is a poset).
  This suggests to try to define $D_s(t) = D(t \cop s)$; since the functor $(\blank \cop s)$ is idempotent (as $\ginipos S$ is a poset), this implies the needed condition.
  However, it is not clear how one should then define the natural transformation $D(t \cop s) \to D(t) \join s$.
  So, our strategy is to construct $D_s(t)$ in a way that comes with this map but, in good cases, still computes $D(t \cop s)$.
  
  The rough idea is to set \[ D_s(t) \defeq (\Ext \Res{\sigma}) (D \circ (t \cop \blank)) (s) \]
  which will come equipped with a map $D_\blank(t) \to D(t) \join \blank$ since the latter is defined as $(\Ext \Pad)(D(t))$ which is terminal in the correct sense (see \Cref{rem:Pad}).
  Furthermore, for $D_\blank(t)$ to compute $D(t \cop \blank)$, we precisely need that $D \circ (t \cop \blank)$ is $\sigma$-cocartesian, which was the statement of \Cref{lemma:shifted_is_cocartesian}.
  
  More formally, to also obtain functoriality in $t$, we consider the diagram
  \[
  \begin{tikzcd}
    \pos S \rar{p'} \ar[bend right = 15]{drrr}{D} & \Fun{\pos S}{\infcat C} \rar{\Res{\sigma}} & \Fun{\gpos S}{\infcat C} \ar{rr}{\id}[swap, name = U]{} \drar[swap]{\Res{\genini}} & & \Fun{\gpos S}{\infcat C} \rar{\Ext} & \Fun{\pos S}{\infcat C} \\
     & & & \infcat C \urar[swap]{\Pad} \ar[from = U, Rightarrow, shorten < = 5, shorten > = 5]{}{\eta} & &
  \end{tikzcd}
  \]
  where $p'$ is given by $t \mapsto D \circ (t \cop \blank)$ and $\eta$ is the unit of the adjunction $\Res{\genini} \dashv \Pad$.
  The upper composition gives the desired functor $D' \colon \pos S \to \Fun{\pos S}{\infcat C}$ and $\eta$ the natural transformation $\alpha$, using that the left triangle commutes as $t \cop \sigma(\genini) = t$.
  
  Furthermore, let $\beta$ be given by the natural transformation in the diagram
  \[
  \begin{tikzcd}
    \pos S \dar[swap]{p'} \ar[bend left = 20]{ddrrr}{D} & & &[15] \\
    \Fun{\pos S}{\infcat C} \dar[swap]{\Res{\sigma}} & & & \\
    \Fun{\gpos S}{\infcat C} \drar[swap]{\Ext} \ar{rr}{\id}[swap, name = T]{} & & \Fun{\gpos S}{\infcat C} \rar{\Res{\genini}} & \infcat C \\
     & \Fun{\pos S}{\infcat C} \urar[swap]{\Res{\sigma}} \ar[bend right = 20]{urr}[swap]{\Res{\ini}} \ar[from = T, Rightarrow, shorten < = 5, shorten > = 5]{}{\epsilon} & &
  \end{tikzcd}
  \]
  where $\epsilon$ is the unit of the adjunction $\Ext \dashv \Res{\sigma}$.
  
  To show that $({\Res{\ini}} \circ \alpha) \beta$ is homotopic to $\tau_{\sigma} \circ D$, write $p = {\Res{\sigma}} \circ p'$ and consider the diagram
  \[
  \begin{tikzcd}
    \Res{\genini} \Res{\sigma} \Ext p \dar[equal] &[-17] \Res{\genini} p \lar[swap]{\beta} \rar{\id} \dar &[-40] \Res{\genini} p \\
    \Res{\ini} \Ext p \dar[swap]{\alpha} & \Res{\genini} (\Pad \Res{\genini}) p \rar[equal] \dar & (\Res{\genini} \Pad) \Res{\genini} p \uar[swap]{\eq} & \\
    \Res{\ini} \Ext \Pad \Res{\genini} p \rar[equal] & \Res{\genini} \Res{\sigma} \Ext \Pad \Res{\genini} p &
  \end{tikzcd}
  \]
  where all maps (apart from the four identities) are given by the (co)units of the adjunctions $\Res{\genini} \dashv \Pad$ and $\Ext \dashv \Res{\sigma}$, and the right vertical morphism is an equivalence since the inclusion $\set \genini \subseteq \gpos S$ is fully faithful.
  The right square commutes up to homotopy by one of the triangle identities and the left square commutes up to homotopy since the two sides are just the two possible horizontal compositions of a pair of natural transformations.
  Now note that the composition along the left of the diagram is just $({\Res{\ini}} \circ \alpha) \beta$ and the one along the right is $\tau_{\sigma} \circ D$.
  
  The only thing left to show is that $D_s(\blank) = {\Res{s}} \circ D'$ is equivalent to $D(\blank \cop s) = {\Res{s}} \circ p'$.
  This then implies that the morphism $D_s(t) \to D_s(t \cop s)$ is an equivalence for all $t \in \pos S$, as $t \cop s \to (t \cop s) \cop s$ is the identity.
  We will actually even show that the map
  \[ D' = \Lan{\sigma} \Res{\sigma} p' \xlongto{\epsilon} p' \]
  is an equivalence, where $\epsilon$ is the counit of the adjunction $\Ext \dashv \Res{\sigma}$.
  For this it is enough to show that $\epsilon$ evaluated at $p'(t) = D \circ (t \cop \blank)$ is an equivalence for all $t \in \pos S$.
  But this is equivalent to $D \circ (t \cop \blank)$ being $\sigma$-cocartesian, which was the statement of \Cref{lemma:shifted_is_cocartesian}.
\end{proof}

\begin{lemma} \label{cor:coprod_cartesian}
  Let $\pos I$ be a poset that admits finite coproducts, $\ini$ its initial object, $i$ an element of $\ginipos I$, $\infcat C$ an $\infty$-category that admits limits indexed by $\ginipos I$, and $D \colon \pos I \to \infcat C$ a diagram.
  Furthermore, assume that $D$ applied to the morphism $j \to j \cop i$ is an equivalence for all $j \in \pos I$.
  Then $D$ is cartesian.
\end{lemma}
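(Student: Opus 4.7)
The plan is to exploit the hypothesis as saying that the natural transformation $\eta \colon D \to D \circ (\blank \cop i)$ of functors $\pos I \to \infcat C$, whose components are the maps $D(j \to j \cop i)$, is a pointwise equivalence. Since the canonical cone map from \Cref{def:cartesian} is natural in the diagram, being cartesian is invariant under pointwise equivalences, so it will suffice to prove that $D \circ (\blank \cop i)$ is cartesian.

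First I would observe that $i \ne \ini$ forces $j \cop i \ne \ini$ for every $j \in \pos I$, so that $\pos I_{i/}$ is a full subposet of $\gini{\pos I}$, and that $j \mapsto j \cop i$ restricts to a functor $q \colon \gini{\pos I} \to \pos I_{i/}$ that is left adjoint to the inclusion $\iota \colon \pos I_{i/} \hookrightarrow \gini{\pos I}$, directly by the universal property of the coproduct in a poset. Consequently $\iota$ is homotopy initial and $q \circ \iota = \id$, so restriction along $\iota$ gives an equivalence
\[ \lim[s]{\gini{\pos I}} \bigl( D \circ (\blank \cop i) \bigr)|_{\gini{\pos I}} \xlongfrom{\eq} \lim[s]{\pos I_{i/}} D|_{\pos I_{i/}} \, , \]
and the right-hand side reduces further to $D(i) = D \circ (\blank \cop i)(\ini)$ since $i$ is initial in $\pos I_{i/}$.

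The main bookkeeping obstacle will be verifying that under these identifications the canonical map from $D \circ (\blank \cop i)(\ini) = D(i)$ to the limit of $D \circ (\blank \cop i)|_{\gini{\pos I}}$ corresponds to the canonical map $D(i) \to \lim_{\pos I_{i/}} D|_{\pos I_{i/}}$, which is an equivalence because $i$ is initial in $\pos I_{i/}$. This should follow from naturality of the canonical cone: its leg at $k \in \pos I_{i/}$ is $D \circ (\blank \cop i)(\ini \to k) = D(i \to k)$, which is exactly the structure map of the tautological cone exhibiting $D(i)$ as $\lim_{\pos I_{i/}} D|_{\pos I_{i/}}$. Once this is checked, we conclude that $D \circ (\blank \cop i)$, and hence $D$, is cartesian.
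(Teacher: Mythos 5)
Your argument is essentially the paper's own proof: use the pointwise equivalence $D \to D \circ (\blank \cop i)$ to reduce to the shifted diagram, identify the limit over $\gini{\pos I}$ with the limit over the coslice under $i$ via the adjunction $q \dashv \iota$, collapse the latter using that $i$ is initial there, and check compatibility of the canonical cone maps (the ``bookkeeping obstacle'' you mention is exactly what the dual of \Cref{lemma:functors_and_map_from_colim} is for). One justification is wrong, however, though easily repaired: since $q \dashv \iota$, the inclusion $\iota$ is a \emph{right} adjoint, hence homotopy terminal rather than homotopy initial; and it genuinely fails to be homotopy initial, as the comma category $\iota \slice j$ is empty whenever $i \not\le j$. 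The equivalence of limits in your display is not obtained by ``restriction along $\iota$'' but is the map on limits induced by $q$ applied to the diagram $D|_{\pos I_{i/}}$, and it is an equivalence because $q$, being a left adjoint, is homotopy initial --- which is precisely how the paper argues (there $q$ appears as $p_i \circ \inc$). With that correction the rest of your argument goes through and coincides with the paper's proof.
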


\begin{proof}
  Let $p_i \colon \pos I \to i \slice \ginipos I$ be the functor given by $j \mapsto j \cop i$.
  Note that the restriction $\restrict {p_i} {\ginipos I}$ is left adjoint to the projection ${\pr} \colon i \slice \ginipos I \to \ginipos I$ by the universal property of the coproduct.
  Additionally, let $\eta \colon \id \to {\inc} \circ {\pr} \circ p_i$ be the natural transformation of functors $\pos I \to \pos I$ coming from $j \le j \cop i$ (this restricts to the unit of the above adjunction on $\ginipos I$).
  
  Now consider the diagram $D' \colon \cone{(i \slice \ginipos I)} \to \infcat C$ given by the composition of $D \circ \inc \circ \pr$ and the functor $\cone{(i \slice \ginipos I)} \to i \slice \ginipos I$ given by $i$ at the cone point and the identity otherwise.
  From this, we obtain the diagram
  \[
  \begin{tikzcd}
  D(\ini) \rar{\eq}[swap]{\eta} \dar & (D \circ {\inc} \circ {\pr} \circ p_i)(\ini) \dar &[25] (D \circ \inc \circ \pr)(i) \lar[equal] \dar{\eq} \\
  \lim{\ginipos I} (D \circ \inc) \rar{\eq}[swap]{\eta} & \lim{\ginipos I} (D \circ {\inc} \circ {\pr} \circ p_i \circ \inc) & \lim{i \slice \ginipos I} (D \circ \inc \circ \pr) \lar{\left( p_i \circ \inc \right)^*}[swap]{\eq}
  \end{tikzcd}
  \]
  which commutes up to homotopy by (the dual of) \Cref{lemma:functors_and_map_from_colim} (by \Cref{lemma:htpy_terminal_admitting}, limits indexed by the poset $i \slice \ginipos I$ exist since it has an initial object).
  The left horizontal morphisms are equivalences as $D \circ \eta$ is an equivalence by assumption.
  Furthermore, the bottom right horizontal map is an equivalence since $p_i \circ \inc$ is left adjoint and hence homotopy initial, and the vertical morphism on the right is an equivalence by \Cref{lemma:terminal_object_colimit} since $i$ is an initial object of $i \slice \ginipos I$.
  Hence the vertical map on the left is an equivalence, which was the claim.
\end{proof}

The rest of the proof of \Cref{thm:full_approximation} is analogous to the one given by Lurie in \cite[Theorem~6.1.1.10]{LurHA}.
However, for completeness' sake, we still recount it here (with a bit more details).
We need a few more lemmas:

\begin{lemma} \label{lemma:P_is_excisive}
  Let $\sigma \colon \gpos S \to \pos S$ be a reduced shape, $\infcat C$ a $\sigma$-nice $\infty$-category with a terminal object, and $\infcat D$ an $\pos S$-differentiable $\infty$-category.
  Then, for any functor $F \colon \infcat C \to \infcat D$, the functor $\P(F) \colon \infcat C \to \infcat D$ is $\sigma$-excisive.
\end{lemma}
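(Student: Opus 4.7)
The plan is to show that $\P_\sigma(F) \circ D$ is cartesian for any $\sigma$-cocartesian $D \colon \pos S \to \infcat C$, by constructing an interleaved sequential diagram in $\Fun{\pos S}{\infcat D}$ whose two cofinal subsequences compute $\P_\sigma(F) \circ D$ and a colimit of cartesian diagrams, respectively.

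The first step is to iterate \Cref{lemma:cartesian_factorization}. Applied to the functor $\T^n F$ (still a functor $\infcat C \to \infcat D$) and the $\sigma$-cocartesian diagram $D$, the lemma produces a homotopy commutative factorization
\[
\begin{tikzcd}[row sep = 10, column sep = 18]
\T^n F \circ D \ar{rr}{\t (\T^n F) \circ D} \drar & & \T^{n+1} F \circ D \\
 & E_n \urar &
\end{tikzcd}
\]
with $E_n \colon \pos S \to \infcat D$ cartesian. Using \Cref{lemma:sequential_diagrams}, these factorizations assemble into a single sequential diagram
\[ F \circ D \longto E_0 \longto \T F \circ D \longto E_1 \longto \T^2 F \circ D \longto \cdots \]
in $\Fun{\pos S}{\infcat D}$, refining the sequential diagram that defines $\P$.

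The second step is cofinality: the inclusions of both the even-indexed subsequence $(\T^n F \circ D)_{n \in \NN}$ and the odd-indexed subsequence $(E_n)_{n \in \NN}$ into the interleaved diagram are homotopy initial (any map $\NN \to \NN$ between cofinal subsequences of a sequential diagram is homotopy terminal on $\NN^{\op}$-opposite indexing by the standard argument). Hence both subsequences have the same colimit in $\Fun{\pos S}{\infcat D}$. Since colimits in $\Fun{\pos S}{\infcat D}$ are computed pointwise and $\P(F) \circ D = (\colim_n \T^n F) \circ D$ (the colimit defining $\P F$ is itself pointwise, giving $\P(F)(D(s)) \simeq \colim_n \T^n(F)(D(s))$), we conclude
\[ \P(F) \circ D \;\eq\; \colim_n \T^n F \circ D \;\eq\; \colim_n E_n \]
in $\Fun{\pos S}{\infcat D}$.

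The last step is to observe that $\colim_n E_n$ is cartesian. Each $E_n$ satisfies the condition that the canonical map $E_n(\ini) \to \lim_{\ginipos S} \Res{\ginipos S} E_n$ is an equivalence. Since $\infcat D$ is $\pos S$-differentiable, sequential colimits commute with limits indexed by $\ginipos S$, and the canonical map for $\colim_n E_n$ is obtained by applying $\colim_n$ to the canonical maps for the $E_n$; hence it is again an equivalence, which gives the cartesian condition for $\P(F) \circ D$. The main obstacle here is really the bookkeeping in the first two steps, namely assembling the pointwise-on-each-$n$ factorizations of \Cref{lemma:cartesian_factorization} into a coherent interleaved sequential diagram; once that is done, the differentiability hypothesis handles the passage to the colimit essentially formally.
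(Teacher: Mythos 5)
Your proof is correct and follows essentially the same route as the paper: iterate \Cref{lemma:cartesian_factorization} to interleave the tower $(\T)^n(F) \circ D$ with cartesian diagrams $E_n$, assemble the interleaved sequence via \Cref{lemma:sequential_diagrams}, identify the colimits of the two cofinal subsequences, and use $\pos S$-differentiability to see that the colimit of the $E_n$ is cartesian (the paper packages this last step as \Cref{lemma:pointwise_cartesian}). The only quibble is terminological: in the paper's conventions the subsequence inclusions $2\NN \subset \NN$ and $2\NN + 1 \subset \NN$ are \emph{homotopy terminal} (cofinal for colimits), not homotopy initial.
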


\begin{proof}
  Let $D \colon \pos S \to \infcat C$ be a $\sigma$-cocartesian diagram.
  By definition, the diagram $\P(F) \circ D$ is given by the colimit of
  \begin{equation} \label[diagram]{diag:P(F)D}
    F \circ D \longto \T(F) \circ D \longto (\T)^2(F) \circ D \longto \cdots
  \end{equation}
  where the maps are given by $\t$.
  By \Cref{lemma:cartesian_factorization}, each of these maps
  \[ \t((\T)^n(F)) \circ D \colon (\T)^n(F) \circ D \longto (\T)^{n+1}(F) \circ D \]
  factors, up to homotopy, through a cartesian diagram $E_n \colon \pos S \to \infcat D$.
  The resulting sequence of morphisms
  \[ F \circ D \longto E_0 \longto \T(F) \circ D \longto E_1 \longto (\T)^2(F) \circ D \longto \cdots \]
  defines a sequential diagram (by \Cref{lemma:sequential_diagrams}).
  Restricting along the inclusion $2 \NN \subset \NN$, we obtain a diagram where each morphism is homotopic to the corresponding one in \cref{diag:P(F)D} and which is thus, by again \Cref{lemma:sequential_diagrams}, equivalent to it.
  On the other hand, restricting along the inclusion $2 \NN + 1 \subset \NN$ yields a diagram $E$ of the form
  \[ E_0 \longto E_1 \longto E_2 \longto \cdots \]
  whose colimit is cartesian by \Cref{lemma:pointwise_cartesian}, as each $E_n$ is cartesian and $\infcat D$ is $\pos S$\=/differentiable.
  Since both inclusions are homotopy terminal, the colimit of the $E_n$ is equivalent to the one of \cref{diag:P(F)D}, which finishes the proof.
\end{proof}

\begin{lemma} \label{lemma:P(t)_is_equiv}
  Let $\sigma \colon \gpos S \to \pos S$ be a full shape, $\infcat C$ a $\sigma$-nice $\infty$-category with a terminal object, and $\infcat D$ an $\pos S$-differentiable $\infty$-category.
  Then, for any functor $F \colon \infcat C \to \infcat D$, applying $\P$ to $\t(F) \colon F \to \T (F)$ yields an equivalence.
\end{lemma}

\begin{proof}
  Recall that $\t(F)$ is defined as the composition of the transformation $F \circ \tau_{\sigma}$, which is an equivalence as $\sigma$ is full, and the upper horizontal map in the homotopy commutative diagram 
  \[
  \begin{tikzcd}
    {\Res{\ini}} \circ (F \;\circ) \circ \Ext \circ \Pad \rar \dar[equal] & \lim[s]{\ginipos S} \circ {\Res{\ginipos S}} \circ (F \;\circ) \circ \Ext \circ \Pad \rar[equal] \dar{\eq} &[-17] \T (F) \\
    F \circ (\blank \join \ini) \rar & \lim[s]{s \in \ginipos S} (F \circ (\blank \join s)) &
  \end{tikzcd}
  \]
  which we obtain from (the dual of) \Cref{lemma:limits_and_currying}.
  Hence it is enough to consider the lower horizontal map in the above diagram.
  Since, by \Cref{lemma:T_and_P_elementary}, the functor $\P$ preserves limits indexed by $\ginipos S$, applying it to this map yields, by \Cref{lemma:preservation_and_can}, the upper triangle in the following homotopy commutative diagram in $\Fun{\infcat C}{\infcat D}$
  \begin{equation} \label[diagram]{diag:P(t)_is_equiv}
  \begin{tikzcd}
     & \P \left( \lim[s]{s \in \ginipos S} (F \circ (\blank \join s)) \right) \dar{\eq} \\
    \P(F \circ (\blank \join \ini)) \rar \urar[end anchor = south west] \dar[swap]{\eq} & \lim[s]{s \in \ginipos S} \P(F \circ (\blank \join s)) \dar{\eq} \\
    \P(F) \circ (\blank \join \ini) \rar & \lim[s]{s \in \ginipos S} (\P(F) \circ (\blank \join s))
  \end{tikzcd}
  \end{equation}
  where the lower two vertical equivalences are provided by \Cref{lemma:T_and_P_elementary}.
  This uses that $(\blank \join s)$ preserves left Kan extension along $\sigma$ by \Cref{lemma:preserving_Kan_extensions}, for which we in turn use \Cref{lemma:join_preserves} to see that $(\blank \join s)$ preserves terminal objects and, for $s' \in \pos S$, colimits indexed by $\sigma \slice s'$ (since $\sigma$ is full and hence $\sigma \slice s'$ has an initial object and is contractible).
  
  But the lower horizontal map in \cref{diag:P(t)_is_equiv} is an equivalence since $\P(F)$ is $\sigma$-excisive by \Cref{lemma:P_is_excisive} and, for any $X \in \infcat C$, the diagram
  \[ (X \join \blank) = (\Ext \Pad)(X) \colon \pos S \longto \infcat C \]
  is $\sigma$-cocartesian by \Cref{lemma:extensions_are_cocartesian} (here we implicitly use that $\Res{X}$ preserves limits and again \Cref{lemma:preservation_and_can}).
  This finishes the proof.
\end{proof}

\begin{lemma} \label{lemma:P(p)_is_equiv}
  Let $\sigma \colon \gpos S \to \pos S$ be a full shape, $\infcat C$ a $\sigma$-nice $\infty$-category with a terminal object, and $\infcat D$ an $\pos S$-differentiable $\infty$-category.
  Then, for any functor $F \colon \infcat C \to \infcat D$, applying $\P$ to $\p(F) \colon F \to \P (F)$ yields an equivalence.
\end{lemma}

\begin{proof}
  By \Cref{lemma:T_and_P_elementary}, the functor $\P$ preserves sequential colimits.
  Hence we have, by \Cref{lemma:preserving_structure_map}, that the map $\P(\p(F))$ is an equivalence if and only if the structure map from $\P(F)$ to the colimit of the diagram
  \[ \P(F) \xlongto{\P(\t)} (\P \T)(F) \xlongto{\P(\t \circ \T)} (\P \T \T)(F) \longto \dots \]
  is.
  But the latter follows from \Cref{lemma:contractible_colimit_over_equivalences} since each of the maps $\P({\t} \circ (\T)^n)$ is an equivalence by \Cref{lemma:P(t)_is_equiv}.
\end{proof}

\begin{lemma} \label{lemma:right_adjoint_right_inverse}
  Let $C$ and $D$ be objects of a 2-category, $l \colon C \to D$ and $r \colon D \to C$ morphisms, and $\eta \colon \id[C] \to r \circ l$ a 2-morphism.
  Assume that there is a 2-isomorphism $l \circ r \iso \id[D]$, and that both $l \circ \eta$ and $\eta \circ r$ are 2-isomorphisms.
  Then there is an adjunction $l \dashv r$ with unit $\eta$.
\end{lemma}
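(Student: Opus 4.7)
The plan is to construct a counit $\epsilon \colon l r \to \id[D]$ by modifying the given 2-iso $\phi$ in two stages, so that $(\eta, \epsilon)$ satisfies both triangle identities; the first stage corrects the first triangle and the second corrects the second triangle without spoiling the first.

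For the first stage, set $\alpha \defeq (\phi l)(l \eta) \colon l \to l$, a 2-iso since both $\phi l$ and $l \eta$ are. Define $\epsilon_1 \defeq \phi \cdot (\alpha^{-1} r) \colon l r \to \id[D]$. An interchange computation, using that $(\alpha^{-1} r l)(l \eta)$ and $(l \eta) \alpha^{-1}$ both equal the horizontal composite $\alpha^{-1} * \eta$, then gives the first triangle identity: $(\epsilon_1 l)(l \eta) = (\phi l)(l \eta) \cdot \alpha^{-1} = \alpha \cdot \alpha^{-1} = \id[l]$.

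For the second stage, set $\beta \defeq (r \epsilon_1)(\eta r) \colon r \to r$, again a 2-iso by the hypotheses on $\eta r$ and $\phi$, and define $\epsilon \defeq \epsilon_1 \cdot (l \beta^{-1})$, still a 2-iso. The second triangle identity will then follow quickly: the interchange identity $(r l \beta^{-1})(\eta r) = (\eta r) \beta^{-1}$, combined with the definition of $\beta$, gives $(r \epsilon)(\eta r) = \beta \cdot \beta^{-1} = \id[r]$. To keep the first triangle, one must verify that the correction $l \beta^{-1}$ satisfies $(l \beta^{-1} l)(l \eta) = l \eta$, which reduces by interchange to showing $(\beta l) \eta = \eta$ as 2-cells $\id[C] \to r l$.

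The main obstacle will be the last equation $(\beta l) \eta = \eta$, which I plan to deduce from the first triangle identity already in hand. Whiskering $(\epsilon_1 l)(l \eta) = \id[l]$ on the left by $r$ yields $(r \epsilon_1 l)(r l \eta) = \id[r l]$. On the other hand, expanding $\beta l = (r \epsilon_1 l)(\eta r l)$ gives $(\beta l) \eta = (r \epsilon_1 l)(\eta r l) \eta$; since both $(\eta r l) \eta$ and $(r l \eta) \eta$ compute the horizontal composite $\eta * \eta$, this collapses to $(r \epsilon_1 l)(r l \eta) \eta = \id[r l] \cdot \eta = \eta$. With this identity in place, $\epsilon$ satisfies both triangle identities, exhibiting the desired adjunction $l \dashv r$ with unit $\eta$.
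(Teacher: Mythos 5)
Your argument is correct, and it takes a genuinely different route from the paper: the paper does not prove this lemma at all but simply cites \cite[Lemma B.4.2 and Remark B.4.3]{RV}, whereas you give a self-contained two-stage correction of the given isomorphism $l \circ r \iso \id[D]$. Both stages check out. In stage one, with $\alpha = (\phi l)(l\eta)$ and $\epsilon_1 = \phi \cdot (\alpha^{-1} r)$, the interchange identity $(\alpha^{-1} rl)(l\eta) = (l\eta)\alpha^{-1}$ does yield the first triangle identity. In stage two, with $\beta = (r\epsilon_1)(\eta r)$ and $\epsilon = \epsilon_1 \cdot (l\beta^{-1})$, the analogous interchange $(rl\beta^{-1})(\eta r) = (\eta r)\beta^{-1}$ yields the second triangle identity, and the compatibility $(\beta l)\eta = \eta$ needed to preserve the first one really does follow from whiskering the stage-one identity by $r$ together with $(\eta rl)\eta = (rl\eta)\eta$, both sides being the horizontal composite $\eta * \eta$. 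The only cosmetic slip is the claim that $\beta$ is invertible ``by the hypotheses on $\eta r$ and $\phi$'': its invertibility also uses that of $l\eta$, through $\alpha$, though of course all three hypotheses are available. As for what each approach buys: the citation keeps the paper short and points to a statement already packaged for the homotopy 2-category of $\infty$-categories, while your proof is elementary and model-independent, valid verbatim in any 2-category and using nothing beyond functoriality of whiskering and the interchange law.
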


\begin{proof}
  This is \cite[Lemma B.4.2 and Remark B.4.3]{RV}.
\end{proof}

We are now ready to complete the proof of the main theorem:

\begin{proof}[Proof of \Cref{thm:full_approximation}]
  We want to apply \Cref{lemma:right_adjoint_right_inverse} (in the homotopy 2-category of $\infty$-categories) to $\P \colon \Fun{\infcat C}{\infcat D} \to \Exc{\sigma}{\infcat C}{\infcat D}$ and the inclusion $\inc \colon \Exc{\sigma}{\infcat C}{\infcat D} \to \Fun{\infcat C}{\infcat D}$.
  First note that $\P$ actually lands in $\Exc{\sigma}{\infcat C}{\infcat D}$ by \Cref{lemma:P_is_excisive}.
  Now, by \Cref{lemma:T_and_P_elementary}, the transformation $\p \colon \id \to \inc \circ \P$ precomposed with $\inc$ is an equivalence.
  This also implies that $\P \circ \inc \eq \id$ since $\Exc{\sigma}{\infcat C}{\infcat D}$ is a full subcategory.
  Furthermore the transformation $\P \circ \p$ is an equivalence by \Cref{lemma:P(p)_is_equiv}.
\end{proof}

\section{Maps between approximations} \label{section:maps}

As was mentioned in \Cref{rem:P_not_functorial}, the construction of the universal $\sigma$-excisive approximation is not functorial in $\sigma$.
However, by its universal property, we will get a map $\P(F) \to \P[\tau](F)$ if $\P[\tau](F)$ is $\sigma$-excisive.
So, in this section, we will study what maps between (pre)shapes tell us about the relationship of the corresponding notions of excision.

One special case is the map $\cubeinc{n} \to \cubeinc{n+1}$ induced by the inclusion of $\finset{n-1}$ into $\finset{n}$.
This will correspond to the classical fact that $(n-1)$-excisive implies $n$-excisive (cf.\ \cite[Proposition 3.2]{Goo92} or \cite[Corollary 6.1.1.14]{LurHA}).
The first subsection will focus on a generalization of this to more general (pre)shapes.
However, under some conditions, a map $\sigma \to \tau$ of preshapes can also tell us that $\tau$-excisive implies $\sigma$-excisive.
This does not have an analogue in classical Goodwillie calculus (at least in the form we prove) and will be explored in the second subsection.

The names chosen for these concepts, indirect respectively direct maps, come from their effect on the universal excisive approximations.
The mnemonic is that a direct map $\sigma \to \tau$ induces a map $\P \to \P[\tau]$ and the other way around for an indirect map.

\begin{remark}
  In \Cref{rem:P_not_functorial}, we saw that a map of (reduced) preshapes $(f, \c f) \colon (\sigma \colon \gpos S \to \pos S) \to (\tau \colon \gpos T \to \pos T)$ induces maps
  \[ \T(X) = \lim{s \in \ginipos S} (X \join_\sigma s) \longto \lim{s \in \ginipos S} (X \join_\tau f(s)) \longfrom \lim{t \in \ginipos T}(X \join_\tau t) = \T[\tau](X) \]
  which do not combine into a map between $\T$ and $\T[\tau]$.
  However, if one of these two maps were an equivalence for all $X$, then we would get a map in one direction.
  This can be seen as a motivation for there being two conditions, direct and indirect, one for each possible direction of the resulting map (and the conditions we give are basically chosen such that they guarantee one of the above maps to be an equivalence).
\end{remark}

\subsection{Indirect maps} \label{sec:indirect}

\begin{definition}
  Let $\sigma \colon \gpos S \to \pos S$ and $\tau \colon \gpos T \to \pos T$ be preshapes.
  A map $(f, \gen f) \colon \sigma \to \tau$ of preshapes is \emph{indirect} if, for all $s \in \pos S$, the induced functor
  \[ \c f_s \colon \sigma \slice s \longto \tau \slice f(s), \quad \left( \c s, \sigma(\c s) \xto{l} s \right) \longmapsto \left( \gen f(\c s), \tau(\gen f(\c s)) = f(\sigma(\c s)) \xto{f(l)} f(s) \right) \]
  is homotopy terminal.
\end{definition}

\begin{lemma} \label{lemma:inclusion_of_cubes_indirect}
  Let $n \le m$ be elements of $\NN$.
  Then the following is an indirect map of preshapes:
  \[
  \begin{tikzcd}
    \Cubeini{n} \rar{\c f} \dar[swap]{\cubeinc{n}} & \Cubeini{m} \dar{\cubeinc{m}} \\
    \Cube{n} \rar{f} & \Cube{m}
  \end{tikzcd}
  \]
  where $f$ and $\c f$ are induced by the inclusion of $\finset{n - 1}$ into $\finset{m - 1}$.
\end{lemma}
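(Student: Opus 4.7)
The plan is to show that for every $s \in \Cube{n}$, the functor $\c f_s$ is in fact an isomorphism of posets, from which homotopy terminality is automatic.

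First I would unwind the slice categories. Since $\cubeinc{n}$ is the inclusion of a full subposet, $\cubeinc{n} \slice s$ is isomorphic to the full subposet of $\Cubeini{n}$ consisting of those $\c s$ with $\c s \subseteq s$; this is exactly $\set \emptyset \cup \set{\set i \mid i \in s}$, with $\emptyset$ as initial object and the singletons as incomparable maximal elements. The same description gives $\cubeinc{m} \slice f(s) = \set \emptyset \cup \set{\set j \mid j \in f(s)}$.

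Next I would use that $f$ and $\c f$ are induced by the inclusion of a subset $T \subseteq \finset m$ of size $n$: thus $f(s)$ is literally $s$ viewed as a subset of $\finset m$, and $\c f$ sends $\emptyset \mapsto \emptyset$ and $\set i \mapsto \set i$. In particular the restriction $\c f_s$ is a bijection on objects that preserves and reflects the order, so it is an isomorphism of posets. (Along the way one checks the two preshape conditions: $f \circ \cubeinc{n} = \cubeinc{m} \circ \c f$ holds since both compose to the inclusion, and $\inv f(\emptyset) = \set \emptyset$ because an inclusion of sets has only $\emptyset$ as preimage of $\emptyset$.)

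Finally I would conclude that $\c f_s$ is homotopy terminal: an isomorphism of categories is in particular an equivalence of categories and hence a homotopy equivalence whose opposite admits an evident inverse, so its nerve is a homotopy equivalence and $\c f_s$ is cofinal in the sense of \cite[Proposition 4.1.1.8]{LurHTT}. There is no real obstacle here; the only thing to be careful about is not to conflate the two different indexing sets $\finset n$ and $\finset m$ when identifying $s$ with $f(s)$.
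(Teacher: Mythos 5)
Your proof is correct and follows exactly the paper's argument: the paper likewise observes that the map of preshapes condition is immediate and that for each $S \in \Cube{n}$ the induced functor $\cubeinc{n} \slice S \to \cubeinc{m} \slice f(S)$ is an isomorphism, hence homotopy terminal. You have simply spelled out the explicit description of the slice posets that the paper leaves implicit.
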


\begin{proof}
  That it is a map of preshapes is clear.
  For indirectness we note that, for $S \in \Cube{n}$, the induced map $\cubeinc n \slice S \to \cubeinc m \slice f(S)$ is even an isomorphism.
\end{proof}

\begin{lemma} \label{lemma:indirect_preserves_cocartesian}
  Let $\sigma \colon \gpos S \to \pos S$ and $\tau \colon \gpos T \to \pos T$ be preshapes, $(f, \gen f) \colon \sigma \to \tau$ an indirect map, and $\infcat C$ a $\sigma$-nice and $\tau$-nice $\infty$-category.
  Then, for any $\tau$-cocartesian diagram $D \colon \pos T \to \infcat C$, the diagram $\Res{f}(D)$ is $\sigma$-cocartesian.
\end{lemma}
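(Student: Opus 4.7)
The plan is to verify that the counit $\epsilon \colon \Lan{\sigma} \Res{\sigma} (\Res{f} D) \to \Res{f} D$ is a pointwise equivalence; at each $s \in \pos S$ this should reduce, via the homotopy terminality of $\c f_s$, to the $\tau$-counit of $D$ at $f(s)$, which is an equivalence by hypothesis.

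First I would use the defining equation $f \circ \sigma = \tau \circ \gen f$ of a map of preshapes to rewrite
\[ \Res{\sigma} \Res{f} = \Res{f \circ \sigma} = \Res{\tau \circ \gen f} = \Res{\gen f} \Res{\tau}. \]
Applying the pointwise formula for left Kan extensions (\Cref{lemma:kan_local}), the domain of $\epsilon$ at $s$ is the colimit over $\sigma \slice s$ of the functor $\Res{\gen f} \Res{\tau} D \circ \pr$, and by direct inspection on objects this equals the composition $\Res{\tau} D \circ \pr_{\tau \slice f(s)} \circ \c f_s$ of the diagram underlying the $\tau$-counit at $f(s)$ with the functor $\c f_s$. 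Homotopy terminality of $\c f_s$ then yields an equivalence between the two colimits.

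The main step is to upgrade this identification to a commuting triangle showing that $\epsilon$ at $s$ factors, up to this equivalence, as the $\tau$-counit at $f(s)$. The cleanest way is via mates: the pair of squares
\[
\begin{tikzcd}
\sigma \slice s \rar{\c f_s} \dar[swap]{\pr} & \tau \slice f(s) \dar{\pr} \rar & \termCat \dar{f(s)} \\
\gpos S \rar{\gen f} & \gpos T \rar{\tau} & \pos T
\end{tikzcd}
\]
(the right one commuting up to the canonical natural transformation) induces, after applying $\Fun{\blank}{\infcat C}$ and taking mates, a pasting identity of natural transformations whose evaluation at $D$ is precisely the desired triangle. Concretely, the cocone structure on either colimit over $D(f(s))$ is given by the same family of morphisms $f(\sigma(\c s)) \to f(s)$ in $\pos T$, so the triangle commutes on the nose at the level of cocones.

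With the triangle in hand, the conclusion is immediate: since $D$ is $\tau$-cocartesian, the $\tau$-counit at $f(s)$ is an equivalence, and the horizontal equivalence produced by homotopy terminality then forces $\epsilon$ at $s$ to be an equivalence as well; as $s$ was arbitrary, $\Res{f} D$ is $\sigma$-cocartesian. The only genuinely nontrivial point I anticipate is bookkeeping the commuting triangle in the $(\infty,1)$-categorical setting, but this is precisely the sort of compatibility the mate calculus recalled in \Cref{section:mates} is designed to handle.
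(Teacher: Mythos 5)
Your proposal is correct and follows essentially the same route as the paper: both arguments hinge on the rewriting $\Res{\sigma}\Res{f} = \Res{\gen f}\Res{\tau}$ and on the Beck--Chevalley map $\Lan{\sigma}\Res{\gen f} \to \Res{f}\Lan{\tau}$ being an equivalence precisely because the $\c f_s$ are homotopy terminal, together with the compatibility of mates with counits. The only difference is presentational: the paper runs the argument globally (forming the mate $\mate\id$ once and invoking \Cref{lemma:kan_mate,lemma:mate_and_units}), whereas you verify the same factorization pointwise at each $s$ via \Cref{lemma:kan_local} and a pasting of squares.
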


\begin{proof}
  Consider the map $(f, \gen f)$ as the identity transformation in the diagram
  \[
  \begin{tikzcd}[sep = 30]
    \gpos S \rar{\gen f} \dar[swap]{\sigma} & \gpos T \dar{\tau} \dlar[Rightarrow, shorten < = 13, shorten > = 13][swap]{\id} \\
    \pos S \rar{f} & \pos T
  \end{tikzcd}
  \]
  Applying $\Fun{\blank}{\infcat C}$ and taking the mate gives us a transformation $\mate \id \colon \Lan{\sigma} \Res{\gen f} \to \Res{f} \Lan{\tau}$, which is an equivalence by \Cref{lemma:kan_mate} and assumption.
  By \Cref{lemma:mate_and_units} we obtain a homotopy commutative diagram
  \[
  \begin{tikzcd}
    \Lan{\sigma} \Res{\gen f} \Res{\tau} \rar{\eq}[swap]{\mate \id} \dar[equal] & \Res{f} \Lan{\tau} \Res{\tau} \dar{\epsilon_\tau} \\
    \Lan{\sigma} \Res{\sigma} \Res{f} \rar{\epsilon_\sigma} & \Res{f}
  \end{tikzcd}
  \]
  where $\epsilon_\sigma$ and $\epsilon_\tau$ are the counits of the respective adjunctions.
  Now note that $\epsilon_\tau$ evaluated at $D$ is an equivalence since $D$ is $\tau$-cocartesian.
  Hence $\epsilon_\sigma$ evaluated at $\Res{f}(D)$ is also an equivalence, as we wanted to show.
\end{proof}

\begin{proposition} \label{prop:indirect}
  Let $\sigma \colon \gpos S \to \pos S$ be a preshape, $\tau \colon \gpos T \to \pos T$ a shape, $\infcat C$ a $\sigma$-nice and $\tau$-nice $\infty$-category, and $\infcat D$ an $\infty$-category that admits limits indexed both by $\ginipos S$ and by $\ginipos T$.
  Furthermore, assume that there is an indirect map $(f, \gen f) \colon \sigma \to \tau$.
  Then each $\sigma$-excisive functor $F \colon \infcat C \to \infcat D$ is also $\tau$-excisive.
\end{proposition}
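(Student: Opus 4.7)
The plan is to reduce to \Cref{lemma:indirect_preserves_cocartesian} and then verify that a certain restriction of limits is an equivalence. Given a $\tau$-cocartesian $D \colon \pos T \to \infcat C$, that lemma gives that $D \circ f$ is $\sigma$-cocartesian, so $F \circ D \circ f$ is cartesian by $\sigma$-excisiveness of $F$. Since $f(\ini_{\pos S}) = \ini_{\pos T}$ by the definition of a map of preshapes, this produces an equivalence
\[ (F \circ D)(\ini_{\pos T}) \xlongto{\eq} \lim_{\ginipos S} \Res{\ginipos S}(F \circ D \circ f), \]
which by inspection factors the canonical map $(F \circ D)(\ini_{\pos T}) \to \lim_{\ginipos T} \Res{\ginipos T}(F \circ D)$ through restriction along $f|_{\ginipos S} \colon \ginipos S \to \ginipos T$. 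By two-out-of-three, it thus suffices to show that this restriction
\[ \lim_{\ginipos T} \Res{\ginipos T}(F \circ D) \longto \lim_{\ginipos S} \Res{\ginipos S}(F \circ D \circ f) \]
is an equivalence.

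This last step is not as cheap as it may look: for an arbitrary diagram on $\ginipos T$ the restriction is typically not an equivalence, because $f|_{\ginipos S}$ fails to be homotopy initial already for the inclusion of cubes from \Cref{lemma:inclusion_of_cubes_indirect}. So the argument has to use that the diagram has the specific form $F \circ D$ with $D$ being $\tau$-cocartesian. I would follow the mate-theoretic template of the proof of \Cref{lemma:indirect_preserves_cocartesian}: indirectness of $(f, \c f)$ gives, via \Cref{lemma:kan_mate} applied to the preshape square, that the mate $\Lan{\sigma}\Res{\c f} \to \Res{f}\Lan{\tau}$ is an equivalence; so writing $D \simeq \Lan{\tau} E$ with $E \defeq \Res{\tau} D$, I would rewrite both sides of the desired restriction in terms of $E$ and match the resulting iterated (co)limits via this mate.

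The main obstacle will be the bookkeeping needed to make the iterated (co)limit comparison visibly realize the map induced by $f|_{\ginipos S}$ rather than just some abstract equivalence of the two sides, and to handle the interaction between the right adjoint computing the cartesian-side limit (as in \Cref{rem:cartesian}) and the left adjoint $\Lan{\tau}$ coming from $\tau$-cocartesianness. The cleanest way I see is to paste together the mate squares for the inclusions $\ginipos S \hookrightarrow \pos S$ and $\ginipos T \hookrightarrow \pos T$ with the preshape square of $\sigma$ and $\tau$, and reduce to the indirectness equivalence by the pasting law for mates.
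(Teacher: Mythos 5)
Your first step is fine and matches the paper: \Cref{lemma:indirect_preserves_cocartesian} does give that $D \circ f$ is $\sigma$-cocartesian, hence $F \circ D \circ f$ is cartesian, and you correctly identify that the whole content of the proposition is then equivalent to the restriction map $\lim_{\ginipos T}(F \circ D) \to \lim_{\ginipos S}(F \circ D \circ f)$ being an equivalence. But your proposed method for that last step has a genuine gap. Writing $D \eq \Lan{\tau} E$ and ``rewriting both sides in terms of $E$'' via the mate equivalence $\Lan{\sigma}\Res{\c f} \to \Res{f}\Lan{\tau}$ cannot work, because the arbitrary functor $F$ sits between the outer limit and the colimits that build $D$ out of $E$: the values entering $\lim_{\ginipos T}(F \circ D)$ are $F(\colim_{\tau \slice t} E \circ \pr)$, and $F$ does not commute with these colimits. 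So neither side of your comparison map can be expressed as an iterated (co)limit of $F \circ E$, and the mate has nothing to act on. A telling symptom is that your argument never uses the hypothesis that $\tau$ is a shape (as opposed to a preshape) — \Cref{lemma:indirect_preserves_cocartesian} only needs preshapes — whereas the shape condition is essential to the statement.

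The paper's proof gets around exactly this obstruction by never comparing the two limits directly. Instead it considers the functor $p \colon \pos T \times \pos S \to \pos T$, $p(t,s) = t \cop f(s)$, and uses that $\tau$ is a shape to conclude (via \Cref{lemma:shifted_is_cocartesian}) that every shifted diagram $D \circ (t \cop \blank)$ is again $\tau$-cocartesian; applying \Cref{lemma:indirect_preserves_cocartesian} to each of these shows $F \circ D \circ p$ is cartesian on every slice $\set{t} \times \pos S$. Then \Cref{lemma:cartesian_products} upgrades this to $F \circ D \circ p$ being a limit diagram over $\pos T \times \pos S$, and the homotopy initiality of $p|_{\gini{(\pos T \times \pos S)}}$ (\Cref{lemma:gini_cop_htpy_initial}, which is where the coproduct structure of $\pos T$ enters) together with \Cref{lemma:preserves_cartesian} transfers this to $F \circ D$ being cartesian. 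If you want to salvage your outline, this family of shifted diagrams is the missing idea; the single diagram $F \circ D \circ f$ does not carry enough information to control $\lim_{\ginipos T}(F \circ D)$.
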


\begin{proof}
  Let $D \colon \pos T \to \infcat C$ a $\tau$-cocartesian diagram.
  We need to prove that $F \circ D$ is cartesian.
  For this we use the functor $p \colon \pos T \times \pos S \to \pos T$ given by $p(t, s) = t \cop f(s)$ to transport diagrams indexed by $\pos T$ to diagrams indexed by $\pos S$.
  
  By \Cref{lemma:shifted_is_cocartesian} and $\tau$ being a shape, the diagram $D \circ (t \cop \blank)$ is $\tau$-cocartesian for any $t \in \pos T$.
  Then, by \Cref{lemma:indirect_preserves_cocartesian}, the diagram $D \circ (t \cop \blank) \circ f$ is is $\sigma$-cocartesian.
  In particular this implies that $F \circ D \circ p \colon \pos T \times \pos S \to \infcat D$ is cartesian when restricted to any $\set t \times \pos S$.
  Then, by \Cref{lemma:cartesian_products}, we have that $F \circ D \circ p$ is already a limit diagram itself.
  Now, using that, by \Cref{lemma:gini_cop_htpy_initial}, the restriction $p|_{\gini{(\pos T \times \pos S)}} \colon \gini{(\pos T \times \pos S)} \to \gini{\pos T}$ is homotopy initial, we obtain, by \Cref{lemma:preserves_cartesian}, that $F \circ D$ is a limit diagram, as we wanted to show.
\end{proof}

Together with \Cref{lemma:inclusion_of_cubes_indirect} this implies the classical statement that $n$-excisive implies $m$-excisive for $n \le m$:

\begin{corollary} \label{cor:n-excisive_implies_m-excisive}
  Let $n \le m$ be elements of $\NN$, $\infcat C$ an $\infty$-category that admits all finite colimits, and $\infcat D$ an $\infty$-category that admits limits indexed both by $\gini{\Cube{n}}$ and by $\gini{\Cube{m}}$.
  Then each $(n-1)$-excisive functor $F \colon \infcat C \to \infcat D$ is also $(m-1)$-excisive.
\end{corollary}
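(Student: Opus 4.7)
The plan is to directly apply \Cref{prop:indirect} to the indirect map constructed in \Cref{lemma:inclusion_of_cubes_indirect}, specialized to $\sigma = \cubeinc n$ and $\tau = \cubeinc m$. Recall that by \Cref{def:excisive}, being $(n-1)$-excisive is by definition being $\cubeinc n$-excisive, and likewise for $m$, so the conclusion will be exactly the output of \Cref{prop:indirect}.

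Concretely, I would first note that \Cref{lemma:inclusion_of_cubes_indirect} provides an indirect map of preshapes $(f, \c f) \colon \cubeinc n \to \cubeinc m$ induced by the inclusion $\finset{n-1} \hookrightarrow \finset{m-1}$. Next, I would check the remaining hypotheses of \Cref{prop:indirect}: that $\cubeinc m$ is a shape (given by the first bullet of \Cref{ex:shapes}); that $\infcat C$ is both $\cubeinc n$-nice and $\cubeinc m$-nice (immediate from \Cref{def:nice} since $\Cube n$ and $\Cube m$ are finite posets and $\infcat C$ admits all finite colimits); and that $\infcat D$ admits limits indexed by $\gini{\Cube n}$ and $\gini{\Cube m}$, which is precisely the standing hypothesis.

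With all hypotheses in place, \Cref{prop:indirect} yields that every $\cubeinc n$-excisive functor $F \colon \infcat C \to \infcat D$ is also $\cubeinc m$-excisive, i.e.\ that every $(n-1)$-excisive functor is $(m-1)$-excisive. There is no real obstacle here, as the corollary is purely a bookkeeping assembly of previously established facts; the one thing worth being careful about is matching the indexing convention (the $+1$ shift between the combinatorial parameter of the cube and the classical excision degree) so that the indirect map from \Cref{lemma:inclusion_of_cubes_indirect} translates into the correct statement about $(n-1)$- versus $(m-1)$-excision.
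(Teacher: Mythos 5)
Your proposal is correct and matches the paper exactly: the corollary is stated as an immediate consequence of applying \Cref{prop:indirect} to the indirect map of \Cref{lemma:inclusion_of_cubes_indirect}, with the same hypothesis checks and the same $+1$ index shift from \Cref{def:excisive}. Nothing to add.
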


The following lemma will be needed later.

\begin{lemma} \label{lemma:full_sub_is_shape}
  Let $\gpos S \in \Pos$ and $\pos S, \pos T \in \Poscop$.
  Furthermore, let there be a commutative diagram of functors between posets
  \[
  \begin{tikzcd}
     & \gpos S \dlar[swap]{\sigma} \drar{\tau} & \\
    \pos S \ar{rr}{f} & & \pos T
  \end{tikzcd}
  \]
  such that $\tau$ is a shape and such that $f$ is full and preserves finite coproducts.
  Then $\sigma$ is also a shape that is finite if $\tau$ is, and $(f, \id[\gpos S]) \colon \sigma \to \tau$ is an indirect map of shapes.
\end{lemma}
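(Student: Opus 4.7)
The plan is to first extract two structural properties of $f$. Since $f$ is full and functors between posets are faithful, $f$ is fully faithful; by \Cref{lemma:pos_full_implies_injective} it is injective on objects. Moreover, fullness implies $f$ reflects $\le$: if $f(a) \le f(b)$ then there is a morphism $a \to b$, i.e.\ $a \le b$. Combined with preservation of finite coproducts we also get $f(\ini_{\pos S}) = \ini_{\pos T}$ and $f(s \cop t) = f(s) \cop f(t)$. Thus $f$ is an order embedding compatible with finite coproducts, and all relevant conditions will transport cleanly between $\sigma$ and $\tau$.

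For $\sigma$ being a preshape, $\pos S$ has an initial object since $\pos S \in \Poscop$, and $\inv\sigma(\ini_{\pos S}) = \inv\tau(\ini_{\pos T})$ because $f \circ \sigma = \tau$ and $f$ is injective with $f(\ini_{\pos S}) = \ini_{\pos T}$; this set is non-empty as $\tau$ is a preshape. For the shape condition, fix $s, t \in \pos S$ and $\c k \in \gpos S$ with $\sigma(\c k) \le t \cop s$; applying $f$ gives $\tau(\c k) \le f(t) \cop f(s)$. Using that $f$ reflects $\le$, the conditions $\sigma(\c s) \le s$ and $\sigma(\c k) \le t \cop \sigma(\c s)$ are equivalent to $\tau(\c s) \le f(s)$ and $\tau(\c k) \le f(t) \cop \tau(\c s)$ respectively, so $\gpos{S}_{s,t,\c k}$ (defined via $\sigma$) equals the analogous subposet defined via $\tau$ at parameters $f(s), f(t), \c k$, which is contractible as $\tau$ is a shape. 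Finiteness passes through since $\gpos S$ is shared and $\pos S$ embeds into the finite $\pos T$ via the injective $f$.

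Finally, for $(f, \id[\gpos S])$ to be an indirect map of shapes, $f \circ \sigma = \tau \circ \id[\gpos S]$ is the hypothesized commutative triangle, and $\inv f(\ini_{\pos T}) = \set{\ini_{\pos S}}$ follows from $f(\ini_{\pos S}) = \ini_{\pos T}$ together with injectivity. For each $s \in \pos S$, the induced functor $\c f_s \colon \sigma \slice s \to \tau \slice f(s)$ is, on objects, the identity of $\gpos S$, and by order-reflection its source and target coincide as the subposet $\set{\c s \in \gpos S \mid \sigma(\c s) \le s} \subseteq \gpos S$; thus $\c f_s$ is an isomorphism, which is in particular homotopy terminal. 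The one point that genuinely requires thought is spotting that fullness of $f$ upgrades it to an order embedding; once noticed, everything else is routine transport of definitions along $f$.
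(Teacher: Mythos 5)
Your proof is correct and follows essentially the same route as the paper's: fullness of $f$ gives injectivity and order-reflection, the shape subposets $\gpos{S}_{s,t,\c k}^{\sigma}$ are identified with $\gpos{S}_{f(s),f(t),\c k}^{\tau}$ using that $f$ preserves coproducts, and the comma-category comparison for indirectness is shown to be an isomorphism of full subposets of $\gpos S$. No gaps.
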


\begin{proof}
  First note that $f$ is injective by \Cref{lemma:pos_full_implies_injective}, hence $\pos S$ is finite if $\pos T$ is.
  Furthermore, this gives us that $\inv\sigma(\ini_{\pos S}) = \inv\sigma(\inv f(\ini_{\pos T})) = \inv\tau(\ini_{\pos T})$ is non-empty (note that $\inv f(\ini_{\pos T}) = \set{\ini_{\pos S}}$ as $f$ is injective and preserves initial objects).
  In particular $\sigma$ is a preshape.
  
  We now need to show that, for all $s, t \in \pos S$ and $\c k \in \gpos S$ such that $\sigma(\c k) \le t \cop s$, the full subposet
  \[ \gpos{S}_{s,t,\c k}^{\sigma} = \{\c s \in \gpos S \mid \sigma(\c s) \le s \text{ and } \sigma(\c k) \le t \cop \sigma(\c s)\} \subseteq \gpos S \]
  is contractible.
  For this note that our assumptions imply
  \begin{align*}
  \gpos{S}_{s,t,\c k}^{\sigma}
  &= \{\c s \in \gpos S \mid f(\sigma(\c s)) \le f(s) \text{ and } f(\sigma(\c k)) \le f(t \cop \sigma(\c s)) \} \\
  &= \{\c s \in \gpos S \mid \tau(\c s) \le f(s) \text{ and } \tau(\c k) \le f(t) \cop \tau(\c s) \} \\
  &= \gpos{S}_{f(s),f(t),\c k}^{\tau}
  \end{align*}
  which is contractible as $\tau$ is a shape and $\sigma(\c k) \le t \cop s$ implies $\tau(\c k) \le f(t) \cop f(s)$.
  
  The tuple $(f, \id[\gpos S])$ is a map of shapes since we have, as noted above, that $\inv f(\ini_{\pos T}) = \set{\ini_{\pos S}}$.
  For indirectness we need that, for all $s \in \pos S$, the functor $\sigma \slice s \to \tau \slice f(s)$ induced by $\id[\gpos S]$ is homotopy terminal.
  We claim that it is even an isomorphism.
  For this, it is enough to show surjectivity since the functor in question is just the inclusion of one full subposet of $\gpos S$ into another.
  This surjectivity is equivalent to $\tau(\c s) \le f(s)$ implying $\sigma(\c s) \le s$ for all $\c s \in \gpos S$, which follows from the equality $f(\sigma(\c s)) = \tau(\c s)$ and $f$ being full.
\end{proof}

\subsection{Direct maps} \label{sec:direct}

\begin{definition}
  Let $\sigma \colon \gpos S \to \pos S$ and $\tau \colon \gpos T \to \pos T$ be preshapes.
  A map $(f, \gen f) \colon \sigma \to \tau$ of preshapes is \emph{direct} if $f$ is full and $f|_{\ginipos S} \colon \ginipos S \to \ginipos T$ is homotopy initial.
\end{definition}

\begin{remark}
  Clearly any map of preshapes $(f, \c f) \colon \sigma \to \tau$ such that $f$ is an isomorphism is direct.
  Let us now furthermore assume that $\tau$ is full.
  In this case one can see very clearly why $\tau$-excisive should imply $\sigma$-excisive: any $\sigma$-cocartesian diagram is also $\tau$-cocartesian.
  This is the case since, if we have a diagram $D \colon \pos S \to \infcat C$ such that $D \eq (\Lan{\sigma} \Res{\sigma}) (D)$, then also $D \eq (\Lan{\tau} \Lan{\c f} \Res{\sigma}) (D)$, and thus $D$ is $\tau$-cocartesian by \Cref{lemma:extensions_are_cocartesian}.
  This can be seen as further motivation for the following proposition.
\end{remark}

\begin{proposition} \label{prop:direct}
  Let $\sigma \colon \gpos S \to \pos S$ and $\tau \colon \gpos T \to \pos T$ be preshapes, $\infcat C$ a $\sigma$-nice and $\tau$-nice $\infty$-category, and $\infcat D$ an $\infty$-category that admits limits indexed both by $\ginipos S$ and by $\ginipos T$.
  Assume that $\tau$ is a shape or full and that there is a direct map $(f, \gen f) \colon \sigma \to \tau$.
  Then each $\tau$-excisive functor $F \colon \infcat C \to \infcat D$ is also $\sigma$-excisive.
\end{proposition}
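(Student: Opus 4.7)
The plan is to transport a given $\sigma$-cocartesian diagram $D$ to a $\tau$-cocartesian diagram $E \defeq \Lan{f} D$ on which the $\tau$-excisiveness of $F$ applies, and then use fully-faithfulness of $f$ together with the homotopy initiality of $f|_{\ginipos S}$ to extract the required cartesianess of $F \circ D$.

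Concretely, let $D \colon \pos S \to \infcat C$ be $\sigma$-cocartesian and set $E \defeq \Lan{f} D \colon \pos T \to \infcat C$, which exists by the niceness assumptions on $\infcat C$. Since $f$ is full, it is fully faithful (functors between posets are automatically faithful), so by \Cref{lemma:fully_faithful_kan_unit} the unit provides a natural equivalence $D \simeq \Res{f} E$. Using $D \simeq \Lan{\sigma} \Res{\sigma} D$ (by $\sigma$-cocartesianess) together with the relation $f \circ \sigma = \tau \circ \gen f$ and the composition law for left Kan extensions, we rewrite
\[ E \simeq \Lan{f} \Lan{\sigma} \Res{\sigma} D \simeq \Lan{\tau}\bigl(\Lan{\gen f} \Res{\sigma} D\bigr). \]
Hence $E$ is $\tau$-cocartesian: if $\tau$ is a shape this is \Cref{lemma:shifted_is_cocartesian}, while if $\tau$ is full it follows from \Cref{lemma:extensions_are_cocartesian}, since any diagram in the essential image of $\Lan{\tau}$ is then $\tau$-cocartesian.

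Now, because $F$ is $\tau$-excisive, $F \circ E$ is cartesian, which by \Cref{rem:cartesian} means that its restriction to the subposet $\set{\ini_{\pos T}} \cup \ginipos T \subseteq \pos T$ (canonically isomorphic to $\cone{(\ginipos T)}$) is a limit diagram. Since $f$ is a map of preshapes, $f(\ini_{\pos S}) = \ini_{\pos T}$, and so $f$ restricts along the inclusion of $\set{\ini_{\pos S}} \cup \ginipos S \cong \cone{(\ginipos S)}$ into $\pos S$ to a functor that under the cone identifications agrees with $\cone{(f|_{\ginipos S})}$. As $f|_{\ginipos S}$ is homotopy initial by directness, restricting our limit diagram along this cone functor again yields a limit diagram. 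Using the natural equivalence $D \simeq \Res{f} E$, this restricted diagram is in turn naturally equivalent to $F \circ D$ restricted to $\set{\ini_{\pos S}} \cup \ginipos S$. Therefore $F \circ D$ is cartesian, as required.

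The main obstacle is the naturality check bundling the above identifications together: the equivalences used (most importantly the unit $D \simeq \Res{f} E$) must combine to an honest equivalence of cone-shaped diagrams, so that the \emph{canonical} map exhibiting $F \circ D$ as cartesian genuinely corresponds, under these equivalences, to the canonical map exhibiting $F \circ E$ as cartesian. This should be formal given the naturality of the unit and of the cone construction, but requires some careful diagram chasing; everything else reduces to direct applications of lemmas already established in the paper.
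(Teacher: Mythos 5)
Your proposal is correct and follows essentially the same route as the paper: transport $D$ to $E = \Lan{f} D$, identify $E \simeq \Lan{\tau} \Lan{\gen f} \Res{\sigma} D$ to see it is $\tau$-cocartesian (splitting into the full/shape cases exactly as the paper does), and pull the cartesianness of $F \circ E$ back along $f$. The ``careful diagram chasing'' you flag at the end is already packaged in the paper as \Cref{lemma:preserves_cartesian}, which you can invoke directly instead of unwinding \Cref{rem:cartesian} and the cone identifications by hand.
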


\begin{proof}
  Let $D \colon \pos S \to \infcat C$ be a $\sigma$-cocartesian diagram.
  We need to show that $F \circ D$ is cartesian.
  For this we use $\Lan{f}$ to transport $D$ to a diagram indexed by $\pos T$.
  
  We have
  \[ \Lan{f}(D) \eq (\Lan{f} \Lan{\sigma} \Res{\sigma})(D) \eq (\Lan{\tau} \Lan{\c f} \Res{\sigma})(D) \]
  which is $\tau$-cocartesian.
  This follows from \Cref{lemma:extensions_are_cocartesian} if $\tau$ is full and from \Cref{lemma:shifted_is_cocartesian} if $\tau$ is a shape.
  In particular, we obtain that $F \circ \Ext[f] (D)$ is cartesian.
  Hence, by \Cref{lemma:preserves_cartesian}, the restriction $F \circ \Ext[f] (D) \circ f$ is cartesian (here we use that $f|_{\ginipos S}$ is homotopy initial).
  Now, since $f$ is full and hence fully faithful, we have that
  \[ F \circ \Ext[f] (D) \circ f = F \circ (\Res{f} \Ext[f]) (D) \eq F \circ D \]
  and thus that $F \circ D$ is cartesian, as we wanted to show.
\end{proof}

\begin{remark}
  The condition that $f$ is full is a bit stronger than actually required.
  We do not need that $(\Res{f} \Lan{f}) (D) \eq D$ for all $D \colon \pos S \to \infcat C$, but only for those that are $\sigma$-cocartesian.
  A weaker, combinatorial condition guaranteeing this can be formulated.
\end{remark}
\section{The structure of shapes} \label{section:shapes}

In this section we study various classes of shapes and their properties.
The results obtained here will be useful for the following sections.

\subsection{Full shapes}

In this subsection we consider shapes which are full as a functor.
The main result is that for every finite shape there is a full shape with the same excision properties.

\begin{notation}
  Let $f \colon \pos P \to \pos Q$ be a map of posets.
  We denote by $\im f \subseteq \pos Q$ the essential image of $f$, i.e.\ the poset with elements $f(\pos P)$ and partial order $\ile$ generated by the relation $q \ile q'$ if there are $p \in \inv f(q)$ and $p' \in \inv f(q')$ such that $p \le p'$.
\end{notation}

\begin{remark}
  Note that $\im f$ is a subposet of $\pos Q$ but does not need to be full.
  To distinguish the different partial orders in this situation we will use, as in the definition, the symbol $\ile$ for the one of $\im f$.
\end{remark}

\begin{notation} \label{def:im_preshape}
  Let $\sigma \colon \gpos S \to \pos S$ be a preshape.
  We write $\i \sigma \colon \im \sigma \to \pos S$ for the preshape occurring in the factorization
  \[
  \begin{tikzcd}
     & \im \sigma \drar{\i \sigma} & \\
    \gpos S \ar{rr}{\sigma} \urar & & \pos S \;.
  \end{tikzcd}
  \]
\end{notation}

\begin{remark}
  That $\i \sigma$ is again a preshape is immediate as $\ini_{\pos S} \in \im \sigma$.
\end{remark}

\begin{lemma} \label{lemma:im_shape_full}
  If $\sigma \colon \gpos S \to \pos S$ is a finite shape, then $\i \sigma \colon \im \sigma \to \pos S$ is full.
\end{lemma}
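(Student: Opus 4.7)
The plan is a proof by contradiction via a minimal counterexample. Assuming $\i\sigma$ is not full, the finiteness of $\pos S$ allows me to choose a pair $q \le q'$ in $\pos S$ with $q, q' \in \im\sigma$ and $q \not\ile q'$ in $\im\sigma$, such that the interval $\im\sigma \cap [q, q']$ in $\pos S$ has minimal cardinality. The first observation is that, by transitivity of $\ile$, any strict intermediate $r \in \im\sigma$ with $q < r < q'$ in $\pos S$ would produce two strictly smaller counterexamples $(q, r)$ and $(r, q')$ (since if both were non-counterexamples we would obtain $q \ile r \ile q'$); hence minimality forces $\im\sigma \cap [q, q'] = \set{q, q'}$. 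In particular there is no \emph{direct witness}: no $\c p \in \inv\sigma(q)$ and $\c p' \in \inv\sigma(q')$ satisfy $\c p \le \c p'$ in $\gpos S$.

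To derive a contradiction I would extract from the shape condition several contractible subposets of $\gpos S$. Applying the defining condition of a shape with $(s, t, \c k) = (q', q, \c p)$ for some $\c p \in \inv\sigma(q)$ makes the second defining inequality vacuous, and shows that $D \defeq \set{\c s \in \gpos S \mid \sigma(\c s) \le q'}$ is contractible, hence zig-zag-connected, while containing both $\inv\sigma(q)$ and $\inv\sigma(q')$. Applying it instead with $(q', q, \c p')$ for some $\c p' \in \inv\sigma(q')$ yields the contractible subposet $C \defeq \set{\c s \in D \mid q \cop \sigma(\c s) = q'}$, which contains $\inv\sigma(q')$ but is disjoint from $\set{\c s \mid \sigma(\c s) \le q}$ (as $q < q'$). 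Finally, applying it with $(q, \ini, \c p)$ shows that $\inv\sigma(q)$ itself is contractible. Under the minimality hypothesis, the $\sigma$-value of any $\c s \in D$ must be either $q'$, at most $q$, or incomparable with $q$ in $\pos S$, splitting $D$ into three pieces $D_1 \sqcup D_2 \sqcup D_3$.

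The main obstacle, and the technical core of the argument, will be to combine the zig-zag connectedness of $D$ with a careful case analysis of which $\le$-edges in $\gpos S$ can cross between these three pieces. The no-direct-witness assumption forbids any $\le$-edge from $\inv\sigma(q) \subseteq D_2$ to $D_1$; an $\le$-edge from $D_3$ to $D_1$ produces an element $\sigma(\c s) \in \im\sigma$ with $\sigma(\c s) < q'$ strictly and $\sigma(\c s)$ incomparable with $q$, and the pair $(\sigma(\c s) \cop q, q')$ together with $(q, \sigma(\c s) \cop q)$ must then be studied using minimality; several of the remaining transitions (such as $\ge$-edges from $D_1$ to $D_2$) are outright impossible. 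The plan is to analyze a shortest zig-zag in $D$ from a chosen $\c p \in \inv\sigma(q)$ to $\c p' \in \inv\sigma(q')$ (whose existence is guaranteed by the contractibility of $D$ together with the contractibility of $\inv\sigma(q)$), and chase the $\sigma$-values along it through the above case analysis, forcing either a direct witness (contradicting the no-direct-witness assumption) or a strict intermediate element of $\im\sigma \cap (q, q')$ (contradicting minimality). The finiteness of $\gpos S$ and $\pos S$ is essential both for the minimal-counterexample reduction and for the termination of this combinatorial analysis.
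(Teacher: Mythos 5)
Your overall strategy (reduce to the case where $\im\sigma \cap [q,q']$ has no strict intermediates, then use contractibility of a subposet of $\gpos S$ supplied by the shape condition to force a ``direct witness'' $\c p \le \c p'$) is the same one the paper uses, and your minimal-counterexample reduction is sound. But the technical core, which you defer, does not close as planned, and the reason is traceable to which instance of the shape condition you invoke. Taking $(s,t,\c k) = (q', q, \c p)$ makes the second inequality $\sigma(\c p) = q \le q \cop \sigma(\c s)$ vacuous, so you only learn that $D = \set{\c s \mid \sigma(\c s) \le q'}$ is connected. That poset is too large: a zig-zag from $\c p$ to $\c p'$ inside $D$ is free to dip down to elements with $\sigma$-value strictly below $q$ (or incomparable with $q$) and then climb directly into $\inv\sigma(q')$. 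Such an edge $\c x \le \c y$ with $\sigma(\c x) \lneq q$ and $\sigma(\c y) = q'$ violates neither the no-direct-witness assumption nor minimality, so your case analysis has no contradiction to reach in that branch. The proposed repair for the $D_3 \to D_1$ edges --- studying the pairs $(\sigma(\c s) \cop q, q')$ and $(q, \sigma(\c s) \cop q)$ ``using minimality'' --- is also ill-founded: minimality constrains pairs of elements of $\im\sigma$, and $\sigma(\c s) \cop q$ need not lie in $\im\sigma$, since the image is not closed under coproducts.

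The fix is to apply the shape condition with $t = \ini$ rather than $t = q$: for $\c p \in \inv\sigma(q)$ the triple $(q', \ini, \c p)$ yields contractibility of $\set{\c s \in \gpos S \mid q \le \sigma(\c s) \le q'}$, which under your no-intermediates hypothesis is exactly $\inv\sigma(q) \sqcup \inv\sigma(q')$. Connectedness then forces an edge between the two parts, necessarily pointing upward from $\inv\sigma(q)$ into $\inv\sigma(q')$ (the other direction would give $q' \le q$), i.e.\ a direct witness --- contradiction. This is essentially the paper's proof, which runs the same idea as an induction on the length of the longest chain from $i$ to $j$ and thereby does not even need the no-intermediates reduction: the boundary edge out of $\inv\sigma(i)$ lands at some $\c l$ with $i \ile \sigma(\c l)$ and $i \neq \sigma(\c l) \le j$, and the inductive hypothesis gives $\sigma(\c l) \ile j$.
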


\begin{proof}
  We need to show that for any two elements $i$ and $j$ of $\im \sigma$ such that $i \le j$, we also have $i \ile j$.
  We do this by induction on
  \[ d(i, j) := \sup \set{ n \in \NN \mid \exists s_0, \dots, s_n \in \pos S \text{ such that } i = s_0 \lneq \dots \lneq s_n = j} \]
  which is finite since $\pos S$ is finite by assumption.
  
  If $d(i, j) = 0$, then $i = j$ and we are done. Otherwise, let $\c k \in \inv\sigma(i)$. Then, by the definition of a shape, the full subposet
  \[ \gpos{S}_{j,\ini,\c k} = \set*{\ci \in \gpos S \mid i \le \sigma(\ci) \le j} \subseteq \gpos S \]
  is contractible, in particular connected, as $\sigma(\c k) = i \le j = j \cop \ini$.
  Since $i \neq j$, the preimage $\inv\sigma(i)$ is a proper non-empty subset of $\gpos{S}_{j,\ini,\c k}$.
  Thus, there must be a morphism in $\gpos{S}_{j,\ini,\c k}$ with exactly one of target or source lying in $\inv\sigma(i)$.
  But if it were the target, there would be an element $\ci \in \gpos{S}_{j,\ini,\c k}$ such that $\sigma(\ci) \lneq i$, a contradiction.
  So there must be $\ci \in \inv\sigma(i)$ and $\c l \in \gpos{S}_{j,\ini,\c k} \setminus \inv\sigma(i)$ such that $\ci \le \c l$.
  In particular $i = \sigma(\ci) \ile \sigma(\c l) \le j$, hence it is enough to show that $\sigma(\c l) \ile j$.
  But as $i \neq \sigma(\c l)$ we have $d(\sigma(\c l), j) \lneq d(i, j)$, so the statement follows by induction.
\end{proof}

\begin{lemma}
  A finite shape $\sigma \colon \gpos S \to \pos S$ is full if and only if it is injective.
\end{lemma}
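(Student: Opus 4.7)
The forward direction is essentially immediate from what is already noted in the paper: a full functor between posets is automatically injective on objects (this is Lemma pos\_full\_implies\_injective, referenced in the remark after Definition def:preshape). So my focus will be on the converse.

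For the converse, the plan is to reduce to the just-proved Lemma lemma:im\_shape\_full via the canonical factorization $\sigma = \i\sigma \circ q$, where $q \colon \gpos S \to \im\sigma$ is the surjective functor from Definition def:im\_preshape. The key observation is that when $\sigma$ is injective, $q$ is an \emph{isomorphism of posets}, not merely a bijection on objects: indeed, by definition the order on $\im\sigma$ is generated by the relation $\sigma(\c s) \ile \sigma(\c t)$ whenever there exist preimages $\c s' \in \inv\sigma(\sigma(\c s))$ and $\c t' \in \inv\sigma(\sigma(\c t))$ with $\c s' \le \c t'$; injectivity forces $\c s' = \c s$ and $\c t' = \c t$, so this relation is exactly $\c s \le \c t$, and the inverse of $q$ is order-preserving.

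Given this, fullness of $\sigma$ follows by chasing through the factorization. Assume $\sigma(\c s) \le \sigma(\c t)$ in $\pos S$. Since $\sigma$ is a finite shape, Lemma lemma:im\_shape\_full gives that $\i\sigma$ is full, so $\sigma(\c s) \ile \sigma(\c t)$ in $\im\sigma$. Transporting this back along the isomorphism $q$ yields $\c s \le \c t$ in $\gpos S$, which is precisely the fullness condition for $\sigma$.

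The only subtle point I expect is being careful about the definition of $\im\sigma$ — it is a subposet of $\pos S$ but not necessarily a full one, and the order $\ile$ is \emph{generated} by the relation described above. I will need to briefly argue that under injectivity this generating relation is already transitive (hence equals $\ile$), which is straightforward since $\c s \le \c t \le \c u$ in $\gpos S$ gives $\c s \le \c u$. No other step should require real work, so this should be a short proof.
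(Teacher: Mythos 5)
Your proposal is correct and follows essentially the same route as the paper's proof: both directions rely on the same observations, namely \Cref{lemma:pos_full_implies_injective} for fullness implying injectivity, and for the converse the fact that injectivity makes the generating relation of $\im\sigma$ already transitive, so that $\gpos S \to \im\sigma$ is an isomorphism and fullness follows from \Cref{lemma:im_shape_full} via the factorization through $\i\sigma$. No gaps.
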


\begin{proof}
  The ``only if'' part follows from \Cref{lemma:pos_full_implies_injective}.
  For the other direction note that, if $\sigma$ is injective, then $\sigma \colon \gpos S \to \im \sigma$ is an isomorphism since the generating relation in \Cref{def:im_preshape} is, in this case, already transitive.
  Hence, by the factorization
  \[
  \begin{tikzcd}
  & \im \sigma \drar{\i \sigma} & \\
  \gpos S \ar{rr}{\sigma} \urar{\iso} & & \pos S \;
  \end{tikzcd}
  \]
  and $\i \sigma$ being full by \Cref{lemma:im_shape_full}, we obtain that $\sigma$ is full as well.
\end{proof}

The following result tells us that for each finite shape there is a full finite shape with the same excision properties.

\begin{proposition} \label{prop:finite_shape_eq_to_full}
  Let $\sigma \colon \gpos S \to \pos S$ be a finite shape.
  Then $\i \sigma \colon \im \sigma \to \pos S$ is again a finite shape (and full, by \Cref{lemma:im_shape_full}) and the map of shapes
  \begin{equation} \label[diagram]{diag:map_im_shape}
  \begin{tikzcd}
    \gpos S \rar{\gen \sigma} \dar[swap]{\sigma} & \im \sigma \dar{\i \sigma} \\
    \pos S \rar{\id} & \pos S
  \end{tikzcd}
  \end{equation}
  is both direct and indirect.
  In particular, a functor $F \colon \infcat C \to \infcat D$ between $\infty$-categories is $\sigma$-excisive if and only if it is $\i\sigma$-excisive (as long as $\infcat C$ is $\sigma$-nice (which implies $\i \sigma$-nice), and $\infcat D$ admits limits indexed by $\ginipos S$).
\end{proposition}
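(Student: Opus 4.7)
The plan is to first verify that $\i\sigma$ is a finite shape, then check that the map $(\id, \gen\sigma)$ in \cref{diag:map_im_shape} is both direct and indirect, and finally invoke \Cref{prop:indirect,prop:direct} to obtain the excision equivalence.

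That $\i\sigma$ is a finite preshape is immediate: $\ini_{\pos S}$ lies in $\im\sigma$ because $\sigma$ is a preshape, and $\im\sigma$ is finite because $\gpos S$ is. Moreover, \Cref{lemma:im_shape_full} ensures that $\i\sigma$ is full, so the partial order on $\im\sigma$ agrees with the one inherited from $\pos S$; I will use this identification freely in what follows.

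To check the shape condition on $\i\sigma$, fix $s, t \in \pos S$ and $k \in \im\sigma$ with $k \le t \cop s$, and choose any preimage $\c k \in \inv\sigma(k)$. The plan is to show that $\gen\sigma|_{\gpos{S}_{s,t,\c k}} \colon \gpos{S}_{s,t,\c k} \to (\im\sigma)_{s,t,k}$ is a homotopy equivalence; since its source is contractible by the shape condition on $\sigma$, this will force $(\im\sigma)_{s,t,k}$ to be contractible. By Quillen's Theorem A it suffices to check that, for each $j \in (\im\sigma)_{s,t,k}$, the slice $j \slice \gen\sigma|_{\gpos{S}_{s,t,\c k}}$ is contractible. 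Unwinding definitions, this slice is
\[ \{\c s \in \gpos S \mid \sigma(\c s) \le s,\; k \le t \cop \sigma(\c s),\; j \le \sigma(\c s)\}, \]
and the middle condition is implied by the outer two via $k \le t \cop j \le t \cop \sigma(\c s)$ (where $k \le t \cop j$ comes from $j \in (\im\sigma)_{s,t,k}$). Picking any $\c j \in \inv\sigma(j)$, what remains is precisely $\gpos{S}_{s, \ini_{\pos S}, \c j}$, which is contractible by the shape condition on $\sigma$ (noting $\sigma(\c j) = j \le s$).

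Directness of $(\id, \gen\sigma)$ is trivial, since $\id$ is full and its restriction to $\ginipos S$ is the identity, which is homotopy initial. For indirectness, I need each $\gen\sigma_s \colon \sigma \slice s \to \i\sigma \slice s$ to be homotopy terminal, i.e.\ $j \slice \gen\sigma_s$ contractible for every $j \in \i\sigma \slice s$. Using fullness of $\i\sigma$ once more, this slice identifies with $\{\c s \in \gpos S \mid \sigma(\c s) \le s,\; j \le \sigma(\c s)\} = \gpos{S}_{s, \ini_{\pos S}, \c j}$ for any $\c j \in \inv\sigma(j)$, and is therefore again contractible by the shape condition on $\sigma$. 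With both conditions on the map established, \Cref{prop:indirect} yields that $\sigma$-excisive implies $\i\sigma$-excisive while \Cref{prop:direct} yields the converse. The main obstacle is the shape step; it hinges on recognising that, thanks to the fullness of $\i\sigma$, the relevant slices all reduce to instances of $\gpos{S}_{s, \ini_{\pos S}, \c j}$ to which the shape condition on $\sigma$ already applies.
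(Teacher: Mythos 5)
Your proof is correct and follows essentially the same strategy as the paper: it compares $(\im\sigma)_{s,t,k}$ to $\gpos{S}_{s,t,\c k}$ via the map induced by $\gen\sigma$, shows this is a homotopy equivalence by reducing its slices to instances of the shape condition on $\sigma$, and handles directness/indirectness exactly as the paper does. The only (cosmetic) difference is that you verify the comparison map is homotopy terminal, reducing the under-slices to $\gpos{S}_{s,\ini_{\pos S},\c j}$, whereas the paper checks homotopy initiality and lands on $\gpos{S}_{\i\sigma(\ii),t,\c l}$; both reductions are valid.
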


\begin{proof}
  For the first part we need to show that, for all $s, t \in \pos S$ and $\i k \in \im \sigma$ such that $\i \sigma(\i k) \le t \cop s$, the full subposet
  \[ \i{\pos{I}}_{s,t,\i k} \defeq \set{\ii \in \im \sigma \mid \i\sigma(\ii) \le s \text{ and } \i\sigma(\i k) \le t \cop \i\sigma(\ii)} \subseteq \im \sigma \]
  is contractible.
  Let $\c l \in \inv{\c\sigma}(\i k)$.
  Then, as $\sigma$ is a shape and $\sigma(\c l) \le t \cop s$, we know that the full subposet
  \[ \gpos{S}_{s,t,\c l} \defeq \set{\c s \in \gpos S \mid \sigma(\c s) \le s \text{ and } \i\sigma(\i k) = \sigma(\c l) \le t \cop \sigma(\c s)} \subseteq \gpos S \]
  is contractible.
  By commutativity of \cref{diag:map_im_shape}, we have that $\c \sigma$ induces a functor $g \colon \gpos{S}_{s,t,\c l} \to \i{\pos{I}}_{s,t,\i k}$.
  It is now enough to check that $g$ is a homotopy equivalence, in particular that it is homotopy initial, i.e.\ that for each $\ii \in \i{\pos{I}}_{s,t,\i k}$ the full subposet
  \[ \pos G_\ii \defeq g \slice \ii = \set{\c s \in \gpos{S}_{s,t,\c l} \mid g(\c s) \ile \ii} \subseteq \gpos{S}_{s,t,\c l} \]
  is contractible.
  By \Cref{lemma:im_shape_full}, the condition $g(\c s) \ile \ii$ is equivalent to $\sigma(\c s) = \i\sigma(g(\c s)) \le \i\sigma(\ii)$.
  Hence, using that $\i\sigma(\ii) \le s$, we obtain $\pos G_\ii = \gpos S_{\i\sigma(\ii),t,\c l}$, which is contractible since $\sigma$ is a shape and $\sigma(\c l) = \i\sigma(\i k) \le t \cop \i\sigma(\ii)$ by definition of $\i{\pos{I}}_{s,t,\i k}$.
  
  That $(\id, \i\sigma)$ is direct is clear.
  For indirectness we have to show that for any $s \in \pos S$ the functor $\c \sigma_s \colon \sigma \slice s \to \i\sigma \slice s$ is homotopy terminal, i.e.\ that for any $\i s \in \i\sigma \slice s$ the category $\i s \slice \c\sigma_s$ is contractible.
  This category $\i s \slice \c\sigma_s$ can be identified with the full subposet
  \[ \pos H_{s, \i s} = \set{\c t \in \gpos S \mid \sigma(\c t) \le s \text{ and } \i s \ile \c\sigma(\c t)} \subseteq \gpos S \;. \]
  Let $\c k \in \inv{\c \sigma}(\i s)$.
  Then we have, again using \Cref{lemma:im_shape_full}, that $\pos H_{s, \i s}$ is just $\gpos S_{s,\ini,\c k}$ and hence contractible since $\sigma$ is a shape and $\sigma(\c k) = \i\sigma(\i s) \le s = \ini \cop s$ by definition of $\i s$.
\end{proof}

As a corollary we obtain the following version of \Cref{thm:full_approximation} for shapes which are finite but not necessarily full.

\begin{corollary} \label{thm:finite_approximation}
  Let $\sigma \colon \gpos S \to \pos S$ be a finite shape, $\infcat C$ a $\sigma$-nice $\infty$-category with a terminal object, and $\infcat D$ an $\pos S$-differentiable $\infty$-category.
  Then there is an adjunction with left adjoint $\P[\i \sigma] \colon \Fun{\infcat C}{\infcat D} \to \Exc{\i \sigma}{\infcat C}{\infcat D} = \Exc{\sigma}{\infcat C}{\infcat D}$, right adjoint the inclusion $\inc \colon \Exc{\sigma}{\infcat C}{\infcat D} \to \Fun{\infcat C}{\infcat D}$, and unit $\p[\i \sigma] \colon \id \to \inc \circ \P[\i \sigma]$.
\end{corollary}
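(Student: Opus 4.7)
The plan is to deduce this corollary directly from \Cref{thm:full_approximation} applied to $\i\sigma$ in place of $\sigma$, combined with the equivalence of excision notions recorded in \Cref{prop:finite_shape_eq_to_full}. There is essentially no new content to prove; the work has already been done in the preceding two results, and all that remains is to check that the hypotheses of \Cref{thm:full_approximation} transfer correctly.

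First I would verify that $\i\sigma$ satisfies the hypotheses of \Cref{thm:full_approximation}. By \Cref{prop:finite_shape_eq_to_full} (together with \Cref{lemma:im_shape_full}) the preshape $\i\sigma$ is a full shape, and it is finite since $\im\sigma$ is a subposet of the finite poset $\pos S$. Because both $\im\sigma$ and $\pos S$ are finite, the definition of $\i\sigma$\=/nice coincides with that of $\sigma$\=/nice (both reduce to admitting all finite colimits), so $\infcat C$ satisfies the required niceness hypothesis and has a terminal object by assumption. Finally, the codomain of $\i\sigma$ is again $\pos S$, so the assumption that $\infcat D$ is $\pos S$\=/differentiable is exactly what \Cref{thm:full_approximation} needs of the target.

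Applying \Cref{thm:full_approximation} to $\i\sigma$ therefore yields an adjunction with left adjoint $\P[\i\sigma] \colon \Fun{\infcat C}{\infcat D} \to \Exc{\i\sigma}{\infcat C}{\infcat D}$, right adjoint the inclusion, and unit $\p[\i\sigma] \colon \id \to \inc \circ \P[\i\sigma]$. To conclude I would invoke the last sentence of \Cref{prop:finite_shape_eq_to_full}, which asserts that a functor is $\sigma$\=/excisive if and only if it is $\i\sigma$\=/excisive; this gives the equality of full subcategories $\Exc{\i\sigma}{\infcat C}{\infcat D} = \Exc{\sigma}{\infcat C}{\infcat D}$ inside $\Fun{\infcat C}{\infcat D}$. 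Substituting this identification into the adjunction above produces exactly the statement claimed.

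There is no substantive obstacle in this argument; the only point one needs to be a little careful about is that the two niceness conditions, for $\sigma$ and $\i\sigma$, really coincide under the finiteness assumption, which holds because $\im\sigma$ is automatically finite once $\pos S$ is.
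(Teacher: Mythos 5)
Your proposal is correct and is exactly the derivation the paper intends: the corollary is stated immediately after \Cref{prop:finite_shape_eq_to_full} precisely so that it follows by applying \Cref{thm:full_approximation} to the full finite shape $\i\sigma$ and identifying $\Exc{\i\sigma}{\infcat C}{\infcat D}$ with $\Exc{\sigma}{\infcat C}{\infcat D}$. Your extra check that the niceness hypotheses for $\sigma$ and $\i\sigma$ coincide (both reduce to admitting finite colimits) is the right point to verify and is handled correctly.
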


\subsection{Free shapes} \label{section:free_shapes}

In this subsection we study shapes $\sigma \colon \gpos S \to \pos S$ such that $\pos S$ is freely generated by $\gpos S$ under taking coproducts.
These turn out to be useful since it is easy to map out of them.
We show that, if $\gpos S$ is finite, being excisive with respect to such a ``free'' shape $\sigma$ is, for some $n$, equivalent to being $n$-excisive.

\begin{notation}
  Let $\pos P$ be a poset and $M \subseteq \pos P$ a subset.
  We write
  \begin{enumerate}[label=\alph*)]
    \item $\down M \defeq \set{x \in \pos P \mid \exists y \in M \colon x \le y}$ for the \emph{down-set} of $M$.
    \item $\copcomp(\pos P) \defeq \set{N \subseteq \pos P \mid N = \down N}$ for the \emph{down-set lattice}, i.e.\ the poset of downward closed subsets ordered by inclusion.
    \item $\copcompmap[\pos P] \colon \pos P \to \copcomp(\pos P)$ for the canonical functor given by $x \mapsto \down {\set x}$.
  \end{enumerate}
\end{notation}

It is well-known (cf.\ \cite[Examples 2.6 (3)]{DP}) that $\copcomp(\pos P)$ is a complete lattice, in particular that it has all coproducts, and that these coproducts are given by taking the union of subsets.
Actually, the map $\copcompmap[\pos P]$ is even the universal map from $\pos P$ to a poset with all coproducts.

The following definition introduces a similar construction which adds coproducts to posets that already have initial objects.
Just taking $\copcomp$ would add a new initial object (the empty set) which we do not want.

\begin{notation}
  Let $\pos P \in \Posini$.
  We write $\copcompi(\pos P) \subsetneq \copcomp(\pos P)$ for the full subposet of non-empty subsets.
  Noting that $\copcompmap[\pos P]$ uniquely factors as $\pos P \to \copcompi(\pos P) \subsetneq \copcomp(P)$, we will also, by abuse of notation, write $\copcompmap[\pos P] \colon \pos P \to \copcompi(\pos P)$ for the first map in this factorization.
\end{notation}

\begin{remark}
  Note that $\copcompi(\pos P)$ still has all coproducts: non-empty ones are again given by taking unions of subsets and the initial object $\set{\ini_{\pos P}}$ constitutes an empty one.
\end{remark}

\begin{lemma} \label{rem:cubes_are_free}
  For $\pos P = \Cubeini n$, there is a canonical isomorphism of posets under $\Cubeini n$ between $\copcompmap[\Cubeini n] \colon \Cubeini n \to \copcompi(\Cubeini n)$ and $\cubeinc{n} \colon \Cubeini n \to \Cube n$.
\end{lemma}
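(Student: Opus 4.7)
The plan is to exhibit an explicit isomorphism and then verify that it intertwines the two maps out of $\Cubeini n$. First I would unpack both sides: $\Cubeini n$ consists of the empty set together with singletons $\set i$ for $i \in \finset n$, the singletons being pairwise incomparable and each sitting above $\emptyset$; thus a non-empty downward closed subset $N \subseteq \Cubeini n$ is determined entirely by the collection of singletons it contains (it automatically contains $\emptyset$). On the other hand $\Cube n$ consists of all subsets of $\finset n$.

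Next I would define the candidate isomorphism
\[ \phi \colon \copcompi(\Cubeini n) \longto \Cube n, \qquad N \longmapsto \set*{i \in \finset n \mid \set i \in N}, \]
together with its candidate inverse
\[ \psi \colon \Cube n \longto \copcompi(\Cubeini n), \qquad S \longmapsto \set\emptyset \cup \set*{\set i \mid i \in S}. \]
A short check shows $\psi(S)$ is downward closed and non-empty, and that $\phi$ and $\psi$ are mutually inverse. Monotonicity of $\phi$ is immediate from its definition, and monotonicity of $\psi$ follows from the observation that, for non-empty downward closed $N_1, N_2 \subseteq \Cubeini n$, inclusion $N_1 \subseteq N_2$ is equivalent to containment of their collections of singletons, which is $\phi(N_1) \subseteq \phi(N_2)$.

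Finally I would verify commutativity of the triangle under $\Cubeini n$, i.e.\ $\phi \circ \copcompmap[\Cubeini n] = \cubeinc{n}$: on the initial object, $\copcompmap[\Cubeini n](\emptyset) = \set \emptyset$ and $\phi(\set \emptyset) = \emptyset = \cubeinc{n}(\emptyset)$; on a singleton, $\copcompmap[\Cubeini n](\set i) = \set{\emptyset, \set i}$ and $\phi(\set{\emptyset, \set i}) = \set i = \cubeinc{n}(\set i)$. Canonicity follows from these constructions using only the underlying structure of $\Cubeini n$ and $\Cube n$ as posets of subsets. No step here is a real obstacle; the only thing to be careful about is keeping track of the two ``levels'' of subsets (elements of $\Cube n$ versus elements of $\copcompi(\Cubeini n)$) so that $\phi$ and $\psi$ are written down correctly.
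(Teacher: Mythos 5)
Your proof is correct and matches the paper's approach; the paper simply states that the claim ``follows directly from the definitions,'' and your explicit isomorphism $\phi \colon N \mapsto \set{i \mid \set i \in N}$ with inverse $\psi$ is exactly the unwinding of those definitions. The verification of mutual inverseness, monotonicity, and commutativity of the triangle under $\Cubeini n$ is all as it should be.
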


\begin{proof}
  This follows directly from the definitions.
\end{proof}

The following lemma now makes precise what we claimed before about the universality of this construction.

\begin{lemma}
  The construction $\copcompi$ extends to a functor $\Posini \to \Poscop$ by defining, for $f \in \Hom[\Posini] {\pos P} {\pos P'}$, the induced map $\copcompi(f)$ to be given by $M \mapsto \down{f(M)}$.
  
  Furthermore $\copcompi$ is left adjoint to the forgetful functor $U \colon \Poscop \to \Posini$.
  More explicitly, for all posets $\pos P \in \Posini$, $\pos Q \in \Poscop$, and any $f \in \Hom[\Posini] {\pos P} {U(\pos Q)}$, the functor $u_f \colon \copcompi(\pos P) \to \pos Q$ given by $M \mapsto \coprod_{m \in M} f(m)$ is the unique element of $\Hom[\Poscop]{\copcompi(\pos P)} {\pos Q}$ such that
  \[
  \begin{tikzcd}
     & \pos P \dlar[swap]{\copcompmap[\pos P]} \drar{f} & \\
    \copcompi(\pos P) \ar{rr} & & Q
  \end{tikzcd}
  \]
  commutes, and the assignment $f \mapsto u_f$ is natural in both $\pos P$ and $\pos Q$.
  
  Moreover $\copcompmap$ is a natural transformation $\id \to U \circ \copcompi$ of functors $\Posini \to \Posini$.
\end{lemma}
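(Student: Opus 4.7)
The plan is to verify the four assertions in order: functoriality of $\copcompi$, the existence-and-uniqueness part of the adjunction via the explicit formula for $u_f$, and the naturality statements. The adjunction itself then falls out of the combination.

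First I would verify that $\copcompi$ is a functor $\Posini \to \Poscop$. For $f \colon \pos P \to \pos P'$ in $\Posini$, the assignment $M \mapsto \down{f(M)}$ is well-defined because $\down{f(M)}$ is non-empty whenever $M$ is. It is order preserving, and it sends the initial object $\set{\ini_{\pos P}}$ to $\down{\set{f(\ini_{\pos P})}} = \set{\ini_{\pos P'}}$ since $f$ preserves initial objects. Small coproducts in $\copcompi$ are unions, and $\down{f(\bigcup_i M_i)} = \bigcup_i \down{f(M_i)}$, so they are preserved. Functoriality reduces to the identity $\down{g(\down{f(M)})} = \down{g(f(M))}$, where the nontrivial inclusion uses that $g$ is order preserving.

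Next I would establish the formula for $u_f$. Well-definedness of $u_f(M) = \coprod_{m \in M} f(m)$ uses that $\pos Q \in \Poscop$ admits all small coproducts, order preservation is immediate from the universal property of coproducts, and preservation of the initial object and of arbitrary unions is a direct computation (for unions, $\coprod_{m \in \bigcup_i M_i} f(m) = \coprod_i \coprod_{m \in M_i} f(m)$, using that repetitions are harmless in a poset). Commutativity of the triangle follows from $u_f(\copcompmap[\pos P](p)) = u_f(\down{\set p}) = \coprod_{p' \le p} f(p') = f(p)$, since $p$ is the maximum of $\down{\set p}$. For uniqueness, I would use the key decomposition $M = \bigcup_{m \in M} \down{\set m}$, which realizes $M$ as the coproduct $\coprod_{m \in M} \copcompmap[\pos P](m)$ in $\copcompi(\pos P)$; any coproduct-preserving extension $g$ of $f$ must therefore satisfy $g(M) = \coprod_{m \in M} f(m) = u_f(M)$.

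Finally I would dispatch the two naturality statements. For $\copcompmap$, given $f \colon \pos P \to \pos P'$ in $\Posini$, the required identity $\copcompi(f)(\copcompmap[\pos P](p)) = \copcompmap[\pos P'](f(p))$ unravels to $\down{f(\down{\set p})} = \down{\set{f(p)}}$, which holds because $f$ is order preserving. Naturality of $f \mapsto u_f$ in both variables is then formal from uniqueness: for $g \colon \pos P_0 \to \pos P$ in $\Posini$ and $h \colon \pos Q \to \pos Q'$ in $\Poscop$, both $u_f \circ \copcompi(g)$ and $h \circ u_f$ are morphisms in $\Poscop$ that restrict along $\copcompmap[\pos P_0]$ to $f \circ g$ respectively $U(h) \circ f$, so they equal $u_{f \circ g}$ and $u_{U(h) \circ f}$ by the uniqueness clause. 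This naturality is exactly the adjunction $\copcompi \dashv U$ with unit $\copcompmap$. The only mildly subtle point throughout is to handle the possibly infinite coproducts carefully, but the fact that coproducts in $\copcompi(\pos P)$ are literally unions and that $\pos Q$ preserves all small coproducts by assumption makes this entirely bookkeeping; there is no real obstacle.
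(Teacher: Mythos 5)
Your proposal is correct and follows essentially the same route as the paper's proof: the same verification of functoriality via $\down{g(\down{f(M)})} = \down{g(f(M))}$, the same check that $u_f$ preserves (possibly empty) coproducts using that repeated summands are harmless in a poset, and the same uniqueness argument via the decomposition of $M$ as a union of principal down-sets. The only cosmetic difference is that you deduce naturality of $f \mapsto u_f$ formally from the uniqueness clause, whereas the paper verifies it by a direct coproduct computation; both are one-line arguments and interchangeable.
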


\begin{proof}
  First note that it is clear that $\copcompi(f)$ is a morphism in $\Poscop$.
  For functoriality of $\copcompi$ we furthermore need that, for $M \in \copcompi(\pos P)$ and $f \colon \pos P \to \pos Q$ and $g \colon \pos Q \to \pos R$ maps of posets, we have $M = \down \id(M)$ and $\down (g \circ f) (M) = \down g(\down f(M))$.
  The former statement, as well as the inclusion $\subseteq$ of the latter, are clear.
  For the other inclusion let $x \in \pos R$ and note that $x \le g(y)$ for some $y \in \down f(M)$ implies that there is $z \in M$ such that $y \le f(z)$.
  Hence $x \le g(f(z))$ and thus $x \in \down g(f(M))$.
  
  For naturality of $\copcompmap$ we need that, for $x \in \pos P$ and $f \in \Hom[\Posini]{\pos P}{\pos Q}$, we have $\down {f(\down {\set x})} = \down {\set{f(x)}}$, which follows from the same argument as the functoriality.
  
  For the adjunction, it is clear that $u_f$ is a functor and that $u_f \circ \copcompmap[\pos P] = f$ as we have, for $x \in \pos P$, that $\coprod_{y \le x} f(y) = f(x)$ (this uses that in a poset the coproduct of a set of objects with a maximum is that maximum).
  To see that $u_f$ is a morphism in $\Poscop$, note that, for any non-empty subset $L \subseteq \copcompi(\pos P)$, we have
  \[ u_f\left(\coprod_{M \in L} M\right) = u_f\left(\bigcup_{M \in L} M\right) = \coprod_{m \in \bigcup_{M \in L} M} m = \coprod_{M \in L}\; \coprod_{m \in M} m = \coprod_{M \in L} u_f(M) \;, \]
  where the third equation follows from the fact that, in a poset, a coproduct over multiple copies of the same object is that object again.
  For the empty coproduct we have $u_f(\set{\ini_{\pos P}}) = f(\ini_{\pos P}) = \ini_{\pos Q}$ as $f$ preserves initial objects by assumption.
  For uniqueness, note that any element of $\copcompi(\pos P)$ is the coproduct of elements in the image of $\copcompmap[\pos P]$.
  Hence any element of $\Hom[\Poscop] {\copcompi(\pos P)} {\pos Q}$ is already uniquely determined by its restriction along $\copcompmap[\pos P]$.
  
  For naturality of the map $f \mapsto u_f$ in the variable $\pos P$ we need that $u_{f \circ p} = u_f \circ \copcompi(p)$ for all morphisms $p$ in $\Posini$, which follows from $\coprod_{m \in M} f(p(m)) = \coprod_{m' \in \down p(M)} f(m')$.
  For naturality in the variable $\pos Q$ we need that $u_{q \circ f} = q \circ u_f$ for all morphisms $q$ in $\Poscop$, which follows from $q$ preserving coproducts.
\end{proof}

\begin{lemma} \label{lemma:univ_is_full_shape}
  For any $\gpos S \in \Posini$ the map $\copcompmap[\gpos S] \colon \gpos S \to \copcompi(\gpos S)$ is a full shape.
  Moreover, if $\gpos S$ is finite, then $\copcompmap[\gpos S]$ is also finite.
\end{lemma}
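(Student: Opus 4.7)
The plan is to verify each of the required properties in turn, leaning heavily on \Cref{lemma:easy_shape_condition} to get the shape condition essentially for free.

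First I would observe that $\copcompmap[\gpos S]$ is a preshape: the poset $\copcompi(\gpos S)$ has initial object $\set{\ini_{\gpos S}}$, and since $\copcompmap[\gpos S](\ini_{\gpos S}) = \down{\set{\ini_{\gpos S}}} = \set{\ini_{\gpos S}}$, the fiber over the initial object is non-empty. Next I would verify fullness: if $\down{\set x} \subseteq \down{\set y}$ in $\copcompi(\gpos S)$, then $x \in \down{\set x} \subseteq \down{\set y}$ forces $x \le y$ in $\gpos S$; conversely $x \le y$ obviously implies $\down{\set x} \subseteq \down{\set y}$. So $\copcompmap[\gpos S]$ is full (and in particular fully faithful and reduced).

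To establish that $\copcompmap[\gpos S]$ is a shape, I would apply \Cref{lemma:easy_shape_condition}: having already shown fullness, it suffices to check that whenever $\c c \in \gpos S$ and $a, b \in \copcompi(\gpos S)$ satisfy $\copcompmap[\gpos S](\c c) \le a \cop b$, we have $\copcompmap[\gpos S](\c c) \le a$ or $\copcompmap[\gpos S](\c c) \le b$. Unwinding the definitions, the coproduct $a \cop b$ is the union $a \cup b$, and the hypothesis reads $\down{\set{\c c}} \subseteq a \cup b$. In particular $\c c \in a \cup b$, so $\c c$ lies in (at least) one of $a$ or $b$; since both are downward closed, $\down{\set{\c c}}$ is contained in whichever one contains $\c c$, which is exactly the required conclusion.

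Finally, if $\gpos S$ is finite then $\copcompi(\gpos S)$ is a subposet of the (finite) power set of $\gpos S$, hence finite, so $\copcompmap[\gpos S]$ is a finite shape. There is no real obstacle here; the only mildly delicate point is remembering that coproducts in $\copcompi(\gpos S)$ are unions (including the empty union, which gives $\set{\ini_{\gpos S}}$), so that the hypothesis of \Cref{lemma:easy_shape_condition} translates cleanly into a set-membership statement.
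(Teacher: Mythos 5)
Your proof is correct and follows essentially the same route as the paper: fullness from $\down{\set x} \subseteq \down{\set y} \Rightarrow x \le y$, and the shape condition via \Cref{lemma:easy_shape_condition} using that $\down{\set{\c c}} \subseteq A \cup B$ forces $\c c$, and hence all of $\down{\set{\c c}}$, into one of the downward closed sets $A$ or $B$. The extra detail you give on the preshape and finiteness checks is fine but the paper treats these as immediate.
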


\begin{proof}
  First note that it is clear that $\copcompmap[\gpos S]$ is a preshape and that $\copcompi(\gpos S)$ is finite if $\gpos S$ is.
  Fullness follows from the fact that, for $\c a$ and $\c b$ in $\gpos S$, the inclusion $\down{\set{\c a}} \subseteq \down{\set{\c b}}$ implies $\c a \le \c b$.
  
  Now we want to employ \Cref{lemma:easy_shape_condition} to show that $\copcompmap[\gpos S]$ is a shape.
  For this we need that, for all $A, B \in \copcompi(\gpos S)$ and $\c c \in \gpos S$ such that $\copcompmap[\gpos S](\c c) \le A \cop B$, we have $\copcompmap[\gpos S](\c c) \le A$ or $\copcompmap[\gpos S](\c c) \le B$.
  But $\down{\set{\c c}} \subseteq A \cup B$ implies $\c c \in A$ or $\c c \in B$.
  In both cases the set $\down{\set{\c c}}$ must already lie in one of $A$ and $B$ as they are downward closed.
  This implies the statement.
\end{proof}

\begin{remark}
  Combining \Cref{lemma:univ_is_full_shape} with \Cref{rem:cubes_are_free}, we obtain as a special case the fact that $\cubeinc{n} \colon \Cubeini{n} \to \Cube{n}$ is a full shape (which we had already seen in \Cref{ex:shapes}).
\end{remark}

The following result tells us in particular that, for any finite poset $\gpos S$ with more than one object, there exists $n \in \NN$ such that $\copcompmap[\gpos S]$-excisive is equivalent to $n$-excisive.

\begin{proposition} \label{prop:free_shape_eq_to_smaller}
  Let $\gpos S \in \Posini$ and $\pos M \subseteq \gpos S$ a non-empty full downward closed subposet such that, for all $\c s \in \gini{\gpos S}$, there exists an element $m \in \ginipos M$ with $m \le \c s$ (if $\gpos S$ is finite this is equivalent to requiring $\pos M$ to contain all minimal elements of $\gini{\gpos S}$).
  Then the inclusion $i \colon \pos M \to \gpos S$ induces a map of shapes
  \[
  \begin{tikzcd}
    \pos M \rar{i} \dar[swap]{\copcompmap[\pos M]} & \gpos S \dar{\copcompmap[\gpos S]} \\
    \copcompi(\pos M) \rar{\copcompi(i)} & \copcompi(\gpos S)
  \end{tikzcd}
  \]
  which is both direct and indirect.
  
  In particular, a functor $F \colon \infcat C \to \infcat D$ between $\infty$-categories is $\copcompmap[\gpos S]$-excisive if and only if it is $\copcompmap[\pos M]$-excisive (as long as $\infcat C$ is $\copcompmap[\gpos S]$-nice (which implies $\copcompmap[\pos M]$-nice), and $\infcat D$ admits limits indexed by both $\gini{\copcompi(\pos M)}$ and $\gini{\copcompi(\pos S)}$).
\end{proposition}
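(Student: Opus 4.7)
The plan is to verify by hand that $(\copcompi(i), i)$ is both a direct and an indirect map of (full) shapes, after which the final statement follows immediately from \Cref{prop:indirect,prop:direct} (using that $\copcompmap[\pos M]$ and $\copcompmap[\gpos S]$ are full shapes by \Cref{lemma:univ_is_full_shape}). The key observation driving every step is that, because $\pos M$ is a full downward closed subposet of $\gpos S$, every $N \in \copcompi(\pos M)$ is already downward closed as a subset of $\gpos S$; hence $\copcompi(i)(N) = \down_{\gpos S} i(N) = N$ and $\copcompi(i)$ is essentially the inclusion of those downward closed subsets of $\gpos S$ that are contained in $\pos M$. From this one also reads off that $(\copcompi(i), i)$ is a map of preshapes: the preimage of $\set{\ini_{\gpos S}}$ consists precisely of $\set{\ini_{\pos M}}$.

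For directness, fullness of $\copcompi(i)$ follows from the identification above (inclusion of subsets is detected in either ambient poset). It remains to show that $\copcompi(i)|_{\gini{\copcompi(\pos M)}}$ is homotopy initial, i.e.\ for each $N' \in \gini{\copcompi(\gpos S)}$ the poset
\[ \pos F_{N'} \defeq \set{N \in \gini{\copcompi(\pos M)} \mid N \subseteq N'} \]
is contractible. The elements of $\pos F_{N'}$ are the non-empty downward closed subsets of $N' \cap \pos M$ other than $\set{\ini}$. Since $N' \supsetneq \set{\ini_{\gpos S}}$, it contains some $\c s \in \gini{\gpos S}$; by assumption there exists $m \in \ginipos M$ with $m \le \c s$, so $m \in N' \cap \pos M$. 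This shows that $N' \cap \pos M$ itself lies in $\pos F_{N'}$, and since it is the largest element of $\pos F_{N'}$, the poset is contractible.

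For indirectness, fix $s \in \copcompi(\pos M)$ and inspect the induced map on slices. One computes directly that $\copcompmap[\pos M] \slice s$ is the full subposet of $\pos M$ on $s$, and $\copcompmap[\gpos S] \slice \copcompi(i)(s)$ is the full subposet of $\gpos S$ on $\copcompi(i)(s) = s$; the induced functor $i_s$ is the identity on underlying sets and, because $\pos M$ sits as a full subposet of $\gpos S$, an isomorphism of posets. In particular it is homotopy terminal, as required.

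The main (minor) obstacle is the contractibility argument in the directness step: one has to notice that the subtlety of excluding $\set{\ini}$ is exactly resolved by the hypothesis that every $\c s \in \gini{\gpos S}$ lies above some $m \in \ginipos M$, as this is what produces a non-initial element of $N' \cap \pos M$ and promotes it to a maximum of $\pos F_{N'}$. Everything else is bookkeeping based on the elementary observation that downward closed subsets of $\pos M$ are automatically downward closed in $\gpos S$.
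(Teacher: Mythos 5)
Your proposal is correct and follows essentially the same route as the paper: fullness of $\copcompi(i)$ from $\pos M$ being downward closed, directness via the terminal object $N' \cap \pos M$ of the relevant slice (whose non-initiality is exactly where the hypothesis on minimal elements enters), and indirectness by observing that the induced map on slices is an isomorphism. The only differences are expository, such as your explicit identification $\copcompi(i)(N) = N$.
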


\begin{proof}
  First note that the square commutes by naturality of $\copcompmap$.
  
  Furthermore $\copcompi(i)$ is full since we have, for any $M \in \copcompi(\pos M)$, that $\down i(M) = i(M)$ (as $\pos M$ is downward closed) and hence that $\copcompi(i)(M) \subseteq \copcompi(i)(N)$ implies $M \subseteq N$.
  Thus, for directness, it only remains to show that $\gini{\copcompi(i)} \colon \gini{\copcompi(\pos M)} \to \gini{\copcompi(\gpos S)}$ is homotopy initial.
  For this let $L \in \gini{\copcompi(\gpos S)}$.
  Then $\gini{\copcompi(i)} \slice L$ has the terminal object $\set{m \in \pos M \mid i(m) \in L}$ (which is actually contained in $\gini{\copcompi(\pos M)}$ since our assumptions imply that there exists an element $m \in \ginipos M$ with $i(m) \in L$) and is thus contractible.
  
  For indirectness we have to check that, for all $M \in \copcompi(\pos M)$, the functor $i_M \colon \copcompmap[\pos M] \slice M \to \copcompmap[\gpos S] \slice \copcompi(i)(M)$ induced by $i$ is homotopy terminal.
  But, by $\pos M$ being downward closed in $\gpos S$, we have that $\down{\set{\c s}} \subseteq \down{i(M)}$ implies $\c s \in M$.
  Hence $i_M$ is even an isomorphism.
\end{proof}

\begin{corollary} \label{cor:free_shape_eq_to_n-exc}
  Let $\gpos S \in \Posini$ be finite.
  Then a functor $F \colon \infcat C \to \infcat D$ between $\infty$-categories is $\copcompmap[\gpos S]$-excisive if and only if it is $(n - 1)$-excisive, where $n$ is the cardinality of the discrete poset $\pos M$ of minimal elements of $\gini{\gpos S}$ (as long as $\infcat C$ admits all finite colimits, and $\infcat D$ admits limits indexed by both $\gini{\Cube n}$ and $\gini{\copcompi(\gpos S)}$).
\end{corollary}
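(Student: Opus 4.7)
The plan is to reduce this corollary to a direct application of \Cref{prop:free_shape_eq_to_smaller}, using \Cref{rem:cubes_are_free} to identify the resulting free shape with the cubical inclusion. Concretely, let $\pos M \subseteq \gpos S$ be the full subposet spanned by $\genini$ together with the minimal elements of $\gini{\gpos S}$. Since any two distinct minimal elements of $\gini{\gpos S}$ are incomparable and each of them lies strictly above $\genini$, the poset $\pos M$ is indeed a full downward closed subposet, it is non-empty, and, by construction, it contains all the minimal elements of $\gini{\gpos S}$, so it satisfies the hypothesis of \Cref{prop:free_shape_eq_to_smaller}.

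Next I would observe that $\pos M$, consisting of $n$ pairwise incomparable elements together with an initial object below them, is canonically isomorphic (as an object of $\Posini$) to $\Cubeini{n}$, sending the $i$-th minimal element of $\gini{\gpos S}$ to the singleton $\set{i}$ and $\genini$ to $\emptyset$. Under this isomorphism the map $\copcompmap[\pos M] \colon \pos M \to \copcompi(\pos M)$ is identified, by \Cref{rem:cubes_are_free}, with $\cubeinc{n} \colon \Cubeini{n} \to \Cube{n}$. In particular $\copcompmap[\pos M]$-excisive is the same as $\cubeinc{n}$-excisive, which is by definition the same as $(n-1)$-excisive.

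By \Cref{prop:free_shape_eq_to_smaller}, applied to the inclusion $\pos M \hookrightarrow \gpos S$, a functor $F$ is $\copcompmap[\gpos S]$-excisive if and only if it is $\copcompmap[\pos M]$-excisive, provided $\infcat C$ is $\copcompmap[\gpos S]$-nice and $\infcat D$ admits limits indexed by both $\gini{\copcompi(\pos M)} \iso \gini{\Cube{n}}$ and $\gini{\copcompi(\gpos S)}$. Since $\gpos S$ is finite, \Cref{lemma:univ_is_full_shape} guarantees that $\copcompi(\gpos S)$ is finite as well, and hence the assumption that $\infcat C$ admits all finite colimits already implies that it is $\copcompmap[\gpos S]$-nice. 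Combining these observations yields the claimed equivalence.

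I do not expect any significant obstacle here: the content is entirely formal, with the only mildly delicate step being the explicit identification of $\pos M$ with $\Cubeini{n}$. This is just a matter of checking that minimality is preserved and that the induced order is discrete above $\genini$, both of which follow immediately from the fact that $\pos M$ was defined to consist precisely of $\genini$ and the minimal elements of $\gini{\gpos S}$.
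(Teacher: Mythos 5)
Your proof is correct and follows essentially the same route as the paper: the paper's proof is a one-line application of \Cref{prop:free_shape_eq_to_smaller} to $\pos M \cup \set{\ini}$, identifying it with $\Cubeini{n}$ via \Cref{rem:cubes_are_free}. Your write-up simply spells out the verification of the hypotheses (downward closedness, containment of minimal elements, the niceness conditions) that the paper leaves implicit.
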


\begin{proof}
  This follows from \Cref{prop:free_shape_eq_to_smaller} applied to $\pos M \cup \set{\ini}$ by noting that $\pos M \cup \set{\ini} \iso \Cubeini{n}$ and using \Cref{rem:cubes_are_free}.
\end{proof}

\subsection{Inane shapes} \label{sec:inane}

In this section we study a certain class of shapes which we call ``inane''.
The terminology is motivated by the fact that every functor is excisive with respect to every inane shape.

\begin{notation} \label{def:gen_sub}
  Let $\pos P \in \Posini$, $\pos Q \in \Poscop$, and $f \in \Hom[\Posini]{\pos P}{\pos Q}$.
  We write $v_f \colon \pos P \to \im u_f$ and $w_f \colon \im u_f \to \pos Q$ for the maps occurring in the diagram
  \[
  \begin{tikzcd}
     & \pos P \dlar[swap]{\copcompmap[\pos P]} \drar{f} & \\
    \copcompi(\pos P) \ar{rr}[near start]{u_f} \drar & & Q \\
     & \im u_f \ar[from = uu, crossing over]{}[near start]{v_f} \urar[swap]{w_f} &
  \end{tikzcd}
  \]
  of factorizations of $f$.
\end{notation}

This cannot be applied to $f = \sigma \colon \gpos S \to \pos S$ an arbitrary (pre)shape as $\gpos S$ does not necessarily have an initial object, so we need to restrict ourselves to reduced (pre)shapes.
Luckily, by \Cref{prop:finite_shape_eq_to_full}, we do not lose much by doing so (at least in the finite case).

\begin{lemma} \label{lemma:univ_im}
  Let $\pos P \in \Posini$, $\pos Q \in \Poscop$, and $f \in \Hom[\Posini]{\pos P}{\pos Q}$.
  Then $\im u_f$ has all coproducts and $w_f \colon \im u_f \to \pos Q$ is full and preserves coproducts.
\end{lemma}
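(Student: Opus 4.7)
The plan is to first establish the stronger structural claim that $\im u_f$ is actually a \emph{full} subposet of $\pos Q$, i.e.\ that the generated order $\ile$ coincides with the order inherited from $\pos Q$. From this, both assertions of the lemma follow with little extra work. That $\ile$ refines $\le$ is immediate from the definition of $\ile$; the reverse implication is the crux.

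For the reverse implication, given $q, q' \in \im u_f$ with $q \le q'$ in $\pos Q$, I would pick any $M \in \inv{u_f}(q)$ and $N \in \inv{u_f}(q')$. Since $u_f$ preserves coproducts and $q \cop q' = q'$ in $\pos Q$, the element $M \cup N \in \copcompi(\pos P)$ satisfies $u_f(M \cup N) = q'$; together with $M \subseteq M \cup N$ this realises $q \ile q'$ as a single generating relation, with no transitive closure required. Hence $\im u_f$ inherits its order from $\pos Q$, which gives fullness of $w_f$.

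Next I would verify that $\im u_f$ is closed under the coproducts of $\pos Q$. For a family $(q_i)_{i \in I}$ in $\im u_f$, choose $M_i \in \inv{u_f}(q_i)$ and consider $\bigcup_{i \in I} M_i \in \copcompi(\pos P)$, taking this to be $\set{\ini_{\pos P}}$ when $I = \emptyset$ (whose image under $u_f$ is $f(\ini_{\pos P}) = \ini_{\pos Q}$); in general its image is $\coprod_{i \in I} q_i$, which therefore lies in $\im u_f$. Since a coproduct in a poset is characterised purely by $\le$-comparisons, the fullness of $w_f$ forces this $\pos Q$-coproduct to also be the coproduct in $\im u_f$, giving existence of coproducts in $\im u_f$ together with preservation by $w_f$.

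The main obstacle is the fullness step; once it is established, the rest is essentially bookkeeping. The key ingredient there is that $\copcompi(\pos P)$ admits binary unions and that $u_f$ sends them to $\pos Q$-coproducts, which is precisely what allows an arbitrary $\le$-comparison in $\im u_f$ to be witnessed by a single generating relation rather than by a nontrivial chain in the transitive closure.
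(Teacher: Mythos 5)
Your proof is correct and follows essentially the same route as the paper: fullness is obtained by witnessing $u_f(M)\le u_f(N)$ through the single generating relation $M\subseteq M\cup N$ together with $u_f(M\cup N)=u_f(M)\cop u_f(N)=u_f(N)$, and the coproduct claim then follows because $u_f$ preserves coproducts and the (now fully faithful) $w_f$ reflects them. Your version merely spells out the reflection step by hand where the paper cites it as a general fact; there is no substantive difference.
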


\begin{proof}
  We first show fullness.
  To this end, let $M$ and $M'$ be elements of $\copcompi(\pos P)$ such that $u_f(M) \le u_f(M')$.
  But then, since $u_f$ preserves coproducts, we have $u_f(M) \ile u_f(M \cup M') = u_f(M) \cop_{\pos Q} u_f(M') = u_f(M')$.
  
  Now the fact that fully faithful functors reflect coproducts implies that $\im u_f$ and $w_f$ are an object respectively a morphism in $\Poscop$ since $\copcompi(\pos P)$ has all coproducts and they are preserved by $u_f$.
\end{proof}

\begin{definition}
  Let $\gpos S \in \Posini$, $\pos S \in \Poscop$, and $f \in \Hom[\Posini]{\gpos S}{\pos S}$.
  In this situation, we set
  \[ \pos I^f_s \defeq w_f \slice s = \set{i \in \im u_f \mid w_f(i) \le s} \subseteq \im u_f \]
  which is a full subposet.
  
  A reduced shape $\sigma \colon \gpos S \to \pos S$ is \emph{inane} if there exists $s \in \ginipos S$ such that $\pos I^\sigma_s = \set \ini$, i.e.\ such that for all $\c s \in \gpos S$ with $\sigma(\c s) \le s$ one has $\sigma(\c s) = \ini$.
\end{definition}

\begin{example} \label{ex:cube_not_inane}
  For any $n \in \NN$, the shape $\cubeinc n$ is reduced but not inane, since any $S \in \gini{\Cube n}$ has a subset of cardinality one.
\end{example}

\begin{example} \label{ex:inane_shape}
  The inclusion of posets $\set 0 \to \set{0 \le 1}$ is an inane reduced shape.
\end{example}

This definition is motivated by the statement of \Cref{prop:inane}, which tells us that finite inane shapes do not have any interesting excision properties.
The intuition is that, if there is an element $s \in \pos S$ with $\pos I^\sigma_s = \set{\ini}$, left Kan extension along $\sigma$ copies the value at $\genini \in \gpos S$ to $s$ (since $s$ does not ``see'' anything else of $\gpos S$), so that afterwards applying a functor $F$ and taking the limit over $\ginipos S$ yields $F$ applied to that value again.
However, before we can prove the formal statement, we need the following lemma.

\begin{lemma} \label{lemma:Is_has_terminal}
  Let $\gpos S \in \Posini$, $\pos S \in \Poscop$, and $f \in \Hom[\Posini]{\gpos S}{\pos S}$.
  Then the poset $\pos I^f_s$ has a terminal object.
\end{lemma}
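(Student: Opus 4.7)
The plan is to exhibit an explicit terminal object of $\pos I^f_s$ and verify it has the required properties. First I would define
\[ \pos M_s \defeq \set{\c s \in \gpos S \mid f(\c s) \le s} \subseteq \gpos S \]
and check that $\pos M_s \in \copcompi(\gpos S)$: it is downward closed since $f$ is monotone, and it is non-empty because $f(\genini) = \ini_{\pos S} \le s$ (using that $f$ preserves initial objects, as $f \in \Hom[\Posini]{\gpos S}{\pos S}$). Consequently the element $u_f(\pos M_s) = \coprod_{m \in \pos M_s} f(m)$ is well-defined (using that $\pos S \in \Poscop$ admits all small coproducts) and lies in $\im u_f$.

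Next I would check that this element actually lies in $\pos I^f_s$, i.e.\ that $w_f(u_f(\pos M_s)) \le s$. Since $w_f$ is (by construction in \Cref{def:gen_sub}) just the inclusion of the underlying set of $\im u_f$ into $\pos S$, this amounts to showing $u_f(\pos M_s) \le s$ in $\pos S$, which holds because $s$ is an upper bound in $\pos S$ for each summand $f(m)$ with $m \in \pos M_s$, so it bounds the coproduct as well.

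For terminality, suppose $j \in \pos I^f_s$. Since $j \in \im u_f$, we can write $j = u_f(N)$ for some $N \in \copcompi(\gpos S)$, and the condition $w_f(j) \le s$ unpacks as $u_f(N) \le s$ in $\pos S$. For any $m \in N$ we have $f(m) \le u_f(N) \le s$ (since $f(m)$ appears as a summand of $u_f(N)$), so $m \in \pos M_s$; that is, $N \subseteq \pos M_s$. By the defining generating relation of the order $\ile$ on $\im u_f$, this inclusion gives $j = u_f(N) \ile u_f(\pos M_s)$, as required. The only point warranting any real care here is the bookkeeping around the two different partial orders (on $\im u_f$ and on $\pos S$) and the distinction between $u_f(\pos M_s)$ viewed as an element of $\im u_f$ versus of $\pos S$; once this is kept straight, the argument is essentially immediate.
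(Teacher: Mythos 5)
Your proof is correct, but it takes a different route from the paper's. The paper forms the coproduct $\coprod_{i \in \pos I^f_s} i$ \emph{inside} $\im u_f$, leaning on \Cref{lemma:univ_im} (that $\im u_f$ has all coproducts and that $w_f$ is full and preserves them) to see both that this coproduct exists and that it still lands in $\pos I^f_s$. You instead work upstairs in $\gpos S$: you take the preimage poset $\pos M_s = \set{\c s \mid f(\c s) \le s}$, check it is a non-empty down-set, and exhibit $u_f(\pos M_s)$ as the terminal object, with terminality coming directly from the generating relation defining the order $\ile$ on $\im u_f$. Both arguments produce the same element (terminal objects in a poset are unique), but yours is more self-contained — it needs neither the coproduct-completeness of $\im u_f$ nor the fullness of $w_f$, only the definitions of $u_f$, $\im$, and the fact that coproducts in a poset are least upper bounds — at the modest cost of the extra verification that $\pos M_s \in \copcompi(\gpos S)$. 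The paper's version is shorter given that \Cref{lemma:univ_im} has already been established immediately beforehand.
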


\begin{proof}
  We claim that $i_* := \coprod_{i \in \pos I^f_s} i \in \im u_f$ is terminal in $\pos I^f_s$, where the coproduct is taken in $\im u_f$ and exists by \Cref{lemma:univ_im}.
  It is clear that we have $i \le i_*$ for any $i \in \pos I^f_s$.
  But $i_*$ also lies in $\pos I^f_s$ as $w_f(i_*) = \coprod_{i \in \pos I^f_s} w_f(i) \le s$ by \Cref{lemma:univ_im} and the universal property of the coproduct.
\end{proof}

Now we can prove that finite inane shapes are actually not interesting from an excision standpoint.

\begin{proposition} \label{prop:inane}
  Let $\sigma \colon \gpos S \to \pos S$ be a finite inane reduced shape.
  Then any functor $F \colon \infcat C \to \infcat D$ between $\infty$-categories is $\sigma$-excisive (as long as $\infcat C$ is $\sigma$-nice, and $\infcat D$ admits limits indexed by both $\ginipos S$ and $\gini{(\im u_\sigma \times \pos P)}$, where $\pos P$ is as in the proof below).
\end{proposition}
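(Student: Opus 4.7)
The plan is to apply \Cref{cor:coprod_cartesian} to $F \circ D$ for an arbitrary $\sigma$-cocartesian $D \colon \pos S \to \infcat C$, with the distinguished element $s \in \ginipos S$ provided by inaneness. It suffices to show that $D$ sends every morphism $s' \to s' \cop s$ to an equivalence, since then $F \circ D$ does as well (because $F$ preserves equivalences), and $\infcat D$ admits $\ginipos S$-indexed limits by assumption.

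The crucial observation is that inaneness forces $\sigma \slice \ini$ and $\sigma \slice s$ to coincide literally (not merely up to equivalence) as full subposets of $\gpos S$: both equal $\inv\sigma(\ini)$, the first by definition and the second by the defining property of $s$. Consequently the functor $\sigma \slice \ini \to \sigma \slice s$ induced by $\ini \le s$ is the identity, and the pointwise formula for $\Lan{\sigma}$ yields that for every $\sigma$-cocartesian $E \colon \pos S \to \infcat C$, the map $E(\ini \to s)$---which under the counit equivalence $\Lan{\sigma} \Res{\sigma} E \simeq E$ is the map between colimits induced by the above identity functor---is itself an equivalence. Applying this to $E \defeq D \circ (s' \cop \blank)$, which is $\sigma$-cocartesian by \Cref{lemma:shifted_is_cocartesian} (using that $\sigma$ is a shape and $\infcat C$ is $\sigma$-nice), shows that $D(s' \to s' \cop s)$ is an equivalence for every $s' \in \pos S$, as required.

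The only genuinely subtle step is unpacking the inaneness definition to see the literal equality of slice categories; once this is in hand, the rest is a routine shift-and-apply argument. I note that this approach uses only $\ginipos S$-indexed limits in $\infcat D$, so the hypothesis in the statement involving $\gini{(\im u_\sigma \times \pos P)}$ presumably pertains to a different, perhaps more conceptual, proof strategy---possibly one that factors $F \circ D$ through a product-indexed diagram---which I have not attempted to reconstruct.
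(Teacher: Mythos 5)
Your proof is correct, and it takes a genuinely different — and substantially shorter — route than the paper's. The paper works with the entire poset $\pos P = \set{s \in \pos S \mid \pos I^\sigma_s = \set\ini}$ of inane witnesses and compares $\lim{\ginipos S}(F \circ D)$ with $F(D(\ini))$ via a span of homotopy initial functors through $\gini{(\im u_\sigma \times \pos P)}$; this is where the extra limit hypothesis on $\infcat D$ enters, and finiteness of $\sigma$ is used to produce a terminal object of $\pos P$ (via closure of $\pos P$ under binary coproducts). You instead fix a single witness $s$ of inaneness, observe the literal equality $\sigma \slice s = \inv\sigma(\ini) = \sigma \slice \ini$, deduce from \Cref{lemma:kan_local} and the counit equivalence that every $\sigma$-cocartesian diagram inverts $\ini \to s$, shift by \Cref{lemma:shifted_is_cocartesian} to invert all $s' \to s' \cop s$, and close with \Cref{cor:coprod_cartesian} — the same lemma that drives the main approximation theorem, which makes the argument feel very much of a piece with the rest of the paper. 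Your version needs only $\ginipos S$-indexed limits in $\infcat D$ (so the parenthetical hypothesis about $\gini{(\im u_\sigma \times \pos P)}$ is superfluous for your argument) and, as far as I can see, never uses finiteness of $\sigma$ at all; both proofs do essentially require $\sigma$ to be a shape, since the shift lemma is the common ingredient.
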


\begin{proof}
  Let $D \colon \pos S \to \infcat C$ be a $\sigma$-cocartesian diagram and $F \colon \infcat C \to \infcat D$ a functor.
  Furthermore, let $\pos P$ be the full subposet $\set{s \in \pos S \mid \pos I^\sigma_s = \set \ini} \subseteq \pos S$.
  That $\sigma$ is inane is then equivalent to $\pos P$ having more elements than just $\ini$.
  
  Now consider the diagram
  \begin{equation} \label[diagram]{diag:uninteresting_shapes_transformation}
  \begin{tikzcd}[column sep = -10]
  & \gini{(\im u_\sigma \times \pos P)} \dlar[swap]{\pr} \drar{c} & \\
  \im u_\sigma \drar[swap]{w_\sigma} \ar[Rightarrow, shorten > = 20, shorten < = 20]{rr}{\eta} & & \ginipos{S} \dlar{\inc} \\
  & \pos S &
  \end{tikzcd}
  \end{equation}
  where $\inc$ denotes the inclusion, $\pr$ the (restriction of the) projection, $c$ is given by $(i, p) \mapsto w_\sigma(i) \cop p$, and the natural transformation $\eta$ comes from the fact that $w_\sigma(i) \le w_\sigma(i) \cop p$.
  
  \begin{claim}
    The functor $c$ is homotopy initial.
  \end{claim}
  
  \begin{claimproof}
    This is \Cref{lemma:gini_cop_htpy_initial} applied to $w_\sigma$ and the inclusion $\inc[\pos P] \colon \pos P \to \pos S$.
    For this we need to show that for all $s \in \ginipos S$ one of $w_\sigma \slice s$ and ${\inc[\pos P]} \slice s$ has a terminal object different from the respective initial object.
    Note that the category $w_\sigma \slice s = \pos I^\sigma_s$ always has a terminal object by \Cref{lemma:Is_has_terminal}.
    If this terminal object is equal to $\ini$ then we have $s \in \pos P$ by definition, in which case $s \neq \ini$ is a terminal object of ${\inc[\pos P]} \slice s$.
  \end{claimproof}
  
  \begin{claim}
    The functor $\pr$ is homotopy initial.
  \end{claim}
  
  \begin{claimproof}
    We need to show that for all $i \in \im u_\sigma$ the category ${\pr} \slice i$ is contractible.
    This category can be identified with $\gini{(((\im u_\sigma) \slice i) \times \pos P)}$.
    By \Cref{lemma:contractible_products}, it is now enough to show that $\pos P$ has a terminal object different from $\ini$.
    For this we show that if $p, p' \in \pos P$ then also $p \cop p' \in \pos P$ (where the coproduct is taken in $\pos S$), which is enough since $\pos P$ has more than one element (by assumption) and is finite as it is a subset of $\pos S$.
    Otherwise there would be $j \in \gini{(\im u_\sigma)}$ such that $w_\sigma(j) \le p \cop p'$.
    Hence, by definition of $u_\sigma$, there is $\c k \in \gpos S$ such that $\sigma(\c k) \ne \ini$ and $\sigma(\c k) \le p \cop p'$.
    Now we look at
    \[ \gpos S_{p,p',\c k} = \set{\c s \in \gpos S \mid \sigma(\c s) \le p \text{ and } \sigma(\c k) \le p' \cop \sigma(\c s)} \]
    which, since $\sigma$ is a shape and $\sigma(\c k) \le p \cop p'$, is contractible.
    In particular it is non-empty, so let $\c s \in \gpos S_{p,p',\c k}$.
    We have $\sigma(\c s) \le p$, thus $\sigma(\c s) \in \pos I^\sigma_p$ (as $\sigma(\c s) \in \im u_\sigma$), and hence $\sigma(\c s) = \ini$.
    Thus $\sigma(\c k) \le p' \cop \sigma(\c s) = p'$ and hence $\sigma(\c k) = \ini$, a contradiction.
  \end{claimproof}
  
  \begin{claim}
    The natural transformation $D \circ \eta \colon D \circ w_\sigma \circ \pr \to D \circ {\inc} \circ c$ is an equivalence.
  \end{claim}
  
  \begin{claimproof}
    Since $D$ is $\sigma$-cocartesian, we have $D \eq (\Ext \Res{\sigma}) (D)$.
    By \Cref{lemma:kan_local} the transformation $(\Ext \Res{\sigma})(D) \circ \eta$ is given, at $(i, p) \in \gini{(\im u_\sigma \times \pos P)}$, by the map
    \[ \colim{\sigma \slice w_\sigma(i)} \left( \Res{\sigma}(D) \circ \pr[\sigma \slice w_\sigma(i)] \right) \longto \colim{\sigma \slice (w_\sigma(i) \cop p)} \left( \Res{\sigma}(D) \circ \pr[\sigma \slice (w_\sigma(i) \cop p)] \right) \]
    induced by the canonical functor $\sigma \slice w_\sigma(i) \to \sigma \slice (w_\sigma(i) \cop p)$.
    
    We will now show that $\sigma \slice w_\sigma(i) \to \sigma \slice (w_\sigma(i) \cop p)$ is an isomorphism, for which surjectivity (on objects) suffices as the map is an inclusion of full subposets of $\gpos S$.
    For this we need that, if $\c k \in \gpos S$ such that $\sigma(\c k) \le w_\sigma(i) \cop p$, then already $\sigma(\c k) \le w_\sigma(i)$.
    For this consider
    \[ \gpos S_{p,w_\sigma(i),\c k} = \set{\c s \in \gpos S \mid \sigma(\c s) \le p \text{ and } \sigma(\c k) \le w_\sigma(i) \cop \sigma(\c s)} \]
    which is contractible (since $\sigma(\c k) \le w_\sigma(i) \cop p$), hence non-empty.
    So let $\c s$ be an element of $\gpos S_{p,w_\sigma(i),\c k}$.
    The condition $\sigma(\c s) \le p$ implies $\sigma(\c s) = \ini$, thus we have $\sigma(\c k) \le w_\sigma(i) \cop \ini = w_\sigma(i)$, as we wanted to show.
  \end{claimproof}
  
  Now we note that \cref{diag:uninteresting_shapes_transformation} extends to a diagram
  \[
  \begin{tikzcd}[column sep = -15, row sep = 25]
    & \cone{\left(\gini{(\im u_\sigma \times \pos P)}\right)} \dlar[swap]{\cone \pr} \drar{\cone c} & \\
    \cone{(\im u_\sigma)} \drar[swap]{\cone{(w_\sigma)}} \ar[Rightarrow, shorten > = 20, shorten < = 20]{rr}{\cone \eta} & & \cone{(\ginipos{S})} \dlar{\cone \inc} \\
    & \cone{\pos S} \dar{q} & \\
    & \pos S &
  \end{tikzcd}
  \]
  where $\cone \eta$ is given by the identity at the cone point and by $\eta$ otherwise, and $q$ is the identity on $\pos S$ and $\ini$ at the cone point.
  This allows us to obtain maps from $F(D(\ini))$ into certain limits indexed by the categories occurring in \cref{diag:uninteresting_shapes_transformation}.
  Namely we get, by (the dual of) \Cref{lemma:functors_and_map_from_colim}, the homotopy commutative diagram
  \[
  \begin{tikzcd}
    \lim{\gini{(\im u_\sigma \times \pos P)}} (F \circ D \circ w_\sigma \circ \pr) \ar{rr}{\eq}[swap]{\eta} & & \lim{\gini{(\im u_\sigma \times \pos P)}} (F \circ D \circ {\inc} \circ c) \\
    & F(D(\ini)) \dlar[swap]{\eq} \ular \urar \drar & \\
    \lim{\im u_\sigma} (F \circ D \circ w_\sigma) \ar{uu}{\pr^*}[swap]{\eq} & & \lim{\ginipos S} (F \circ D \circ \inc) \ar{uu}{\eq}[swap]{c^*}
  \end{tikzcd}
  \]
  where the maps around the boundary are equivalences by the previous three claims and the map into the bottom left is one by \Cref{lemma:terminal_object_colimit} as $\ini$ is initial in $\im u_\sigma$ and $q \circ \cone{(w_\sigma)}$ applied to the unique map from the cone point to $\ini$ is the identity.
  Hence the map into the bottom right is an equivalence as well, which is what we wanted to show.
\end{proof}

\subsection{Non-inane shapes}

This subsection is devoted to comparing a non-inane (pre)shape $\sigma \colon \gpos S \to \pos S$ to the free shape $\copcompmap[\gpos S] \colon \gpos S \to \copcompi(\gpos S)$ in two different ways.

\begin{proposition} \label{prop:indirect_to_univ}
  Let $\sigma \colon \gpos S \to \pos S$ be a full preshape.
  Then we have a map of posets $e \colon \pos S \to \copcompi(\gpos S)$ defined by $s \mapsto \set{\c s \in \gpos S \mid \sigma(\c s) \le s}$.
  If $\inv e(\set{\ini_{\gpos S}}) = \set{\ini_{\pos S}}$, this gives a map of preshapes
  \[
  \begin{tikzcd}
    \gpos S \rar{\id} \dar[swap]{\sigma} & \gpos S \dar{\copcompmap[\gpos S]} \\
    \pos S \rar{e} & \copcompi(\gpos S)
  \end{tikzcd}
  \]
  that is indirect.
  Furthermore, if we additionally assume $\sigma$ to be a shape, the condition $\inv e(\set{\ini_{\gpos S}}) = \set{\ini_{\pos S}}$ is equivalent to $\sigma$ being non-inane.
\end{proposition}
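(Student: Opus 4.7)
The plan is to unpack the definitions, relying throughout on fullness of $\sigma$, which for posets amounts to the equivalence $\sigma(\c t) \le \sigma(\c s) \iff \c t \le \c s$. First I would check that $e$ is well-defined: the subset $e(s) \subseteq \gpos S$ is non-empty (it contains $\genini$, since full preshapes are reduced) and downward closed (by order preservation of $\sigma$: $\c t \le \c s$ with $\sigma(\c s) \le s$ gives $\sigma(\c t) \le s$), and $e$ is itself order-preserving directly from the definition. Commutativity of the square amounts to the identity $e(\sigma(\c s)) = \down\set{\c s}$ for each $\c s \in \gpos S$: the inclusion $\supseteq$ is automatic, while $\subseteq$ is exactly fullness. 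Combined with the hypothesis $\inv e(\set{\genini}) = \set{\ini_{\pos S}}$, this gives a map of preshapes.

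For indirectness, I would observe that for any $s \in \pos S$ the induced functor $\sigma \slice s \to \copcompmap[\gpos S] \slice e(s)$ is already an equality of full subposets of $\gpos S$. Indeed, the source identifies with $\set{\c s \in \gpos S \mid \sigma(\c s) \le s} = e(s)$, while the target identifies with $\set{\c s \in \gpos S \mid \down\set{\c s} \subseteq e(s)}$, which collapses to $e(s)$ because $e(s)$ is downward closed. The induced functor is then the identity on $e(s)$, and hence trivially homotopy terminal.

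For the final equivalence, assume $\sigma$ is a shape so that inanity is defined. Using fullness once more, $\sigma(\c s) = \ini_{\pos S}$ is equivalent to $\c s = \genini$, so the condition $\pos I^\sigma_s = \set{\ini}$ from the definition of inanity translates to $e(s) = \set{\genini}$. Since $e(\ini_{\pos S}) = \set{\genini}$ holds automatically, $\sigma$ is inane iff there exists some $s \in \ginipos S$ with $e(s) = \set{\genini}$, iff $\inv e(\set{\genini}) \ne \set{\ini_{\pos S}}$. There is no real obstacle in the proof; the substantive observation is that $e$ is defined precisely so that indirectness reduces to the trivial fact that both relevant comma categories collapse to $e(s)$.
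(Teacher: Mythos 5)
Your proof is correct and follows essentially the same route as the paper: commutativity via fullness, indirectness via the observation that both comma categories identify with the downward closed set $e(s)$ (so the induced functor is an isomorphism), and the final equivalence by unwinding the definition of inanity using that fullness forces $\inv\sigma(\ini_{\pos S}) = \set{\genini}$. The extra well-definedness checks you include are correctly handled and merely make explicit what the paper leaves implicit.
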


\begin{proof}
  For commutativity of the square we need, for all $\c k \in \gpos S$, the equality $\set{\c s \in \gpos S \mid \sigma(\c s) \le \sigma(\c k)} = \set{\c s \in \gpos S \mid \c s \le \c k}$, which follows from $\sigma$ being full.
  For indirectness, note that, for all $s \in \pos S$, the induced functor $\sigma \slice s \to \copcompmap[\gpos S] \slice e(s)$ is an isomorphism since both sides can be identified with the full subposet $e(s) = \set{\c s \mid \sigma(\c s) \le s} \subseteq \gpos S$.
  For the last statement of the proposition, note that $\sigma$ being non-inane is equivalent to, for all $s \in \ginipos S$, there existing an $\c s \in \gpos S$ such that $\ini_{\pos S} \neq \sigma(\c s) \le s$, which is a reformulation of the statement $\inv e(\set{\ini_{\gpos S}}) = \set{\ini_{\pos S}}$.
\end{proof}

\begin{corollary} \label{cor:exc_implies_free_exc}
  Let $\sigma \colon \gpos S \to \pos S$ be a non-inane full shape.
  Then a functor $F \colon \infcat C \to \infcat D$ between $\infty$-categories that is $\sigma$-excisive is also $\copcompmap[\gpos S]$-excisive (as long as $\infcat C$ is $\sigma$-nice and $\copcompmap[\gpos S]$-nice, and $\infcat D$ admits limits indexed both by $\ginipos S$ and by $\gini{\copcompi(\gpos S)}$).
\end{corollary}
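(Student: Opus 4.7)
The plan is to assemble this corollary directly from the machinery already developed, with essentially no new work required. The key observation is that Proposition \ref{prop:indirect_to_univ} produces exactly the kind of map that Proposition \ref{prop:indirect} is designed to exploit.

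First I would verify that the hypotheses of Proposition \ref{prop:indirect_to_univ} are satisfied: $\sigma$ is full by assumption, so the map $e \colon \pos S \to \copcompi(\gpos S)$ sending $s \mapsto \set{\c s \in \gpos S \mid \sigma(\c s) \le s}$ is well-defined. Since $\sigma$ is moreover a shape (being a full shape) and is non-inane by hypothesis, the last sentence of Proposition \ref{prop:indirect_to_univ} applies and gives $\inv e(\set{\ini_{\gpos S}}) = \set{\ini_{\pos S}}$. Consequently we obtain an indirect map of preshapes $(e, \id[\gpos S]) \colon \sigma \to \copcompmap[\gpos S]$.

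Next I would record that the target preshape $\copcompmap[\gpos S] \colon \gpos S \to \copcompi(\gpos S)$ is itself a shape by Lemma \ref{lemma:univ_is_full_shape}. This is the missing ingredient needed to invoke Proposition \ref{prop:indirect}, which requires the codomain of the indirect map to be a shape (not merely a preshape).

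Finally I would apply Proposition \ref{prop:indirect} to the indirect map $(e, \id[\gpos S]) \colon \sigma \to \copcompmap[\gpos S]$. The hypotheses on $\infcat C$ and $\infcat D$ in the statement of the corollary match exactly what is required: $\infcat C$ is both $\sigma$-nice and $\copcompmap[\gpos S]$-nice, and $\infcat D$ admits limits indexed by $\ginipos S$ and by $\gini{\copcompi(\gpos S)}$. The conclusion of Proposition \ref{prop:indirect} then states that every $\sigma$-excisive functor $F \colon \infcat C \to \infcat D$ is $\copcompmap[\gpos S]$-excisive, which is exactly what we wanted. There is no real obstacle here; the proof is simply a three-line citation chain, and the only thing to be careful about is checking that the preshape/shape distinctions and the niceness hypotheses line up correctly between the two cited results.
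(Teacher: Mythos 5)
Your proposal is correct and is exactly the paper's intended argument: the corollary is stated immediately after Proposition \ref{prop:indirect_to_univ} precisely so that it follows by feeding the indirect map $(e, \id[\gpos S]) \colon \sigma \to \copcompmap[\gpos S]$ (whose existence uses non-inaneness to verify $\inv e(\set{\ini_{\gpos S}}) = \set{\ini_{\pos S}}$) into Proposition \ref{prop:indirect}, with Lemma \ref{lemma:univ_is_full_shape} supplying the required fact that the target is a shape. All hypotheses line up as you describe.
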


Combining this with \Cref{cor:free_shape_eq_to_n-exc}, we obtain:

\begin{theorem} \label{thm:nb-excisive_implies_n-excisive}
  Let $\sigma \colon \gpos S \to \pos S$ be a finite non-inane full shape.
  Then a functor $F \colon \infcat C \to \infcat D$ between $\infty$-categories that is $\sigma$-excisive is also $(n-1)$-excisive, where $n$ is the number of minimal elements of $\gini{\gpos S}$ (as long as $\infcat C$ admits all finite colimits, and $\infcat D$ admits limits indexed by $\gini{\Cube n}$, $\ginipos S$, and $\gini{\copcompi(\gpos S)}$).
\end{theorem}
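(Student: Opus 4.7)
The plan is to combine the two corollaries that immediately precede this theorem. Specifically, \Cref{cor:exc_implies_free_exc} shows that, for a non-inane full shape $\sigma \colon \gpos S \to \pos S$, any $\sigma$-excisive functor is also $\copcompmap[\gpos S]$-excisive, while \Cref{cor:free_shape_eq_to_n-exc} shows that $\copcompmap[\gpos S]$-excisive (for finite $\gpos S$) is equivalent to $(n-1)$-excisive, where $n$ is the number of minimal elements of $\gini{\gpos S}$. So the argument is simply to chain these two implications together.

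First I would verify that the hypotheses on $\infcat C$ and $\infcat D$ in the theorem are sufficient to invoke both corollaries. Since $\sigma$ is finite, being $\sigma$-nice means admitting all finite colimits, which is part of the assumption on $\infcat C$; moreover $\copcompi(\gpos S)$ is finite since $\gpos S$ is, so $\copcompmap[\gpos S]$-niceness also reduces to admitting finite colimits. On the $\infcat D$ side, the three assumed limit types (indexed by $\gini{\Cube n}$, $\ginipos S$, and $\gini{\copcompi(\gpos S)}$) are precisely the ones needed to apply both corollaries: the first corollary requires limits over $\ginipos S$ and $\gini{\copcompi(\gpos S)}$, and the second requires limits over $\gini{\Cube n}$ and $\gini{\copcompi(\gpos S)}$.

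With the hypotheses checked, the proof is then a one-line composition: $F$ is $\sigma$-excisive, so by \Cref{cor:exc_implies_free_exc} it is $\copcompmap[\gpos S]$-excisive, and then by \Cref{cor:free_shape_eq_to_n-exc} it is $(n-1)$-excisive. There is no real obstacle here since the work has already been done in the preceding subsections; the only subtlety worth noting is that both \Cref{prop:indirect_to_univ} (underlying the first corollary) and \Cref{prop:free_shape_eq_to_smaller} (underlying the second) compare $\sigma$ to the free shape $\copcompmap[\gpos S]$, so using the latter as an intermediate shape is the natural bridge between $\sigma$ and $\cubeinc{n}$.
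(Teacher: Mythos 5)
Your proposal matches the paper exactly: the theorem is stated there as an immediate consequence of combining \Cref{cor:exc_implies_free_exc} with \Cref{cor:free_shape_eq_to_n-exc}, which is precisely the chain of implications you describe. Your additional verification of the niceness and limit hypotheses is a sensible (and correct) bit of due diligence that the paper leaves implicit.
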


The next two results tell us that any non-inane full shape $\gpos S \to \pos S$ has the same excision properties as a shape $\gpos S \to \pos Q$ such that $\pos Q$ is a retract of $\copcompi(\gpos S)$.
In particular, for a fixed finite $\gpos S$, there are only finitely many full shapes of the form $\gpos S \to \pos S$ with different excision properties.
Combining this with \Cref{prop:finite_shape_eq_to_full}, we even get this result if we do not require the shapes to be full.
See \Cref{cor:fin_many_excision_notions} for a precise version of this statement.

\begin{proposition} \label{prop:equiv_to_image_of_free}
  Let $\sigma \colon \gpos S \to \pos S$ be a reduced shape.
  Then $v_\sigma \colon \gpos S \to \im u_\sigma$ (see \Cref{def:gen_sub}) is again a shape which is finite if $\sigma$ is, and the map of shapes
  \[
  \begin{tikzcd}
  \gpos S \rar{\id} \dar[swap]{v_\sigma} & \gpos S \dar{\sigma} \\
  \im u_\sigma \rar{w_\sigma} & \pos S
  \end{tikzcd}
  \]
  is indirect.
  Furthermore, if $\sigma$ is full and non-inane, then it is also direct.
\end{proposition}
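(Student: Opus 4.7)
The plan is to treat the three assertions separately: that $v_\sigma$ is a shape (finite if $\sigma$ is), that the displayed square is an indirect map of shapes, and that under the additional hypotheses it is direct. The first two will follow immediately from \Cref{lemma:full_sub_is_shape}; the content lies in establishing directness.

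For the shape and indirectness statements, I would apply \Cref{lemma:full_sub_is_shape} to the factorization triangle $\sigma = w_\sigma \circ v_\sigma$ from \Cref{def:gen_sub}. By \Cref{lemma:univ_im} the codomain $\im u_\sigma$ lies in $\Poscop$ and $w_\sigma$ is full and preserves coproducts; together with $\sigma$ being a shape, these are exactly the hypotheses of \Cref{lemma:full_sub_is_shape}. The lemma then yields at once that $v_\sigma$ is a shape (finite if $\sigma$ is, using that $w_\sigma$ is injective and hence $\im u_\sigma$ is finite whenever $\pos S$ is) and that $(w_\sigma, \id[\gpos S]) \colon v_\sigma \to \sigma$ is an indirect map of shapes.

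For directness, the fullness of $w_\sigma$ is once again \Cref{lemma:univ_im}, so the only remaining task is to show that the restriction $w_\sigma|_{\gini{\im u_\sigma}} \colon \gini{\im u_\sigma} \to \ginipos S$ is homotopy initial. Fix $s \in \ginipos S$; unwinding the slice category over $s$, this amounts to showing that the full subposet
\[ \set{i \in \gini{\im u_\sigma} \mid w_\sigma(i) \le s} \;=\; \pos I^\sigma_s \setminus \set{\ini} \]
of $\im u_\sigma$ is contractible. By \Cref{lemma:Is_has_terminal}, $\pos I^\sigma_s$ has a terminal object $i_* = \coprod_{i \in \pos I^\sigma_s} i$, the coproduct being taken inside $\im u_\sigma$. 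The non-inanity of $\sigma$ says precisely that $\pos I^\sigma_s \neq \set{\ini}$ for every $s \in \ginipos S$, which forces $i_* \neq \ini$ (otherwise every element of $\pos I^\sigma_s$ would be below the initial object and hence equal to it). Thus $i_*$ remains terminal in $\pos I^\sigma_s \setminus \set{\ini}$, so this slice is contractible.

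I do not anticipate any real obstacle; the one point that requires care is that the partial order on $\im u_\sigma$ is the one generated as in \Cref{def:gen_sub} rather than the one inherited from $\pos S$, so that coproducts and the terminal object $i_*$ must be formed internally to $\im u_\sigma$. With that kept in mind, the proposition follows mechanically from the cited lemmas.
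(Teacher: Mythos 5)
Your proof is correct and follows essentially the same route as the paper: the shape and indirectness claims via \Cref{lemma:full_sub_is_shape} and \Cref{lemma:univ_im}, and directness by observing that each slice of $w_\sigma|_{\gini{(\im u_\sigma)}}$ over $s \in \ginipos S$ has the terminal object $\max \pos I^\sigma_s$ from \Cref{lemma:Is_has_terminal}, which non-inanity forces to be non-initial. The paper merely packages the same observation as the statement that $s \mapsto \max \pos I^\sigma_s$ is a right adjoint to $w_\sigma|_{\gini{(\im u_\sigma)}}$, so the two arguments coincide.
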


\begin{proof}
  The first part follows from \Cref{lemma:full_sub_is_shape} using \Cref{lemma:univ_im}.
  For the second part we will use the following claim:
  
  \begin{claim}
    Let the functor $r \colon \pos S \to \im u_\sigma$ be given by $s \mapsto \max\, \pos I^\sigma_s$ and assume it restricts to a functor $\ginipos S \to \gini{(\im u_\sigma)}$.
    Then $r|_{\ginipos S}$ is right adjoint to $w_\sigma|_{(\gini{\im u_\sigma)}}$.
  \end{claim}
  
  \begin{claimproof}
    First note that $\max\, \pos I^\sigma_s$ exists by \Cref{lemma:Is_has_terminal} and that $r$ is functorial as $s \le s'$ implies $\pos I^\sigma_s \subseteq \pos I^\sigma_{s'}$ and hence $\max\, \pos I^\sigma_s \le \max\, \pos I^\sigma_{s'}$.
    For the adjunction we have to show that, for all $i \in \gini{(\im u_\sigma)}$ and $s \in \ginipos S$, we have $i \le r(s)$ if and only if $w_\sigma(i) \le s$, which follows from $\pos I^\sigma_s$ being defined as $w_\sigma \slice s$.
  \end{claimproof}
  
  Note that our assumption that $\sigma$ is not inane precisely says that $r$ restricts to a functor $\ginipos S \to \gini{(\im u_\sigma)}$.
  Hence $w_\sigma|_{(\gini{\im u_\sigma)}} \colon \gini{(\im u_\sigma)} \to \ginipos S$ is left adjoint and thus homotopy initial.
  Together with $w_\sigma$ and $\sigma$ being full this shows directness.
\end{proof}

\begin{proposition} \label{prop:retract_of_free}
  Let $\sigma \colon \gpos S \to \pos S$ be a full shape and assume that $u_\sigma \colon \copcompi(\gpos S) \to \pos S$ is surjective.
  Then $u_\sigma \circ e = \id[\pos S]$, where $e$ is as in \Cref{prop:indirect_to_univ}.
  In particular $\sigma$ is a retract of $\copcompmap[\gpos S]$ (in the category $\gpos S \slice \Posini$).
\end{proposition}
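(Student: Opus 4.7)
The plan is to unpack the definitions of $e$ and $u_\sigma$ and check the equality $u_\sigma \circ e = \id[\pos S]$ pointwise, then verify the auxiliary conditions needed to interpret this as a retract diagram in $\gpos S \slice \Posini$.

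First I would establish $u_\sigma(e(s)) \le s$. By definition $u_\sigma(e(s)) = \coprod_{\c s \in e(s)} \sigma(\c s) = \coprod\set{\sigma(\c s) \mid \c s \in \gpos S,\; \sigma(\c s) \le s}$, and every summand is bounded by $s$, so the universal property of the coproduct gives the inequality. For the reverse inequality I would use surjectivity of $u_\sigma$: pick $M \in \copcompi(\gpos S)$ with $u_\sigma(M) = s$. Then for every $\c m \in M$ we have $\sigma(\c m) \le \coprod_{\c n \in M} \sigma(\c n) = s$, which means $M \subseteq e(s)$. Monotonicity of $u_\sigma$ then gives $s = u_\sigma(M) \le u_\sigma(e(s))$. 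Combining both inequalities yields $u_\sigma(e(s)) = s$.

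For the retract statement I need to exhibit $e$ as a morphism in $\gpos S \slice \Posini$ from $\sigma$ to $\copcompmap[\gpos S]$. The map $e$ is order preserving because $s \le s'$ implies $e(s) \subseteq e(s')$. It preserves the initial object: since $\sigma$ is full and thus injective, $\inv\sigma(\ini_{\pos S}) = \set{\genini}$ (using that a full preshape is automatically reduced), so $e(\ini_{\pos S}) = \set{\genini}$, which is exactly the initial object of $\copcompi(\gpos S)$. The under-$\gpos S$ condition $e \circ \sigma = \copcompmap[\gpos S]$ follows from fullness of $\sigma$: for $\c s \in \gpos S$,
\[ e(\sigma(\c s)) = \set{\c t \in \gpos S \mid \sigma(\c t) \le \sigma(\c s)} = \set{\c t \in \gpos S \mid \c t \le \c s} = \down{\set{\c s}} = \copcompmap[\gpos S](\c s). \]
On the other side, $u_\sigma$ is by construction the unique morphism in $\gpos S \slice \Posini$ from $\copcompmap[\gpos S]$ to $\sigma$ (as recorded in \Cref{def:gen_sub}). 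Combined with $u_\sigma \circ e = \id[\pos S]$, this exhibits $\sigma$ as a retract of $\copcompmap[\gpos S]$ in $\gpos S \slice \Posini$.

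No step looks genuinely hard: everything reduces to unwinding the explicit formulas for $e$, $u_\sigma$, and $\copcompmap[\gpos S]$. The only place one has to be slightly careful is at the initial object, where fullness (and the reductness it entails) is used to pin down $\inv\sigma(\ini_{\pos S})$; this is what guarantees that $e$ lands in $\copcompi(\gpos S)$ in an initial-object-preserving way, so the retract really lives in the pointed category $\gpos S \slice \Posini$ rather than merely in $\gpos S \slice \Pos$.
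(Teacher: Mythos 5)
Your proof is correct and follows essentially the same route as the paper: the key step in both is to use surjectivity to pick $M$ with $u_\sigma(M) = s$, deduce $M \subseteq e(s)$, and sandwich $s = u_\sigma(M) \le u_\sigma(e(s)) \le s$. The extra verifications you include (that $e \circ \sigma = \copcompmap[\gpos S]$ and that $e$ preserves the initial object, so the retract lives in $\gpos S \slice \Posini$) are details the paper leaves implicit, relying on \Cref{prop:indirect_to_univ}, and they check out.
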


\begin{proof}
  We need to show that $u_\sigma(e(s)) = s$ for all $s \in \pos S$, where $e(s) = \set{\c s \in \gpos S \mid \sigma(\c s) \le s}$.
  So, let $s$ be an element of $\pos S$.
  By assumption, there is an $M \subseteq \gpos S$ such that $s = u_\sigma(M) = \coprod_{m \in M} \sigma(m)$.
  This implies $\sigma(m) \le s$ for all $m \in M$, hence $M \subseteq e(s)$.
  We obtain the inequalities $s = u_\sigma(M) \le u_\sigma(e(s)) = \coprod_{\c s \in e(s)} \sigma(\c s) \le s$, which imply $u_\sigma(e(s)) = s$.
\end{proof}

\begin{lemma} \label{lemma:u_v_surjective}
  Let $\sigma \colon \gpos S \to \pos S$ be a reduced shape.
  Then the following diagram commutes:
  \[
  \begin{tikzcd}
    \copcompi(\gpos S) \ar{rr}{u_\sigma} \drar[swap]{u_{v_\sigma}} & & \pos S \\
     & \im u_\sigma \urar[swap]{w_f} &
  \end{tikzcd}
  \]
  which in particular implies that $u_{v_\sigma}$ is surjective (as $u_\sigma$ maps surjectively onto $\im u_\sigma$).
\end{lemma}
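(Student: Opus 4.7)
The plan is to identify $u_{v_\sigma}$ with the canonical surjection $\pi_\sigma \colon \copcompi(\gpos S) \twoheadrightarrow \im u_\sigma$ coming from the definition of the image factorization $u_\sigma = w_\sigma \circ \pi_\sigma$. Once this identification is made, the commutativity of the triangle is exactly this factorization, and the surjectivity of $u_{v_\sigma}$ is immediate from the fact that $\pi_\sigma$ is surjective onto $\im u_\sigma$ by construction.

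The only point that requires argument is that $\pi_\sigma$ is a morphism in $\Poscop$, so that we may feed it into the universal property. For this I would observe that $u_\sigma = w_\sigma \circ \pi_\sigma$ preserves coproducts (being a morphism in $\Poscop$), while $w_\sigma$ is fully faithful and itself preserves coproducts by \Cref{lemma:univ_im}. Since fully faithful functors reflect colimits, $\pi_\sigma$ must preserve coproducts as well; in particular (as the empty case) it preserves the initial object, so $\pi_\sigma \in \Hom_{\Poscop}(\copcompi(\gpos S), \im u_\sigma)$.

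Next I would unwind \Cref{def:gen_sub} to see that, by construction, $v_\sigma = \pi_\sigma \circ \copcompmap[\gpos S]$. In particular $v_\sigma$ preserves initial objects (so that $u_{v_\sigma}$ is defined), and $\pi_\sigma$ is a morphism in $\Poscop$ extending $v_\sigma$ along $\copcompmap[\gpos S]$. By the universal property provided by the adjunction $\copcompi \dashv U$, such an extension is unique, and hence $u_{v_\sigma} = \pi_\sigma$. Postcomposing with $w_\sigma$ then yields $u_\sigma = w_\sigma \circ \pi_\sigma = w_\sigma \circ u_{v_\sigma}$, which is the commutativity of the displayed triangle. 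The ``in particular'' statement follows because $\pi_\sigma$ is surjective by definition of $\im u_\sigma$.

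I do not expect any real obstacle: the argument is essentially bookkeeping with the universal property of $\copcompi$. The one spot that deserves care is verifying that $\pi_\sigma$ belongs to $\Poscop$, which reduces to the reflection-of-colimits property of fully faithful maps between posets.
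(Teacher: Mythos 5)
Your proof is correct and uses essentially the same ingredients as the paper: the uniqueness clause of the universal property of $\copcompi$ together with \Cref{lemma:univ_im}. The only cosmetic difference is that you explicitly identify $u_{v_\sigma}$ with the corestriction $\pi_\sigma$ (verifying $\pi_\sigma \in \Poscop$ via reflection of coproducts along the fully faithful $w_\sigma$), whereas one can also conclude directly from the naturality $u_{w_\sigma \circ v_\sigma} = w_\sigma \circ u_{v_\sigma}$ once \Cref{lemma:univ_im} guarantees $w_\sigma \in \Poscop$; both are sound.
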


\begin{proof}
  This follows directly from \Cref{lemma:univ_im}.
\end{proof}

\begin{theorem} \label{thm:equiv_to_retract_of_free}
  Let $\sigma \colon \gpos S \to \pos S$ be a full shape.
  Then $v_\sigma \colon \gpos S \to \im u_\sigma$ is a retract of $\copcompmap[\gpos S]$ (in the category $\gpos S \slice \Posini$).
  Moreover, if $\sigma$ is non-inane, a functor $F \colon \infcat C \to \infcat D$ between $\infty$-categories is $\sigma$-excisive if and only if it is $v_\sigma$-excisive (as long as $\infcat C$ is $\sigma$-nice (which implies $v_\sigma$-nice), and $\infcat D$ admits limits indexed by both $\ginipos S$ and $\gini{(\im u_\sigma)}$).
\end{theorem}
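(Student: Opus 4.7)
The plan is to split the theorem into its two halves and reduce each to machinery already in place. Specifically, part (1) will follow by applying \Cref{prop:retract_of_free} to $v_\sigma$ itself (rather than to $\sigma$), while part (2) will follow by feeding the non-inane hypothesis into \Cref{prop:equiv_to_image_of_free} and then invoking \Cref{prop:direct,prop:indirect}.

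First I would verify that $v_\sigma$ is a full shape. That it is a shape (and finite if $\sigma$ is) is the first conclusion of \Cref{prop:equiv_to_image_of_free}. For fullness, suppose $v_\sigma(a) \ile v_\sigma(b)$ in $\im u_\sigma$; applying the functor $w_\sigma$ yields $\sigma(a) = w_\sigma(v_\sigma(a)) \le w_\sigma(v_\sigma(b)) = \sigma(b)$ in $\pos S$, and fullness of $\sigma$ then forces $a \le b$. Thus $v_\sigma$ reflects the order, hence is a full functor between posets and therefore a full shape. By \Cref{lemma:u_v_surjective}, $u_{v_\sigma}$ is surjective, so \Cref{prop:retract_of_free} applied to $v_\sigma$ immediately produces the desired retraction of $v_\sigma$ by $\copcompmap[\gpos S]$ in $\gpos S \slice \Posini$, proving the first assertion.

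For the excision equivalence, the additional non-inane hypothesis on $\sigma$ unlocks the full conclusion of \Cref{prop:equiv_to_image_of_free}: the map of shapes $(w_\sigma, \id) \colon v_\sigma \to \sigma$ is simultaneously direct and indirect. Indirectness, together with \Cref{prop:indirect} (whose requirement that the target be a shape holds since $\sigma$ is one), shows that every $v_\sigma$-excisive functor is $\sigma$-excisive. Conversely, directness, together with \Cref{prop:direct} (whose requirement that the target be a shape or full is satisfied on both counts), shows that every $\sigma$-excisive functor is $v_\sigma$-excisive. The niceness of $\infcat C$ with respect to $\sigma$ implies niceness with respect to $v_\sigma$ by inspection of \Cref{def:nice}, and the limit hypotheses on $\infcat D$ in the statement match exactly what the two propositions demand.

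I do not anticipate any real obstacle: the technical content has been absorbed into \Cref{prop:equiv_to_image_of_free,prop:retract_of_free,lemma:u_v_surjective}, and the only genuinely new check is the short fullness argument for $v_\sigma$, which is a two-line diagram chase through $w_\sigma$. If anything, the mildly delicate point is keeping track of the direction of the map of shapes $v_\sigma \to \sigma$ so as to pair it correctly with \Cref{prop:direct,prop:indirect}.
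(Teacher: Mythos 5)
Your proposal is correct and follows essentially the same route as the paper: apply \Cref{prop:equiv_to_image_of_free} to $\sigma$ for the excision equivalence (pairing directness and indirectness of $(w_\sigma,\id)\colon v_\sigma\to\sigma$ with \Cref{prop:direct,prop:indirect}) and \Cref{prop:retract_of_free} to $v_\sigma$ for the retraction, using \Cref{lemma:u_v_surjective} for surjectivity of $u_{v_\sigma}$. Your inline fullness check for $v_\sigma$ is exactly the content of \Cref{lemma:poset_full_composition}, which the paper cites instead of reproving.
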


\begin{proof}
  This follows by applying \Cref{prop:equiv_to_image_of_free} to $\sigma$ and then \Cref{prop:retract_of_free} to $v_\sigma$.
  For the latter part we use that $v_\sigma$ is full by \Cref{lemma:poset_full_composition} since $\sigma$ is, and that $u_{v_\sigma}$ is actually surjective by \Cref{lemma:u_v_surjective}.
\end{proof}

\section{Cubical shapes} \label{sec:cubes}

\newcommand{\mc}[2]{\operatorname{mc}(#1, #2)}
\newcommand{\isol}[1]{\operatorname{isol}(#1)}
\newcommand{\nisol}[1]{\operatorname{nisol}(#1)}

In this section we study (pre)shapes of the form $\sigma \colon \gpos S \to \Cube S$, i.e.\ those with codomain a cube, and, more specifically, how the associated notions of excision relate to $n$-excision.

\begin{remark}
  By \Cref{prop:finite_shape_eq_to_full} we do not lose any generality when restricting ourselves to full shapes, at least if the shape is finite (which in the later parts of this section it will need to be anyway), so we will do so freely.
  
  Also note that, if $\sigma$ is full, then it is also injective (by \Cref{lemma:pos_full_implies_injective}).
  Hence we can, by a slight abuse of notation, consider $\gpos S$ to be a subset of $\Cube S$.
  We will do so throughout this section.
\end{remark}

\begin{lemma} \label{lemma:cube_shape}
  Let $S$ be a set and $\sigma \colon \gpos S \to \Cube S$ a full preshape.
  Then $\sigma$ is a shape if and only if $\gpos S$ is downward closed in $\Cube S$.
\end{lemma}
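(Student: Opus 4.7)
My plan is to prove both directions essentially by hand, using the explicit shape condition of \Cref{def:shape} specialized to $\pos S = \Cube S$, where $\cop$ becomes $\cup$, $\le$ becomes $\subseteq$, and the initial object is $\emptyset$. Since $\sigma$ is full, I will identify $\gpos S$ with its image in $\Cube S$.

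For the ``if'' direction, suppose $\gpos S$ is downward closed. Fix $s, t \in \Cube S$ and $\c k \in \gpos S$ with $\c k \subseteq s \cup t$. I plan to show that $\c c_0 \defeq \c k \setminus t$ is an initial object of
\[
  \gpos S_{s,t,\c k} = \{\c c \in \gpos S \mid \c c \subseteq s \text{ and } \c k \subseteq t \cup \c c\},
\]
which then makes this category contractible. Three quick checks: (i) $\c c_0 \subseteq s$ since $\c k \subseteq s \cup t$ forces $\c k \setminus t \subseteq s$; (ii) $\c c_0 \in \gpos S$ because $\c c_0 \subseteq \c k \in \gpos S$ and $\gpos S$ is downward closed; (iii) for any other $\c d \in \gpos S_{s,t,\c k}$, the condition $\c k \subseteq t \cup \c d$ gives $\c k \setminus t \subseteq \c d$, i.e.\ $\c c_0 \subseteq \c d$.

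For the ``only if'' direction I argue by contrapositive. Suppose $\gpos S$ is not downward closed in $\Cube S$; pick $\c k \in \gpos S$ and $\c c \subseteq \c k$ with $\c c \notin \gpos S$. I take $s \defeq \c c$ and $t \defeq \c k \setminus \c c$, so that $\c k = s \cup t$ and in particular $\c k \subseteq s \cup t$. The key observation is that the shape condition forces
\[
  \gpos S_{s,t,\c k} = \{\c d \in \gpos S \mid \c d \subseteq \c c \text{ and } \c k \subseteq (\c k \setminus \c c) \cup \c d\} = \emptyset,
\]
because the second constraint is equivalent to $\c c \subseteq \c d$ (using $\c c \subseteq \c k$) and together with $\c d \subseteq \c c$ it pins $\c d = \c c$, which is not in $\gpos S$. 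Since the empty category is not contractible, $\sigma$ is not a shape.

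The main (minor) obstacle is just bookkeeping of the set-theoretic identities $\c k \setminus t \subseteq s$ and the equivalence $\c k \subseteq (\c k \setminus \c c) \cup \c d \iff \c c \subseteq \c d$ in the presence of $\c c \subseteq \c k$; both are elementary and can be checked by chasing elements, so no conceptual difficulty is expected. Note that \Cref{lemma:easy_shape_condition} is not directly applicable here, since in a cube $\c k \subseteq a \cup b$ does not imply $\c k \subseteq a$ or $\c k \subseteq b$, which is why we have to exhibit the initial object $\c k \setminus t$ explicitly rather than invoking that convenient sufficient criterion.
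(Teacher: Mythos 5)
Your proof is correct and follows essentially the same route as the paper: for the ``if'' direction you exhibit $\c k \setminus t$ as an initial object of $\gpos S_{s,t,\c k}$ (using downward closedness to see it lies in $\gpos S$), and for the ``only if'' direction you produce the same empty poset $\gpos S_{\c c,\,\c k \setminus \c c,\,\c k}$ from an element $\c c \subseteq \c k$ witnessing the failure of downward closedness. No gaps.
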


\begin{proof}
  For the ``if'' direction let $A, B \in \Cube{S}$ and $\c K \in \gpos S$ such that $\c K \subseteq A \cup B$, and note that ${\gpos S}_{A,B,\c K} = \set{\c A \in \gpos S \mid \c K \setminus B \subseteq \c A \subseteq A}$.
  Now, since $\gpos S$ is downward closed, the element $\c K \setminus B \in \Cube{S}$ is contained in $\gpos S$, and hence is an initial object of ${\gpos S}_{A,B,\c K}$.
  
  For the ``only if'' direction let $A \in \Cube S \setminus \gpos S$ and $\c K \in \gpos S$ such that $A \subset \c K$.
  Then ${\gpos S}_{A,\c K \setminus A, \c K} = \set{\c A \in \gpos S \mid A = \c K \setminus (\c K \setminus A) \subseteq \c A \subseteq A}$ is empty and thus not contractible.
\end{proof}

The next lemma is a direct application of the results of \Cref{section:maps} to the situation we are studying.

\begin{lemma} \label{lemma:cubes_map}
  Let $S$ and $T$ be sets and $\sigma \colon \gpos S \to \Cube S$ and $\tau \colon \gpos T \to \Cube T$ full preshapes.
  Furthermore let $f \colon S \to T$ be a map such that $\inv f(\gpos T) \subseteq \gpos S$ (we abuse notation slightly by denoting the induced map $\Cube S \to \Cube T$ by $f$ as well).
  Then each $\sigma$-excisive functor $F \colon \infcat C \to \infcat D$ between $\infty$-categories is also $\tau$-excisive (as long as $\infcat C$ is $\sigma$-nice and $\tau$-nice and $\infcat D$ admits limits indexed both by $\Cubegini{N}$ and by $\Cubegini{M}$).
\end{lemma}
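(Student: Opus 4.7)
The strategy is to adapt the proof of \Cref{prop:indirect}, playing the role of the ``indirect map of preshapes'' by $f \colon \Cube S \to \Cube T$ together with the hypothesis. The heart of the argument is to verify that \Cref{lemma:gc_equiv} applies to the diagram
\[
\begin{tikzcd}
\gpos S \dar[swap]{\sigma} & \gpos T \dar{\tau} \\
\Cube S \rar{f} & \Cube T
\end{tikzcd}
\]
i.e., that for every $A \in \Cube S$ and $B \in \gpos T$ with $B \subseteq f(A)$ the subposet $\pos C_{A,B} := \set{A' \in \gpos S \mid A' \subseteq A,\; B \subseteq f(A')} \subseteq \gpos S$ is contractible.

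Setting $A_0 := A \cap \inv f(B)$, where $\inv f$ denotes the preimage under the underlying set map $f \colon S \to T$, one computes $f(A_0) = f(A) \cap B = B$, so the hypothesis $\inv f(\gpos T) \subseteq \gpos S$ forces $A_0 \in \gpos S$, placing $A_0$ in $\pos C_{A,B}$. The same computation shows $f(A' \cap A_0) = f(A') \cap B = B$ for every $A' \in \pos C_{A,B}$ (using $A' \subseteq A$ and $B \subseteq f(A')$), so $A' \cap A_0 \in \gpos S$ as well. The map $\rho \colon \pos C_{A,B} \to \pos C_{A,B}$, $A' \mapsto A' \cap A_0$, is then a monotone idempotent with $\rho \le \id$, exhibiting $\pos C_{A,B}$ as a deformation retract of its image $\set{A' \in \gpos S \mid A' \subseteq A_0,\; f(A') = B}$, which has $A_0$ as a maximum and is therefore contractible.

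With this in hand, \Cref{lemma:gc_equiv} shows that $D \circ f$ is $\sigma$-cocartesian, and, by applying the same reasoning to $D \circ (U \cup \blank)$ (which is $\tau$-cocartesian by \Cref{lemma:shifted_is_cocartesian}), that $D \circ (U \cup f(\blank))$ is $\sigma$-cocartesian for each $U \in \Cube T$. The $\sigma$-excisiveness of $F$ then makes each $F \circ D \circ (U \cup f(\blank))$ cartesian. Setting $p \colon \Cube T \times \Cube S \to \Cube T$, $(U, A) \mapsto U \cup f(A)$, \Cref{lemma:cartesian_products} piecewise-assembles these into cartesianness of $F \circ D \circ p$, and \Cref{lemma:gini_cop_htpy_initial} together with \Cref{lemma:preserves_cartesian} descend this to cartesianness of $F \circ D$, exactly as at the end of the proof of \Cref{prop:indirect}.

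The main obstacle is the contractibility of $\pos C_{A,B}$: the hypothesis is used twice---first to place $A_0$ in $\gpos S$ and then to keep the retraction inside $\gpos S$---and both applications are indispensable. Weakening the inclusion $\inv f(\gpos T) \subseteq \gpos S$ breaks the deformation retraction and in fact invalidates the implication, so no shortcut appears possible.
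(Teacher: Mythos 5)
Your proof is correct, but it takes a genuinely different route from the paper's. The paper factors the comparison through the intermediate preshape $\iota \colon \inv f(\gpos T) \to \Cube{S}$: it shows that $(f, \c f) \colon \iota \to \tau$ is an \emph{indirect} map (proving contractibility of $M \slice \c f_N$ via an adjunction with the poset $\set{K \in \gpos T \mid M \subseteq K \subseteq f(N)}$, which has an initial object) and that $(\id, \c\iota) \colon \iota \to \sigma$ is a \emph{direct} map, and then invokes \Cref{prop:indirect} and \Cref{prop:direct} as black boxes. You instead skip the intermediate preshape entirely and verify the hypothesis of \Cref{lemma:gc_equiv} directly for the square relating $\sigma$ to $\tau$ over $f$ (note this square has no top horizontal arrow, so it is not a map of preshapes and the direct/indirect formalism does not literally apply to it), and then rerun the body of the proof of \Cref{prop:indirect} by hand. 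Your contractibility argument is sound: $f(A \cap \inv f(B)) = f(A) \cap B = B$ places $A_0$ in $\gpos S$ by the hypothesis (read, as in the paper, as $f(A') \in \gpos T \Rightarrow A' \in \gpos S$), the same identity keeps $A' \cap A_0$ in $\gpos S$, and the monotone idempotent $\rho \le \id$ is right adjoint to the inclusion of its image, which has $A_0$ as a maximum. What each approach buys: the paper's localizes all combinatorics into a single indirectness check and exhibits the auxiliary preshape $\iota$ explicitly, which is reused conceptually elsewhere; yours is shorter in total machinery and the retraction $A' \mapsto A' \cap A_0$ is a cleaner one-step contractibility argument. One remark: your appeal to \Cref{lemma:shifted_is_cocartesian} requires $\tau$ to be a shape rather than merely a full preshape, but the paper's own proof has exactly the same reliance (through \Cref{prop:indirect}), so you have not smuggled in any hypothesis beyond what the paper itself uses.
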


\begin{proof}
  We write $\gpos Q \defeq \inv f(\gpos T)$ considered as a full subposet of $\Cube S$ and note that it is clear that the inclusion $\iota \colon \gpos Q \to \Cube S$ is a preshape.
  
  \begin{claim}
    The map of preshapes $(f, \c f) \colon \iota \to \tau$ is indirect, where $\c f \colon \gpos Q \to \gpos T$ is the restriction of $f$.
  \end{claim}
  
  \begin{claimproof}
    We need to show that for all $N \subseteq S$ the induced map ${\c f}_N \colon \iota \slice N \to \tau \slice f(N)$ is homotopy terminal, i.e.\ that for all $M \in \tau \slice f(N)$ the poset
    \[ M \slice {\c f}_N \iso \set{N' \subseteq S \mid N' \subseteq N \text{ and } M \subseteq f(N') \in \gpos T} \subseteq \gpos Q \]
    is contractible.
    For this purpose, define
    \[ \pos K \defeq \set{K \in \gpos T \mid M \subseteq K \subseteq f(N)} \subseteq \gpos T \]
    considered as a full subposet.
    Note that $\pos K$ has the initial object $M$, hence $\pos K$ is contractible, and it is enough to show that it is homotopy equivalent to $M \slice {\c f}_N$.
    For this, let $l \colon M \slice {\c f}_N \to \pos K$ be given by $f$ and $r \colon \pos K \to M \slice {\c f}_N$ by $K \mapsto \inv f(K) \cap N$.
    To see that $r$ is well-defined note that $K \subseteq f(\inv f(K) \cap N) \subseteq K$ where for the first inclusion we use that, since $K \subseteq f(N)$, there exists $N' \subseteq N$ such that $f(N') = K$ hence $N' \subseteq \inv f(K) \cap N$ and $K = f(N') \subseteq f(\inv f(K) \cap N)$.
    We claim that $l$ is left adjoint to $r$.
    For this we need to show that, for all $N' \in M \slice {\c f}_N$ and $K \in \pos K$, we have $f(N') \subseteq K$ if and only if $N' \subseteq \inv f(K) \cap N$, which is clear.
  \end{claimproof}
  
  By \Cref{prop:indirect}, we now know that a $\iota$-excisive functor $\infcat C \to \infcat D$ is $\tau$-excisive.
  But the map of preshapes $(\id, \c \iota) \colon \iota \to \sigma$ is direct, where $\c \iota \colon \gpos Q \to \gpos S$ is the restriction of $\iota$ (note that this map is well-defined by assumption).
  Hence, by \Cref{prop:direct}, a $\sigma$-excisive functor $\infcat C \to \infcat D$ is $\iota$-excisive, and we are done.
\end{proof}

We will now use this lemma in the situation where $\tau = \cubeinc{n}$ for some $n \in \NN$.
To state the result, we need the following:

\begin{notation}
  Let $S$ be a set and $M \subseteq \Cube{S}$ a subset of its power set.
  We write $\mc S M$ for the minimal number of elements of $M$ needed to cover $S$ and set $\mc S M = \infty$ if no such cover exists.
\end{notation}

\begin{remark} \label{rem:mc_infty}
  Note that, when $\sigma \colon \gpos S \to \Cube{S}$ is a full, finite shape such that $\mc S {\gpos S} = \infty$, then $\sigma$ is an inane shape.
\end{remark}

\begin{corollary}
  Let $S$ be a finite set and $\sigma \colon \gpos S \to \Cube S$ a full shape.
  Then a functor $F \colon \infcat C \to \infcat D$ between $\infty$-categories that is $\sigma$-excisive is also $(n-1)$-excisive where $n = \mc S {\gpos S}$ (as long as $\infcat C$ admits all finite colimits and $\infcat D$ admits limits indexed both by $\Cubegini{S}$ and by $\Cubegini{n}$).
  Here we take $\infty$-excisive to be a trivial condition, i.e.\ all functors are $\infty$-excisive.
\end{corollary}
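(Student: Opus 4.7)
My plan is to apply \Cref{lemma:cubes_map} with $\tau = \cubeinc{n} \colon \Cubeini{n} \to \Cube{n}$, which would immediately yield that $F$ is $\cubeinc{n}$-excisive, i.e.\ $(n-1)$-excisive. The whole proof reduces to producing a function $f \colon S \to \finset{n}$ satisfying $\inv f(\Cubeini{n}) \subseteq \gpos S$.

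First I would dispense with the case $n = \infty$: by convention all functors are $\infty$-excisive, so there is nothing to prove (one may also note that, by \Cref{rem:mc_infty}, $\sigma$ is then inane, which is consistent).

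For finite $n$, I would construct $f$ by a standard disjointification. By definition of $n = \mc S {\gpos S}$, there are $A_0, \dots, A_{n-1} \in \gpos S$ whose union equals $S$. Set
\[ B_i \defeq A_i \setminus \bigcup_{j < i} A_j \]
for $i \in \finset{n}$. The $B_i$ are pairwise disjoint and still cover $S$, so they define a function $f \colon S \to \finset{n}$ by sending $s$ to the unique $i$ with $s \in B_i$, yielding $\inv f(\set{i}) = B_i$ and $\inv f(\emptyset) = \emptyset$. Since $\sigma$ is full, it is injective by \Cref{lemma:pos_full_implies_injective}, and since the preshape condition forces $\emptyset \in \im \sigma$, we have $\emptyset \in \gpos S$ under the abuse of notation in force throughout this section. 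By \Cref{lemma:cube_shape}, the hypothesis that $\sigma$ is a shape makes $\gpos S$ downward closed in $\Cube S$, so $B_i \subseteq A_i \in \gpos S$ gives $B_i \in \gpos S$ for each $i$. Thus $\inv f(\Cubeini{n}) \subseteq \gpos S$, as required.

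Now \Cref{lemma:cubes_map} applies (the $\cubeinc n$-niceness of $\infcat C$ follows from $\infcat C$ admitting all finite colimits since $\Cubeini n$ and $\Cube n$ are finite, and the required limits in $\infcat D$ are exactly those assumed), yielding that $F$ is $\cubeinc{n}$-excisive, which by \Cref{def:excisive} means $(n-1)$-excisive. No step here looks like a serious obstacle; the only conceptual point is recognizing that downward closure of $\gpos S$ lets one trade an arbitrary cover for a partition by elements of $\gpos S$, so that the hypothesis of \Cref{lemma:cubes_map} can be verified on the nose.
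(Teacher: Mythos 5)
Your proposal is correct and follows essentially the same route as the paper: dispose of the $n=\infty$ case as vacuous, use downward closedness of $\gpos S$ (from \cref{lemma:cube_shape}) to replace a minimal cover by a pairwise disjoint one, define $f \colon S \to \finset{n}$ from that partition, check $\inv f(\Cubeini{n}) \subseteq \gpos S$, and apply \cref{lemma:cubes_map}. The only cosmetic remark is that $\inv f(\Cubeini{n})$ in \cref{lemma:cubes_map} is the preimage under the induced map $\Cube{S} \to \Cube{n}$, so it consists of \emph{all} subsets contained in some $B_i$ rather than just the $B_i$ themselves; since you have already invoked downward closedness of $\gpos S$, this is immediate.
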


\begin{proof}
  Note that the statement is vacuous if $\mc S {\gpos S} = \infty$, so that we can assume $\mc S {\gpos S}$ to be finite.
  In particular we can chose a cover $A_0, \dots, A_{n-1} \in \gpos S$ of S.
  Since $\gpos S$ is downward closed by \Cref{lemma:cube_shape}, we can assume the $A_i$ to be pairwise disjoint.
  Now let $f \colon S \to \finset n$ be the map given by $s \mapsto i$ for $s \in A_i$.
  Then
  \[ \inv f(\Cubeini{n}) = \set{B \subseteq S \mid \exists i \colon B \subseteq A_i} \subseteq \gpos S \]
  so that an application of \Cref{lemma:cubes_map} finishes the proof.
\end{proof}

One natural question to ask is whether the implication of the corollary is actually an equivalence.
The following proposition shows that this is indeed the case.

\begin{proposition}
  Let $S$ be a finite set, $\sigma \colon \gpos G \to \Cube{S}$ a full preshape, and $n \in \NN$.
  Assume that $n \le \mc S {\gpos G}$.
  Then a functor $F \colon \infcat C \to \infcat D$ between $\infty$-categories that is $(n-1)$-excisive is also $\sigma$-excisive (as long as $\infcat C$ admits all finite colimits and $\infcat D$ admits all finite limits).
\end{proposition}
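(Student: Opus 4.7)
The plan is to apply \Cref{lemma:cubes_map} to deduce $(n-1)$-excisive $\Rightarrow \sigma$-excisive. In the notation of that lemma, I would take its $\sigma$ to be $\cubeinc n$ and its $\tau$ to be the proposition's $\sigma$. Then one needs a set map $f \colon \finset n \to S$ whose induced map on power sets satisfies $\inv f(\gpos G) \subseteq \Cubeini n$. Unpacking, this requires $f$ such that $f(B) \in \gpos G$ forces $|B| \le 1$; for $|B| = 0$ this is automatic and for $|B| = 1$ no condition is imposed, so the real content is that $f(B) \notin \gpos G$ whenever $|B| \ge 2$.

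The heart of the proof is the construction of such an $f$ from the hypothesis $n \le \mc S \gpos G$. I would first reduce to the case where $\gpos G$ is downward closed in $\Cube S$, so that $\sigma$ is a full shape by \Cref{lemma:cube_shape}; this reduction doesn't change $\mc S \gpos G$, since only maximal elements of $\gpos G$ contribute to minimal covers. In the downward closed setting, $f(B) \notin \gpos G$ for $|B| \ge 2$ is equivalent to the condition that no maximal element of $\gpos G$ contains two values of $f$. I would then construct $f$ iteratively: pick $f(0), f(1), \ldots$ one at a time, using the cover bound to show that the obstructions imposed by previously chosen values cannot cover $S$, so a further valid choice always exists.

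The main obstacle is making this combinatorial construction precise: translating ``no cover of $S$ by fewer than $n$ elements of $\gpos G$'' into ``$n$ elements of $S$ no two of which lie together inside an element of $\gpos G$'' is not a one-line implication, and requires exploiting downward closure of $\gpos G$ carefully together with the cover bound. If the direct inductive construction turns out to stall (for instance when $\gpos G$ is rich in small subsets), the natural fallback is to pass through an auxiliary shape $\tau$: produce an indirect map $\cubeinc n \to \tau$ and a direct map $\sigma \to \tau$ and invoke \Cref{prop:indirect} and \Cref{prop:direct} in sequence to chain $(n-1)$-excisive $\Rightarrow \tau$-excisive $\Rightarrow \sigma$-excisive. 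Once the combinatorial step is in place, \Cref{lemma:cubes_map} (or equivalently the preshape-map machinery of \Cref{section:maps}) closes the argument immediately.
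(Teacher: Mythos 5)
Your reduction strategy has a genuine gap at its combinatorial core. After reducing to the case where $\gpos G$ is downward closed (which is fine, via the direct map to the downward closure and \Cref{prop:direct}), the existence of a set map $f \colon \finset{n} \to S$ with $\inv f(\gpos G) \subseteq \Cubeini{n}$ amounts to finding $n$ \emph{distinct} elements of $S$ no two of which lie in a common member of $\gpos G$ (note $f$ is forced to be injective: once $\mc{S}{\gpos G}$ is finite and $\gpos G$ is downward closed, every singleton lies in $\gpos G$). This is an independence statement, and it does \emph{not} follow from the covering bound $n \le \mc{S}{\gpos G}$ --- the inequality between covering number and independence number goes the wrong way. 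Concretely, take $S = \finset{5}$ and let $\gpos G$ consist of $\emptyset$, all singletons, and the five edges $\set{i, i+1 \bmod 5}$ of a $5$-cycle. Then $\gpos G$ is downward closed (so $\sigma$ is a full shape) and $\mc{S}{\gpos G} = 3$, since two $2$-element sets cannot cover five points; but the largest collection of pairwise ``non-adjacent'' elements of $S$ has size $2$. Hence for $n = 3$ no admissible $f$ exists and \Cref{lemma:cubes_map} cannot be applied, even though the proposition asserts that $2$-excisive implies $\sigma$-excisive in this case.

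The fallback you sketch --- an auxiliary shape $\tau$ receiving an indirect map from $\cubeinc{n}$ and a direct map from $\sigma$ --- is precisely where all the work would lie, and no candidate for $\tau$ is offered; I do not see how to produce one for the $5$-cycle example. The paper's proof does not proceed by exhibiting any such chain of maps. It is a lexicographic induction on a triple recording the number of non-isolated directions, the size of $S$, and the size of $\gpos G$; it builds an auxiliary diagram $E$ on $\Cube{M} \times \Cube{S}$ (with $M$ the set of maximal elements of $\gpos G$ of cardinality greater than one), repeatedly invokes \Cref{lemma:gc_equiv} to recognize various restrictions of $E$ as cocartesian for smaller shapes, and closes the induction with a separate estimate comparing covering numbers. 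So while the first step of your plan (reducing to the downward closed case and aiming for an application of the machinery of \Cref{section:maps}) is reasonable, the key combinatorial claim it rests on is false, and the argument does not go through.
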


\begin{proof}
  Throughout the proof let $D \colon \Cube S \to \infcat C$ denote a $\sigma$-cocartesian diagram.
  First note that the statement is true if $\gpos G = \Cubeini{S}$ (by \Cref{cor:n-excisive_implies_m-excisive}) or if $\mc S {\gpos G} = \infty$ (by \Cref{rem:mc_infty,prop:inane}).
  
  We will now proceed by a slightly complicated to state induction.
  For this we define
  \begin{align*}
    \isol \sigma &\defeq \set{s \in S \mid \set s \text{ is a maximal element of } \gpos G} \\
    \nisol \sigma &\defeq S \setminus \isol \sigma
  \end{align*}
  the sets of what we will call \emph{isolated} respectively \emph{non-isolated directions}.
  The induction is now on the tuple $(\card{\nisol \sigma}, \card S, \card{\gpos G}) \in (\NN)^3$, ordered lexicographically.
  Said differently, we will from now on assume that the statement has already been proven for all finite sets $T$ and full preshapes $\tau \colon \gpos G' \to \Cube T$ such that both $n \le \mc T {\gpos G'}$ and one of the following conditions is fulfilled:
  \begin{itemize}
    \item $\card{\nisol \tau} < \card{\nisol \sigma}$,
    \item $\card{\nisol \tau} = \card{\nisol \sigma}$ and $\card T < \card S$,
    \item $\card{\nisol \tau} = \card{\nisol \sigma}$ and $\card T = \card S$ and $\card{\gpos G'} < \card{\gpos G}$.
  \end{itemize}
  Note that this induction is possible since any strictly descending chain in $(\NN)^3$ ordered lexicographically is finite (i.e.\ $(\NN)^3$ is well-founded).
  
  Let $M$ denote the set of maximal elements of $\gpos G$ of cardinality larger than $1$.
  Note that if $M$ is empty, then we are in one of the two cases already handled in the beginning of the proof, hence we can assume that $M$ is not empty.
  Denote by $\pos R \subseteq \Cube{M} \times \gpos G$ the full subposet spanned by $\set \ini \times (\gpos G \setminus M)$ and $(\set m, m)$ for all $m \in M$, and set $r \defeq {\id} \times \sigma \colon \pos R \to \Cube{M} \times \Cube{S}$.
  Now let $E' \defeq \Res \sigma (D) \circ \pr[\gpos G] \colon \pos R \to \infcat C$ and set $E \defeq \Lan{r} (E') \colon \Cube{M} \times \Cube{S} \to \infcat C$.
  We note the following fact for later use:
  
  \begin{claim} \label{claim:equiv_to_terminal}
    Let $A \in \Cube{M}$ and $B \in \Cube{S}$.
    Assume that for all $m \in M$ such that $m \subseteq B$ we have $m \in A$. 
    Then $E(A, B) \to E(M, B)$ is an equivalence.
  \end{claim}
  
  \begin{claimproof}
    By assumption the induced functor $r \slice (A, B) \to r \slice (M, B)$ is an isomorphism.
    This implies the claim by \Cref{lemma:kan_local}.
  \end{claimproof}
  
  Now, for any $A \in \Cube{M}$, let $\pos R_A$ denote the full subposet of $\gpos G$ spanned by $\gpos G \setminus M$ and $A \subseteq \gpos G$, and set $r_A \defeq \restrict{\sigma}{\pos R_A} \colon \pos R_A \to \Cube{S}$.
  
  \begin{claim}
    We have, for all $A \in \Cube{M}$, that the diagram $\restrict{E}{\set A \times \Cube{S}} \colon \Cube{S} \to \infcat C$ is $r_A$-cocartesian.
    Moreover there is an equivalence $\restrict{E}{\set{M} \times \Cube{S}} \eq D$.
  \end{claim}
  
  \begin{claimproof}
    For the first part it is enough to show that the functors
    \[
    \begin{tikzcd}
    \set A \times \pos R_A \dar[swap]{r_A} & \pos R \dar{r} \\
    \set A \times \Cube{S} \rar[hook]{\inc} & \Cube{M} \times \Cube{S}
    \end{tikzcd}
    \]
    fulfill the assumptions of \Cref{lemma:gc_equiv}.
    For this let $B \in \Cube S$ and $(A', G') \in \pos R$ such that $r(A', G') \le (A, B)$.
    Then we have, for all $G \in \pos R_A$ such that $r_A(G) \subseteq B$ and $r(A', G') \le (A, r_A(G))$, that $G' \subseteq G$.
    But directly from the definitions we obtain $G' \in \pos R_{A'} \subseteq \pos R_A$, so that $G'$ is an initial object of $\set{G \in \pos R_A \mid r_A(G) \subseteq B \text{ and } r(A', G') \le (A, r_A(G))}$.
    
    For the second part note the existence of the diagram
    \[
    \begin{tikzcd}[row sep = 25]
      \pos R_M \rar[equal] \drar[swap]{r_M} & \gpos G \dar[swap]{\sigma} \rar{\iso}[swap]{\inv{(\pr[\gpos G])}} & \pos R \dar{r} \dlar[Rightarrow, shorten < = 20, shorten > = 20]{\alpha} \\
       & \set M \times \Cube S \rar{\inc} & \Cube M \times \Cube S
    \end{tikzcd}
    \]
    and that, by \Cref{claim:equiv_to_terminal}, the map $\alpha_* \colon \big( {\Res{\inv{(\pr[\gpos G])}} \Res{r}} \big) (E) \to {\Res{\sigma}} \big( \restrict{E}{\set M \times \Cube S} \big)$ is an equivalence.
    We obtain
    \begin{align*}
      \restrict{E}{\set M \times \Cube S} &\eq (\Lan{r_M} \Res{r_M}) \big( \restrict{E}{\set M \times \Cube S} \big) \\
       &\eq (\Lan{\sigma} \Res{\sigma}) \big( \restrict{E}{\set M \times \Cube S} \big) \\
       &\eq \big( {\Lan{\sigma} \Res{\inv{(\pr[\gpos G])}} \Res{r}} \big) (E) \\
       &\eq \big( {\Lan{\sigma} \Res{\inv{(\pr[\gpos G])}}} \big) (E') \\
       &= (\Lan{\sigma} \Res{\sigma}) (D) \\
       &\eq D
    \end{align*}
    where, for the fourth equivalence, we use \Cref{lemma:fully_faithful_kan_unit} and that $r$ is full.
  \end{claimproof}
  
  Now consider the following diagram obtained from \Cref{lemma:functors_and_map_from_colim}:
  \begin{equation} \label[diagram]{diag:cubes_reduction}
  \begin{tikzcd}
    (F \circ E)(\ini, \ini) \dar[swap]{\eq} & (F \circ E)(\ini, \ini) \rar{\eq} \lar[equal] \dar & (F \circ E)(M, \ini) \dar \\
    \lim{\set{\ini} \times \Cubegini{S}} (F \circ E) & \lim{\Cube{M} \times \Cubegini{S}} (F \circ E) \rar \lar[swap]{\eq} & \lim{\set{M} \times \Cubegini{S}} (F \circ E)
  \end{tikzcd}
  \end{equation}
  and note that the lower left horizontal map is an equivalence since the inclusion $\set{\ini} \times \Cubegini{S} \to \Cube{M} \times \Cubegini{S}$ is homotopy initial, and that the upper right horizontal map is an equivalence by \Cref{claim:equiv_to_terminal}.
  Moreover the left vertical map is an equivalence since $F$ is $r_\ini$-excisive by the induction hypothesis.
  To see this note that the passage from $\sigma = r_M$ to $r_\ini$ cannot increase the number of non-isolated directions and does not change the set $S$.
  Moreover we have $\pos R_\ini \subsetneq R_M = \gpos G$, since $M$ is not empty, and hence $\card{\pos R_\ini} < \card{\gpos G}$ and $\mc S {\pos R_\ini} \ge \mc S {\gpos G} \ge n$.
  
  Our goal is to show that $F \circ D \eq F \circ \restrict{E}{\set{M} \times \Cube{S}}$ is cartesian, i.e.\ that the right vertical map in \cref{diag:cubes_reduction} is an equivalence.
  For this it is, by the above, enough to show that the lower right horizontal map in the diagram is an equivalence.
  We will do this in multiple steps.
  For the first one we consider the poset
  \[ \pos Q \defeq \set{(A, B) \mid \text{for all } m \in M \text{ such that } m \subseteq B \text{ we have } m \in A} \subseteq \Cube{M} \times \Cubegini{S} \]
  and claim that the canonical map $\eta \colon \restrict{(F \circ E)}{\pos Q} \to \Ran{\inc} \restrict{(F \circ E)}{\set{M} \times \Cubegini{S}}$ is an equivalence where $\inc \colon \set{M} \times \Cubegini{S} \to \pos Q$ is the inclusion.
  For this, let $(A, B) \in \pos Q$ and consider the square
  \[
  \begin{tikzcd}
    (F \circ E)(A, B) \rar{\eta_{A,B}} \dar & \lim[s]{(A,B) \slice \inc} (F \circ E \circ \pr) \dar \\
    (F \circ E)(M, B) \rar{\eta_{M,B}} & \lim[s]{(M,B) \slice \inc} (F \circ E \circ \pr)
  \end{tikzcd}
  \]
  for which we want to show that the upper horizontal map is an equivalence.
  That the left vertical map is an equivalence was precisely the statement of \Cref{claim:equiv_to_terminal}.
  Moreover the right vertical map is an equivalence by \Cref{lemma:kan_local} since the induced functor $(M,B) \slice \inc \to (A,B) \slice \inc$ is an isomorphism, and the bottom horizontal map is an equivalence by \Cref{lemma:kan_counit,lemma:terminal_object_colimit} since $(M, B)$ is the initial object of $(M,B) \slice \inc$.
  
  We obtain, by \Cref{lemma:colim_of_kan}, that in the following factorization of the map we desire to be an equivalence the second map is an equivalence:
  \[ \lim{\Cube{M} \times \Cubegini{S}} (F \circ E) \longto \lim{\pos Q} (F \circ E) \xlongto{\eq} \lim{\set{M} \times \Cubegini{S}} (F \circ E) \]
  and hence that it is enough to show that the first map is an equivalence as well.
  For this we factor it further via the poset
  \[ \pos Q \subseteq \pos L \defeq \set{(A, B) \mid {\inc[\pos Q]} \slice (A, B) \text{ is contractible}} \subseteq \Cube{M} \times \Cubegini{S} \]
  which is defined precisely such that the inclusion $\pos Q \to \pos L$ becomes homotopy initial.
  Hence it is enough to show that $\restrict{(F \circ E)}{\Cube{M} \times \Cubegini{S}}$ is a right Kan extension of $\restrict{(F \circ E)}{\pos L}$ along the inclusion $\inc[\pos L] \colon \pos L \to \Cube{M} \times \Cubegini{S}$.
  For this we choose a filtration
  \[ \pos L = \pos L_0 \subsetneq \pos L_1 \subsetneq \dots \subsetneq \pos L_k = \Cube{M} \times \Cubegini{S} \]
  such that for all $1 \le i \le k$ there exists an $l_i \in \Cube{M} \times \Cubegini{S}$ such that $\pos L_i \setminus \pos L_{i-1} = \set{l_i}$ and $l \in \pos L_{i-1}$ for all $l_i < l \in \Cube{M} \times \Cubegini{S}$.
  By \Cref{lemma:colim_of_kan} it is enough to show that $\restrict{(F \circ E)}{\pos L_i}$ is a right Kan extension of $\restrict{(F \circ E)}{\pos L_{i-1}}$ for all $1 \le i \le k$.
  Similarly to before, this is equivalent to the canonical map $\eta_l \colon (F \circ E)(l) \to \lim{l \slice \inc[\pos L_{i-1}]} (F \circ E \circ \pr)$ being an equivalence for all $l \in \pos L_i$ which is again automatic if $l \in \pos L_{i-1}$, so it is enough to check it for $l = l_i$.
  In that case we write $l_i = (A, B) \in \Cube{M} \times \Cubegini{S}$ and note the existence of the commutative diagram
  \[
  \begin{tikzcd}
     & \pos L_{i-1} \drar[hook, bend left = 15] & \\
    l_i \slice \inc[\pos L_{i-1}] \rar{\iso} \urar[bend left = 15]{\pr} \dar[hook] & \gini{\left(\Cube{M \setminus A} \times \Cube{S \setminus B}\right)} \rar[hook]{\iota} \uar[hook, swap]{\iota} \dar[hook] & \Cube{M} \times \Cubegini{S} \\
    \cone{(l_i \slice \inc[\pos L_{i-1}])} \rar{\iso} & \Cube{M \setminus A} \times \Cube{S \setminus B} \urar[hook, bend right = 15, swap]{\iota} &
  \end{tikzcd}
  \]
  where the three maps labeled $\iota$ are all given by $(A', B') \mapsto (A \cup A', B \cup B')$.
  This tells us, by \Cref{lemma:kan_counit}, that $\eta_{l_i}$ being an equivalence is equivalent to $\restrict{(F \circ E)}{\pos H}$ being cartesian, where $\pos H = \iota(\Cube{M \setminus A} \times \Cube{S \setminus B})$, which we will now prove.
  
  For this we set $N \defeq \set{m \in M \setminus A \mid m \subseteq B} \subseteq M \setminus A$ and $\bar{N} \defeq (M \setminus A) \setminus N$, yielding $\pos H = \iota(\Cube{N} \times \Cube{\bar N} \times \Cube{S \setminus B})$.
  We now claim that, for all $C \subseteq \bar N$, the restriction of $F \circ E$ to the cube $\pos H_C \defeq \iota(\Cube{N} \times \set{C} \times \Cube{S \setminus B})$ is cartesian (this implies that $\restrict{(F \circ E)}{\pos H}$ is cartesian by \Cref{lemma:cartesian_products}).
  The main step in proving this consists of the following claim:
  
  \begin{claim}
    The diagram $\restrict{E}{\pos H_C}$ is $h_C$-cocartesian, where $h_C$ is the inclusion of the full subposet
    \[ \gpos H_C \defeq \iota\left((\Cubeini{N} \times \set C \times \set \ini) \cup (\set \ini \times \set C \times \pi_{S \setminus B}(\pos R_{A \cup C}))\right) \]
    into $\pos H_C$.
    Here $\pi_{S \setminus B} \colon \Cube{S} \to \Cube{S \setminus B}$ denotes the map given by $B' \mapsto B' \cap (S \setminus B)$.
  \end{claim}
  
  \begin{claimproof}
    It is enough to show that the functors
    \[
    \begin{tikzcd}
      \gpos H_C \dar[swap]{h_C} & \pos R \dar{r} \\
      \pos H_C \rar{\inc} & \Cube M \times \Cube S
    \end{tikzcd}
    \]
    fulfill the assumptions of \Cref{lemma:gc_equiv}.
    For this we need to show that, for all $\iota (N', C, B') \in \pos H_C$ and $(A', G') \in \pos R$ such that $(A', G') \le (A \cup C \cup N', B \cup B')$, the full subposet $\pos T \subseteq \gpos H_C$ defined as
    \[ \set{\iota(N'', C, B'') \in \gpos H_C \mid (A', G') \le (A \cup C \cup N'', B \cup B'') \text{ and } \iota(N'', C, B'') \le \iota (N', C, B')} \]
    is contractible.
    We distinguish two cases regarding $(A', G')$:
    
    Case 1: We have $A' = \ini$ and $G' \in \gpos G \setminus M$.
    Note that, if $G' \subseteq B$, then the initial object of $\gpos H_C$ is contained in $\pos T$ and we are done.
    So we can assume that $G' \cap B' \neq \ini$.
    In particular the only elements of $\gpos H_C$ that can potentially be contained in $\pos T$ are those of the form $\iota(\ini, C, \pi_{S \setminus B}(G))$ for some $G \in \pos R_{A \cup C}$.
    But this lies in $\pos T$ if and only if $\pi_{S \setminus B}(G') \subseteq \pi_{S \setminus B}(G) \subseteq B'$.
    Hence $\iota(\ini, C, \pi_{S \setminus B}(G'))$ is an initial object of $\pos T$.
    
    Case 2: We have $(A', G') = (\set m, m)$ for some $m \in \pos M$.
    First note that, if $(\set m, m) \le (A \cup C, B)$, then the initial object of $\gpos H_C$ is contained in $\pos T$ and we are done.
    Now assume $m \in N'$.
    By definition of $N$ this implies $m \subseteq B$ and hence that $\pos T$ has the single element $(\set m, C, \ini)$ and is thus contractible.
    The last case we have to consider is $m \in A \cup C$ and $m \not\subseteq B$.
    Here the only elements of $\gpos H_C$ that can potentially be contained in $\pos T$ are those of the form $\iota(\ini, C, \pi_{S \setminus B}(G))$ for some $G \in \pos R_{A \cup C}$.
    But this lies in $\pos T$ if and only if $\pi_{S \setminus B}(m) \subseteq \pi_{S \setminus B}(G) \subseteq B'$.
    Noting that $m \in A \cup C$ implies $m \in \pos R_{A \cup C}$, we obtain that $\iota(\ini, C, \pi_{S \setminus B}(m))$ is an initial object of $\pos T$.
  \end{claimproof}
  
  To show that $\restrict{(F \circ E)}{\pos H_C}$ is cartesian it is, by the claim, enough to show that $F$ is $h_C$-excisive.
  For this we want to use the induction hypothesis.
  First note that $\nisol{h_C} = \nisol{r_{A \cup C}^{S \setminus B}}$, where $r_{A \cup C}^{S \setminus B} \colon \pi_{S \setminus B}(\pos R_{A \cup C}) \to \Cube{S \setminus B}$ is the inclusion.
  Moreover
  \[ \isol{r_{A \cup C}^{S \setminus B}} \supseteq \isol{r_{A \cup C}} \cap (S \setminus B) \supseteq \isol{r_M} \cap (S \setminus B) = \isol{\sigma} \cap (S \setminus B) \]
  hence $\nisol{r_{A \cup C}^{S \setminus B}} \subseteq \nisol{\sigma} \cap (S \setminus B)$.
  So we have $\card{\nisol{h_C}} \le \card{\nisol \sigma}$ with equality if and only if $\nisol \sigma \subseteq S \setminus B$.
  But this inclusion is equivalent to $B \subseteq \isol \sigma$ which implies that $N$ is empty, in which case $\pos H_C \iso \Cube{S \setminus B}$.
  Noting that $B \neq \ini$ and hence $\card{S \setminus B} < \card S$, this implies the second of the two conditions we need to be able to apply the induction hypothesis. 
  
  It is now enough to show that $h_C$ fulfills the first condition of the induction hypothesis, i.e.\ that $\mc{N \sqcup (S \setminus B)}{\gpos H_C} \ge n$ (here we abuse notation slightly by identifying $\gpos H_C$ with its preimage under $\iota$ in $\Cube{N} \times \set{C} \times \Cube{S \setminus B} \iso \Cube{N \sqcup (S \setminus B)}$).
  For this we note the (in)equalities
  \begin{align*}
    \mc{N \sqcup (S \setminus B)}{\gpos H_C} &= \card N + \mc{S \setminus B}{\pi_{S \setminus B}(\pos R_{A \cup C})} \\
    \mc{S \setminus B}{\pi_{S \setminus B}(\pos R_{A \cup C})} &\ge \mc{S \setminus B}{\pi_{S \setminus B}(\pos R_{M})} \\
    \mc{S}{\pos R_{M}} &\le \mc{B}{\pi_{B}(\pos R_{M})} + \mc{S \setminus B}{\pi_{S \setminus B}(\pos R_{M})} \\
    \mc{S}{\pos R_{M}} &= \mc{S}{\gpos G} \ge n
  \end{align*}
  which together imply
  \[ \mc{N \sqcup (S \setminus B)}{\gpos H_C} \ge n + \card N - \mc{B}{\pi_{B}(\pos R_{M})} \]
  so that it is enough to show $\card N \ge \mc{B}{\pi_{B}(\pos R_{M})}$.
  Noting that $M_B \defeq \set{m \in M \mid m \subseteq B} \subseteq \pi_{B}(\pos R_M)$, we can further reduce this to showing $\card N \ge \mc{B}{M_B}$.
  For this we will use the following claim:
  
  \begin{claim}
    Let $(A, B) \in \Cube{M} \times \Cubegini{S}$.
    Assume that there exists $b \in B$ such that for all $m \in M$ with $b \in m \subseteq B$ we have $m \in A$.
    Then ${\inc[\pos Q]} \slice (A, B)$ is contractible, and in particular $(A, B) \in \pos L$.
  \end{claim}
  
  \begin{claimproof}
    We set $\pos Q_A = \pos Q \cap (\set A \times \Cubegini{S})$.
    Using that $(A', B') \in \pos Q$ implies $(A'', B') \in \pos Q$ for all $A' \subseteq A''$ we see that the induced functor ${\inc[\pos Q_A]} \slice (A, B) \to {\inc[\pos Q]} \slice (A, B)$ is right adjoint, so in particular a homotopy equivalence.
    Furthermore we have that $\pos Q_{A,B} \defeq \pos Q_A \cap (\set A \times \Cubegini{B}) \iso {\inc[\pos Q_A]} \slice (A, B)$.
    Now the assumption implies that if $(A, B') \in \pos Q_{A,B}$, then $(A, B' \cup \set b) \in \pos Q_{A,B}$ as well.
    In particular the functor from $\pos Q_{A,B}$ to $\pos Q_{A,B,b} \defeq \pos Q_{A,B} \cap (\set A \times \set{B' \subseteq B \mid b \in B'})$ given by $(A, B') \mapsto (A, B' \cup \set b)$ is well-defined.
    Moreover it is also left adjoint to the inclusion and hence a homotopy equivalence.
    Since $(A, \set b) \in \pos Q_{A,B,b}$ is an initial object, this finishes the proof.
  \end{claimproof}
  
  Assume $\card N < \mc{B}{M_B}$.
  Since $N \subseteq M_B$ this implies that there exists some $b \in B \setminus \bigcup_{m \in N} m$.
  Now $M_B \setminus N \subseteq A$ implies that if $b \in m \in M_B$, then $m \in A$.
  Hence we have $(A, B) \in \pos L$ by the claim.
  But now we recall from earlier that $(A, B) = l_i \not\in \pos L$, a contradiction.
\end{proof}

Together with \Cref{prop:finite_shape_eq_to_full} the last two statements yield the second main theorem of this paper.

\begin{theorem} \label{thm:cubes}
  Let $S$ be a finite set and $\sigma \colon \gpos S \to \Cube S$ a finite shape.
  Then, for a functor $F \colon \infcat C \to \infcat D$ between $\infty$-categories, being $\sigma$-excisive is equivalent to being $(n-1)$-excisive where $n = \mc S {\im \sigma}$ (as long as $\infcat C$ admits all finite colimits and $\infcat D$ admits all finite limits).
\end{theorem}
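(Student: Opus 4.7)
The proof will essentially be an assembly of results already proved in this section and the previous one, so the plan is short.

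The first step is to reduce to the full case. Since $\sigma \colon \gpos S \to \Cube S$ is a finite shape, \Cref{prop:finite_shape_eq_to_full} applies and gives us a full, finite shape $\i\sigma \colon \im \sigma \to \Cube S$ such that $F$ is $\sigma$-excisive if and only if it is $\i\sigma$-excisive (the assumptions on $\infcat C$ and $\infcat D$ imply $\sigma$-niceness of $\infcat C$ and the existence of limits indexed by $\Cubegini{S}$, so the corollary applies). Hence it suffices to prove the theorem for the full shape $\i\sigma$, for which by definition $\im \i\sigma = \im \sigma$, so that the number $n = \mc S {\im \sigma} = \mc S {\im \i\sigma}$ is unchanged.

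For the forward implication I would invoke the first corollary in this section (the one derived from \Cref{lemma:cubes_map}) applied to $\i\sigma$: it yields that any $\i\sigma$-excisive functor is $(n-1)$-excisive with $n = \mc S {\im \sigma}$. For the reverse implication I would invoke the subsequent proposition, applied again to $\i\sigma$ as the full preshape with $\gpos G = \im \sigma$ and with the natural number $n = \mc S {\im \sigma}$, which trivially satisfies $n \le \mc S {\im \sigma}$; it then gives that every $(n-1)$-excisive functor is $\i\sigma$-excisive. Composing these two implications with the equivalence of $\sigma$- and $\i\sigma$-excision from the first step yields the claimed equivalence.

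The only point that requires a moment of care is checking that the hypotheses on $\infcat C$ and $\infcat D$ in the theorem (finite colimits in $\infcat C$ and finite limits in $\infcat D$) are sufficient to cover all invocations above: $\sigma$-niceness and $\i\sigma$-niceness follow from $\infcat C$ admitting all finite colimits since $\gpos S$, $\im \sigma$, and $\Cube S$ are all finite posets, while the required limits indexed by $\Cubegini S$, $\Cubegini n$, and $\gini{(\im \sigma)}$ all exist because $\infcat D$ admits all finite limits. Thus no extra hypothesis beyond the stated ones is needed, and the combination of the three cited results completes the proof. There is no genuine obstacle here; the work has been done in \Cref{prop:finite_shape_eq_to_full} and in the two results preceding the statement.
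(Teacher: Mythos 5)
Your proposal is correct and follows exactly the paper's own argument: the paper proves the theorem by combining \Cref{prop:finite_shape_eq_to_full} with the corollary to \Cref{lemma:cubes_map} and the subsequent proposition, applied to $\i\sigma$, just as you do. The verification that the hypotheses on $\infcat C$ and $\infcat D$ suffice is also as in the paper (all relevant posets being finite).
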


\section{The Taylor graph} \label{section:taylor}

The goal of this section is to assemble all of the various excisive approximations for different (pre)shapes into a natural coherent diagram lying under the functor we are approximating, analogous to the Taylor tower in classical Goodwillie calculus.
For this we first have to define the category this will be indexed by.
Since the existences of the approximations depend on the $\infty$-categories $\infcat C$ and $\infcat D$ between which we consider functors, the indexing category also needs to depend on these $\infty$-categories.
However, later in this section, we will also obtain a version which puts more conditions on $\infcat C$ and $\infcat D$ but in return uses a fixed indexing category.

\begin{definition}
  Let $\sigma \colon \gpos S \to \pos S$ be a preshape and $\infcat C$ and $\infcat D$ two $\infty$-categories.
  We say that \emph{$\Fun{\infcat C}{\infcat D}$ admits a universal $\sigma$-excisive approximation} if the inclusion
  \[ {\inc} \colon \Exc{\sigma}{\infcat C}{\infcat D} \longto \Fun{\infcat C}{\infcat D} \]
  has a left adjoint.
  In this case we fix an adjunction $\tP \dashv \inc$ and denote its unit by $\tp \colon \id \to \inc \circ \tP$.
\end{definition}

\begin{remark}
  If $\sigma$ is a full shape, we can, by \Cref{thm:full_approximation}, choose $\tP = \P$ and $\tp = \p$ (at least when $\infcat C$ and $\infcat D$ are nice enough).
  Here $\P$ and $\p$ are as in \cref{def:T}.
\end{remark}

\begin{notation}
  Let $\infcat C$ and $\infcat D$ be $\infty$-categories.
  We write $\PSharel{\infcat C}{\infcat D}$ for the thin category with objects the preshapes $\sigma$ such that $\Fun{\infcat C}{\infcat D}$ admits a universal $\sigma$-excisive approximation and such that there is a morphism from $\sigma$ to $\tau$ if and only if any functor $F \colon \infcat C \to \infcat D$ that is $\sigma$-excisive is also $\tau$-excisive.
\end{notation}

The main input in the construction of these diagrams is the following lemma.
Both statement and proof are due to Markus Land (though any mistake is mine).

\begin{lemma} \label{lemma:functors_from_proset}
  Let $\cat I$ be a category and $\infcat C$ an $\infty$-category.
  Denote by $\disccat{\cat I} \subseteq \cat I$ the discrete subcategory containing all objects and let $f \colon \disccat{\cat I} \to \infcat C$ be a functor such that, for all $x$ and $y$ in $\cat I$ such that there is a morphism $x \to y$, the mapping space $\Map[\infcat C]{f(x)}{f(y)}$ is contractible.
  Then there is an essentially unique extension of $f$ to a functor $\cat I \to \infcat C$.
\end{lemma}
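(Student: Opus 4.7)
The plan is to show that the fiber $F$ of the restriction functor $i^* \colon \Fun{\cat I}{\infcat C} \to \Fun{\disccat{\cat I}}{\infcat C}$ over $f$ is a contractible Kan complex; this contractibility is equivalent to the claimed existence and essential uniqueness of the extension.

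First I would dispose of the case $\cat I = [n]$. The spine inclusion $\mathrm{Sp}(n) \hookrightarrow \Simplex{n}$ is inner anodyne (a classical fact due to Joyal), so restriction induces a trivial fibration $\Fun{\Simplex{n}}{\infcat C} \to \Fun{\mathrm{Sp}(n)}{\infcat C}$, in particular an equivalence on fibers over $f$. Since $\mathrm{Sp}(n)$ is an iterated pushout of copies of $\Simplex{1}$ glued along $\Simplex{0}$'s, the fiber of $\Fun{\mathrm{Sp}(n)}{\infcat C} \to \Fun{\disccat{[n]}}{\infcat C}$ over $f$ is identified with $\prod_{i=0}^{n-1} \Map[\infcat C]{f(i)}{f(i+1)}$, which is contractible by hypothesis.

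For general $\cat I$, I would use the presentation $N(\cat I) \cong \colim_{\sigma \in \Delta/N(\cat I)} \Simplex{n_\sigma}$ in simplicial sets, which turns $\Fun{\cat I}{\infcat C}$ into the corresponding limit and presents $F$ as $\lim_\sigma F_\sigma$, with each $F_\sigma$ contractible by the simplex case.

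The hard part will be that a bare limit of contractible Kan complexes need not be contractible without some form of fibrancy. I would address this by restricting to the subcategory of non-degenerate simplices (which is Reedy) and inducting along the skeletal filtration $\disccat{\cat I} = \mathrm{sk}_0 N(\cat I) \subseteq \mathrm{sk}_1 N(\cat I) \subseteq \cdots$. At the $k$-th stage the skeleton is obtained from the previous one by a pushout along $\coprod \partial \Simplex{k} \hookrightarrow \coprod \Simplex{k}$, giving a pullback square of function quasi-categories. The key technical step is then to show that the restriction $\Fun{\Simplex{k}}{\infcat C} \to \Fun{\partial \Simplex{k}}{\infcat C}$ is an isofibration whose fiber over the boundary data produced so far is contractible: the first statement is a general fact for quasi-categories; the second is immediate for $k = 1$ (the fiber is a mapping space $\Map[\infcat C]{f(x)}{f(y)}$) and for $k \geq 2$ is obtained by factoring the inclusion through an inner horn $\Lambda^k_1 \hookrightarrow \Simplex{k}$ (which is inner anodyne, so induces a trivial fibration of function complexes) and reducing the question to iterated path spaces in mapping spaces $\Map[\infcat C]{f(x)}{f(y)}$, all of which are contractible by hypothesis. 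Propagating this contractibility through the induction gives that $F$ is contractible.
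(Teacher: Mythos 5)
Your strategy is sound and genuinely different from the proof in the paper. The paper's argument (due to Markus Land) also reduces to showing that the fiber of $\Res{\inc} \colon \Fun{\cat I}{\infcat C} \to \Fun{\disccat{\cat I}}{\infcat C}$ over $f$ is a contractible Kan complex, but it handles all simplices of $\cat I$ at once: the extension problem for $\bdry{\Simplex n} \to \Simplex n$ is rewritten as a lifting problem for $\Simplex n \times \disccat{\cat I} \cup \bdry{\Simplex n} \times \cat I \hookrightarrow \Simplex n \times \cat I$ against $\infcat C$, one forms the pullback $\infcat E \defeq \cat I \times_{\hcat{\infcat C}} \infcat C$ along the unique $1$-categorical extension of $\hcatmap[\infcat C] \circ f$, the contractibility hypothesis makes the projection $\infcat E \to \cat I$ fully faithful and essentially surjective — hence a trivial categorical fibration — and the lift is produced by factoring through $\infcat E$. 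That packaging avoids all skeletal and spine bookkeeping; your cell-by-cell argument is more elementary and shows exactly which homotopy groups of which mapping spaces are being used, at the cost of more simplicial technology. If you write your version up, three points must be made precise. First, ``isofibration with contractible fibers'' does not imply ``trivial fibration'' (the inclusion $\disccat{[1]} \to \Simplex 1$ is a categorical fibration with contractible fibers that is not an equivalence); you need to pass to the fibers over the fixed object assignments, note that the restriction maps are conservative isofibrations bijective on vertices so that these fibers are Kan complexes and the induced maps between them are Kan fibrations, and only then does fiberwise contractibility upgrade to a trivial Kan fibration at each stage. Second, contractibility of a fiber includes nonemptiness: at stage $k$ you must check that every boundary datum arising from the previous stage actually admits a filler, which uses connectedness of $\Map[\infcat C]{f(x)}{f(y)}$ (the argument would break if the mapping spaces were merely empty-or-contractible), and in the inner-horn reduction you must also note that the ``long edge'' of a simplex of $\cat I$ is again the image of a morphism of $\cat I$, so the hypothesis applies to it. Third, you need that the inverse limit of the resulting tower of trivial Kan fibrations is again a trivial fibration; this is standard (a lifting-property argument) but should be said. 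With those in place your induction closes, the fiber at stage $k$ being an iterated loop space of a contractible mapping space.
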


\begin{proof}
  We want to show that the pullback $\infcat D$ (in the 1-category of simplicial sets) of the lower right corner of the middle part of the diagram
  \[
  \begin{tikzcd}
    \bdry{\Simplex n} \rar{s} \dar[hook] & \infcat D \rar{p} \dar &[10] \Fun{\cat I}{\infcat C} \dar{\Res{\inc}}\\
    \Simplex n \rar \urar[dashed] & \termCat \rar{\const[f]} & \Fun{\disccat{\cat I}}{\infcat C}
  \end{tikzcd}
  \]
  is a contractible Kan complex, i.e.\ that any map $s \colon \bdry{\Simplex n} \to \infcat D$ extends to a map $\Simplex n \to \infcat D$ as indicated on the left side of the above diagram.
  By the universal property of the pullback, this is equivalent to finding a map $t \colon \Simplex n \to \Fun{\cat I}{\infcat C}$ such that $t \circ \inc[\bdry{\Simplex n}] = p \circ s$ and ${\Res{\inc}} \circ t = \const[f]$.
  By currying $t$ corresponds to a map $\Simplex n \times \cat I \to \infcat C$ which restricts to $f \circ \pr[2]$ on $\Simplex n \times \disccat{\cat I}$ and to the curried morphism associated to $p \circ s$ on $\bdry{\Simplex n} \times \cat I$.
  Said differently, we want to find a dashed extension as in the diagram
  \[
  \begin{tikzcd}
    \Simplex n \times \disccat{\cat I} \cup \bdry{\Simplex n} \times \cat I \rar{l} \dar[hook] & \infcat C \\
    \Simplex n \times \cat I \urar[dashed] &
  \end{tikzcd}
  \]
  where $l$ is given by $f \circ \pr[2]$ on $\Simplex n \times \disccat{\cat I}$ and by $p' \circ (s \times \id[\cat I])$ on $\bdry{\Simplex n} \times \cat I$, where $p' \colon \infcat D \times \cat I \to \infcat C$ is the curried morphism associated to $p$.
  We will now factor $l$ through another $\infty$-category that only sees the information relevant to us and we can thus control.
  
  For this, note that our assumptions imply the existence of a unique extension $g$ of the composition $\hcatmap[\infcat C] \circ f$ to $\cat I$ (here $\hcatmap[\infcat C]$ is the canonical map $\infcat C \to \hcat{\infcat C}$).
  Now we write $\infcat E$ for the pullback (in the 1-category of simplicial sets) of the lower right corner in the diagram
  \[
  \begin{tikzcd}
    \disccat{\cat I} \ar[bend right, hook]{ddr}[swap]{\inc} \ar[bend left]{drr}{f} \drar[dashed]{\hat f} &[-10] & \\[-10]
    & \infcat E \dar[swap]{q} \rar{k} & \infcat C \dar{\hcatmap[\infcat C]} \\
    & \cat I \rar{g} & \hcat{\infcat C}
  \end{tikzcd}
  \]
  and note that, since the large outer diagram commutes, we obtain a map $\hat f$ making the two triangles commute (in particular $f$ factors through $\infcat E$).
  Now, by \Cref{lemma:generalized_subcat}, we obtain that the pullback $\infcat E$ is again an $\infty$-category and that $q$ can be identified with $\hcatmap[\infcat E]$, so in particular that $q$ is essentially surjective.
  Using our condition on $f$, the lemma also implies that $q$ is fully faithful, hence a categorical equivalence.
  (Note that we cannot just use the subcategory spanned by the essential image $\im g$ instead of $\infcat E$ since, when $g$ is not full, there could be morphisms in $\im g$ that our conditions do not control.)
  
  To see that $l$ actually factors through $\infcat E$, we will first show that $p'$ factors through $\infcat E$.
  For this, consider the commutative diagram
  \[
  \begin{tikzcd}
    \infcat D \rar{p} \dar &[10] \Fun{\cat I}{\infcat C} \dar[swap]{\Res{\inc}} \rar{\hcatmap[\infcat C] \circ} & \Fun{\cat I}{\hcat{\infcat C}} \dar{\Res{\inc}} \\
    \termCat \rar{\const[f]} & \Fun{\disccat{\cat I}}{\infcat C} \rar{\hcatmap[\infcat C] \circ} & \Fun{\disccat{\cat I}}{\hcat{\infcat C}}
  \end{tikzcd}
  \]
  and note that $(\hcatmap[\infcat C] \;\circ) \circ p$ is just $\const[g]$ since the subcategory of $\Fun{\cat I}{\hcat{\infcat C}}$ lying over (the discrete subcategory spanned by) the object $\hcatmap[\infcat C] \circ f \in \Fun{\disccat{\cat I}}{\hcat{\infcat C}}$ has the single object $g$ and is discrete.
  Hence $p'$ fits into a commutative diagram of the form
  \[
  \begin{tikzcd}
    \infcat D \times \disccat{\cat I} \ar[bend right]{ddr}[swap]{{\inc} \circ \pr[2]} \ar[bend left]{drr}{f \circ \pr[2]} \drar[hook, start anchor = south east]{\id[\infcat D] \times \inc} & & \\
     & \infcat D \times \cat I \rar{p'} \dar[swap]{\pr[\cat I]} & \infcat C \dar{\hcatmap[\infcat C]} \\
     & \cat I \rar{g} & \hcat{\infcat C}
  \end{tikzcd}
  \]
  and we obtain a functor $\hat p' \colon \infcat D \times \cat I \to \infcat E$ such that $k \circ \hat p' = p'$ and $q \circ \hat p' = \pr[\cat I]$.
  Thus also $k \circ \hat p' \circ (\id[\infcat D] \times \inc) = f \circ \pr[2]$ and $q \circ \hat p' \circ (\id[\infcat D] \times \inc) = {\inc} \circ \pr[2]$, which implies $\hat p' \circ (\id[\infcat D] \times \inc) = \hat f \circ \pr[2]$ by the universal property of $\infcat E$.
  
  Now we can construct the diagram
  \[
  \begin{tikzcd}
    \Simplex n \times \disccat{\cat I} \cup \bdry{\Simplex n} \times \cat I \rar{\hat l} \dar[hook] & \infcat E \rar{k} \dar{q} & \infcat C \\
    \Simplex n \times \cat I \rar{\pr[\cat I]} \urar[dashed] & \cat I &
  \end{tikzcd}
  \]
  where $\hat l$ is given by $\hat f \circ \pr[2]$ on $\Simplex n \times \disccat{\cat I}$ and by $\hat p' \circ (s \times \id[\cat I])$ on $\bdry{\Simplex n} \times \cat I$.
  It follows from what we said before that $\hat l$ is well-defined, that the diagram commutes (without the dashed arrow), and that we actually have $k \circ \hat l = l$.
  Remembering that $q$ is a categorical equivalence and, by \Cref{lemma:hcatmap_categorical_fibration}, also a categorical fibration, we obtain the desired dashed lift since trivial categorical fibrations have the right lifting property against inclusions of simplicial sets.
\end{proof}

To use the above lemma we need information about mapping spaces in slice categories (since we want a diagram which lies under a given functor in a coherent way) which the following lemma and its consequences will provide.

\begin{lemma} \label{lemma:maps_from_unit}
  Let $\infcat C$ be an $\infty$-category and $i \colon \infcat C_0 \to \infcat C$ the inclusion of a full subcategory such that there is an adjunction $l \dashv i$ with unit $\eta \colon \id \to i \circ l$.
  Further let $C \in \infcat C$, $C_0 \in \infcat C_0$, and $f \colon C \to i(C_0)$ be a morphism in $\infcat C$.
  Then $\Map[\undercat{\infcat C}{C}]{\eta(C)}{f}$ is contractible.
\end{lemma}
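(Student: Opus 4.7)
The plan is to realize $\Map[\undercat{\infcat C}{C}]{\eta(C)}{f}$ as a homotopy fiber of a precomposition map, and then to argue that this precomposition map is already an equivalence, which forces all of its fibers (and in particular the one over $f$) to be contractible.

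Concretely, by the standard description of mapping spaces in slice $\infty$-categories, there is a fiber sequence
\[ \Map[\undercat{\infcat C}{C}]{\eta(C)}{f} \longto \Map[\infcat C]{i(l(C))}{i(C_0)} \xlongto{\eta(C)^*} \Map[\infcat C]{C}{i(C_0)} \]
where the right-hand map is precomposition with $\eta(C)$ and the fiber is taken over $f$. It therefore suffices to show that $\eta(C)^*$ is an equivalence of spaces. For this, the observation is that $\eta(C)^*$ fits into a commutative triangle
\[
\begin{tikzcd}
\Map[\infcat C_0]{l(C)}{C_0} \rar \drar[swap]{\simeq} & \Map[\infcat C]{i(l(C))}{i(C_0)} \dar{\eta(C)^*} \\
 & \Map[\infcat C]{C}{i(C_0)}
\end{tikzcd}
\]
in which the horizontal map is induced by postcomposition with $i$ and the diagonal is the adjunction equivalence associated to $l \dashv i$. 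The horizontal map is an equivalence because $i$ is fully faithful, and the diagonal is an equivalence since it is part of an adjunction; moreover one standard description of that adjunction equivalence is precisely postcomposition by $i$ followed by precomposition by $\eta(C)$, which makes the triangle commute by definition. Hence $\eta(C)^*$ is an equivalence.

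The only real subtlety is establishing the fiber sequence and checking the commutativity of the naturality triangle rigorously in the quasi-category model. Both are standard and can be sourced from Lurie's HTT or the Riehl--Verity framework that the paper already relies on, so this is a matter of citation rather than a substantive obstacle.
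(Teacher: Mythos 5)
Your proposal is correct and follows essentially the same route as the paper: identify $\Map[\undercat{\infcat C}{C}]{\eta(C)}{f}$ as the homotopy fiber of precomposition with $\eta(C)$ (the paper cites \cite[Lemma 5.5.5.12]{LurHTT}), then show that map is an equivalence by factoring the adjunction equivalence of \Cref{lemma:unit_classical} through the fully faithful map induced by $i$. The two citation points you flag are exactly the ones the paper supplies, so there is nothing to add.
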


\begin{proof}
  By \cite[Lemma 5.5.5.12]{LurHTT}, we have that $\Map[\undercat{\infcat C}{C}]{\eta(C)}{f}$ is a homotopy fiber of the map $(\circ\; \eta(C)) \colon \Map[\infcat C]{i(l(C))}{i(C_0)} \to \Map[\infcat C]{C}{i(C_0)}$ over the point $f$.
  But, by \Cref{lemma:unit_classical}, the composition
  \[ \Map[\infcat C_0]{l(C)}{C_0} \xlongto{i} \Map[\infcat C]{i(l(C))}{i(C_0)} \xlongto{\circ \eta(C)} \Map[\infcat C]{C}{i(C_0)} \]
  is an equivalence.
  Now we note that the first map in this composition is an equivalence since $i$ is fully faithful.
  Hence the second map is also an equivalence and thus the homotopy fiber we are interested in is trivial.
\end{proof}

\begin{corollary} \label{lemma:maps_from_local_univ}
  Let $\sigma \colon \gpos S \to \pos S$ be a preshape and $\infcat C$ and $\infcat D$ $\infty$-categories such that $\Fun{\infcat C}{\infcat D}$ admits a universal $\sigma$-excisive approximation.
  Furthermore, let $\alpha \colon F \to G$ be a natural transformation of functors $\infcat C \to \infcat D$ such that $G$ is $\sigma$-excisive.
  Then $\Map[\undercat{\Fun{\infcat C}{\infcat D}}{F}]{\tp(F)}{\alpha}$ is contractible.
\end{corollary}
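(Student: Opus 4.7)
The statement is set up to be a direct application of the preceding \Cref{lemma:maps_from_unit}, so my plan is essentially just to verify that the hypotheses match in the expected way. Specifically, I would instantiate the lemma with the ambient $\infty$-category taken to be $\Fun{\infcat C}{\infcat D}$ and the full subcategory taken to be $\Exc{\sigma}{\infcat C}{\infcat D}$. By our hypothesis that $\Fun{\infcat C}{\infcat D}$ admits a universal $\sigma$-excisive approximation, we have fixed an adjunction $\P \dashv \inc$ with unit $\p \colon \id \to \inc \circ \P$, which provides the needed adjunction in the lemma. The full faithfulness of $\inc$ is automatic since $\Exc{\sigma}{\infcat C}{\infcat D}$ is by definition a full subcategory of $\Fun{\infcat C}{\infcat D}$.

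With these identifications, I would take the object $C$ of the lemma to be the given functor $F$, the object $C_0$ to be $G$ (which is an object of $\Exc{\sigma}{\infcat C}{\infcat D}$ precisely because $G$ is $\sigma$-excisive by assumption), and the morphism $f \colon C \to i(C_0)$ to be the natural transformation $\alpha \colon F \to G = \inc(G)$. Then \Cref{lemma:maps_from_unit} immediately yields that $\Map[\undercat{\Fun{\infcat C}{\infcat D}}{F}]{\p(F)}{\alpha}$ is contractible, which is the desired conclusion.

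There is no real obstacle here; the work has already been done in the proof of \Cref{lemma:maps_from_unit} (which uses \cite[Lemma 5.5.5.12]{LurHTT} together with the classical characterization of adjunction units via mapping spaces). The only thing to check is the matching of data, which is immediate from the definitions. Hence the proof will consist of a single sentence invoking \Cref{lemma:maps_from_unit}.
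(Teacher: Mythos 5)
Your proposal is correct and is exactly the intended argument: the paper states this as a corollary of \Cref{lemma:maps_from_unit} with no separate proof, precisely because it follows by instantiating that lemma with the full subcategory $\Exc{\sigma}{\infcat C}{\infcat D} \subseteq \Fun{\infcat C}{\infcat D}$, the fixed adjunction $\P \dashv \inc$ with unit $\p$, and the data $C = F$, $C_0 = G$, $f = \alpha$. Nothing is missing.
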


\begin{lemma}
  Let $\infcat C$ be an $\infty$-category and $i \colon \infcat C_0 \to \infcat C$ the inclusion of a full subcategory such that there is an adjunction $l \dashv i$ with unit $\eta \colon \id \to i \circ l$.
  Furthermore, let $\infcat E$ be an $\infty$-category, $F \colon \infcat E \to \infcat C$ and $F_0 \colon \infcat E \to \infcat C_0$ functors, and $\alpha \colon F \to i \circ F_0$ a natural transformation.
  Then $\Map[\undercat{\Fun{\infcat E}{\infcat C}}{F}]{\eta \circ F}{\alpha}$ is contractible.
\end{lemma}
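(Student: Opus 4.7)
The plan is to reduce the statement to \Cref{lemma:maps_from_unit} applied to a suitable postcomposition adjunction. The assignment $\infcat C \mapsto \Fun{\infcat E}{\infcat C}$ extends to a 2-functor on the homotopy 2-category of $\infty$-categories; applied to the adjunction $l \dashv i$ with unit $\eta$, this yields an adjunction
\[
  (l \;\circ) \dashv (i \;\circ) \colon \Fun{\infcat E}{\infcat C_0} \longto \Fun{\infcat E}{\infcat C}
\]
whose unit, evaluated at $F$, is precisely the whiskering $\eta \circ F \colon F \to i \circ l \circ F$. Moreover, postcomposition by the fully faithful functor $i$ is itself fully faithful, so $(i \;\circ)$ plays the role of a full subcategory inclusion in the sense required.

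With this set up, I would apply \Cref{lemma:maps_from_unit} to the above postcomposition adjunction with the choices $C = F$, $C_0 = F_0$, and $f = \alpha$. Its conclusion is exactly the desired contractibility of $\Map[\undercat{\Fun{\infcat E}{\infcat C}}{F}]{\eta \circ F}{\alpha}$, which finishes the proof.

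The only slightly delicate point is justifying the behavior of $\Fun{\infcat E}{\blank}$ on adjunctions, namely that it sends $l \dashv i$ with unit $\eta$ to $(l \;\circ) \dashv (i \;\circ)$ with unit $\eta \circ \blank$. This is an instance of the general principle that any 2-functor preserves adjunctions together with their unit-counit data, combined with the 2-functoriality of postcomposition on the homotopy 2-category of $\infty$-categories. Alternatively one can construct the postcomposition adjunction by hand and identify its unit using the explicit description of mapping spaces in functor $\infty$-categories. Note also that the proof of \Cref{lemma:maps_from_unit} uses only fully faithfulness of $i$ (not that it is a literal subcategory inclusion), so it applies verbatim to $(i \;\circ)$ in our situation.
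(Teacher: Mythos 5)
Your proposal is correct and matches the paper's proof essentially verbatim: the paper also invokes the postcomposition adjunction $(l\;\circ) \dashv (i\;\circ)$ with unit $(\eta\;\circ)$ (its \Cref{lemma:composing_with_adjunction}, proved via the 2-functoriality of $\Fun{K}{\blank}$ on the homotopy 2-category) and then applies \Cref{lemma:maps_from_unit}, noting that $(i\;\circ)$ is the inclusion of a full subcategory. Your extra remark that fully faithfulness of $(i\;\circ)$ suffices is a reasonable safeguard but not needed, since postcomposition with a full subcategory inclusion is again one.
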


\begin{proof}
  By \Cref{lemma:composing_with_adjunction}, there is an adjunction with left adjoint $(l \;\circ) \colon \Fun{\infcat E}{\infcat C} \to \Fun{\infcat E}{\infcat C_0}$, right adjoint $(i \;\circ) \colon \Fun{\infcat E}{\infcat C_0} \to \Fun{\infcat E}{\infcat C}$, and unit $(\eta \;\circ)$.
  Noting that $(i \;\circ)$ is the inclusion of a full subcategory, we can apply \Cref{lemma:maps_from_unit} to obtain the desired statement.
\end{proof}

\begin{corollary} \label{cor:maps_from_univ}
  Let $\sigma \colon \gpos S \to \pos S$ be a preshape and $\infcat C$ and $\infcat D$ $\infty$-categories such that $\Fun{\infcat C}{\infcat D}$ admits a universal $\sigma$-excisive approximation.
  Furthermore, let $A \colon \Fun{\infcat C}{\infcat D} \to \Exc{\sigma}{\infcat C}{\infcat D}$ be a functor and $\alpha \colon \id[\Fun{\infcat C}{\infcat D}] \to {\inc} \circ A$ a natural transformation.
  Then the space of maps from $\tp$ to $\alpha$ in $\undercat{\Fun{\Fun{\infcat C}{\infcat D}}{\Fun{\infcat C}{\infcat D}}}{\id}$ is contractible.
\end{corollary}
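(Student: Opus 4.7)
The plan is to deduce this corollary as an immediate specialization of the preceding lemma (the one applied to inclusions of full reflective subcategories and functors out of an arbitrary $\infty$-category). All the work has already been done there; we just need to feed in the right inputs.

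First I would set $\infcat E \defeq \Fun{\infcat C}{\infcat D}$ in the statement of the preceding lemma. For the ambient pair, I would take the ``outer'' $\infty$-category to be $\Fun{\infcat C}{\infcat D}$ itself and the full subcategory to be the inclusion ${\inc} \colon \Exc{\sigma}{\infcat C}{\infcat D} \to \Fun{\infcat C}{\infcat D}$, which is indeed full and reflective by our assumption that $\Fun{\infcat C}{\infcat D}$ admits a universal $\sigma$-excisive approximation; the corresponding left adjoint is $\P$ and the unit is $\p$.

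Next I would choose the functors required by the lemma: take $F \defeq \id[\Fun{\infcat C}{\infcat D}] \colon \infcat E \to \Fun{\infcat C}{\infcat D}$ and $F_0 \defeq A \colon \infcat E \to \Exc{\sigma}{\infcat C}{\infcat D}$. With these choices the natural transformation $\eta \circ F$ of the lemma specializes to $\p \circ \id = \p$, and the composition $i \circ F_0$ specializes to ${\inc} \circ A$, so $\alpha \colon \id \to {\inc} \circ A$ is exactly a natural transformation of the form required by the lemma.

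Plugging these into the conclusion of the preceding lemma yields that $\Map[\undercat{\Fun{\infcat E}{\Fun{\infcat C}{\infcat D}}}{\id}]{\p}{\alpha}$ is contractible, which is precisely the statement of the corollary. There is no real obstacle here since the content (contractibility of mapping spaces out of the unit of a reflective localization) is entirely encapsulated by the previous two lemmas; the only thing to check is that the inputs match up, which is a matter of renaming.
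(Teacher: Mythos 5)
Your proposal is correct and is exactly the intended derivation: the paper states this as an immediate corollary of the preceding lemma, obtained by the same specialization you describe (ambient category $\Fun{\infcat C}{\infcat D}$, full reflective subcategory $\Exc{\sigma}{\infcat C}{\infcat D}$ with unit $\p$, and $F = \id$, $F_0 = A$). Nothing further is needed.
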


We now know enough to be able to construct a diagram as promised in the beginning of this section.
There are two versions: one which also takes into account maps between functors (i.e.\ makes the naturality of the diagram precise), and a second one only considering a single functor.

\begin{theorem} \label{thm:taylor_graph}
  Let $\infcat C$ and $\infcat D$ be two $\infty$-categories.
  \begin{enumerate}[label=\alph*)]
    \item There is an essentially unique functor
    \[ \Taylor \colon \op{(\PSharel{\infcat C}{\infcat D})} \longto \undercat{\Fun{\Fun{\infcat C}{\infcat D}}{\Fun{\infcat C}{\infcat D}}}{\id} \]
    such that $\Taylor(\sigma) = \tp$.
    
    \item Let $F \colon \infcat C \to \infcat D$ be a functor.
    Then there is an essentially unique functor
    \[ \Taylor(F) \colon \op{(\PSharel{\infcat C}{\infcat D})} \longto \undercat{\Fun{\infcat C}{\infcat D}}{F} \]
    such that $\Taylor(F)(\sigma) = \tp(F)$.
  \end{enumerate}
  In particular, we obtain such functors for any subcategory of $\PSharel{\infcat C}{\infcat D}$.
  These restricted functors are also essentially unique.
\end{theorem}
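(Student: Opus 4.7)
The plan is to apply \Cref{lemma:functors_from_proset} directly, since $\PSharel{\infcat C}{\infcat D}$ (and hence its opposite) is a thin category. So all we need is to specify the functors on objects and then verify the mapping-space condition for each morphism; essential uniqueness will then be automatic from the lemma.

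First I would define the value on objects by setting $\sigma \mapsto \p$ (for part (a)) or $\sigma \mapsto \p(F)$ (for part (b)), viewed as objects of the respective under-category. Next, let $\sigma \to \tau$ be a morphism in $\op{(\PSharel{\infcat C}{\infcat D})}$; by definition this means that every $\tau$-excisive functor is $\sigma$-excisive, equivalently $\Exc{\tau}{\infcat C}{\infcat D} \subseteq \Exc{\sigma}{\infcat C}{\infcat D}$. In particular the functor $\P[\tau] \colon \Fun{\infcat C}{\infcat D} \to \Exc{\tau}{\infcat C}{\infcat D}$ factors (uniquely, up to equivalence) through $\Exc{\sigma}{\infcat C}{\infcat D}$, so that $\p[\tau]$ may be regarded as a natural transformation $\id \to \inc_\sigma \circ A$ for a functor $A$ landing in $\Exc{\sigma}{\infcat C}{\infcat D}$. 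By \Cref{cor:maps_from_univ} the space of maps from $\p[\sigma]$ to $\p[\tau]$ in $\undercat{\Fun{\Fun{\infcat C}{\infcat D}}{\Fun{\infcat C}{\infcat D}}}{\id}$ is contractible; similarly, by \Cref{lemma:maps_from_local_univ}, the space of maps from $\p[\sigma](F)$ to $\p[\tau](F)$ in $\undercat{\Fun{\infcat C}{\infcat D}}{F}$ is contractible.

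With the object assignment and the contractibility of these mapping spaces in hand, \Cref{lemma:functors_from_proset} (applied to $\cat I = \op{(\PSharel{\infcat C}{\infcat D})}$) produces an essentially unique extension to a functor on all of $\op{(\PSharel{\infcat C}{\infcat D})}$. This gives $\Taylor$ in part (a) and $\Taylor(F)$ in part (b). The final sentence of the theorem — that the restriction to any subcategory of $\PSharel{\infcat C}{\infcat D}$ is again essentially unique — follows immediately by applying the same argument to the subcategory in place of the whole thing, since the contractibility of mapping spaces is inherited.

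I do not anticipate a genuine obstacle here: the real work has already been done in establishing \Cref{lemma:functors_from_proset} and in deriving the mapping-space corollaries \Cref{cor:maps_from_univ} and \Cref{lemma:maps_from_local_univ} from the universal property of $\p[\sigma]$. The only small care needed is bookkeeping of variance: a morphism in $\op{(\PSharel{\infcat C}{\infcat D})}$ from $\sigma$ to $\tau$ must produce a map out of $\p[\sigma]$ into $\p[\tau]$, and this matches the direction in which the corollaries apply precisely because the inclusion of excisive subcategories goes the opposite way.
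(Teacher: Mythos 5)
Your proposal is correct and follows essentially the same route as the paper: both apply \Cref{lemma:functors_from_proset} and verify the mapping-space hypothesis via \Cref{cor:maps_from_univ} (resp.\ \Cref{lemma:maps_from_local_univ}), using that the inclusion of excisive subcategories reverses direction so that $\P[\tau]$ lands in $\Exc{\sigma}{\infcat C}{\infcat D}$. Your variance bookkeeping is exactly the point the paper's proof relies on, so there is nothing to add.
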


\begin{proof}
  We want to apply \Cref{lemma:functors_from_proset} to obtain the desired functors.
  For the first point, note that, if there is a map $\sigma \to \tau$ in $\PSharel{\infcat C}{\infcat D}$, then $\tP$ lands in $\Exc{\tau}{\infcat C}{\infcat D}$, hence \Cref{cor:maps_from_univ} implies that $\Map{\tp[\tau]}{\tp}$ is contractible.
  The second follows in the same way, using \Cref{lemma:maps_from_local_univ} instead of \Cref{cor:maps_from_univ}.
  The essential uniqueness of the restrictions also follows from \Cref{lemma:functors_from_proset}.
\end{proof}

\begin{remark}
  Using the essential uniqueness of $\Taylor(F)$, we see that it is equivalent to the functor obtained from $\Taylor$ by postcomposing with evaluation at $F$, which explains our notation.
\end{remark}

We now describe a version of these diagrams with a single indexing category independent of the $\infty$-categories $\infcat C$ and $\infcat D$.

\begin{notation}
  We write $\Shafin$ for the thin (i.e.\ each hom-set has at most one element) category with
  \begin{itemize}
    \item objects the collection of (small) finite shapes;
    \item a morphism from $\sigma$ to $\tau$ if and only if, for all $\infty$-categories $\infcat C$ and $\infcat D$ such that $\infcat C$ has a terminal object and admits all finite colimits and such that $\infcat D$ is differentiable, each functor $F \colon \infcat C \to \infcat D$ that is $\sigma$-excisive is also $\tau$-excisive.
  \end{itemize}
\end{notation}

\begin{definition}
  We will say that two finite shapes are \emph{equivalent} if they are isomorphic in $\Shafin$.
\end{definition}

\begin{corollary}
  Let $\infcat C$ be an $\infty$-category that has a terminal object and admits all finite colimits, and $\infcat D$ a differentiable $\infty$-category.
  \begin{enumerate}[label=\alph*)]
    \item There is an essentially unique functor
    \[ \Taylor \colon \op{(\Shafin)} \longto \undercat{\Fun{\Fun{\infcat C}{\infcat D}}{\Fun{\infcat C}{\infcat D}}}{\id} \]
    such that $\Taylor(\sigma) = \tp$.
    
    \item Let $F \colon \infcat C \to \infcat D$ be a functor.
    Then there is an essentially unique functor
    \[ \Taylor(F) \colon \op{(\Shafin)} \longto \undercat{\Fun{\infcat C}{\infcat D}}{F} \]
    such that $\Taylor(F)(\sigma) = \tp(F)$.
  \end{enumerate}
  In particular, we obtain such functors for any subcategory of $\Shafin$.
  These restricted functors are also essentially unique.
\end{corollary}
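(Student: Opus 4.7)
The plan is to reduce this corollary to \Cref{thm:taylor_graph} by producing a canonical comparison functor $\iota \colon \Shafin \to \PSharel{\infcat C}{\infcat D}$ and then defining the desired functors as the composites of those of \Cref{thm:taylor_graph} with $\op \iota$. Both $\Shafin$ and $\PSharel{\infcat C}{\infcat D}$ are thin, so specifying $\iota$ amounts to checking it is well defined on objects and on morphisms.

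On objects, I would send each finite shape $\sigma \colon \gpos S \to \pos S$ to itself; this is well defined because, under our standing hypotheses, $\infcat C$ admits all finite colimits and hence is $\sigma$-nice, while $\infcat D$ is differentiable and $\pos S$ is finite, so $\infcat D$ is $\pos S$-differentiable. Thus \Cref{thm:finite_approximation} provides the requisite left adjoint to $\Exc{\sigma}{\infcat C}{\infcat D} \hookrightarrow \Fun{\infcat C}{\infcat D}$, making $\sigma$ an object of $\PSharel{\infcat C}{\infcat D}$. On morphisms, a map $\sigma \to \tau$ in $\Shafin$ by definition says that $\sigma$-excisiveness implies $\tau$-excisiveness for functors between any appropriate $\infty$-categories; specializing to $\infcat C$ and $\infcat D$ yields a morphism in $\PSharel{\infcat C}{\infcat D}$. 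Thinness of both categories makes functoriality automatic.

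With $\iota$ constructed, the functors $\Taylor$ and $\Taylor(F)$ of the corollary arise as the precompositions of the functors from \Cref{thm:taylor_graph} with $\op \iota$, and they satisfy $\Taylor(\sigma) = \p$ and $\Taylor(F)(\sigma) = \p(F)$ by construction. For essential uniqueness, both for $\Shafin$ itself and for any subcategory thereof, I would appeal directly to \Cref{lemma:functors_from_proset}: its hypothesis requires contractibility of the mapping space from $\p[\tau]$ to $\p$ (respectively from $\p[\tau](F)$ to $\p(F)$) in the relevant slice whenever there is a morphism $\sigma \to \tau$ in the (subcategory of) $\Shafin$. This follows verbatim from \Cref{cor:maps_from_univ} and \Cref{lemma:maps_from_local_univ}, exactly as in the proof of \Cref{thm:taylor_graph}, since such a morphism guarantees that $\P$ takes values in $\Exc{\tau}{\infcat C}{\infcat D}$.

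There is no substantial obstacle; the only content beyond unwinding definitions is the bookkeeping verification that the hypotheses of \Cref{thm:finite_approximation} are implied by our global assumptions on $\infcat C$ and $\infcat D$, which is transparent given the finiteness of the shapes in $\Shafin$.
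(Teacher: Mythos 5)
Your proposal is correct and follows essentially the same route as the paper: the paper's proof is exactly to observe that \Cref{thm:finite_approximation} yields a functorial inclusion $\Shafin \to \PSharel{\infcat C}{\infcat D}$ and then to invoke \Cref{thm:taylor_graph}. Your additional bookkeeping (checking $\sigma$-niceness and $\pos S$-differentiability from the global hypotheses, and the direct appeal to \Cref{lemma:functors_from_proset} for uniqueness of restrictions) just makes explicit what the paper leaves implicit.
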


\begin{proof}
  By \Cref{thm:finite_approximation}, there is a functorial inclusion $\Shafin \to \PSharel{\infcat C}{\infcat D}$.
  Then \Cref{thm:taylor_graph} implies the statement.
\end{proof}

\begin{remark}
  There are analogous statements for shapes with higher cardinality bounds (as long as we restrict ourselves to full shapes).
  However, they are less useful as the necessary differentiability condition on $\infcat D$ becomes very strong.
\end{remark}

\subsection{Relation to the Taylor tower}

\begin{notation}
  We write $\Shafinnb$ for the full (thin) subcategory of $\Shafin$ spanned by the shapes equivalent to a finite non-inane full shape.
\end{notation}

The following proposition tells us that restricting ourselves to $\Shafinnb$ drops precisely the shapes with uninteresting excision properties.

\begin{proposition}
  A finite shape $\sigma$ is equivalent to a finite inane full shape if and only if, for all $\infty$-categories $\infcat C$ and $\infcat D$ such that $\infcat C$ has a terminal object and admits all finite colimits and such that $\infcat D$ is differentiable, each functor $F \colon \infcat C \to \infcat D$ is $\sigma$-excisive.
  Furthermore, a finite non-inane full shape is not equivalent to a finite inane full shape.
  
  In particular, the finite shapes equivalent to a finite inane full shape are precisely the terminal objects of $\Shafin$, and $\Shafinnb$ consists precisely of the non-terminal objects of $\Shafin$.
\end{proposition}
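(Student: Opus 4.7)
The plan is to combine two established results via the ``fullification'' of finite shapes: Proposition~\ref{prop:inane} says every functor is excisive with respect to a finite inane reduced shape, while Theorem~\ref{thm:nb-excisive_implies_n-excisive} says a finite non-inane full shape has strictly non-trivial excision; Proposition~\ref{prop:finite_shape_eq_to_full} bridges these with arbitrary finite shapes by replacing any finite shape $\sigma$ by an equivalent finite full shape $\i\sigma$.

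For the ``if'' direction of the first statement, I will suppose $\sigma$ is equivalent in $\Shafin$ to a finite inane full shape $\tau$. Since $\tau$ is full it is reduced, so Proposition~\ref{prop:inane} applies and, given $\infcat C$ with a terminal object and all finite colimits and $\infcat D$ differentiable (which provides the finite limits demanded by the proposition), yields that every $F \colon \infcat C \to \infcat D$ is $\tau$-excisive. By the definition of $\sim$ in $\Shafin$, every such $F$ is then also $\sigma$-excisive.

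For the ``only if'' direction, I assume every such $F$ is $\sigma$-excisive, pass to $\sigma \sim \i\sigma$ via Proposition~\ref{prop:finite_shape_eq_to_full}, and split on whether $\i\sigma$ is inane. If it is, we are done. Otherwise Theorem~\ref{thm:nb-excisive_implies_n-excisive} forces every such $F$ to be $(n-1)$-excisive, where $n$ is the number of minimal elements of $\gini{\im\sigma}$; even in the degenerate case $n = 0$ (which forces $\im\sigma = \set{\ini}$ and $\ginipos S = \emptyset$, so that $\i\sigma$ is only vacuously non-inane) the conclusion is $(-1)$-excisiveness, i.e.\ the image lies in terminal objects. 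I plan to contradict this uniformly by taking $\infcat C = \infcat D = \ptSpaces$ and $F = \id$: the identity on pointed spaces is not $m$-excisive for any $m \ge -1$. For $m = -1$ and $m = 0$ this is immediate, as these conditions force the image to consist of terminal or mutually equivalent objects respectively; for $m \ge 1$, the strongly cocartesian $(m+1)$-cube of iterated pushouts along $S^0 \to \ast$, whose terminal vertex is $S^{m+1}$, is not sent to a cartesian cube by $\id$ because $S^0 \not\simeq \Omega^{m+1} S^{m+1}$.

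Finally, for the second assertion I will argue by contradiction: if a finite non-inane full shape $\sigma$ were equivalent to a finite inane full shape, the ``if'' direction would force every $F$ to be $\sigma$-excisive, and then Theorem~\ref{thm:nb-excisive_implies_n-excisive} applied directly to $\sigma$ yields the same identity-on-$\ptSpaces$ contradiction. The concluding assertion then follows formally: $\sigma$ is terminal in $\Shafin$ exactly when every $F$ is $\sigma$-excisive, which by the first equivalence happens iff $\sigma$ is equivalent to a finite inane full shape; and by Proposition~\ref{prop:finite_shape_eq_to_full} membership in $\Shafinnb$ is equivalent to $\i\sigma$ being non-inane, which by the second assertion is complementary to being terminal. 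The main obstacle I anticipate is the uniform non-$(n-1)$-excisiveness of $\id$ on $\ptSpaces$, which I expect to verify by the suspension-cube argument sketched above.
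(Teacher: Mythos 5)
Your proof has the same skeleton as the paper's: \Cref{prop:inane} handles the ``if'' direction, \Cref{prop:finite_shape_eq_to_full} reduces to full shapes, and \Cref{thm:nb-excisive_implies_n-excisive} reduces everything else to producing, for each $n$, a functor between suitable $\infty$-categories that is not $n$-excisive. The one place you genuinely diverge is in that witness, and that step as written does not work. You propose $F = \id$ on $\ptSpaces$ and the strongly cocartesian $(m+1)$-cube generated by the $m+1$ maps $S^0 \to \ast$. But the value of that cube at a subset $U$ is the join $S^0 \join U$ of $S^0$ with the discrete set $U$, so the terminal vertex is the join of $S^0$ with $m+1$ points, which is a wedge of $m$ circles, not $S^{m+1}$; and the limit of the punctured cube is not an iterated loop space of the terminal vertex, because the intermediate vertices (already $S^1$ for $\card U = 2$) are not contractible. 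Cartesianness of this cube is exactly the assertion that $S^0 \to \T[\cubeinc{m+1}](\id)(S^0)$ is an equivalence, and refuting that for every $m \ge 2$ is a genuinely nontrivial statement about the Goodwillie tower of the identity that your sketch does not address. So your argument only establishes non-$m$-excisiveness of $\id$ for $m \le 1$, where the computation $\Omega S^1 \simeq \ZZ \not\simeq S^0$ does work.

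The paper sidesteps this by choosing a different witness for each $n$: the functor $X \mapsto \Suspspec(X^{\smashprod(n+1)})$ from pointed spaces to spectra is $(n+1)$-reduced, so its $n$-excisive approximation is terminal by \cite[Remark 1.16]{Goo03}, yet the functor is not weakly constant; hence it cannot be $n$-excisive, with no cube computation needed. If you insist on using the identity, you would need either an explicit computation showing the above limit has the wrong homotopy type for every $m$, or an appeal to the nonvanishing of the higher derivatives of the identity --- both substantially heavier inputs than the reducedness argument. The remaining parts of your write-up (the degenerate case $n = 0$, the deduction of the second assertion from the first, and the identification of the terminal objects of $\Shafin$, where the implication ``terminal $\Rightarrow$ trivially excisive'' needs the existence of at least one inane shape, cf.\ \Cref{ex:inane_shape}) match the paper and are fine.
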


\begin{proof}
  By \Cref{prop:inane}, any shape equivalent to a finite inane full shape has trivial excision properties.
  This shows one direction of the first statement.
  
  Moreover, by \Cref{thm:nb-excisive_implies_n-excisive}, for any $\sigma \in \Shafinnb$, there exists an $n \in \NN$ such that $\sigma$-excisive implies $n$-excisive.
  Since being $n$-excisive is a non-trivial condition (i.e.\ there exists a functor $\infcat C \to \infcat D$ (with $\infcat C$ and $\infcat D$ as above) that is not $n$-excisive; one example is the functor $F$ from pointed spaces to spectra given by $X \mapsto \Suspspec(X^{\smashprod (n + 1)})$ as it is not weakly constant but still $(n+1)$-reduced, i.e.\ $\P[n](F)$ is terminal; see \cite[Remark 1.16]{Goo03}), we obtain that being $\sigma$-excisive is also a non-trivial condition.
  In particular, no shape in $\Shafinnb$ is equivalent to a finite inane full shape.
  This shows the second statement.
  
  For the other implication of the first statement, note that any finite shape is equivalent to a finite full shape by \Cref{prop:finite_shape_eq_to_full}.
  But, if it has trivial excision properties, it cannot be equivalent to a finite non-inane full shape by the same argument as for the second statement.
  So it must be equivalent to a finite inane full shape.
  
  The last statement now follows by noting that, since there exists a finite shape $\sigma$ such that any functor is $\sigma$-excisive (see \Cref{ex:inane_shape}), these shapes are precisely the terminal objects of $\Shafin$.
\end{proof}

We also have the following direct corollary of \Cref{thm:equiv_to_retract_of_free} and \Cref{prop:finite_shape_eq_to_full}, which tells us a bit more about the structure of $\Shafinnb$:

\begin{corollary} \label{cor:fin_many_excision_notions}
  Let $\gpos S$ be a finite poset.
  Then the full subcategory of $\Shafinnb$ spanned by the shapes with domain $\gpos S$ (i.e.\ those of the form $\sigma \colon \gpos S \to \pos S$ for some $\pos S$) has only finitely many isomorphism classes.
\end{corollary}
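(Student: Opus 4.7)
The plan is to show that every equivalence class in the subcategory admits a representative drawn from an explicitly finite collection. Specifically, I would combine \Cref{prop:finite_shape_eq_to_full} (which reduces any finite shape to a full one) with \Cref{thm:equiv_to_retract_of_free} (which reduces any non-inane full shape to one whose codomain sits inside the finite poset $\copcompi$ of its domain).

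Given a shape $\sigma \colon \gpos S \to \pos S$ in $\Shafinnb$, the first step is to apply \Cref{prop:finite_shape_eq_to_full}: this provides a $\Shafin$-isomorphism between $\sigma$ and the full shape $\i\sigma \colon \im\sigma \to \pos S$. Since $\sigma \in \Shafinnb$ is a non-terminal object of $\Shafin$ (by the proposition immediately preceding this corollary), the full shape $\i\sigma$ is also non-terminal, and that same proposition then forces $\i\sigma$ to be non-inane. The poset $\im\sigma$ has cardinality at most $\card{\gpos S}$, so it ranges over only finitely many isomorphism classes of finite posets. The second step is to apply \Cref{thm:equiv_to_retract_of_free} to the non-inane full shape $\i\sigma$: this yields a $\Shafin$-isomorphism with $v_{\i\sigma} \colon \im\sigma \to \im u_{\i\sigma}$, whose codomain is a subposet of the finite poset $\copcompi(\im\sigma)$.

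Stringing the two reductions together, every shape in the subcategory is $\Shafin$-isomorphic to a shape of the form $\pos P \to \pos Q$ where $\pos P$ is a finite poset of cardinality at most $\card{\gpos S}$ and $\pos Q \subseteq \copcompi(\pos P)$ is a subposet. Up to isomorphism there are only finitely many such pairs $(\pos P, \pos Q)$, giving only finitely many $\Shafin$-isomorphism classes in total. The one point requiring care, though not presenting any real obstacle, is the preservation of non-inaneness when passing from $\sigma$ to $\i\sigma$; this is exactly where working in $\Shafinnb$ rather than all of $\Shafin$ becomes essential, since it is needed to apply \Cref{thm:equiv_to_retract_of_free} in its strong (equivalence) form.
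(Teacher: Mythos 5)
Your proof is correct and follows exactly the route the paper intends: the corollary is stated there as a direct consequence of \Cref{prop:finite_shape_eq_to_full} and \Cref{thm:equiv_to_retract_of_free}, and you combine these two reductions in the same order, including the (correct) observation that non-inaneness transfers to $\i\sigma$ because equivalent shapes in $\Shafin$ are simultaneously terminal or non-terminal. The only detail worth making explicit is that one should count not just the finitely many pairs $(\pos P, \pos Q)$ but also the finitely many order-preserving maps between them, which is immediate since both posets are finite.
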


We will now state and prove the third main theorem of this paper, relating our class of finite non-inane shapes to the classical cubes.

\begin{theorem} \label{thm:tower_is_initial}
  The functor $\Tower \colon \op{(\NN)} \to \op{(\Shafinnb)}$ given by sending $n$ to $\cubeinc{n}$ is homotopy initial.
\end{theorem}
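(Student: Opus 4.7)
The plan is to unpack the definition of homotopy initial in this thin setting and then reduce to the two main theorems already proved in the preceding sections. By the dual of \cite[Proposition 4.1.1.3]{LurHTT} (or just the paper's definition together with the standard characterization of cofinality), $\Tower$ is homotopy initial if and only if, for each $\sigma \in \op{(\Shafinnb)}$, the comma category $\Tower \slice \sigma$ is (weakly) contractible. Since both $\op{(\NN)}$ and $\op{(\Shafinnb)}$ are thin, this comma category is canonically isomorphic to the full subposet of $\op{(\NN)}$ spanned by those $n$ for which there exists a morphism $\cubeinc{n} \to \sigma$ in $\op{(\Shafinnb)}$, i.e.\ a morphism $\sigma \to \cubeinc{n}$ in $\Shafinnb$; by definition of $\Shafin$ the latter amounts to saying that every $\sigma$-excisive functor $F \colon \infcat{C} \to \infcat{D}$ (with $\infcat C$ having a terminal object and all finite colimits, and $\infcat D$ differentiable) is also $(n-1)$-excisive.

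Next I would observe that this set of admissible $n$ is non-empty and upward closed. Non-emptiness: by \Cref{prop:finite_shape_eq_to_full} we may replace $\sigma$ by an equivalent (in $\Shafin$) finite non-inane full shape; then \Cref{thm:nb-excisive_implies_n-excisive} produces some $n_0 \in \NN$ (namely the number of minimal elements of $\gini{\gpos S}$) such that $\sigma$-excisive implies $(n_0 - 1)$-excisive, so $n_0$ lies in our set. Upward closure: if $n$ lies in the set and $n \le m$, then $(n-1)$-excisive implies $(m-1)$-excisive by \Cref{cor:n-excisive_implies_m-excisive}, so $m$ lies in the set as well.

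Finally, the set is therefore of the form $\set{n \in \NN \mid n \ge n_{\min}}$ for some minimal element $n_{\min}$. Viewed as a full subcategory of $\op{(\NN)}$, the order is reversed, so $n_{\min}$ is a terminal object of $\Tower \slice \sigma$. Any category with a terminal object has contractible nerve, which finishes the argument. There is essentially no obstacle here beyond the work already carried out in \Cref{section:shapes}: the entire content of the theorem is that \Cref{thm:nb-excisive_implies_n-excisive} exists and that the implications between the various $\cubeinc{n}$-excisive conditions are monotone. Once those are in hand, the homotopy-initiality is a formal observation about upward-closed subsets of $\op{(\NN)}$.
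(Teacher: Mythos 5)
Your proposal is correct and follows essentially the same route as the paper: identify $\Tower \slice \sigma$ with the subposet of $n$ such that $\sigma$-excisive implies $(n-1)$-excisive, show it is non-empty via \Cref{thm:nb-excisive_implies_n-excisive} and upward closed via \Cref{cor:n-excisive_implies_m-excisive}, and conclude contractibility from the existence of a terminal object. The only cosmetic difference is that non-emptiness is justified in the paper directly from the definition of $\Shafinnb$ rather than via \Cref{prop:finite_shape_eq_to_full}, and the paper phrases the last step as the subposet being isomorphic to $\op{(\NN)}$; both come to the same thing.
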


\begin{proof}
  First note that $\Tower$ is well-defined since $\cubeinc{n+1}$ is a shape (see \Cref{ex:shapes}) that is full but not inane (see \Cref{ex:cube_not_inane}) and since $n$-excisive implies $(n+1)$-excisive (see \Cref{cor:n-excisive_implies_m-excisive}).
  To see that it is homotopy initial we need to show that, for all $\sigma \in \op{(\Shafinnb)}$, the category ${\Tower} \slice \sigma$ is contractible.
  However, this is just the full subposet $\pos P \subseteq \op{(\NN)}$ spanned by those $n$ such that $\sigma$-excisive implies $(n-1)$-excisive.
  By definition of $\Shafinnb$ we can assume that $\sigma$ is full and non-inane.
  Hence, by \Cref{thm:nb-excisive_implies_n-excisive}, the poset $\pos P$ is not empty, and, by \Cref{cor:n-excisive_implies_m-excisive}, it is closed below (i.e.\ if it contains $n$ then it contains all $m$ such that $m \le n$ in $\op{(\NN)}$).
  But any non-empty subposet of $\op{(\NN)}$ that is downward closed is isomorphic to $\op{(\NN)}$ and hence contractible (since it has a terminal object).
\end{proof}

\begin{remark}
  This theorem tells us in particular that the Taylor graph $\Taylor(F)$ of a functor $F$ converges at an object $X$ if and only if its Taylor tower $\Taylor(F) \circ \Tower$ does (here convergence at $X$ means that the canonical map from $F(X)$ to the limit of the respective diagram is an equivalence).
  In particular any convergence criteria for the tower, such as the analytic functors of Goodwillie (cf.\ \cite[Definition 4.2]{Goo92} and \cite[Theorem 1.13]{Goo03}), can also be applied to the graph.
\end{remark}

It is also possible to rephrase \Cref{thm:cubes} in terms of the functor $\Tower$:

\begin{theorem} \label{thm:cubes_equivalence}
  The functor $\Tower \colon \op{(\NN)} \to \op{(\Shafinnb)}$ of \Cref{thm:tower_is_initial} becomes an equivalence when the codomain is restricted to the full subcategory of $\op{(\Shafinnb)}$ spanned by the finite, non-inane shapes that have a cube as codomain (i.e.\ those of the form $\gpos S \to \Cube{n}$ for some (finite) poset $\gpos S$ and $n \in \NN$).
\end{theorem}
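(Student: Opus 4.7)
The strategy is to check that $\Tower$, once restricted to the indicated full subcategory of $\op{(\Shafinnb)}$, is essentially surjective and fully faithful. Since both categories are thin, fully faithful amounts to saying that the order is both preserved and reflected by $\Tower$. The essential surjectivity will repackage \Cref{thm:cubes}, while fully faithfulness will split into \Cref{cor:n-excisive_implies_m-excisive} and a standard non-triviality statement from classical Goodwillie calculus.

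For essential surjectivity, let $\sigma \colon \gpos S \to \Cube S$ be a shape in $\op{(\Shafinnb)}$ with cubical codomain. \Cref{thm:cubes} asserts that being $\sigma$-excisive is equivalent to being $(n-1)$-excisive for $n = \mc S {\im \sigma}$. The first thing to check is that this $n$ is finite. If instead $\mc S {\im \sigma} = \infty$, then applying \Cref{rem:mc_infty} to the full shape $\i\sigma$ (to which $\sigma$ is equivalent by \Cref{prop:finite_shape_eq_to_full}) shows that $\i\sigma$, and hence $\sigma$, would be equivalent to an inane shape, contradicting $\sigma \in \Shafinnb$. Hence $n$ is finite and $\sigma$ is isomorphic to $\Tower(n) = \cubeinc{n}$ in $\Shafinnb$.

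For fully faithfulness, unpacking the opposite categories, I need to show that $(n-1)$-excisive implies $(m-1)$-excisive if and only if $n \le m$ in $\NN$. The ``if'' direction is a direct reindexing of \Cref{cor:n-excisive_implies_m-excisive}. For the ``only if'' direction, assume $n > m$; I would exhibit a functor between suitable $\infty$-categories that is $(n-1)$-excisive but not $(m-1)$-excisive. When $m \ge 1$, the functor $F(X) = \Suspspec(X^{\smashprod m})$ from pointed spaces to spectra (which already appears in the discussion preceding this theorem) is $m$-homogeneous: it is $m$-excisive and hence $(n-1)$-excisive by \Cref{cor:n-excisive_implies_m-excisive} since $m \le n-1$, yet its $(m-1)$-st Taylor approximation is terminal while $F$ itself is not weakly constant, so $F$ is not $(m-1)$-excisive. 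When $m = 0$, any constant functor at a non-terminal object is $k$-excisive for all $k \ge 0$ but is not $(-1)$-excisive.

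I anticipate no serious obstacle: both essential surjectivity and fully faithfulness are essentially bookkeeping once \Cref{thm:cubes} and the classical existence of non-trivial $k$-homogeneous functors are in hand. The only subtlety worth spelling out in writing is the reduction from arbitrary $\sigma$ to the full shape $\i\sigma$ via \Cref{prop:finite_shape_eq_to_full}, which is what allows \Cref{rem:mc_infty} to be invoked to rule out the infinite case.
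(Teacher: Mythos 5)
Your proposal is correct and is essentially the argument the paper intends: Theorem~\ref{thm:cubes_equivalence} is stated as a rephrasing of Theorem~\ref{thm:cubes}, with essential surjectivity coming from that theorem (your observation that non-inaneness forces $\mc{S}{\im\sigma}$ to be finite, via Remark~\ref{rem:mc_infty} and Proposition~\ref{prop:finite_shape_eq_to_full}, is exactly the right bookkeeping) and full faithfulness from Corollary~\ref{cor:n-excisive_implies_m-excisive} together with the non-triviality of $n$-excision witnessed by the homogeneous functors $X \mapsto \Suspspec(X^{\smashprod m})$ already invoked in the paper. No gaps.
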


\Cref{thm:tower_is_initial,thm:cubes_equivalence} suggest the following conjecture.
All evidence known to the author, including the two theorems, points towards it being true.

\begin{conjecture}
  The functor $\Tower$ is an equivalence of categories.
  Or, equivalently, any shape in $\Shafinnb$ is equivalent to $\cubeinc{n}$ for some $n \in \NN$.
\end{conjecture}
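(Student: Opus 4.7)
The conjecture asserts that every finite non-inane shape $\sigma$ is equivalent in $\Shafinnb$ to $\cubeinc n$ for some $n \in \NN$. One direction is already established: by \Cref{thm:nb-excisive_implies_n-excisive} (applied after replacing $\sigma$ by its image via \Cref{prop:finite_shape_eq_to_full}), any non-inane full finite shape $\sigma \colon \gpos S \to \pos S$ is such that $\sigma$-excisive implies $(n-1)$-excisive, where $n$ is the number of minimal elements of $\gini \gpos S$. This gives a morphism $\sigma \to \cubeinc n$ in $\Shafinnb$. The missing ingredient is the reverse implication: $(n-1)$-excisive should imply $\sigma$-excisive for the appropriate $n$.

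The approach I would take is to reduce to the cubical codomain case and appeal to \Cref{thm:cubes}. First, by \Cref{prop:finite_shape_eq_to_full}, I would replace $\sigma$ by its image factorization $\i\sigma$, so I may assume $\sigma$ is full. Next, by \Cref{thm:equiv_to_retract_of_free}, $\sigma$ is equivalent in $\Shafinnb$ to $v_\sigma \colon \gpos S \to \im u_\sigma$, which is a retract (in $\gpos S \slice \Posini$) of the free shape $\copcompmap[\gpos S] \colon \gpos S \to \copcompi(\gpos S)$. Since \Cref{cor:free_shape_eq_to_n-exc} identifies $\copcompmap[\gpos S]$ with $\cubeinc n$ in $\Shafinnb$ (for the same $n$ as above), the free shape is already in the image of $\Tower$. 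The problem therefore reduces to: if $\tau \colon \gpos S \to \pos Q$ is a full non-inane shape that fits into a retract diagram $\copcompmap[\gpos S] \rightrightarrows \tau$ of shapes under $\gpos S$, must $\tau$ be equivalent to some $\cubeinc m$? To attack this, I would try to build explicit direct and indirect maps (in the sense of \Cref{sec:indirect,sec:direct}) between $\tau$ and an appropriate cube, using the combinatorial structure of the subposet $\pos Q \subseteq \copcompi(\gpos S)$ given by the retraction.

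The main obstacle, and the reason I expect this plan to run into genuine difficulty, is the reverse implication. Showing that $(n-1)$-excisive implies $\tau$-excisive requires demonstrating that, for any $\tau$-cocartesian diagram $D \colon \pos Q \to \infcat C$ and any $(n-1)$-excisive $F$, the composite $F \circ D$ is cartesian. In the cubical codomain case this is the content of the subtle inductive argument used to prove \Cref{thm:cubes}, which decomposes a cocartesian $\Cube S$-diagram by peeling off maximal subsets one at a time. Adapting that argument to $\pos Q$ that is merely a retract of a down-set lattice seems to require a combinatorial cover of $\tau \slice q$ by cubical pieces compatible with the retraction, and it is not clear that such a cover exists in general, nor that the resulting Kan extensions behave well enough to glue the cubical excisivity conclusions back together. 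A secondary concern is that the criteria of \Cref{sec:indirect,sec:direct} are sufficient but not necessary for an implication of excision conditions, so even if an abstract equivalence between $\tau$ and $\cubeinc m$ holds, one may not be able to realize it by a zigzag of direct and indirect maps of shapes.

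A plausible backup strategy would be to attempt a proof by induction on the cardinality of $\pos S$ (or on some combinatorial complexity of the retraction data), using \Cref{lemma:cartesian_factorization} and the explicit construction of $\P$ to compare $\P[\tau] F$ with $\P[\cubeinc m] F$ directly on generators. Alternatively, one could try to strengthen \Cref{thm:cubes} by extracting from its proof a general principle — roughly, that cocartesianness with respect to any shape generated by $n$ minimal elements is governed by the $(n-1)$-skeletal combinatorics — and then verify this principle for retracts of free shapes. Either route requires substantial new combinatorial input beyond what is currently available in the paper, which is presumably why this is posed as a conjecture rather than a theorem.
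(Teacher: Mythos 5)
The statement you are addressing is posed in the paper as a conjecture, and the paper offers no proof of it; there is therefore no proof of record to compare your proposal against. Your proposal is likewise not a proof, and you say so yourself. What you have written is an accurate map of the partial results the paper does establish: the forward morphism $\sigma \to \cubeinc{n}$ in $\Shafinnb$ coming from \Cref{thm:nb-excisive_implies_n-excisive} (after passing to the image shape via \Cref{prop:finite_shape_eq_to_full}), the reduction to retracts of free shapes via \Cref{thm:equiv_to_retract_of_free}, and the identification of the free shape $\copcompmap[\gpos S]$ with a cube via \Cref{cor:free_shape_eq_to_n-exc}. This matches the evidence the paper itself assembles in support of the conjecture.

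The genuine gap --- in your proposal and in the paper alike --- is the reverse implication: that $(n-1)$-excisive implies $\sigma$-excisive for the relevant $n$, for an arbitrary finite non-inane shape. The paper proves this only when the codomain of $\sigma$ is a cube (\Cref{thm:cubes}), by a delicate induction on a lexicographic complexity measure that peels off maximal elements and exploits the lattice structure of $\Cube{S}$ at every step; none of the intermediate constructions there (the auxiliary diagram on $\Cube{M} \times \Cube{S}$, the posets $\pos Q$ and $\pos L$, the covering argument for the cubes $\pos H_C$) obviously transports to a codomain that is merely a retract of a down-set lattice. You are also right that directness and indirectness of maps of shapes are only sufficient criteria, so an abstract equivalence in $\Shafinnb$ need not be witnessed by such maps, which limits how far the paper's machinery can be pushed. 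Your two fallback strategies are reasonable directions but would each require combinatorial input not present in the paper. In short: your proposal correctly identifies where the difficulty lies and does not overclaim, but it does not prove the statement, and neither does the paper.
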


An answer in the affirmative would provide even more compelling evidence that the cubes are the ``correct'' shapes to use for functor calculus.
If the conjecture were false that would also be very interesting: in that case the Taylor graph would be a finer resolution of the tower, potentially containing additional information.

\begin{appendices}

\crefalias{section}{appsec}
\crefalias{subsection}{appsec}

\section{The calculus of mates} \label{section:mates}

In this appendix we recall the mate construction as well as a number of lemmas concerning it, which are quite useful when working with adjunctions and natural transformations.
Since this is not supposed to be a comprehensive exposition of the topic, we will be brief and only state and give references for the statements we will use.
A concise summary of these, and a few more, important statements, though without proofs, can be found in \cite[Appendix A]{GPS}.
A longer exposition with proofs is given (in French) in \cite[Section 1.1.2]{Ayo}.

\begin{notation}
  Suppose we have, in a (strict) 2-category, a diagram of the form
  \[
  \begin{tikzcd}
    A \rar{a} \dar[swap]{h} & B \dar{k} \dlar[Rightarrow, shorten < = 10, shorten > = 10, swap]{\alpha} \\
    C \rar{c} & D
  \end{tikzcd}
  \]
  and fixed adjunctions $a_! \dashv a$ and $c_! \dashv c$.
  In this situation we write $\mate \alpha$ for the \emph{mate} of $\alpha$, which is a 2-morphism of the form
  \[
  \begin{tikzcd}
    A \dar[swap]{h} & B \dar{k} \lar[swap]{a_!} \\
    C & D \lar[swap]{c_!} \ular[Rightarrow, shorten < = 10, shorten > = 10, swap]{\mate \alpha}
  \end{tikzcd}
  \]
  defined as the composition
  \[ c_! k \xlongto{c_! k \eta_a} c_! k a a_! \xlongto{c_! \alpha a_!} c_! c h a_! \xlongto{\epsilon_c h a_!} h a_! \]
  where $\eta_a$ and $\epsilon_c$ are the unit respectively counit of the adjunctions $a_! \dashv a$ respectively $c_! \dashv c$ fixed above.
\end{notation}

The following is a property of the mate that follows easily from the definitions (and actually characterizes it uniquely).

\begin{lemma} \label{lemma:mate_and_units}
  Let the following be a diagram in a 2-category:
  \[
  \begin{tikzcd}
    A \rar{a} \dar[swap]{h} & B \dar{k} \dlar[Rightarrow, shorten < = 10, shorten > = 10, swap]{\alpha} \\
    C \rar{c} & D
  \end{tikzcd}
  \]
  and $a_! \dashv a$ and $c_! \dashv c$ two fixed adjunctions.
  Then the following two diagrams commute:
  \[
  \begin{tikzcd}
    k \rar{k \eta_a} \dar[swap]{\eta_c k} & k a a_! \dar{\alpha a_!} & & c_! k a \rar{\mate \alpha a} \dar[swap]{c_! \alpha} & h a_! a \dar{h \epsilon_a} \\
    c c_! k \rar{c \mate \alpha} & c h a_! & & c_! c h \rar{\epsilon_c h} & h
  \end{tikzcd}
  \]
  where $\eta$ and $\epsilon$ denote the respective (co)units and $\mate \alpha$ is the mate of $\alpha$.
\end{lemma}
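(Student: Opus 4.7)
The plan is to prove both commutativity statements by direct calculation: expand $\mate \alpha$ according to its definition, use the interchange law (equivalently, naturality of whiskered 2-morphisms) to slide the unit/counit past the other 2-morphisms in the composite, and then collapse adjacent (co)units using the triangle identities for the two adjunctions. Both diagrams will in fact be verified by essentially the same mechanism, just applied to opposite ends of the defining composite of $\mate \alpha$; concretely, the left diagram uses the triangle identity $c \epsilon_c \circ \eta_c c = \id_c$, while the right diagram uses $a \epsilon_a \circ \eta_a a = \id_a$.

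For the left diagram, I would start from $c \mate \alpha \circ \eta_c k$ and substitute the definition
\[ c \mate \alpha = c \epsilon_c h a_! \circ c c_! \alpha a_! \circ c c_! k \eta_a \; . \]
Applying naturality of $\eta_c$ successively to $k \eta_a \colon k \to k a a_!$ and then to $\alpha a_! \colon k a a_! \to c h a_!$ rewrites the composite as
\[ c \epsilon_c h a_! \circ \eta_c c h a_! \circ \alpha a_! \circ k \eta_a \; , \]
at which point the triangle identity for $c_! \dashv c$ collapses the first two factors to the identity, yielding $\alpha a_! \circ k \eta_a$, as required.

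For the right diagram, I would dually start from $h \epsilon_a \circ \mate \alpha a$, expand, and use naturality of $\epsilon_c$ (applied to $h \epsilon_a$) to move $\epsilon_c h$ to the outside. The remaining interior simplifies via the interchange law
\[ c h \epsilon_a \circ \alpha a_! a = \alpha \circ k a \epsilon_a \; , \]
so that everything to the right of $\epsilon_c h \circ c_! \alpha$ becomes $c_! k (a \epsilon_a \circ \eta_a a)$, which is the identity by the triangle identity for $a_! \dashv a$. What remains is exactly $\epsilon_c h \circ c_! \alpha$.

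Neither step is really an obstacle in any deep sense; the only thing to be careful about is bookkeeping of whiskerings, i.e.\ making sure each use of naturality or of the interchange law is applied to the correct 2-morphism at the correct position. Since the statement is purely about 2-categorical formalism and the paper cites \cite[Section 1.1.2]{Ayo} and \cite[Appendix A]{GPS} for the standard facts about mates, one could alternatively just invoke these references; but the direct calculation above is short enough that reproducing it is reasonable.
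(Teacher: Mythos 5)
Your calculation is correct: both whiskering/interchange steps are applied at the right positions, and the two triangle identities you invoke, \(c\epsilon_c \circ \eta_c c = \id_c\) for the left square and \(a\epsilon_a \circ \eta_a a = \id_a\) for the right square, are exactly the ones needed to collapse the expanded composite of \(\mate\alpha\). The paper itself does not carry out this computation: its proof of the lemma is a one-line citation to (the dual of) \cite[Proposition 1.1.9]{Ayo}, so your argument is a genuinely different (more self-contained) route that reproduces what the reference establishes. The trade-off is the expected one: the citation keeps the appendix short and defers the whiskering bookkeeping to a source where it is done once and for all, while your direct verification makes the lemma independent of \cite{Ayo} at the cost of the careful tracking of which 1-morphisms each unit and counit is whiskered by --- which you have handled correctly. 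Either is acceptable here; if you write it up, state explicitly at each step which horizontal composite the interchange law is being applied to (e.g.\ \(\eta_c * (k\eta_a)\) and \(\epsilon_c * (h\epsilon_a)\)), since that is the only place where an error could realistically creep in.
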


\begin{proof}
  This is (the dual of) \cite[Proposition 1.1.9]{Ayo}.
\end{proof}

The following two lemmas express a certain functoriality of the mate construction with respect to pasting of squares.
A more abstract (and maybe conceptual) way to formulate them is to present the mate construction as an isomorphism of certain double categories.
This can be found in \cite[Proposition 2.2]{KS}.

\begin{lemma}[Pasting law I]
  Let the following be a diagram in a 2-category and its paste:
  \[
  \begin{tikzcd}[sep = 35]
    A \rar{a} \dar[swap]{h} & B \dar{k} \dlar[Rightarrow, shorten < = 15, shorten > = 15, swap]{\alpha} \rar{b} & E \dlar[Rightarrow, shorten < = 15, shorten > = 15, swap]{\beta} \dar{l} & & A \rar{ba} \dar[swap]{h} & E \dar{l} \dlar[Rightarrow, shorten < = 15, shorten > = 15, swap]{\alpha \paste \beta} \\
    C \rar{c} & D \rar{d} & F & & C \rar{dc} & F
  \end{tikzcd}
  \]
  and $a_! \dashv a$, $b_! \dashv b$, $c_! \dashv c$, and $d_! \dashv d$ four fixed adjunctions.
  We obtain mates $\alpha_!$ and $\beta_!$ that fit into diagrams of the form
  \[
  \begin{tikzcd}[sep = 35]
    A \dar[swap]{h} & B \lar[swap]{a_!} \dar{k} & E \lar[swap]{b_!} \dar{l} & & A \dar[swap]{h} & E \lar[swap]{a_! b_!} \dar{l} \\
    C & D \lar[swap]{c_!} \ular[Rightarrow, shorten < = 15, shorten > = 15, swap]{\mate \alpha} & F \lar[swap]{d_!} \ular[Rightarrow, shorten < = 15, shorten > = 15, swap]{\mate \beta} & & C & F \lar[swap]{c_! d_!} \ular[Rightarrow, shorten < = 15, shorten > = 15, swap]{\mate \alpha \paste \mate \beta}
  \end{tikzcd}
  \]
  and it holds that $\mate \alpha \paste \mate \beta = \mate{(\alpha \paste \beta)}$, where, for the latter mate, we use the adjunctions $a_! b_! \dashv b a$ and $c_! d_! \dashv d c$ given by composing the original ones.
\end{lemma}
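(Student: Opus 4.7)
The plan is to verify the identity $\mate \alpha \paste \mate \beta = \mate{(\alpha \paste \beta)}$ by expanding both sides using the defining composites of the mate construction and checking that the resulting strings of 2-cells coincide after a small number of applications of the interchange law.

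First I will expand the right-hand side. Using that the unit of the composed adjunction $a_! b_! \dashv b a$ is $b \eta_a b_! \circ \eta_b$, the counit of $c_! d_! \dashv d c$ is $\epsilon_c \circ c_! \epsilon_d c$, and that $\alpha \paste \beta = d \alpha \circ \beta a$, the defining composite of $\mate{(\alpha \paste \beta)}$ becomes a six-step string of 2-cells from $c_! d_! l$ to $h a_! b_!$, with the composed unit inserted at the start and the composed counit at the end. Similarly, I will expand the left-hand side $\mate \alpha \, b_! \circ c_! \mate \beta$ by substituting the three-step definitions of $\mate \alpha$ and $\mate \beta$; this also yields a six-step string from $c_! d_! l$ to $h a_! b_!$.

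The core of the argument is then to rearrange one string into the other. Inspecting the two composites side by side, one sees that the only structural difference lies in the order of the four intermediate 2-cells: on the left-hand side the middle portion has the order $\beta, \epsilon_d, \eta_a, \alpha$ (the first three of these coming from $c_! \mate \beta$ and the last three from $\mate \alpha \, b_!$), while on the right it is $\eta_a, \beta, \alpha, \epsilon_d$. Each of the required transpositions is an instance of the interchange law, since $\eta_a$ (respectively $\epsilon_d$) is whiskered so that its active functor is disjoint from the active functor of $\beta$ (respectively $\alpha$). For example, moving $\eta_a$ past $\beta$ uses the identity $(\beta\, a a_! b_!) \circ (l b \eta_a b_!) = (d k \eta_a b_!) \circ (\beta b_!)$. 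I will organize these three or four applications of interchange as one large commutative diagram whose interior cells are each an interchange square.

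The only real obstacle is bookkeeping: the diagram involves roughly a dozen regions and many parallel whiskerings, and it is easy to lose track of which functor acts where. To mitigate this I will either draw the argument as a string diagram, in which the required swaps become planar isotopies and the equality is manifest, or appeal to the known fact that the mate correspondence assembles into a double functor (as in \cite[Proposition 2.2]{KS}), of which the present lemma is precisely the statement that horizontal composition is preserved.
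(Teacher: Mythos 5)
Your argument is correct, but it is a genuinely different route from the paper's: the paper disposes of this lemma by citing (the dual of) \cite[Proposition 1.1.11]{Ayo}, with a remark that the composition there is written in the wrong order, whereas you supply the direct computation that such a reference would contain. Your bookkeeping checks out: writing $\alpha \paste \beta = d\alpha \circ \beta a$ and using the composed unit $b\eta_a b_! \circ \eta_b$ and counit $\epsilon_c \circ c_!\epsilon_d c$, the right-hand side unfolds to the string $\eta_b, \eta_a, \beta, \alpha, \epsilon_d, \epsilon_c$, while $\mate\alpha\, b_! \circ c_!\mate\beta$ unfolds to $\eta_b, \beta, \epsilon_d, \eta_a, \alpha, \epsilon_c$ (suitably whiskered), and exactly three interchange swaps ($\epsilon_d \leftrightarrow \eta_a$, $\epsilon_d \leftrightarrow \alpha$, $\beta \leftrightarrow \eta_a$) identify the two; each swap is legitimate because the two cells involved act in disjoint whiskering slots, and crucially the relative order of $\beta$ and $\alpha$ --- which do interact, since $\beta$ creates the $k$ that $\alpha$ consumes --- is the same on both sides. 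What the citation buys is brevity; what your computation buys is self-containedness and an explicit check of the orientation conventions, which is not idle here given that the paper itself flags an ordering error in the cited source. Your fallback of invoking the double-category formulation of \cite[Proposition 2.2]{KS} is essentially the paper's own alternative framing, mentioned in the surrounding text.
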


\begin{proof}
  This is (the dual of) \cite[Proposition 1.1.11]{Ayo}\footnote{Note that the composition $\mate \alpha \paste \mate \beta$ is erroneously written the wrong way around there, and that what is actually proven is the dual version we stated.}.
\end{proof}

\begin{lemma}[Pasting law II]
  Let the following be a diagram in a 2-category and its paste:
  \[
  \begin{tikzcd}[sep = 35]
    A \rar{a} \dar[swap]{h} & B \dar{k} \rar{b} & E \dar{l} & & A \rar{ba} \dar[swap]{h} & E \dar{l} \\
    C \rar{c} \urar[Rightarrow, shorten < = 15, shorten > = 15]{\alpha} & D \rar{d} \urar[Rightarrow, shorten < = 15, shorten > = 15]{\beta} & F & & C \rar{dc} \urar[Rightarrow, shorten < = 15, shorten > = 15]{\beta \paste \alpha} & F
  \end{tikzcd}
  \]
  and $h_! \dashv h$, $k_! \dashv k$, and $l_! \dashv l$ three fixed adjunctions.
  We obtain mates $\alpha_!$ and $\beta_!$ that fit into diagrams of the form
  \[
  \begin{tikzcd}[sep = 35]
    A \rar{a} & B \rar{b} & E & & A \rar{ba} & E \\
    C \uar{h_!} \rar{c} & D \uar{k_!} \rar{d} \ular[Rightarrow, shorten < = 15, shorten > = 15, swap]{\mate \alpha} & F \uar[swap]{l_!} \ular[Rightarrow, shorten < = 15, shorten > = 15, swap]{\mate \beta} & & C \uar{h_!} \rar{dc} & F \uar[swap]{l_!} \ular[Rightarrow, shorten < = 15, shorten > = 15, swap]{\mate \alpha \paste \mate \beta}
  \end{tikzcd}
  \]
  and it holds that $\mate \alpha \paste \mate \beta = \mate{(\beta \paste \alpha)}$.
\end{lemma}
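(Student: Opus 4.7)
The plan is to prove this by direct computation, mirroring the proof of Pasting law I. My approach is to unpack both sides of the asserted equality via the definition of mate and then match them up using the interchange law together with the triangle identities of the adjunctions $h_! \dashv h$, $k_! \dashv k$, and $l_! \dashv l$.

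First I would write $\mate{(\beta \paste \alpha)}$ explicitly, using that $\beta \paste \alpha \colon dch \Rightarrow lba$ equals $(\beta a) \circ (d \alpha)$, as the composition
\[ l_! dc \xlongto{l_! dc\, \eta_h} l_! dc h h_! \xlongto{l_! d \alpha h_!} l_! dk a h_! \xlongto{l_! \beta a h_!} l_! l b a h_! \xlongto{\epsilon_l b a h_!} ba h_!, \]
and separately expand $\mate{\alpha} \paste \mate{\beta} = (b \mate{\alpha}) \circ (\mate{\beta} c)$ as the longer composition
\[ l_! dc \xlongto{l_! d \eta_k c} l_! dk k_! c \xlongto{l_! \beta k_! c} l_! l b k_! c \xlongto{\epsilon_l b k_! c} b k_! c \xlongto{b k_! c\, \eta_h} b k_! c h h_! \xlongto{b k_! \alpha h_!} b k_! k a h_! \xlongto{b\, \epsilon_k a h_!} b a h_!. \]

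Next I would use the interchange law to rearrange this longer composite. Pushing $\epsilon_l$ as far to the right as possible (it affects only the leftmost factor $l_! l$) and pushing $\eta_h$ as far to the left as possible (it affects only the rightmost factor) puts the composite into the form $(\epsilon_l b a h_!) \circ M \circ (l_! dc\, \eta_h)$, matching the outer frame of the short composite. It then suffices to identify the middle portions. After the rearrangement, the middle composite $M$ takes the form
\[ l_! dc h h_! \xlongto{l_! d \eta_k c h h_!} l_! dk k_! c h h_! \xlongto{l_! dk k_! \alpha h_!} l_! dk k_! k a h_! \xlongto{l_! dk\, \epsilon_k\, a h_!} l_! dk a h_! \xlongto{l_! \beta a h_!} l_! l b a h_!. \]
A naturality-for-$\eta_k$ argument applied to the 2-cell $\alpha \colon ch \Rightarrow ka$ lets me commute $\eta_k$ past $\alpha$, after which the triangle identity $(\epsilon_k k) \circ (k\, \eta_k) = \id_k$ collapses the intervening unit-counit pair and leaves exactly $(l_! \beta a h_!) \circ (l_! d \alpha h_!)$, which is the middle of the short composite. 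This finishes the comparison.

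The main obstacle is the bookkeeping of which tensor factor each whiskering affects, so that the interchange law can be correctly applied at each rearrangement step; however, given the structurally identical argument used for Pasting law I (Ayoub's \cite[Proposition 1.1.11]{Ayo}, dualised), this is entirely routine.
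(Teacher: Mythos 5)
Your computation is correct. The paper itself gives no argument here — it simply cites (the dual of) Ayoub's Proposition 1.1.12 — so your proposal supplies the standard direct verification that underlies that reference: expand both sides via the unit–counit definition of the mate, use the interchange law to align the outer $\eta_h$ and $\epsilon_l$ whiskerings (plus one more interchange to slide $\beta$ past the $\alpha$/$\epsilon_k$ block, which your displayed $M$ implicitly performs), and then collapse the inner $\eta_k$/$\epsilon_k$ pair. The key cancellation is exactly as you say: interchange gives $(k k_! \alpha) \circ (\eta_k \, ch) = (\eta_k \, ka) \circ \alpha$, and then the triangle identity $(k \epsilon_k) \circ (\eta_k k) = \id_k$ (note the whiskerings — you wrote $(\epsilon_k k) \circ (k \eta_k)$, which does not typecheck for $k_! \dashv k$, but the intended identity is unambiguous) reduces the middle of the long composite to $(l_! \beta a h_!) \circ (l_! d \alpha h_!)$, matching $\mate{(\beta \paste \alpha)}$. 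So the proof is complete modulo that notational slip; it is the same argument one finds in the cited source rather than a genuinely different route.
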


\begin{proof}
  This is (the dual of) \cite[Proposition 1.1.12]{Ayo}.
\end{proof}

\begin{lemma} \label{lemma:mate_equiv}
  Let the following be a diagram in a 2-category:
  \[
  \begin{tikzcd}
    A \rar{a} \dar[swap]{h} & B \dar{k} \dlar[Rightarrow, shorten < = 10, shorten > = 10, swap]{\alpha} \\
    C \rar{c} & D
  \end{tikzcd}
  \]
  and $a_! \dashv a$ and $c_! \dashv c$ two fixed adjunctions.
  Furthermore assume that $h$ and $k$ are isomorphisms, and that $\alpha$ is a 2-isomorphism.
  Then the mate $\mate \alpha$ is a 2-isomorphism.
\end{lemma}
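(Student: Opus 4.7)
The plan is to construct an explicit inverse $\beta \colon h a_! \to c_! k$ to $\mate\alpha$, exploiting the invertibility of $\alpha$, $h$, and $k$. Since $h$ and $k$ are 1-isomorphisms in the ambient 2-category, the strict equalities $h \inv h = \id_C$, $\inv h h = \id_A$, $k \inv k = \id_D$, $\inv k k = \id_B$ hold; I will use these freely. Whiskering the 2-isomorphism $\alpha^{-1} \colon c h \to k a$ on the left by $\inv k$ and on the right by $\inv h$ produces a 2-isomorphism $\tilde\alpha \defeq \inv k \alpha^{-1} \inv h \colon \inv k c \to a \inv h$ fitting into the ``inverted'' square
\[
\begin{tikzcd}
  C \rar{c} \dar[swap]{\inv h} & D \dar{\inv k} \dlar[Rightarrow, shorten < = 10, shorten > = 10, swap]{\tilde\alpha} \\
  A \rar{a} & B
\end{tikzcd}
\]
Applying the mate construction to this square with respect to the original adjunctions $c_! \dashv c$ and $a_! \dashv a$ yields a 2-morphism $\mate{\tilde\alpha} \colon a_! \inv k \to \inv h c_!$, and I set $\beta \defeq h \cdot \mate{\tilde\alpha} \cdot k \colon h a_! \to c_! k$.

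To verify that $\beta$ is a two-sided inverse to $\mate\alpha$, I will paste the original and the inverted square vertically in both orders. A short computation using $\alpha^{-1} \circ \alpha = \id$ and $\alpha \circ \alpha^{-1} = \id$ together with the strict equalities above shows that the vertical pastes $\tilde\alpha \paste \alpha$ and $\alpha \paste \tilde\alpha$ are the identity 2-morphisms on $a$ and on $c$ respectively, so both composite squares are identity squares with identity vertical arrows. A direct inspection of the mate formula together with the triangle identities (essentially \Cref{lemma:mate_and_units}) shows that the mate of any such identity square is itself the identity 2-morphism. The remaining tool is a vertical pasting law for mates, entirely analogous to the horizontal Pasting Laws I and II stated in \Cref{section:mates}: it would express the mate of a vertically pasted square as a suitable composite of the mates of the two pieces, each whiskered by the vertical arrows of the opposite piece. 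Granting such a law, the two pastes above directly yield the equalities $(\inv h \mate\alpha)(\mate{\tilde\alpha} k) = \id_{a_!}$ and $(h \mate{\tilde\alpha})(\mate\alpha \inv k) = \id_{c_!}$; whiskering by $h$ or $k$ on the remaining side collapses these, via the strict equalities, to $\mate\alpha \circ \beta = \id_{h a_!}$ and $\beta \circ \mate\alpha = \id_{c_! k}$.

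The main obstacle is that this vertical pasting law is not explicitly recorded in \Cref{section:mates}. I would therefore need to either prove it first (its derivation is parallel to that of Pasting Law I, requiring only the definition of the mate, naturality, and the triangle identities) or sidestep it by expanding $\beta \circ \mate\alpha$ and $\mate\alpha \circ \beta$ directly via the explicit mate formula and simplifying using the triangle identities and the cancellations $\alpha^{-1} \circ \alpha = \id$, $\alpha \circ \alpha^{-1} = \id$. Either route is routine; the pasting-law approach is cleaner in that it separates the 2-categorical combinatorics from the particular shape of $\alpha$.
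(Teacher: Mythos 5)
Your proposal is correct and follows essentially the same route as the paper: the paper forms the very same inverted square (its auxiliary $\beta$ is your $\tilde\alpha$), checks that the vertical paste with $\alpha$ is the identity 2-morphism, and invokes the pasting law to conclude that $\inv h \mate \alpha$ has a right inverse and $\mate \alpha \inv k$ a left inverse, whence $\mate \alpha$ is invertible. The ``vertical pasting law'' you flag as a missing ingredient is in fact already recorded as Pasting Law II in \Cref{section:mates} up to transposing the picture (it is precisely the law for pasting transverse to the direction in which the adjoints are taken), so neither a new proof of it nor the direct expansion you propose as a fallback is needed.
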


\begin{proof}
  First note that if $a = c$ (with the same fixed adjunction) and $h$, $k$, and $\alpha$ are all identities, then the mate $\mate \alpha$ is the identity (by one of the triangle identities).
  For the general case consider the diagram
  \[
  \begin{tikzcd}
    A \rar{a} \dar[swap]{h} & B \dar{k} \dlar[Rightarrow, shorten < = 10, shorten > = 10, swap]{\alpha} \\
    C \rar{c} \dar[swap]{\inv h} & D \dar{\inv k} \dlar[Rightarrow, shorten < = 10, shorten > = 10, swap]{\beta} \\
    A \rar{a} & B
  \end{tikzcd}
  \]
  where $\beta$ is given by
  \[ \inv k c = \inv k c h \inv h \xlongto{\inv k \inv \alpha \inv h} \inv k k a \inv h = a \inv h \]
  and note that the paste $\beta \paste \alpha$ is the identity.
  This follows from the diagram
  \[
  \begin{tikzcd}[row sep = 10, column sep = 60]
    \inv k k a \rar{\inv k \alpha} \dar[equal] & \inv k c h \rar{\beta h} \dar[equal] & a \inv h h \dar[equal] \\
    \inv k k a \inv h h \rar{\inv k \alpha \inv h h} \dar[equal] & \inv k c h \inv h h \rar{\inv k \inv \alpha \inv h h} & \inv k k a \inv h h \dar[equal] \\
    a \ar{rr}{\id} & & a
  \end{tikzcd}
  \]
  being commutative.
  Thus, by the pasting law for mates, we obtain that $\inv h \mate \alpha$ has a right inverse (namely $\mate \beta k$).
  In the same way we can show that $\mate \alpha \inv k$ has a left inverse.
  Since $\inv h$ and $\inv k$ are both isomorphisms, this implies that $\mate \alpha$ has both a left and a right inverse and thus is a 2-isomorphism.
\end{proof}

\begin{remark}
  The statement of \Cref{lemma:mate_equiv} is still true when we only require $h$ and $k$ to be equivalences.
  Moreover the converse is also true, i.e.\ if $\mate \alpha$ is a 2-isomorphism, then $\alpha$ is as well.
  (See \cite[Appendix A]{GPS}.)
\end{remark}

\begin{remark}
  Naturally, there is also a dual version of everything we have done here, using right adjoints instead of left adjoints.
\end{remark}

\section{Basic \texorpdfstring{$\infty$}{infty}-categorical facts} \label{section:facts}

This appendix consists of a collection of basic $\infty$-categorical facts that are used throughout this paper.
They are simply stated here, to make it easier to quickly remind oneself of them.
The references (or proofs) can be found in \Cref{section:basics}.
Note that, even though we often only state things for indexing categories (instead of $\infty$-categories or simplicial sets), this is purely for convenience and there are more general versions of all of these statements.

\begin{restatable}{lemma}{lemmaCompositionOfKan}
  Let $f \colon I \to J$ and $g \colon J \to K$ be maps of simplicial sets and $\infcat C$ an $\infty$-category that is both weakly left $f$-extensible and weakly left $g$-extensible.
  Then it is also weakly left $(g \circ f)$-extensible, and we have $\Lan{g \circ f} \eq \Lan{g} \circ \Lan{f}$.
\end{restatable}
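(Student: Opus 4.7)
The plan is to reduce the claim to the general 2-categorical fact that adjunctions compose, applied in the homotopy 2-category of $\infty$-categories (in which, by our conventions, our adjunctions live). By assumption we have fixed adjunctions $\Lan{f} \dashv \Res{f}$ and $\Lan{g} \dashv \Res{g}$. Composing these yields an adjunction
\[ \Lan{g} \circ \Lan{f} \dashv \Res{f} \circ \Res{g} \]
with unit and counit built in the usual way from those of the two given adjunctions; this is a formal fact that holds in any strict 2-category and in particular applies here (cf.\ e.g.\ \cite{RV}).

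Next I would observe the strict equality $\Res{g \circ f} = \Res{f} \circ \Res{g}$. This is immediate from the fact that $\Fun{\blank}{\infcat C}$ (the internal hom of simplicial sets) is contravariantly functorial in its first argument, so applying it to the composition $g \circ f$ yields the composition of the two restriction functors. Substituting this identification into the composed adjunction above gives $\Lan{g} \circ \Lan{f} \dashv \Res{g \circ f}$, which witnesses that $\infcat C$ is weakly left $(g \circ f)$-extensible.

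Finally, the equivalence $\Lan{g \circ f} \eq \Lan{g} \circ \Lan{f}$ follows because both functors are left adjoints to the same functor $\Res{g \circ f}$, and left adjoints to a fixed functor are unique up to canonical equivalence (a 2-categorical statement which again follows from the formalism of \cite{RV}). There is no real obstacle to this proof; it is essentially bookkeeping, ensuring that the general formal properties of adjunctions in a 2-category deliver what is claimed.
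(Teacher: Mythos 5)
Your proposal is correct and follows essentially the same route as the paper's proof: compose the two fixed adjunctions, identify $\Res{f} \circ \Res{g}$ with $\Res{g \circ f}$, and conclude by uniqueness of left adjoints in the homotopy 2-category. Nothing is missing.
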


\begin{restatable}{lemma}{lemmaKanUnit} \label{lemma:fully_faithful_kan_unit}
  Let $f \colon \cat I \to \cat J$ be a fully faithful functor between categories and $\infcat C$ a left $f$-extensible $\infty$-category.
  Then the unit $\id \to \Res{f} \Ext[f]$ of the adjunction $\Ext[f] \dashv \Res{f}$ is an equivalence of functors $\Fun{\cat I}{\infcat C} \to \Fun{\cat I}{\infcat C}$.
\end{restatable}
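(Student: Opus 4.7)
The plan is to verify the claim pointwise. Fix a diagram $D \colon \cat I \to \infcat C$ and an object $i \in \cat I$; I need to show that the component at $i$ of the unit $D \to \Res{f} \Lan{f} D$ is an equivalence in $\infcat C$. Since $\infcat C$ is left $f$-extensible, \Cref{rem:functorial_ptw_ext} gives us that $\Lan{f}$ is computed pointwise, so
\[ (\Res{f} \Lan{f} D)(i) = (\Lan{f} D)(f(i)) \eq \colim_{f \slice f(i)} (D \circ \pr[\cat I]) \;, \]
and the $i$\=/component of the unit agrees, up to this equivalence, with the structure map of the colimit at the object $(i, \id_{f(i)}) \in f \slice f(i)$.

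Next I would observe that $f$ being fully faithful forces $(i, \id_{f(i)})$ to be a terminal object of the comma category $f \slice f(i)$: for any $(i', u \colon f(i') \to f(i))$, full faithfulness provides a unique $v \colon i' \to i$ with $f(v) = u = \id_{f(i)} \circ u$, which is precisely the datum of the unique morphism $(i', u) \to (i, \id_{f(i)})$ in the comma category. By the standard fact that the structure map from a terminal object into the colimit is an equivalence (this is \Cref{lemma:terminal_object_colimit}, or can be read off directly from \Cref{rem:structure_maps} and \Cref{lemma:htpy_terminal_admitting}), this structure map is an equivalence, which is what we wanted.

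The main obstacle is the very first step, namely identifying the pointwise structure map with the evaluation of the unit of the adjunction $\Lan{f} \dashv \Res{f}$ at $i$. This is precisely the content of the Riehl--Verity construction of the pointwise left Kan extension as a genuine left adjoint (as referenced in \Cref{rem:functorial_ptw_ext} via \cite[Corollary 12.3.10]{RV}): the adjunction obtained there has its unit built, componentwise, out of exactly these structure maps into the defining colimits. Once that identification is in hand, the rest of the argument is the elementary combinatorial observation about $f \slice f(i)$ above.
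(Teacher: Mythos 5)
Your proof is correct, but it takes a genuinely different route from the paper. The paper disposes of the statement in one line by appealing to the Beck--Chevalley condition for exact squares in the Riehl--Verity framework (\cite[Lemmas 9.4.4 and 12.3.11]{RV}), i.e.\ it stays entirely at the level of the homotopy 2-category. You instead argue pointwise: identify $(\Res{f}\Lan{f}D)(i)$ with $\colim_{f \slice f(i)}(D \circ \pr)$, observe that full faithfulness makes $(i, \id_{f(i)})$ a terminal object of $f \slice f(i)$, and conclude via the structure map from a terminal object. The combinatorial core of your argument is sound, and the one step you flag as the "main obstacle" --- matching the $i$-component of the unit with the structure map at $(i,\id_{f(i)})$ --- does not actually require unwinding the internals of the Riehl--Verity construction: it follows from the paper's own toolkit by applying \Cref{lemma:mate_and_units} to the square of \Cref{lemma:kan_local} (the first commuting diagram there, evaluated at the object $(i,\id_{f(i)})$, where the component of $\rho_{f(i)}$ is an identity), together with \Cref{rem:structure_maps}. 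What your approach buys is an explicit, elementary argument that makes visible \emph{why} full faithfulness matters (it creates the terminal object of the comma category); what the paper's approach buys is brevity and model-independence, at the cost of outsourcing all content to the cited exactness results. One small citation quibble: \Cref{lemma:terminal_object_colimit} as stated concerns the canonical map out of the colimit of a cocone diagram rather than the structure map into the colimit; the cleaner reference for your step is \Cref{rem:structure_maps} combined with the fact that the inclusion of a terminal object is homotopy terminal (\Cref{lemma:homotopy_terminal_stuff}).
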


\begin{restatable}{lemma}{lemmaCoconeComparison} \label{lemma:cocone_comparison}
  Let $\cat I$ be a category and $\infcat C$ an $\infty$-category.
  Then $\infcat C$ admits all colimits indexed by $\cat I$ if and only if, for all diagrams $D \colon \cat I \to \infcat C$, there is a colimit diagram extending $D$.
  In this case a diagram $D \colon \cocone{\cat I} \to \infcat C$ lies in the essential image of $\Lan{\inc} \colon \Fun{\cat I}{\infcat C} \to \Fun{\cocone{\cat I}}{\infcat C}$ if and only if it is a colimit diagram.
\end{restatable}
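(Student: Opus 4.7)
My plan is to split the lemma into its two assertions and treat each separately, using standard $\infty$-categorical criteria for the first and a direct computation with Kan extensions for the second.

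For the first claim, I would invoke the standard representability criterion for adjoints: a functor $R$ between $\infty$-categories admits a left adjoint if and only if, for each object $D$ of its target, the comma $\infty$-category $D \slice R$ has an initial object (cf.\ \cite[Proposition 5.2.4.2]{LurHTT}). Applying this to $R = \Diag \colon \infcat C \to \Fun{\cat I}{\infcat C}$, existence of a left adjoint is equivalent to each comma category $D \slice \Diag$ having an initial object. Unwinding the join-based definitions, $D \slice \Diag$ is equivalent to $\infcat C_{D/}$ (whose initial objects are, by definition, the colimit diagrams extending $D$), so the two conditions coincide.

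For the second claim, assume $\infcat C$ admits colimits indexed by $\cat I$; by the remark immediately preceding Notation \ref{def:structure_map}, $\infcat C$ is then left $\inc$-extensible, so $\Lan{\inc}$ exists. Since $\inc \colon \cat I \to \cocone{\cat I}$ is fully faithful, Lemma \ref{lemma:fully_faithful_kan_unit} implies the unit $\id \to \Res{\inc} \Lan{\inc}$ is an equivalence, so $\Lan{\inc}$ is itself fully faithful. Its essential image is therefore precisely the collection of $E \colon \cocone{\cat I} \to \infcat C$ for which the counit $\Lan{\inc} \Res{\inc} E \to E$ is an equivalence. Using the pointwise formula for Kan extensions (Lemma \ref{lemma:kan_local}), the counit at $i \in \cat I$ is computed over $\inc \slice i$, which has the identity on $i$ as terminal object and thus gives an equivalence automatically; at the cone point, $\inc \slice \coconept \iso \cat I$, and the counit becomes the canonical comparison map $\colim_{\cat I}(\Res{\inc} E) \to E(\coconept)$. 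Hence $E$ lies in the essential image of $\Lan{\inc}$ iff this comparison map is an equivalence.

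The main obstacle is reconciling this last ``canonical comparison is an equivalence'' description of colimit diagrams with Lurie's initial-object-in-$\infcat C_{D/}$ definition. To handle this I would appeal to the characterization of colimit diagrams as those cocones whose associated slice has a bottom element (combining \cite[Proposition 4.2.4.3]{LurHTT} with the identification of colimits with Kan extensions along $\inc$, which is presumably the content of \Cref{lemma:colim_is_kan_to_cocone} referenced earlier). Together this gives both directions of the second claim; combined with the first part, this completes the proof.
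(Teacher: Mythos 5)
Your proof is correct in substance but takes a genuinely different route from the paper's. The paper disposes of both claims in one line by citing Riehl--Verity (\cite[Proposition F.2.1 and Corollary 12.2.10]{RV} for the first, \cite[Proposition F.2.1, Lemma 2.3.6, and Lemma 2.3.7]{RV} for the second); those results are precisely the machinery that translates the homotopy-2-categorical notions of adjunction and pointwise Kan extension into Lurie's slice-based definitions, which is exactly what this lemma is meant to package. You instead rebuild that bridge by hand inside \cite{LurHTT}: the adjoint-existence criterion via initial objects of comma categories for the first claim, and, for the second, the observation that $\Lan{\inc}$ is fully faithful (so its essential image is detected by the counit) together with a pointwise computation of the counit over the slices ${\inc} \slice i$ and ${\inc} \slice \coconept$. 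This buys a self-contained and more explicit argument, at the cost of having to reconcile two models of the slice category and two descriptions of ``colimit diagram'' yourself.

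Two points should be tightened. First, the final reconciliation: given that a colimit of $\Res{\inc} E$ exists, a cocone $E$ is a colimit diagram precisely when the essentially unique map in $\infcat{C}_{(\Res{\inc} E)/}$ from a colimit cone to $E$ is an equivalence, and this is detected at the cone point because the projection $\infcat{C}_{(\Res{\inc} E)/} \to \infcat C$ is a right fibration, hence conservative; the value at the cone point is exactly your canonical comparison map. Spelling this out, rather than gesturing at a combination of references, is what actually closes the second claim. Second, your use of Lurie's adjoint criterion is legitimate only because the paper's 2-categorical notion of adjunction agrees with Lurie's, which is the content of \cite[Appendix F.5]{RV}; since the entire point of the lemma is this translation between frameworks, that agreement should be invoked explicitly rather than left implicit.
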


\begin{restatable}{lemma}{lemmaHomotopyTerminalStuff} \label{lemma:homotopy_terminal_stuff}
  Let $f \colon \cat I \to \cat J$ be a functor between categories.
  \begin{enumerate}[label=\alph*)]
    \item If $\cat J$ has a terminal object, then it is contractible.
    \item The functor $f$ is homotopy terminal if and only if, for each $j \in \cat J$, the category $j \slice f$ is contractible.
    \item If $f$ is right adjoint, then it is homotopy terminal.
    \item If $f$ is homotopy terminal, then it is a homotopy equivalence.
  \end{enumerate}
\end{restatable}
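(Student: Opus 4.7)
The plan is to establish the four parts in the order stated, since each later part relies on an earlier one; the only substantive input will be Lurie's $\infty$-categorical form of Quillen's Theorem A, which handles part (b).

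For part (a), the unique morphisms $j \to t$ to a terminal object $t$ assemble into a natural transformation $\id_{\cat J} \Rightarrow \const[t]$, whose image under the nerve and geometric realization functors gives a homotopy between $\id_{|N(\cat J)|}$ and the constant map at $t$. For part (b), I would invoke \cite[Proposition 4.1.1.8]{LurHTT} to identify our notion of homotopy terminal with Lurie's notion of cofinal, and then cite \cite[Theorem 4.1.3.1]{LurHTT} for the characterization of cofinality in terms of contractibility of the slices $j \slice f$; for a functor between ordinary categories, Lurie's slice simplicial set is categorically equivalent to the nerve of the ordinary slice $j \slice f$, so the two conditions coincide.

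Part (c) reduces to (b) via the standard observation that if $l \dashv f$ with unit $\eta$, then $(l(j), \eta_j)$ is an initial object of $j \slice f$: a morphism from it to an object $(i, \phi)$ is, by the adjunction, the same datum as the morphism $\phi \colon j \to f(i)$, hence exists and is unique. Categories with an initial object are contractible --- apply part (a) to the opposite category, using that the geometric realizations of the nerves of a category and its opposite are canonically homeomorphic --- and then (b) concludes.

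Part (d) follows by specializing the definition of homotopy terminal to the $\infty$-category of spaces with the constant diagram at a point: the colimit of such a constant diagram indexed by a category $\cat K$ is (up to weak equivalence) the classifying space $|N(\cat K)|$, and homotopy terminality then forces the induced map $|N(\cat I)| \to |N(\cat J)|$ to be an equivalence. The main (and only) real obstacle is part (b), but that is concealed inside the citation to \cite{LurHTT}; the remaining parts are formal consequences once the language is set up.
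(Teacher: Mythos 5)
Your proposal is correct, and for parts (a) and (b) it is essentially the paper's argument: (a) is the explicit contraction coming from the natural transformation to the constant functor at the terminal object, and (b) is exactly the citation of \cite[Theorem 4.1.3.1]{LurHTT} combined with the identification of homotopy terminal with Lurie's cofinal via \cite[Proposition 4.1.1.8]{LurHTT} (which the paper records as a remark right after the definition). Where you diverge is in (c) and (d): the paper simply cites \cite[Proposition 4.1.5]{RV} for (c) and \cite[Proposition 4.1.1.3]{LurHTT} for (d), whereas you unwind both. Your (c) --- exhibiting $(l(j), \eta_j)$ as an initial object of $j \slice f$ via the adjunction bijection and then applying (a) to the opposite category --- is the standard proof of the cited RV statement and is a perfectly good, more self-contained route. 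Your (d) likewise reproves Lurie's result by evaluating the definition of homotopy terminal on the constant point diagram in spaces and using that its colimit is the classifying space; this is correct, though it quietly relies on the (standard) identification of that colimit with the realization of the nerve, naturally in the indexing category, which is roughly the content of the result the paper cites. The only point worth tightening is in (b): Lurie's slice $\cat I \times_{\cat J} \cat J_{j/}$ is not \emph{categorically} equivalent to the nerve of the ordinary comma category $j \slice f$ on the nose; rather, the two slice constructions $\cat J_{j/}$ and $\cat J^{j/}$ are related by a comparison map that is a weak homotopy equivalence after pulling back, which is all one needs since only weak contractibility is at stake. This does not affect the validity of the argument.
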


\begin{restatable}{lemma}{lemmaHomotopyTerminal}
  Let $f \colon \cat I \to \cat J$ be a homotopy terminal functor between categories and $\infcat C$ an $\infty$-category that admits colimits indexed both by $\cat I$ and by $\cat J$.
  Then the natural transformation $f_* \colon \colim{\cat I} \Res{f} \to \colim{\cat J}$ of functors $\Fun{\cat J}{\infcat C} \to \infcat C$ is an equivalence.
\end{restatable}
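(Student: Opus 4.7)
The plan is to unpack the definition of $f_*$ and reduce the statement to the essential uniqueness of colimit diagrams. Fix a diagram $D \colon \cat J \to \infcat C$. Since $\infcat C$ admits colimits indexed by $\cat J$, \Cref{lemma:cocone_comparison} guarantees the existence of a colimit diagram $\bar D \colon \cocone{\cat J} \to \infcat C$ extending $D$. By the very definition of homotopy terminal, the composition $\bar D \circ \cocone f \colon \cocone{\cat I} \to \infcat C$ is then again a colimit diagram, and it evidently extends $\Res{f}(D) = D \circ f$. On the other hand, since $\infcat C$ admits colimits indexed by $\cat I$, \Cref{lemma:cocone_comparison} provides another colimit diagram $\overline{D \circ f} \colon \cocone{\cat I} \to \infcat C$ extending $D \circ f$.

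Next I would unpack the mate formula. Since $\Diag_{\cat I} = \Res{f} \Diag_{\cat J}$ (both being restriction along the constant map $\cat I \to \termCat$), the natural transformation $f_*$ is the mate of the identity with respect to the adjunctions $\colim_{\cat I} \dashv \Diag_{\cat I}$ and $\colim_{\cat J} \dashv \Diag_{\cat J}$. Using the unit--counit formula, $f_*(D)$ is the composite
\[ \colim_{\cat I} \Res{f}(D) \xlongto{\colim_{\cat I} \Res{f}\, \eta_J(D)} \colim_{\cat I} \Diag_{\cat I}(\colim_{\cat J} D) \xlongto{\epsilon_I} \colim_{\cat J} D, \]
where $\eta_J(D)$ corresponds under \Cref{lemma:cocone_comparison} to the colimit cocone encoded by $\bar D$. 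Restricting along $f$ turns this into the cocone on $D \circ f$ encoded by $\bar D \circ \cocone f$; thus $f_*(D)$ is precisely the map out of $\colim_{\cat I}(D \circ f)$ induced, via the universal property of the colimit diagram $\overline{D \circ f}$, by the cocone $\bar D \circ \cocone f$.

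The conclusion is now immediate: both $\overline{D \circ f}$ and $\bar D \circ \cocone f$ are colimit diagrams extending $D \circ f$, i.e.\ initial objects of $\infcat C_{D \circ f /}$, so the unique map between them is an equivalence. At the level of cone points this unique map is exactly $f_*(D)$, so $f_*(D)$ is an equivalence for every $D$, proving that $f_*$ is an equivalence of functors.

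The main subtlety to get right will be step two, namely the identification of the mate $f_*(D)$ with the universal map out of $\colim_{\cat I}(D \circ f)$ induced by the restricted cocone $\bar D \circ \cocone f$. This is a standard unwinding of the mate definition combined with the correspondence between units of $\colim \dashv \Diag$ and colimit cocones provided by \Cref{lemma:cocone_comparison}, but it must be done carefully to ensure that the equivalence produced by initiality in $\infcat C_{D \circ f /}$ agrees up to homotopy with $f_*(D)$ itself rather than its inverse.
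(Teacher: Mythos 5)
Your argument is correct, and it rests on the same two pillars as the paper's proof: that $f$ being homotopy terminal makes $\bar D \circ \cocone f$ a colimit diagram extending $D \circ f$, and that the comparison map out of $\colim{\cat I}(D \circ f)$ induced by such a colimit cocone is an equivalence. The difference lies in how the identification of $f_*(D)$ with that comparison map is carried out. You unwind the mate pointwise and appeal to the essential uniqueness of initial objects of $\infcat{C}_{D \circ f/}$; the paper instead lifts the whole situation to cocone-indexed diagrams, uses \Cref{lemma:kan_mate} together with the pasting law to identify $f_*$ with the cocone-point component of the mate $\Lan{\inc[\cat I]} \Res{f} \to \Res{\cocone f} \Lan{\inc[\cat J]}$, and then factors that mate as a unit of $\Lan{\inc[\cat J]} \dashv \Res{\inc[\cat J]}$ (an equivalence because $\inc[\cat J]$ is fully faithful) followed by a counit of $\Lan{\inc[\cat I]} \dashv \Res{\inc[\cat I]}$ evaluated on a colimit diagram (an equivalence by \Cref{lemma:extensions_are_cocartesian,lemma:cocone_comparison}). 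The subtlety you flag at the end --- that the unit of $\colim{\cat J} \dashv \Diag[\cat J]$ really does correspond to the colimit cocone $\Lan{\inc[\cat J]} D$, and that the equivalence produced by initiality in the slice agrees up to homotopy with $f_*(D)$ itself --- is exactly the bookkeeping that the mate calculus handles automatically in the paper's version (via \Cref{lemma:colim_is_kan_to_cocone} and the pasting laws); in your pointwise version it is a genuine compatibility check that still needs to be written out, but it does go through, so the proof is sound.
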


\begin{restatable}{lemma}{lemmaPreservation} \label{lemma:preservation}
  Let $\cat I$ be a category and $F \colon \infcat C \to \infcat D$ a functor between $\infty$-categories that admit colimits indexed by $\cat I$.
  Then the following conditions are equivalent:
  \begin{enumerate}[label=\alph*)]
    \item $F$ preserves left Kan extension along the inclusion ${\inc} \colon \cat I \to \cocone{\cat I}$.
    \item $F$ preserves colimits indexed by $\cat I$.
    \item $F$ sends $\cocone{\cat I}$-indexed colimit diagrams to colimit diagrams.
  \end{enumerate}
\end{restatable}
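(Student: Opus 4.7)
The plan is to establish (a) $\iff$ (b) and (a) $\iff$ (c) separately, using the factorization $\const_{\cat I} = \const_{\cocone{\cat I}} \circ \inc$ and the characterization of colimit diagrams in \Cref{lemma:cocone_comparison}. Throughout, the key observation is that $(F \;\circ)$ commutes strictly with restriction along any map of simplicial sets, so the mate squares one gets from restriction functors are always built on identity 2-cells.

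For (a) $\iff$ (b), I would first note that, since $\coconept$ is terminal in $\cocone{\cat I}$, the lemma \Cref{lemma:colim_is_kan_to_cocone} (cited earlier in the paper) gives $\colim{\cocone{\cat I}} \eq \Res{\coconept}$; together with the composition of Kan extensions lemma this yields $\colim{\cat I} \eq \Res{\coconept} \circ \Lan{\inc}$. The mate that tests condition (b) corresponds to the square built from $\Res{\const_{\cat I}}$, and by the pasting law for mates it factors as the paste of the mate for $\Res{\inc}$ (which tests (a)) and the mate for $\Res{\const_{\cocone{\cat I}}}$. The latter mate is always an equivalence: it computes, via $\Res{\coconept} \circ (F\;\circ) = (F\;\circ) \circ \Res{\coconept}$, essentially the identity 2-cell. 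Hence (a) implies (b). For the converse, the mate of (a) is a natural transformation of functors into $\Fun{\cocone{\cat I}}{\infcat D}$, and it suffices to check it pointwise; at a point $i \in \cat I$ the map is an identity by strict commutation of restriction and postcomposition, and at $\coconept$ it agrees with the mate tested by (b). So (a) and (b) are equivalent.

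For (a) $\iff$ (c), the second half of \Cref{lemma:cocone_comparison} identifies colimit diagrams $\cocone{\cat I} \to \infcat C$ with the essential image of $\Lan{\inc}$, equivalently with those $E$ for which the counit $\Lan{\inc} \Res{\inc} E \to E$ is an equivalence. For (a) $\Rightarrow$ (c), given such an $E$, postcomposing with $F$ and using the equivalence $(F \;\circ) \circ \Lan{\inc} \eq \Lan{\inc} \circ (F \;\circ)$ supplied by (a) (together with strict commutation of $(F\;\circ)$ with $\Res{\inc}$) exhibits $F \circ E$ as in the essential image of $\Lan{\inc}$, hence as a colimit diagram. For (c) $\Rightarrow$ (a), given any $D \colon \cat I \to \infcat C$, the diagram $\Lan{\inc} D$ is a colimit diagram extending $D$, so by (c) the composite $F \circ \Lan{\inc} D$ is a colimit diagram extending $F \circ D$. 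On the other hand $\Lan{\inc} (F \circ D)$ is another such colimit diagram, and the mate 2-cell provides a natural map between them over $F \circ D$; both are initial objects of $\infcat{D}_{F \circ D /}$, so this map is an equivalence, which is exactly the statement of (a).

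The proof is essentially bookkeeping with mates; the mild obstacle is checking that the pasting-law factorization in (a) $\iff$ (b) really identifies the two comparison 2-cells, but this reduces to the trivial observation that restriction and postcomposition commute strictly, so that the ``extra'' mate square introduced by $\Res{\coconept}$ is built from an identity 2-cell and is itself an identity (equivalently: an equivalence).
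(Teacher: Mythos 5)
Your argument reaches the same conclusion and shares the paper's basic toolkit (mate bookkeeping, \Cref{lemma:cocone_comparison}, fully faithfulness of $\inc$), but it closes the two equivalences differently. The paper obtains (a) $\Leftrightarrow$ (b) from \Cref{lemma:preserving_Kan_extensions}: one direction is that lemma verbatim (the slices ${\inc}\slice a$ are either $\cat I$ or have a terminal object), and the other follows from its proof together with the essential surjectivity of $\Res{\inc}$ (since $\Res{\inc}\Lan{\inc} \eq \id$). It obtains (a) $\Leftrightarrow$ (c) by writing the comparison 2-cell as unit-followed-by-counit, discarding the unit (an equivalence since $\inc$ is fully faithful), and identifying ``counit is an equivalence at $(F\;\circ)\Lan{\inc}D$'' with ``$F \circ \Lan{\inc}D$ is a colimit diagram'' via \Cref{lemma:extensions_are_cocartesian,lemma:cocone_comparison}. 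Your (c) $\Rightarrow$ (a) via uniqueness of initial objects of $\infcat D_{(F \circ D)/}$ is workable, but to regard the mate as a morphism in that slice you must check its compatibility with the units, which is exactly \Cref{lemma:mate_and_units}; the paper's counit factorization sidesteps this.

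The one genuine gap is in your (a) $\Rightarrow$ (b): you assert that the mate for $\const \colon \cocone{\cat I} \to \termCat$ is ``essentially the identity 2-cell''. It is true that both composites $\colim{\cocone{\cat I}} \circ (F\;\circ)$ and $(F\;\circ) \circ \colim{\cocone{\cat I}}$ are equivalent to $\Res{\coconept}\circ(F\;\circ) = (F\;\circ)\circ\Res{\coconept}$, but that says nothing yet about the particular 2-cell built from the paper's \emph{fixed} unit and counit of $\colim{\cocone{\cat I}} \dashv \Diag$ — strict commutation of restriction with postcomposition does not apply to $\colim{\cocone{\cat I}}$ itself. The claim is correct and provable with the paper's tools: paste the square defining this mate against the square whose mate is the induced map $(\coconept)_* \colon \Res{\coconept} \to \colim{\cocone{\cat I}}$, which is an equivalence because $\const[\coconept] \colon \termCat \to \cocone{\cat I}$ is homotopy terminal (\Cref{lemma:homotopy_terminal_stuff}); the pasting law then identifies your mate, up to these equivalences, with the identity mate for $\id[\termCat]$. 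As written, however, this step is unsupported, and it is precisely the point where the argument risks circularity (it amounts to ``every functor preserves colimits indexed by a category with a terminal object'', which one cannot get from \Cref{lemma:preserving_Kan_extensions} without running in a circle). A small side note: $\colim{\cocone{\cat I}} \eq \Res{\coconept}$ is not the statement of \Cref{lemma:colim_is_kan_to_cocone}; that lemma gives $\colim{\cat I} \eq \Res{\coconept}\Lan{\inc}$, which is the identity you actually use.
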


\begin{restatable}{lemma}{lemmaResPreservesKan}
  Let $f \colon \cat I \to \cat J$ be a functor between categories, $g \colon \infcat K \to \infcat L$ a functor between $\infty$-categories, and $\infcat C$ a left $f$-extensible $\infty$-category.
  Then $\Fun{\infcat L}{\infcat C}$ is left $f$-extensible, and ${\Res{g}} \colon \Fun{\infcat L}{\infcat C} \to \Fun{\infcat K}{\infcat C}$ preserves left Kan extension along $f$.
\end{restatable}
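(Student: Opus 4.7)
The plan is to reduce the statement to the fact that left Kan extensions in functor $\infty$-categories with values in $\infcat{C}$ can be computed via postcomposition with the corresponding Kan extension in $\infcat{C}$.

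First I would use the currying equivalences $\Fun{\cat{I}}{\Fun{\infcat{L}}{\infcat{C}}} \eq \Fun{\infcat{L}}{\Fun{\cat{I}}{\infcat{C}}}$, and analogously with $\cat{J}$ or $\infcat{K}$ in place of $\cat{I}$ or $\infcat{L}$. Under these identifications, the functor $\Res{f}$ (acting on the outer indexing) corresponds to postcomposition by $\Res{f} \colon \Fun{\cat{J}}{\infcat{C}} \to \Fun{\cat{I}}{\infcat{C}}$, while postcomposition by $\Res{g}$ corresponds to precomposition by $g$. The key point is that postcomposition and precomposition commute strictly on $\Fun{\infcat{L}}{\Fun{\cat{J}}{\infcat{C}}}$, not merely up to a chosen equivalence.

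Second I would invoke \Cref{lemma:kan_and_currying}, which provides, for $\infcat{C}$ (weakly) left $f$-extensible, a left adjoint to postcomposition by $\Res{f}$ given by postcomposition with $\Lan{f}$. Applied at the level of $\Fun{\infcat{L}}{\blank}$ and $\Fun{\infcat{K}}{\blank}$, this shows that both $\Fun{\infcat{L}}{\infcat{C}}$ and $\Fun{\infcat{K}}{\infcat{C}}$ are weakly left $f$-extensible. To upgrade this to genuine left $f$-extensibility, I would note that colimits indexed by the comma categories $f \slice j$ in a functor $\infty$-category into $\infcat{C}$ are computed pointwise, so they exist as soon as they exist in $\infcat{C}$ itself.

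Third, under the identifications above the naturality square defining the mate becomes the square expressing the (strict) commutativity of postcomposition with $\Lan{f}$ and precomposition with $g$, and the distinguished $2$-cell in that square is the identity. The mate of the identity $2$-cell, with respect to the two adjunctions $\Lan{f} \dashv \Res{f}$ on $\Fun{\infcat{L}}{\infcat{C}}$ and $\Fun{\infcat{K}}{\infcat{C}}$, is again an equivalence by \Cref{lemma:mate_equiv}, since the vertical functors are literally identical after currying. The main bookkeeping obstacle is to verify that the chosen adjunctions $\Lan{f} \dashv \Res{f}$ on the two functor categories are compatible with $\Res{g}$ in the sense needed for this mate argument; but this is a formal consequence of the $2$-functoriality of $\Fun{\infcat{E}}{\blank}$ applied to the adjunction at the $\infcat{C}$-level, so no genuine difficulty arises.
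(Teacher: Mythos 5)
Your overall strategy --- curry so that the outer restriction $\Res{f}$ becomes postcomposition with $\Res{f} \colon \Fun{\cat J}{\infcat C} \to \Fun{\cat I}{\infcat C}$ while $(\Res{g} \;\circ)$ becomes precomposition with $g$, observe that these commute strictly, and conclude that the relevant mate is an equivalence --- is essentially the route the paper takes, and your first two steps match its argument: weak left $f$-extensibility of $\Fun{\infcat L}{\infcat C}$ comes from \Cref{lemma:kan_and_currying}, and this upgrades to left $f$-extensibility because the latter is defined via the existence of colimits indexed by the comma categories $f \slice j$, i.e.\ via weak $\const$-extensibility.

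The justification of your third step, however, has a genuine gap. \Cref{lemma:mate_equiv} requires the two \emph{vertical} functors of the square (the ones not carrying the adjunctions) to be isomorphisms; after currying these are the two copies of $\Res{g}$, i.e.\ precomposition with $g$, which is not an isomorphism in general. That the two vertical functors are ``literally identical'' is not the hypothesis of that lemma, and no such weakening can hold: strict commutativity of the square of right adjoints together with strict commutativity of the corresponding square of left adjoints does not by itself imply that the \emph{mate} of the identity $2$-cell is invertible, since the mate is a specific composite of a unit and a counit whose cancellation must be checked. (Compare: any functor $F$ gives a strictly commuting square with the diagonals $\Diag$ as horizontal arrows and $F$, $(F \;\circ)$ vertical, yet $F$ need not preserve colimits.) The paper closes exactly this gap by pasting the middle square with the two currying-isomorphism squares --- to which \Cref{lemma:mate_equiv} \emph{does} apply --- and identifying the total paste with $\Fun{\blank}{\infcat C}$ applied to the commuting square $({\id} \times g) \circ (f \times \id) = (f \times \id) \circ ({\id} \times g)$, which is an exact square by \cite[Lemma 9.2.8]{RV}, so that the Beck--Chevalley condition yields the claim. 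Alternatively, you could finish your version by hand: by \Cref{lemma:composing_with_adjunction} the unit and counit occurring in the mate are both obtained by whiskering the unit and counit of the single adjunction $\Lan{f} \dashv \Res{f}$ between $\Fun{\cat I}{\infcat C}$ and $\Fun{\cat J}{\infcat C}$, and they then cancel by a triangle identity. Some such argument is needed; the appeal to \Cref{lemma:mate_equiv} as written does not suffice.
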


\section{Tools for Kan extensions and (co)limits} \label{section:basics}

In this appendix we collect some basic tools for working with Kan extensions and (co)limits in $\infty$-categories that we need in the rest of this paper.
Note that, even though we often only state things for indexing categories (instead of $\infty$-categories or simplicial sets), this is purely for convenience and there are more general versions of most of those statements.
Generally, if there is a pair of dual statements, we will only give one of them and leave the other implicit.

There is no claim of originality for any of the statements found in this appendix (the correctness of most, if not all, of them should be more or less clear to anyone familiar with the theory); the ones for which a proof is given are merely those for which the author could not find a reference.

\subsection{Kan extensions}

\lemmaCompositionOfKan*

\begin{proof}
  Since adjunctions compose, we have that $\Lan{g} \circ \Lan{f}$ is a left adjoint of the composition $\Res{f} \circ \Res{g} = \Res{g \circ f}$.
  As adjoints are unique up to isomorphism (in the homotopy 2-category of $\infty$-categories), we obtain that $\Lan{g} \circ \Lan{f} \eq \Lan{g \circ f}$.
\end{proof}

\lemmaKanUnit*

\begin{proof}
  This follows from the Beck-Chevalley condition \cite[Lemma 12.3.11]{RV}, using that by \cite[Lemma 9.4.4]{RV} when $f$ is fully faithful a certain square fulfills a condition called exact (here we use that the nerve functor is cosmological by \cite[Example 1.3.5]{RV}, hence preserves fully faithfulness (cf.\ \cite[Corollary 3.5.6]{RV}) since it preserves absolute right and left lifting diagrams by \cite[Proposition 10.1.4]{RV}).
\end{proof}

\begin{lemma} \label{lemma:extensions_are_cocartesian}
  Let $f \colon \cat I \to \cat J$ be a fully faithful functor between categories and $\infcat C$ a left $f$-extensible $\infty$-category.
  Then $\epsilon \circ \Lan{f} \colon \Lan{f} \Res{f} \Lan{f} \to \Lan{f}$ is an equivalence, where $\epsilon$ is the counit of the adjunction $\Lan{f} \dashv \Res{f}$.
\end{lemma}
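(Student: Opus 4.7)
The plan is to deduce this directly from \Cref{lemma:fully_faithful_kan_unit} via one of the triangle identities. Write $\eta \colon \id \to \Res{f} \Lan{f}$ for the unit. Since $f$ is fully faithful, \Cref{lemma:fully_faithful_kan_unit} tells us that $\eta$ is an equivalence, and hence so is the whiskering $\Lan{f} \circ \eta \colon \Lan{f} \to \Lan{f} \Res{f} \Lan{f}$.

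The triangle identity for the adjunction $\Lan{f} \dashv \Res{f}$ (which is available since we work in the homotopy $2$-category, where adjunctions really do satisfy strict triangle identities up to homotopy) yields the equation
\[ (\epsilon \circ \Lan{f}) \cdot (\Lan{f} \circ \eta) = \id_{\Lan{f}} \;. \]
Thus $\epsilon \circ \Lan{f}$ is a retraction of the equivalence $\Lan{f} \circ \eta$, so it is itself an equivalence (a retraction of an equivalence is an equivalence, since it agrees with the inverse).

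I do not expect any real obstacle here; the content of the statement is entirely absorbed into \Cref{lemma:fully_faithful_kan_unit} together with the triangle identity, and the only thing to check is that the composite really is the identity, which is just naming one of the triangle identities. No $\infty$-categorical subtleties arise beyond those already present in fixing the adjunction $\Lan{f} \dashv \Res{f}$ at the level of the homotopy $2$-category.
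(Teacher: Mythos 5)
Your proposal is correct and is essentially identical to the paper's own proof: both invoke \Cref{lemma:fully_faithful_kan_unit} to see that $\Lan{f} \circ \eta$ is an equivalence and then apply the triangle identity $(\epsilon \circ \Lan{f}) \cdot (\Lan{f} \circ \eta) = \id$ to conclude. No gaps.
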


\begin{proof}
  Consider the diagram
  \[
  \begin{tikzcd}
    \Lan{f} \rar{\id} \dar[swap]{\eta} & \Lan{f} \\
    \Lan{f} (\Res{f} \Lan{f}) \rar[equal] & (\Lan{f} \Res{f}) \Lan{f} \uar[swap]{\epsilon}
  \end{tikzcd}
  \]
  where the vertical maps are given by the unit respectively counit of the adjunction $\Lan{f} \dashv \Res{f}$.
  It commutes up to homotopy by one of the triangle identities.
  Since $f$ is fully faithful, the left vertical morphism is an equivalence.
  Hence the right vertical map is an equivalence as well.
\end{proof}

%\begin{lemma} \label{lemma:fully_faithful_adjunction}
%  Let $\infcat C$ and $\infcat D$ be $\infty$-categories and $r \colon \infcat C \to \infcat D$ a fully faithful functor that is right adjoint to $l \colon \infcat D \to \infcat C$.
%  Then the counit $l \circ r \to \id$ is an equivalence.
%\end{lemma}

\begin{lemma} \label{lemma:kan_and_currying}
  Let $J$ be a simplicial set, $f \colon I \to I'$ a map of simplicial sets, and $\infcat C$ a weakly left $f$-extensible $\infty$-category.
  Then there is a homotopy commutative diagram of the form
  \[
  \begin{tikzcd}
    \Fun{I}{\Fun{J}{\infcat C}} \rar{\Lan{f}} & \Fun{I'}{\Fun{J}{\infcat C}} \\
    \Fun{I \times J}{\infcat C} \rar{\Lan{f \times \id}} \uar{\iso} \dar[swap]{\iso} & \Fun{I' \times J}{\infcat C} \uar[swap]{\iso} \dar{\iso} \\
    \Fun{J}{\Fun{I}{\infcat C}} \rar{\Lan{f} \circ} & \Fun{J}{\Fun{I'}{\infcat C}}
  \end{tikzcd}
  \]
  where the vertical maps are the respective currying isomorphisms (in particular, all of these left Kan extensions actually exist).
\end{lemma}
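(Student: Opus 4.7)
The plan is to deduce both the existence of the three left Kan extensions and the homotopy commutativity of each square from the uniqueness of adjoints in the homotopy 2-category, via the calculus of mates recalled in \Cref{section:mates}.

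First I would note that the restriction version of the diagram --- obtained by reversing every horizontal arrow and replacing each $\Lan{}$ by the corresponding $\Res{}$ --- commutes strictly, indeed already as a diagram of simplicial set maps. This is immediate from the definition of the internal hom: precomposition with $f \times \id_J$ on a functor $I' \times J \to \infcat C$ corresponds, under either currying isomorphism, to precomposition with $f$ in the appropriate variable (for the top square as $\Res{f}$ on the outer $I'$, for the bottom square as postcomposition $\Res{f} \circ$ after currying the $J$-variable out).

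Next I would establish existence of the three left adjoints. Weak left $f$-extensibility of $\infcat C$ gives an adjunction $\Lan{f} \dashv \Res{f}$ on $\Fun{-}{\infcat C}$. Applying the 2-functor $\Fun{J}{-}$ on the homotopy 2-category of $\infty$-categories (which preserves adjunctions) lifts this to an adjunction $(\Lan{f} \circ\,) \dashv (\Res{f} \circ\,)$ on $\Fun{J}{\Fun{-}{\infcat C}}$; this gives the bottom horizontal arrow of the diagram and the weak left $f$-extensibility of $\Fun{J}{\infcat C}$ (which in turn gives the top horizontal arrow). Transporting either of these adjunctions along the currying equivalence --- an isomorphism of simplicial sets, hence an equivalence with itself as both adjoints in the homotopy 2-category --- furnishes a left adjoint to $\Res{f \times \id}$. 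By uniqueness of adjoints, any chosen $\Lan{f \times \id}$ must agree with this transported one up to equivalence.

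Finally, I would obtain the homotopy commutativity of each square by taking mates of the identity 2-cell in the corresponding strictly commuting restriction square. Both side maps of each square are currying isomorphisms, so \Cref{lemma:mate_equiv} applies and the resulting mates are 2-isomorphisms. Unwinding the definition of the mate, these 2-isomorphisms are precisely the comparison natural transformations between the two routes around each square in the Kan-extension diagram, producing the claimed homotopy commutativity. The main obstacle I expect is purely organizational: matching up which currying isomorphism plays the role of $h$ and which the role of $k$ in the mate setup for each of the two squares, and verifying the 2-naturality of currying with respect to restriction functors so that the restriction square really does commute as claimed. Once that bookkeeping is in place, the proof reduces to one application of \Cref{lemma:mate_equiv} per square.
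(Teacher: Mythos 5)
Your proposal is correct and follows essentially the same route as the paper: observe that the restriction version of the diagram commutes strictly, obtain the bottom adjunction by composing with the adjunction $\Lan{f} \dashv \Res{f}$ (the paper cites \Cref{lemma:composing_with_adjunction} for exactly the fact you derive from 2-functoriality of $\Fun{J}{-}$), transport left adjoints across the currying isomorphisms, and conclude homotopy commutativity of both squares via \Cref{lemma:mate_equiv}. No gaps.
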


\begin{proof}
  Note that $(\Lan{f} \;\circ)$ is left adjoint to $(\Res{f} \;\circ)$ by \Cref{lemma:composing_with_adjunction}.
  Since the above diagram with the restrictions, instead of their left adjoints, commutes, we obtain that $\Res{f}$ and $\Res{f \times \id}$ actually have left adjoints.
  Then \Cref{lemma:mate_equiv} implies the statement.
\end{proof}

\begin{lemma} \label{lemma:colimits_and_currying}
  Let $I$ and $J$ be simplicial sets, $\infcat C$ an $\infty$-category that admits colimits indexed by $I$, and $D \colon I \times J \to \infcat C$ a functor.
  Denote by $D_{I} \colon I \to \Fun{J}{\infcat C}$ and $D_{J} \colon J \to \Fun{I}{\infcat C}$ the curried functors.
  Then $\Lan{\pr[J]} (D)$, $\colim{I} D_{I}$, and $\colim{I} \circ D_{J}$ exist and are all equivalent in $\Fun{J}{\infcat C}$.
\end{lemma}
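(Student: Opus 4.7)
The plan is to deduce this directly from the preceding \Cref{lemma:kan_and_currying} by observing that the projection $\pr[J] \colon I \times J \to J$ is precisely of the form $\const \times \id[J]$, where $\const \colon I \to \termCat$ is the unique map. More precisely, $\pr[J]$ fits into the natural identification $\termCat \times J \iso J$, so taking $f = \const \colon I \to \termCat$ in \Cref{lemma:kan_and_currying} yields the homotopy commutative diagram
\[
\begin{tikzcd}
\Fun{I}{\Fun{J}{\infcat C}} \rar{\Lan{\const}} & \Fun{\termCat}{\Fun{J}{\infcat C}} \\
\Fun{I \times J}{\infcat C} \rar{\Lan{\const \times \id}} \uar{\iso} \dar[swap]{\iso} & \Fun{\termCat \times J}{\infcat C} \uar[swap]{\iso} \dar{\iso} \\
\Fun{J}{\Fun{I}{\infcat C}} \rar{\Lan{\const} \circ} & \Fun{J}{\Fun{\termCat}{\infcat C}}
\end{tikzcd}
\]
in which all three horizontal arrows exist (this is part of the conclusion of the previous lemma), under the sole hypothesis that $\Fun{J}{\infcat C}$ is weakly left $\const$-extensible. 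By the defining property of $\colim{I}$, the latter condition is automatic for $\Fun{J}{\infcat C}$ as soon as it holds for $\infcat C$, using \Cref{lemma:composing_with_adjunction} to produce the adjunction $\colim{I} \circ \dashv \Res{\const} \circ$ on the level of functor categories.

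I would then trace $D$ through this diagram. Along the top, applying the currying isomorphism sends $D$ to $D_I$, and then $\Lan{\const} = \colim{I}$ sends it to $\colim{I} D_I$ (under the canonical identification $\Fun{\termCat}{\Fun{J}{\infcat C}} \iso \Fun{J}{\infcat C}$). Along the middle, $D$ is sent to $\Lan{\pr[J]} D$, again using the identification $\termCat \times J \iso J$ to identify $\Lan{\const \times \id}$ with $\Lan{\pr[J]}$. Along the bottom, $D$ is first sent to $D_J$ and then, by postcomposition with $\Lan{\const} = \colim{I}$, to $\colim{I} \circ D_J$. Homotopy commutativity of the diagram then gives the three claimed equivalences in $\Fun{J}{\infcat C}$.

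The existence of all three Kan extensions drops out of \Cref{lemma:kan_and_currying} simultaneously, so no separate existence argument is needed. There is no real obstacle here; the only thing to be careful about is the bookkeeping of the two currying isomorphisms and the identification $\termCat \times J \iso J$, which turn the content of \Cref{lemma:kan_and_currying} into the statement about colimits.
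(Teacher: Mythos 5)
Your proof is correct and is exactly the paper's argument: the paper's proof consists of the single sentence that this is a special case of \Cref{lemma:kan_and_currying}, and your specialization to $f = \const \colon I \to \termCat$ together with the identification $\termCat \times J \iso J$ is precisely the bookkeeping that sentence leaves implicit.
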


\begin{proof}
  This is a special case of \Cref{lemma:kan_and_currying}.
\end{proof}

\begin{lemma} \label{lemma:comma_square_mate}
  Let $f \colon \cat I \to \cat K$ and $g \colon \cat J \to \cat K$ be functors between categories.
  Consider the natural transformation
  \[
  \begin{tikzcd}[sep = 30]
    f \slice g \rar{\pr[\cat I]} \dar[swap]{\pr[\cat J]} & \cat I \dar{f} \dlar[Rightarrow, shorten < = 13, shorten > = 13, swap]{\alpha} \\
    \cat J \rar{g} & \cat K
  \end{tikzcd}
  \]
  given, at $(i, j, k \colon\! f(i) \to g(j)) \in f \slice g$, by $k$.
  Now let $\infcat C$ be a left $f$-extensible and left $\pr[\cat J]$-extensible $\infty$-category.
  After applying $\Fun{\blank}{\infcat C}$ to the diagram above we get
  \[
  \begin{tikzcd}[sep = 35]
    \Fun{\cat K}{\infcat C} \rar{\Res{f}} \dar[swap]{\Res{g}} & \Fun{\cat I}{\infcat C} \dar{\Res{\pr[\cat I]}} \dlar[Rightarrow, shorten < = 22, shorten > = 22, swap]{\alpha} \\
    \Fun{\cat J}{\infcat C} \rar{\Res{\pr[\cat J]}} & \Fun{f \slice g}{\infcat C}
  \end{tikzcd}
  \]
  and taking the mate we obtain a transformation $\mate \alpha \colon \Lan{\pr[\cat J]} \Res{\pr[\cat I]} \to \Res{g} \Lan{f}$.
  This transformation $\mate \alpha$ is an equivalence.
\end{lemma}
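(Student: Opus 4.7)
The plan is to reduce the statement to a pointwise check and then exhibit an adjunction between the relevant comma categories. Using the pointwise formula for left Kan extensions (\Cref{lemma:kan_local}), at each $j \in \cat J$ and $D \colon \cat K \to \infcat C$, the source of $\mate \alpha$ is identified with $\colim{\pr[\cat J] \slice j} (D \circ \pr[\cat I] \circ \pr)$ and the target $(\Res{g} \Lan{f} D)(j) = (\Lan{f} D)(g(j))$ with $\colim{f \slice g(j)} (D \circ \pr)$. Unwinding the formula defining the mate from \Cref{section:mates}, one identifies $\mate \alpha$ evaluated at $j$ with the map on colimits induced by the functor
\[ r \colon \pr[\cat J] \slice j \longto f \slice g(j), \quad (i, j', k \colon\! f(i) \to g(j'), l \colon\! j' \to j) \longmapsto (i, g(l) \circ k) \;. \]
By a pointwise criterion for mates (such as the one invoked in the proof of \Cref{lemma:gc_equiv}), it thus suffices to show that $r$ induces an equivalence on colimits for every such input diagram.

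For this, I would construct a right adjoint $s \colon f \slice g(j) \to \pr[\cat J] \slice j$ to $r$, defined by $(i, m) \mapsto (i, j, m, \id_j)$; plainly $r \circ s = \id$. To verify $r \dashv s$, observe that a morphism in $\pr[\cat J] \slice j$ from $(i, j', k, l)$ to $s(i'', m'') = (i'', j, m'', \id_j)$ is a pair $(\phi \colon\! i \to i'', \psi \colon\! j' \to j)$ subject to $m'' \circ f(\phi) = g(\psi) \circ k$ and $\id_j \circ \psi = l$; the latter forces $\psi = l$, and the remaining constraint becomes precisely the defining condition of a morphism $r(i, j', k, l) = (i, g(l) \circ k) \to (i'', m'')$ in $f \slice g(j)$. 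Since $s$ is a right adjoint, \Cref{lemma:homotopy_terminal_stuff} says that $s$ is homotopy terminal, so the induced map $s_* \colon \colim{f \slice g(j)} (D \circ \pr) \to \colim{\pr[\cat J] \slice j} (D \circ \pr[\cat I] \circ \pr)$ is an equivalence. The identity $r \circ s = \id$ then gives $r_* \circ s_* = \id$, whence $r_*$ is the inverse equivalence of $s_*$ and in particular an equivalence.

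The main obstacle is the first step: unwinding the composite defining the mate to identify it pointwise with $r_*$. This amounts to a careful diagram chase with the units and counits of the adjunctions $\Lan{f} \dashv \Res{f}$ and $\Lan{\pr[\cat J]} \dashv \Res{\pr[\cat J]}$, combined with the pointwise descriptions of the Kan extensions involved and the explicit form of $\alpha$ as the tautological comma-square 2-cell. Once this identification is made, the rest of the argument is the direct application of the adjunction $r \dashv s$ described above.
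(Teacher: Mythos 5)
Your route is genuinely different from the paper's: the paper disposes of this lemma in one line by citing the Riehl--Verity exact-square machinery (comma squares are exact, and exact squares satisfy the Beck--Chevalley condition after applying $\Fun{\blank}{\infcat C}$), whereas you give the classical hands-on proof, reducing to cofinality of the comparison functor $r \colon \pr[\cat J] \slice j \to f \slice g(j)$. The combinatorial core of your argument is correct: $s$ is indeed right adjoint to $r$ with $r \circ s = \id$, so $s$ is homotopy terminal by \Cref{lemma:homotopy_terminal_stuff}, the induced map $s_*$ on colimits is an equivalence, and $r_*$ is its inverse. Moreover, the identification you defer as the "main obstacle" --- that $\mate\alpha$ evaluated at $j$ is, under the pointwise colimit identifications, the map $r_*$ --- is exactly what \Cref{lemma:kan_mate} supplies (your $r$ is the functor called $f$ there, specialized to the comma square), so that step requires no new diagram chase.

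There is, however, a circularity you need to address. \Cref{lemma:kan_local}, which you invoke to identify $(\Lan{\pr[\cat J]}\Res{\pr[\cat I]}D)(j)$ and $(\Lan{f}D)(g(j))$ with colimits over the respective comma categories, is proved in this paper precisely as the special case of the present lemma in which the category $\cat J$ of the statement is the terminal category. As written, your reduction therefore assumes an instance of what is being proved. The argument is repairable: the pointwise colimit formula for left Kan extensions admits an independent proof (it is essentially how the adjoint $\Lan{f}$ is produced in \cite[Corollary 12.3.10]{RV}, cf.\ \Cref{rem:functorial_ptw_ext}), and granting that base case your cofinality argument does yield the general comma square. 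But you must either supply that independent input explicitly or restructure the deduction; citing \Cref{lemma:kan_local} as it stands is not admissible here.
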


\begin{proof}
  This follows from the fact that the second diagram satisfies the Beck-Chevalley condition by \cite[Lemma 12.3.11]{RV} as the first one is a so called exact square by \cite[Lemma 9.2.6]{RV} (again using that the nerve is a cosmological functor by \cite[Example 1.3.5]{RV} and thus preserves comma categories by \cite[Proposition 10.1.2]{RV}).
\end{proof}

\begin{lemma} \label{lemma:kan_local}
  Let $\cat I$ and $\cat J$ be categories, $f \colon \cat I \to \cat J$ a functor, $\infcat C$ a left $f$-extensible $\infty$-category, and $D \colon \cat I \to \infcat C$ a diagram.
  Then, for any $j \in \cat J$, the mate $\vartheta \colon \colim{f \slice j} \Res{\pr} \to \Res{j} \Lan{f}$ of the natural transformation in the right diagram (which is the image of the left diagram under $\Fun{\blank}{\infcat C}$)
  \[
  \begin{tikzcd}[sep = 35]
  f \slice j \rar{\const} \dar[swap]{\pr} & \termCat \dar{j} & & \Fun{\cat J}{\infcat C} \rar{\Res{f}} \dar[swap]{\Res{j}} & \Fun{\cat I}{\infcat C} \dar{\Res{\pr}} \dlar[Rightarrow, shorten < = 27, shorten > = 27, swap]{\rho_j} \\
  \cat I \rar{f} \urar[Rightarrow, shorten < = 17, shorten > = 17]{\tilde \rho_j} & \cat J & & \infcat C \rar{\Diag} & \Fun{f \slice j}{\infcat C}
  \end{tikzcd}
  \]
  is an equivalence, where $\tilde \rho_j$ is, at $(i, k \colon f(i) \to j)$, just given by $k$.
  Furthermore it is natural in $j$, in the sense that, for a morphism $\kappa \colon j \to j'$ in $\cat J$, the diagram
  \[
  \begin{tikzcd}[column sep = 35]
  \colim[s]{f \slice j} \Res{\pr[f \slice j]} \rar{(f \slice \kappa)_*} \dar{\eq}[swap]{\vartheta} & \colim[s]{f \slice j'} \Res{\pr[f \slice j']} \dar{\vartheta}[swap]{\eq} \\
  \Res{j} \Lan{f} \rar & \Res{j'} \Lan{f}
  \end{tikzcd}
  \]
  commutes up to homotopy.
\end{lemma}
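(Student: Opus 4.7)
My plan is to prove the equivalence first and then the naturality.

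The equivalence $\vartheta$ is an instance of \Cref{lemma:comma_square_mate}: taking $g = \const[j] \colon \termCat \to \cat J$ there, the comma category $f \slice g$ identifies with $f \slice j$, the projection $\pr[\cat J] \colon f \slice j \to \termCat$ is the constant functor so $\Lan{\pr[\cat J]} = \colim{f \slice j}$, $\Res{g} = \Res{j}$, and the $2$-cell $\alpha$ in that lemma specializes to $\tilde \rho_j$. Thus the mate $\mate \alpha$ is our $\vartheta$, and it is an equivalence by that lemma. The only hypothesis to check is that $\infcat C$ admits colimits indexed by $f \slice j$, which is precisely what left $f$-extensibility says.

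For the naturality in $j$, a morphism $\kappa \colon j \to j'$ in $\cat J$ is the same as a $2$-cell $\kappa \colon \const[j] \Rightarrow \const[j']$ between functors $\termCat \to \cat J$, and postcomposition with $\kappa$ defines a functor $f \slice \kappa \colon f \slice j \to f \slice j'$, $(i, h) \mapsto (i, \kappa \circ h)$, strictly intertwining the projections both to $\cat I$ and to $\termCat$. I plan to show that the whiskering of $\tilde \rho_{j'}$ on the right by $f \slice \kappa$ agrees with the vertical composite of $\tilde \rho_j$ followed by the whiskering of $\kappa$ on the right by $\const$; both are $2$-cells $f \circ \pr[f \slice j] \Rightarrow j' \circ \const$ between functors $f \slice j \to \cat J$, and a pointwise check on an object $(i, h \colon f(i) \to j)$ of $f \slice j$ shows that each yields the morphism $\kappa \circ h \colon f(i) \to j'$.

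Applying $\Fun{\blank}{\infcat C}$ and taking mates with respect to $\Lan{f} \dashv \Res{f}$ and $\colim \dashv \Diag$ (together with the trivial adjunctions $\id \dashv \id$ where needed), the pasting laws of \Cref{section:mates} identify the mate of the first $2$-cell with $\vartheta_{j'} \circ (f \slice \kappa)_*$, using that $(f \slice \kappa)_*$ is by its very definition the mate of the identity $2$-cell witnessing $\const \circ (f \slice \kappa) = \const$. Similarly, the mate of the second $2$-cell is $(\Res{\kappa} \Lan{f}) \circ \vartheta_j$, where $\Res{\kappa} \colon \Res{j} \Rightarrow \Res{j'}$ is the whiskering of $\kappa$ by $\Fun{\blank}{\infcat C}$; this arises as the mate of $\kappa$ viewed as a $2$-cell in a square whose other three sides are identities, so that the mate is just $\Res{\kappa}$ itself. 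Equality of the two original $2$-cells therefore yields the desired homotopy-commutative square. The main technical obstacle will be bookkeeping: arranging the stacking of squares so that the pasting laws apply directly and the relevant adjunctions line up on the boundaries of the pasted diagrams, and in particular verifying that the ``trivial mate'' interpretation of $\Res{\kappa} \Lan{f}$ as arising from a square of identities is valid. Once this setup is in place, the pasting laws take care of the rest.
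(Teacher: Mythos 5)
Your proposal is correct and follows essentially the same route as the paper: the equivalence is obtained as the special case $g = \const[j]$ of \Cref{lemma:comma_square_mate}, and the naturality is proved by checking pointwise that the two pastes of $2$-cells (one involving $\tilde\rho_{j'}$ whiskered by $f \slice \kappa$, the other $\tilde\rho_j$ followed by $\kappa$) agree, then applying $\Fun{\blank}{\infcat C}$ and the pasting laws for mates, identifying the mates of the trivial cells with $(f \slice \kappa)_*$ and evaluation at $\kappa$ exactly as the paper does.
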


\begin{proof}
  That $\vartheta$ is an equivalence is a special case of \Cref{lemma:comma_square_mate}.
  For the naturality in $j$ we consider, for a map $\kappa \colon j \to j'$, the two diagrams
  \[
  \begin{tikzcd}[sep = 25]
  f \slice j \rar{f \slice \kappa} \dar & f \slice j' \dar \rar{\pr} \dlar[Rightarrow, shorten < = 13, shorten > = 13, swap]{\id} & \cat I \dar{f} \dlar[Rightarrow, shorten < = 13, shorten > = 13, swap]{\tilde \rho_{j'}} & & f \slice j \rar \dar[swap]{\pr} & \termCat \dar{j} \rar & \termCat \dar{j'} \\
  \termCat \rar & \termCat \rar{j'} & \cat J & & \cat I \rar{f} \urar[Rightarrow, shorten < = 13, shorten > = 13]{\tilde \rho_j} & \cat J \rar{\id} \urar[Rightarrow, shorten < = 10, shorten > = 10]{\kappa} & \cat J
  \end{tikzcd}
  \]
  for which we note that ${\id} \paste \tilde \rho_{j'} = \kappa \paste \tilde \rho_j$ by definition of the involved maps.
  Hence, after applying $\Fun{\blank}{\infcat C}$, we obtain, by the pasting laws for mates, that $\mate{(\rho_{j'})} \paste \mate \id = \mate {({\id} \paste \rho_{j'})} = \mate {(\kappa \paste \rho_j)} = \mate \kappa \paste \mate {(\rho_j)}$ (in the homotopy 2-category of $\infty$-categories).
  This is the statement we wanted to show since $\mate \kappa$ is the map $\Res{j} \to \Res{j'}$ given by evaluation at $\kappa$ and $\mate \id$ is the map on colimits induced by $f \slice \kappa$.
\end{proof}

\begin{lemma} \label{lemma:kan_counit}
  Let $f \colon \cat I \to \cat J$ be a functor between categories, $\infcat C$ a left $f$-extensible $\infty$-category, and $D \colon \cat J \to \infcat C$ a diagram.
  Let $j \in \cat J$ and note that the projection $\pr \colon f \slice j \to \cat I$ can be extended over the inclusion $\inc \colon f \slice j \to \cocone{(f \slice j)}$ to a map $\pr' \colon \cocone{(f \slice j)} \to \cat I$ by sending $\coconept$ to $j$ and the unique map $(i, k \colon f(i) \to j) \to \coconept$ to the map $k$.
  Then the following diagram commutes up to homotopy:
  \[
  \begin{tikzcd}
    \colim[s]{f \slice j} \Res{\pr} \Res{f} \rar{\eq}[swap]{\vartheta} \dar[equal] & \Res{j} \Lan{f} \Res{f} \ar{dd}{\epsilon_f} \\[-12]
    \colim[s]{f \slice j} \Res{\inc} \Res{\pr'} \Res{f} \dar & \\
    \Res{\coconept} \Res{\pr'} \Res{f} \rar[equal] & \Res{j}
  \end{tikzcd}
  \]
  where $\epsilon_f$ is the counit of the adjunction $\Ext[f] \dashv \Res{f}$, $\vartheta$ is as in \Cref{lemma:kan_local}, and the left vertical morphism is the canonical map out of the colimit.
\end{lemma}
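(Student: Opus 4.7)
The plan is to reduce commutativity to \Cref{lemma:mate_and_units}, after identifying the bottom-left composite of the diagram as a whiskered mate. By \Cref{lemma:kan_local}, $\vartheta$ is the mate of the natural transformation $\rho_j \colon \Res{\pr}\Res{f} \Rightarrow \Diag\, \Res{j}$ with respect to the adjunctions $\Lan{f} \dashv \Res{f}$ and $\colim{f \slice j} \dashv \Diag$. Applying the second commutative square of \Cref{lemma:mate_and_units} to this mate yields immediately
\[
  \Res{j}\, \epsilon_f \circ \vartheta\, \Res{f} \;=\; \hat\epsilon\, \Res{j} \circ \colim{f \slice j}\, \rho_j,
\]
where $\epsilon_f$ is the counit of $\Lan{f} \dashv \Res{f}$ and $\hat\epsilon$ is the counit of $\colim{f \slice j} \dashv \Diag$. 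The left-hand side is precisely the top-right composite of the diagram in the statement.

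It therefore suffices to show that the bottom-left composite also equals the right-hand side above. Unwinding \Cref{def:can_map}, this composite is the whiskering $\mate\xi\, \Res{\pr'}$. The explicit mate formula (in which the unit factor drops out, since the upper horizontal arrow in the diagram of \Cref{def:can_map} is $\Res{\id}$) gives $\mate\xi = \hat\epsilon\, \Res{\coconept} \circ \colim{f \slice j}\, \Res{\xi}$, and hence, using $\Res{\coconept}\Res{\pr'} = \Res{j}$,
\[
  \mate\xi\, \Res{\pr'} \;=\; \hat\epsilon\, \Res{j} \circ \colim{f \slice j}(\Res{\xi}\, \Res{\pr'}).
\]
We are thus reduced to verifying the identity $\rho_j = \Res{\xi}\, \Res{\pr'}$ of natural transformations $\Res{\pr}\Res{f} \Rightarrow \Diag\, \Res{j}$.

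This last identity reflects the fact that $\tilde \rho_j$ factors in the $2$-category of categories as the left whiskering of $\xi$ by $\pr'$: its component at $(i, k \colon f(i) \to j)$ is $\pr'$ applied to the unique morphism $(i, k) \to \coconept$, which by the definition of $\pr'$ equals $k$, the component of $\tilde \rho_j$ at $(i,k)$. Applying the contravariant $2$-functor $\Fun{-}{\infcat C}$, which turns left-whiskering by $\pr'$ into right-whiskering by $\Res{\pr'}$, then gives $\rho_j = \Res{\xi}\, \Res{\pr'}$ and completes the proof. The only (minor) obstacle is the bookkeeping around whiskering under this contravariant $2$-functor; once that identification is made, commutativity is a direct application of \Cref{lemma:mate_and_units}.
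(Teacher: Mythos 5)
Your proof is correct, and it rests on the same two ingredients as the paper's: the calculus of mates via \Cref{lemma:mate_and_units}, and the pointwise identity expressing $\tilde\rho_j$ as the whiskering of $\xi$ by the extension $\pr'$. The organization is different, though. The paper writes the square defining $\tilde\rho_j$ as a horizontal paste of the square from \Cref{def:can_map} with a trivial square, invokes the pasting law for mates to factor $\vartheta$ accordingly, and then cancels the residual unit against $\epsilon_f$ using the \emph{first} square of \Cref{lemma:mate_and_units} (a triangle identity in disguise). You instead apply the \emph{second} square of \Cref{lemma:mate_and_units} directly to $\vartheta = \mate{(\rho_j)}$ to rewrite the top-right composite as $\hat\epsilon\,\Res{j} \circ \colim{f \slice j}\rho_j$, unwind the explicit mate formula for $\mate\xi$ (where the unit factor is trivial because $\Lan{\id} = \id$) to put the bottom-left composite in the same form, and reduce everything to $\rho_j = \xi\,\Res{\pr'}\Res{f}$. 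Your route avoids the pasting law entirely at the cost of an explicit whiskering computation; the paper's route hides that computation inside the observation that the two squares paste to the square for $\tilde\rho_j$. Both are sound, and the verification of the whiskering identity you defer to the end — that $\pr'$ sends the unique morphism $(i,k) \to \coconept$ to $k$, so that post-whiskering $\xi$ recovers $\tilde\rho_j$ componentwise — is exactly the content the paper also relies on when it asserts that the pastes agree.
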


\begin{proof}
  Consider the diagram
  \[
  \begin{tikzcd}[sep = 30]
    f \slice j \rar{\inc} \dar & \cocone{(f \slice j)} \rar{\pr'} \dar[swap]{\id} \dlar[Rightarrow, shorten < = 15, shorten > = 15, swap]{\xi} & \cat I \dar{f} \dlar[Rightarrow, shorten < = 12, shorten > = 12, swap]{\id} \\
    \termCat \rar{\coconept} & \cocone{(f \slice j)} \rar{f \circ \pr'} & \cat J
  \end{tikzcd}
  \]
  and note that applying $\Fun{\blank}{\infcat C}$ and taking mates yields, by the pasting law for mates, that the upper part of the diagram
  \[
  \begin{tikzcd}
    \colim[s]{f \slice j} \Res{\inc} \Res{\pr'} \Res{f} \rar[equal] \dar & \colim[s]{f \slice j} \Res{\pr} \Res{f} \drar[bend left = 12]{\vartheta} & \\
    \Res{\coconept} \Lan{\id} \Res{\pr'} \Res{f} \rar \dar{\id} & \Res{\coconept} \Res{f \circ \pr'} \Lan{f} \Res{f} \rar[equal] \dar[swap]{\epsilon_f} & \Res{j} \Lan{f} \Res{f} \dar{\epsilon_f} \\
    \Res{\coconept} \Lan{\id} \Res{\id} \Res{f \circ \pr'} \rar[equal] & \Res{\coconept} \Res{f \circ \pr'} \rar[equal] & \Res{j}
  \end{tikzcd}
  \]
  commutes up to homotopy.
  Since the lower left square in the above diagram commutes up to homotopy by \Cref{lemma:mate_and_units} this finishes the proof.
\end{proof}

\begin{lemma} \label{lemma:kan_mate}
  Let the diagram in the left be a diagram of categories, functors between them, and a natural transformation and the one in the right its image under $\Fun{\blank}{\infcat C}$
  \[
  \begin{tikzcd}[sep = 30]
    \cat I \rar{a} \dar[swap]{b} & \cat J \dar{d} \dlar[Rightarrow, shorten < = 13, shorten > = 13, swap]{\gamma} & & \Fun{\cat L}{\infcat C} \rar{\Res{d}} \dar[swap]{\Res{c}} & \Fun{\cat J}{\infcat C} \dar{\Res{a}} \dlar[Rightarrow, shorten < = 19, shorten > = 19, swap]{\gamma} \\
    \cat K \rar{c} & \cat L & & \Fun{\cat K}{\infcat C} \rar{\Res{b}} & \Fun{\cat I}{\infcat C}
  \end{tikzcd}
  \]
  where $\infcat C$ is a left $b$-extensible and left $d$-extensible $\infty$-category.
  Then, for any $k \in \cat K$, the following diagram commutes up to homotopy:
  \[
  \begin{tikzcd}
    \colim[s]{b \slice k} \Res{\pr[b \slice k]} \Res{a} \rar{f_*} \dar{\eq}[swap]{\vartheta} & \colim[s]{d \slice c(k)} \Res{\pr[d \slice c(k)]} \dar{\vartheta}[swap]{\eq} \\
    \Res{k} \Ext[b] \Res{a} \rar{\mate \gamma} & \Res{k} \Res{c} \Ext[d]
  \end{tikzcd}
  \]
  where $\mate \gamma$ is the mate of $\gamma$, the maps denoted $\vartheta$ are as in \Cref{lemma:kan_local}, and $f$ is the functor
  \[ b \slice k \longto d \slice c(k), \quad \left( i,\; b(i) \xto{g} k \right) \longmapsto \left( a(i),\; d(a(i)) \xto{\gamma} c(b(i)) \xto{c(g)} c(k) \right) \]
  acting on morphisms via $a$.
\end{lemma}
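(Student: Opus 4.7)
The plan is to apply Pasting Law II of \Cref{section:mates} to a single 2-cell pasting in the 2-category of categories, decomposed in two different ways. First I would consider the pasting
\[
\begin{tikzcd}[sep = 30]
b \slice k \rar{\pr} \dar[swap]{\const} & \cat I \rar{a} \dar{b} & \cat J \dar{d} \\
\termCat \rar{k} \urar[Rightarrow, swap]{\tilde\rho_k} & \cat K \rar{c} \urar[Rightarrow, swap]{\gamma} & \cat L
\end{tikzcd}
\]
whose total 2-cell evaluates at $(i,g) \in b \slice k$ to the composition $c(g) \circ \gamma_i \colon d(a(i)) \to c(k)$. Since $\tilde\rho_{c(k)}$ evaluated at $f(i,g) = (a(i), c(g) \circ \gamma_i)$ is precisely the same morphism $c(g) \circ \gamma_i$, the identical total 2-cell arises from the alternative decomposition
\[
\begin{tikzcd}[sep = 30]
b \slice k \rar{f} \dar[swap]{\const} & d \slice c(k) \rar{\pr} \dar{\const} & \cat J \dar{d} \\
\termCat \rar{\id} \urar[Rightarrow, swap]{\id} & \termCat \rar{c(k)} \urar[Rightarrow, swap]{\tilde\rho_{c(k)}} & \cat L
\end{tikzcd}
\]
whose left square literally commutes.

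Next I would apply $\Fun{\blank}{\infcat C}$ and invoke Pasting Law II using the vertical adjunctions $\Lan{d} \dashv \Res{d}$, $\Lan{b} \dashv \Res{b}$, and $\colim_{b \slice k} \dashv \Res{\const}$ for the first decomposition, and $\Lan{d} \dashv \Res{d}$, $\colim_{d \slice c(k)} \dashv \Res{\const}$, and $\colim_{b \slice k} \dashv \Res{\const}$ for the second. Since both realizations present the same total 2-cell, the pasting law forces the two resulting pastes of mates to coincide as natural transformations.

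Finally I would identify each paste with one leg of the square in the statement. In the first decomposition, the mate of $\tilde\rho_k$ is $\vartheta$ (by the definition given in \Cref{lemma:kan_local}) and the mate of $\gamma$ is $\mate\gamma$; after appropriate whiskering their paste is the ``down-then-right'' composite $(\Res{k} \circ \mate\gamma) \circ (\vartheta \circ \Res{a})$. In the second decomposition, the mate of $\tilde\rho_{c(k)}$ is again $\vartheta$, while the mate of the identity 2-cell in the left square is \emph{by definition} the induced map $f_* \colon \colim_{b \slice k} \Res{f} \to \colim_{d \slice c(k)}$ (this is exactly the construction recalled in the paper's definition of the induced map on colimits); using $\Res{\pr_{d \slice c(k)} \circ f} = \Res{a \circ \pr_{b \slice k}}$, their paste becomes the ``right-then-down'' composite $\vartheta \circ f_*$. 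Equating these two pastes yields the claimed commutativity. The main obstacle is not any single difficult step but the careful bookkeeping required to match the source/target conventions of the mates dictated by Pasting Law II with the shape of the square in the statement, and to verify the identification of the mate of the identity with $f_*$ (a direct unwinding of the definition of the induced map on colimits).
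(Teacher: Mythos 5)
Your proposal is correct and is essentially the paper's own proof: the paper considers exactly the same two pastings (the $\pr$-then-$a$ square pasted with $\gamma$, versus the commuting $f$-square pasted with $\tilde\rho_{c(k)}$), observes that their total 2-cells agree, and applies $\Fun{\blank}{\infcat C}$ and the pasting law for mates, identifying the mates of $\tilde\rho_k$, $\tilde\rho_{c(k)}$, $\gamma$, and the identity with $\vartheta$, $\vartheta$, $\mate\gamma$, and $f_*$ respectively. The only difference is that you spell out the pointwise verification and the identification of $f_*$ in more detail than the paper does.
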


\begin{proof}
  Consider the two diagrams
  \[
  \begin{tikzcd}[sep = 30]
    b \slice k \rar{\pr[b \slice k]} \dar & \cat I \dar{b} \dlar[Rightarrow, shorten < = 15, shorten > = 15, swap]{\tilde \rho_k} \rar{a} & \cat J \dlar[Rightarrow, shorten < = 12, shorten > = 12, swap]{\gamma} \dar{d} & & b \slice k \rar{f} \dar & d \slice c(k) \dar \dlar[Rightarrow, shorten < = 17, shorten > = 17, swap]{\id} \rar{\pr[d \slice c(k)]} &[10] \cat J \dlar[Rightarrow, shorten < = 20, shorten > = 20, swap]{\tilde \rho_{c(k)}} \dar{d} \\
    \termCat \rar{k} & \cat K \rar{c} & \cat L & & \termCat \rar & \termCat \rar{c(k)} & \cat L
  \end{tikzcd}
  \]
  where $\tilde \rho$ is as in \Cref{lemma:kan_local}.
  Note that it follows directly from the definitions that their pastes $\tilde \rho_k \paste \gamma$ and ${\id} \paste \tilde \rho_{c(k)}$ are the same.
  Applying $\Fun{\blank}{\infcat C}$ and using the pasting law for mates yields the desired result.
\end{proof}

\subsection{(Co)Limits}

\begin{lemma} \label{lemma:colim_is_kan_to_cocone}
  Let $\infcat C$ be an $\infty$-category that admits colimits indexed by a category $\cat I$.
  Then the mate $\colim{\cat I} \to \Res{\coconept} \Lan{\inc}$ of the diagram on the right (which is the image of the diagram on the left under $\Fun{\blank}{\infcat C}$)
  \[
  \begin{tikzcd}[sep = 35]
  \cat I \rar{\id} \dar & \cat I \dar{\inc} \dlar[Rightarrow, shorten < = 17, shorten > = 17, swap]{\alpha} & & \Fun{\cocone{\cat I}}{\infcat C} \rar{\Res{\inc}} \dar[swap]{\Res{\coconept}} & \Fun{\cat I}{\infcat C} \dar{\id} \dlar[Rightarrow, shorten < = 27, shorten > = 27, swap]{\alpha} \\
  \termCat \rar{\coconept} & \cocone{\cat I} & & \infcat C \rar{\Diag} & \Fun{\cat I}{\infcat C}
  \end{tikzcd}
  \]
  is an equivalence.
\end{lemma}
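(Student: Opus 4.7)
The plan is to reduce this directly to \Cref{lemma:kan_local} by exploiting the fact that $\coconept$ is terminal in $\cocone{\cat I}$. Specifically, I would apply the lemma with $f = \inc \colon \cat I \to \cocone{\cat I}$ and $j = \coconept$, yielding an equivalence $\vartheta \colon \colim{\inc \slice \coconept} \Res{\pr} \xlongto{\eq} \Res{\coconept} \Lan{\inc}$, and then identify this with the mate in the statement.

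The crucial observation is that the comma category $\inc \slice \coconept$ is canonically isomorphic to $\cat I$: since $\coconept$ is terminal in $\cocone{\cat I}$, each object $i \in \cat I$ admits a unique morphism $\inc(i) \to \coconept$, so the forgetful functor $\pr \colon \inc \slice \coconept \to \cat I$ is bijective on objects. It is also bijective on morphisms, because any triangle in $\inc \slice \coconept$ commutes automatically (both sides factor through the unique map to $\coconept$). So first I would make this identification $\inc \slice \coconept \iso \cat I$ explicit, noting that under it $\Res{\pr}$ becomes the identity, $\colim{\inc \slice \coconept}$ becomes $\colim{\cat I}$, and the natural transformation $\tilde \rho_{\coconept}$ from the setup of \Cref{lemma:kan_local} (which at $(i, k)$ is $k$) becomes precisely the natural transformation $\alpha$ appearing in the statement (the unique map $\inc(i) \to \coconept$).

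Having made these identifications, I would observe that the two diagrams of categories --- the one defining $\alpha$ here and the one defining $\tilde \rho_{\coconept}$ in \Cref{lemma:kan_local} --- coincide (modulo swapping which edge is horizontal versus vertical). Since the mate construction depends only on the underlying $2$-cell together with the two adjunctions $\colim{\cat I} \dashv \Diag$ and $\Lan{\inc} \dashv \Res{\inc}$, and both mates use the same two adjunctions, the mate $\colim{\cat I} \to \Res{\coconept} \Lan{\inc}$ in the present lemma coincides with the image of $\vartheta$ under the identification, and is therefore an equivalence.

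The main thing to be careful about is this last bookkeeping step: verifying that the two $2$-cells are genuinely the same after the identification, and that the two mate constructions use the same fixed adjunctions (in particular, that the identification $\inc \slice \coconept \iso \cat I$ transports the adjunction $\colim{\inc \slice \coconept} \dashv \Diag$ to $\colim{\cat I} \dashv \Diag$). Once this is done the conclusion is immediate, and no further calculation is needed.
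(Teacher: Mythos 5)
Your proposal is correct and is exactly the paper's argument: the paper also deduces the lemma as a special case of \Cref{lemma:kan_local} via the isomorphism ${\inc} \slice \coconept \iso \cat I$ over $\cocone{\cat I}$. You have simply spelled out the bookkeeping (identifying the $2$-cells and the fixed adjunctions) that the paper leaves implicit.
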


\begin{proof}
  This is a special case of \Cref{lemma:kan_local} since there is an isomorphism ${\inc} \slice \coconept \iso \cat I$ over $\cocone{\cat I}$.
\end{proof}

\begin{lemma} \label{lemma:colim_of_kan}
  Let $f \colon \cat I \to \cat J$ be a fully faithful functor between categories and $\infcat C$ a left $f$-extensible $\infty$-category that admits colimits indexed both by $\cat I$ and by $\cat J$.
  Then the map
  \[ f_* \Lan f \colon \colim{\cat I} \Res{f} \Lan f \to \colim{\cat J} \Lan f \]
  of functors $\Fun{\cat I}{\infcat C} \to \infcat C$ is an equivalence.
\end{lemma}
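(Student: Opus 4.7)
The plan is to explicitly identify $f_* \Lan f$, precomposed with the equivalence $\colim_{\cat I}(\nu)$ induced by the unit $\nu \colon \id \to \Res f \Lan f$ of the adjunction $\Lan f \dashv \Res f$, with the canonical equivalence coming from composition of left Kan extensions. Since $f$ is fully faithful, \Cref{lemma:fully_faithful_kan_unit} gives that $\nu$ is an equivalence, so $\colim_\cat I(\nu) \colon \colim_\cat I \to \colim_\cat I \Res f \Lan f$ is also an equivalence; hence it suffices to show that the composition $f_* \Lan f \circ \colim_\cat I(\nu) \colon \colim_\cat I \to \colim_\cat J \Lan f$ is an equivalence.

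Since $\const_\cat I = \const_\cat J \circ f$, the restatable lemma on composition of left Kan extensions at the beginning of \Cref{section:basics} tells us that $\colim_\cat I = \Lan{\const_\cat I}$ and $\colim_\cat J \Lan f = \Lan{\const_\cat J} \Lan f$ are both left adjoints to the common functor $\Res f \Diag_\cat J = \Diag_\cat I$. Uniqueness of adjoints then provides a canonical equivalence $\phi \colon \colim_\cat I \xrightarrow{\eq} \colim_\cat J \Lan f$. Using the standard description of the unit of a composite adjunction, one sees that $\phi$ can be written explicitly as
\[ \phi \;=\; \epsilon_\cat I(\colim_\cat J \Lan f) \circ \colim_\cat I(\Res f \,\eta_\cat J(\Lan f)) \circ \colim_\cat I(\nu), \]
where $\epsilon_\cat I$ denotes the counit of $\colim_\cat I \dashv \Diag_\cat I$ and $\eta_\cat J$ the unit of $\colim_\cat J \dashv \Diag_\cat J$.

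On the other hand, unwinding the definition of the mate $f_*$ of the identity $2$-cell in the commutative square (with vertical adjunctions $\colim \dashv \Diag$)
\[
\begin{tikzcd}[sep=30]
\infcat C \rar{\Diag_\cat J} \dar[swap]{\id} & \Fun{\cat J}{\infcat C} \dar{\Res f} \dlar[Rightarrow, shorten < = 15, shorten > = 15, swap]{\id} \\
\infcat C \rar{\Diag_\cat I} & \Fun{\cat I}{\infcat C}
\end{tikzcd}
\]
yields $f_* = \epsilon_\cat I(\colim_\cat J) \circ \colim_\cat I(\Res f\,\eta_\cat J)$, and hence
\[ f_* \Lan f \;=\; \epsilon_\cat I(\colim_\cat J \Lan f) \circ \colim_\cat I(\Res f\,\eta_\cat J(\Lan f)). \]
Comparing this with the expression for $\phi$ above, we see directly that $\phi = f_* \Lan f \circ \colim_\cat I(\nu)$. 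It follows that $f_* \Lan f = \phi \circ (\colim_\cat I(\nu))^{-1}$ is a composition of equivalences, hence itself an equivalence.

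There is no serious obstacle in this proof: once one writes out the explicit formulas for $\phi$ (via the composite adjunction) and for $f_*$ (via the mate construction), their compatibility is a direct comparison. The main bookkeeping task is keeping straight the various units and counits of the two systems of adjunctions ($\Lan f \dashv \Res f$ on one side, and $\colim \dashv \Diag$ on the other).
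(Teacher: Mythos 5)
Your proof is correct and is essentially the paper's argument: the paper also precomposes $f_* \Lan{f}$ with $\colim{\cat I}$ applied to the unit of $\Lan{f} \dashv \Res{f}$ (an equivalence by full faithfulness) and identifies the composite with the mate of the outer identity square on $\Diag[\cat I]$, i.e.\ the canonical comparison of the two left adjoints of $\Diag[\cat I]$. The only difference is presentational: you unwind the pasting law for mates into explicit unit--counit formulas, whereas the paper invokes the pasting law and \Cref{lemma:mate_equiv} directly.
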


\begin{proof}
  Taking mates of the two natural transformations in the diagram
  \[
  \begin{tikzcd}[sep = 35]
    \infcat C \rar{\Diag[\cat J]} \dar[swap]{\id} & \Fun{\cat J}{\infcat C} \rar{\Res{f}} \dar{\Res{f}} \dlar[Rightarrow, shorten < = 22, shorten > = 22, swap]{\id} & \Fun{\cat I}{\infcat C} \dar{\id} \dlar[Rightarrow, shorten < = 22, shorten > = 22, swap]{\id} \\
    \infcat C \rar{\Diag[\cat I]} & \Fun{\cat I}{\infcat C} \rar{\id} & \Fun{\cat I}{\infcat C}
  \end{tikzcd}
  \]
  and using the pasting law for mates yields that the composition
  \[ \colim{\cat I} \xlongto{\eta} \colim{\cat I} \Res{f} \Lan{f} \xlongto{f_*} \colim{\cat J} \Lan{f} \]
  is homotopic to $(\id[\cat I])_*$.
  Noting that $\eta$ is an equivalence since $f$ is fully faithful, this implies the desired statement.
\end{proof}

\lemmaCoconeComparison*

\begin{proof}
  The first statement follows from \cite[Proposition F.2.1 and Corollary 12.2.10]{RV}.
  The second from \cite[Proposition F.2.1, Lemma 2.3.6, and Lemma 2.3.7]{RV}.
\end{proof}

\lemmaHomotopyTerminalStuff*

\begin{proof}
  The first statement is clear (one can explicitly construct the contraction).
  The latter three statements follow, in order, from \cite[Theorem 4.1.3.1]{LurHTT}, \cite[Proposition 4.1.5]{RV}, and \cite[Proposition 4.1.1.3]{LurHTT}.
\end{proof}

\begin{lemma} \label{lemma:htpy_terminal_admitting}
  Let $f \colon \cat I \to \cat J$ be a homotopy terminal functor between categories and $\infcat C$ an $\infty$-category.
  Then $\infcat C$ admits colimits indexed by $\cat I$ if and only if it admits colimits indexed by $\cat J$.
\end{lemma}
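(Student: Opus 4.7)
The plan is to reduce the statement to the defining property of homotopy terminal maps, namely \cite[Proposition 4.1.1.8]{LurHTT}, which is exactly the characterization cited right after the definition of homotopy terminal in the paper. That proposition asserts: for any extension $\overline{q} \colon \cocone{\cat J} \to \infcat C$ of a diagram $q \colon \cat J \to \infcat C$, the diagram $\overline{q}$ is a colimit diagram if and only if $\overline{q} \circ \cocone{f}$ is one.

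The first step is to combine this with \Cref{lemma:cocone_comparison} (which rephrases admitting colimits in terms of diagrams extending to colimit diagrams) to establish the following pointwise statement: a diagram $q \colon \cat J \to \infcat C$ admits a colimit if and only if $q \circ f \colon \cat I \to \infcat C$ does. The nontrivial content is the ``reverse'' implication on the level of individual diagrams: given a colimit extension $\overline{p} \colon \cocone{\cat I} \to \infcat C$ of $q \circ f$, one constructs an extension $\overline{q} \colon \cocone{\cat J} \to \infcat C$ of $q$ with $\overline{q} \circ \cocone{f} \eq \overline{p}$ by exploiting the contractibility of the comma categories $j \slice f$ (provided by \Cref{lemma:homotopy_terminal_stuff}) to coherently fill in the cocone data $q(j) \to \overline{p}(\coconept)$. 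Applying \cite[Proposition 4.1.1.8]{LurHTT} to this $\overline{q}$ then shows it is itself a colimit diagram.

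From the pointwise statement the $(\Rightarrow)$ direction is immediate: every $q \colon \cat J \to \infcat C$ restricts along $f$ to $q \circ f \colon \cat I \to \infcat C$, which has a colimit by hypothesis, so $q$ does too.

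The main obstacle I anticipate is the $(\Leftarrow)$ direction. Given $q' \colon \cat I \to \infcat C$ we need to exhibit a colimit, but $q'$ is not a priori of the form $q \circ f$, so we cannot apply the pointwise statement directly. My approach would be to first produce a $\cat J$-indexed diagram $q''$ whose restriction along $f$ recovers $q'$; for instance via an appropriate application of the left Kan extension along $f$, or (more robustly) by constructing $q''$ directly from the cocone-space equivalences that cofinality provides. Once such a $q''$ is in hand, the hypothesis gives its colimit, which transfers back to a colimit of $q'$ via the pointwise statement. Making the construction of $q''$ work without presupposing colimits indexed by the comma categories $f \slice j$ is the technical heart of this direction.
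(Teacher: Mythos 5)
Your handling of the forward direction (``$\infcat C$ admits $\cat I$-indexed colimits $\Rightarrow$ it admits $\cat J$-indexed colimits'') is correct and is essentially the paper's argument: \cite[Proposition 4.1.1.8 (2)]{LurHTT} gives, for every $p \colon \cat J \to \infcat C$, an equivalence $\infcat{C}_{p/} \to \infcat{C}_{p f/}$, so $\infcat{C}_{p/}$ has an initial object whenever $\infcat{C}_{p f/}$ does, and \Cref{lemma:cocone_comparison} translates this into the existence of colimits. (For your ``pointwise statement'' you do not need to hand-assemble $\overline q$ from the contractibility of the comma categories; the slice equivalence already transfers initial objects in both directions, and making the coherent filling precise by hand would amount to reproving part of that proposition.)

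The genuine gap is exactly where you located it, in the $(\Leftarrow)$ direction, and it cannot be repaired along the lines you propose. Your plan requires every $q' \colon \cat I \to \infcat C$ to be equivalent to a restriction $q'' \circ f$, but restriction along a homotopy terminal functor is in general very far from essentially surjective. Take $\cat J = \termCat$ and $\cat I$ the pushout shape $a \leftarrow b \to c$: since $\cat I$ is weakly contractible, $f \colon \cat I \to \termCat$ is homotopy terminal by \Cref{lemma:homotopy_terminal_stuff}, yet only essentially constant $\cat I$-diagrams factor through $f$. Taking $\infcat C$ to be the nerve of the poset $\set{b < a,\ b < c}$ itself, $\infcat C$ admits colimits indexed by $\termCat$ (every $\infty$-category does) while the identity diagram $\cat I \to \infcat C$ has no colimit; so the reverse implication of the lemma is false in this generality, and no construction of $q''$ can rescue it. A left Kan extension along $f$ does not help either, since producing it already presupposes the $\cat I$-indexed colimits whose existence is in question. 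For what it is worth, the paper's own one-line proof also only yields the forward implication, and that is the only direction ever invoked elsewhere (always with the source of the homotopy terminal functor equal to $\termCat$, or dually); you should prove that implication as you did and note that the converse requires additional hypotheses.
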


\begin{proof}
  This follows from \Cref{lemma:cocone_comparison} and \cite[Proposition 4.1.1.8 (2)]{LurHTT}.
\end{proof}

\lemmaHomotopyTerminal*

\begin{proof}
  By \Cref{lemma:kan_mate}, the mate $\mate \id \colon \Lan{\inc[\cat I]} \Res{f} \to \Res{\cocone f} \Lan{\inc[\cat J]}$ of the natural transformation in the diagram on the right (which is the image of the diagram on the left under $\Fun{\blank}{\infcat C}$)
  \[
  \begin{tikzcd}[sep = 30]
  \cat I \rar{f} \dar[swap]{\inc} & \cat J \dar{\inc} \dlar[Rightarrow, shorten < = 13, shorten > = 13, swap]{\id} & & \Fun{\cocone{\cat J}}{\infcat C} \rar{\Res{\inc}} \dar[swap]{\Res{\cocone f}} & \Fun{\cat J}{\infcat C} \dar{\Res{f}} \dlar[Rightarrow, shorten < = 19, shorten > = 19, swap]{\id} \\
  \cocone{\cat I} \rar{\cocone f} & \cocone{\cat J} & & \Fun{\cocone{\cat I}}{\infcat C} \rar{\Res{\inc}} & \Fun{\cat I}{\infcat C}
  \end{tikzcd}
  \]
  is given, at the cocone point, by $f_*$.
  Hence it is enough to prove that $\mate \id$ is an equivalence.
  This mate is given by the composition
  \[
  \begin{tikzcd}[row sep = 7]
  \Lan{\inc[\cat I]} \Res{f} \rar{\eta} & \Lan{\inc[\cat I]} \Res{f} \Res{\inc[\cat J]} \Lan{\inc[\cat J]} \dar[equal] & \\
  & \Lan{\inc[\cat I]} \Res{\inc[\cat I]} \Res{\cocone f} \Lan{\inc[\cat J]} \rar{\epsilon} & \Res{\cocone f} \Lan{\inc[\cat J]}
  \end{tikzcd}
  \]
  of the unit $\eta$ of the adjunction $\Lan{\inc[\cat J]} \dashv \Res{\inc[\cat J]}$ and the counit $\epsilon$ of the adjunction $\Lan{\inc[\cat I]} \dashv \Res{\inc[\cat I]}$.
  Since $\inc[\cat J]$ is fully faithful, the map $\eta$ is an equivalence.
  So we only need to show that $\epsilon \circ \Res{\cocone f} \circ \Lan{\inc[\cat J]}$ is an equivalence.
  Let $D \colon \cat I \to \infcat C$ be a diagram.
  By \Cref{lemma:cocone_comparison} and assumption the diagram $(\Res{\cocone f} \circ \Lan{\inc[\cat J]}) (D)$ is a colimit diagram.
  But $\epsilon$ applied to a colimit diagram is an equivalence by \Cref{lemma:extensions_are_cocartesian} and again \Cref{lemma:cocone_comparison}.
\end{proof}

\begin{lemma} \label{lemma:functors_and_map_from_colim}
  Let $f \colon \cat I \to \cat J$ be a functor between categories and $\infcat C$ an $\infty$-category that admits colimits indexed both by $\cat I$ and by $\cat J$.
  Then the following diagram in $\Fun{\Fun{\cocone{\cat J}}{\infcat C}}{\infcat C}$ commutes up to homotopy:
  \[
  \begin{tikzcd}
    \colim[s]{\cat I} \Res{f} \Res{\inc[\cat J]} \rar{f_*} \dar[equal] & \colim[s]{\cat J} \Res{\inc[\cat J]} \ar{dd} \\[-12]
    \colim[s]{\cat I} \Res{\inc[\cat I]} \Res{\cocone{f}} \dar & \\
    \Res{\coconept} \Res{\cocone{f}} \rar[equal] & \Res{\coconept}
  \end{tikzcd}
  \]
  where the vertical maps are the canonical maps out of the colimit.
\end{lemma}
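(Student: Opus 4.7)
The plan is to recognize both composites in the target diagram as mates of the same 2-cell in a single square, and then conclude via the pasting law for mates. The relevant square in $\mathrm{Cat}$ is
\[
\begin{tikzcd}
    \cat I \rar{\inc[\cat J] \circ f} \dar[swap]{\const} & \cocone{\cat J} \dar{\id} \dlar[Rightarrow, shorten < = 10, shorten > = 10, swap]{\zeta} \\
    \termCat \rar{\coconept} & \cocone{\cat J}
\end{tikzcd}
\]
where $\zeta$ at $i \in \cat I$ is the unique morphism $f(i) \to \coconept$. Applying $\Fun{\blank}{\infcat C}$ and taking the mate yields a natural transformation $\colim_{\cat I} \Res{\inc[\cat J] \circ f} \to \Res{\coconept}$ of functors $\Fun{\cocone{\cat J}}{\infcat C} \to \infcat C$.

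First I would observe that this square admits two horizontal factorizations in $\mathrm{Cat}$: (1) through $\cat J$, combining the square defining $f_*$ (2-cell $\id$) on the left with the square from \Cref{def:can_map} for $\cat J$ (2-cell $\xi_{\cat J}$) on the right; and (2) through $\cocone{\cat I}$, combining the square from \Cref{def:can_map} for $\cat I$ (2-cell $\xi_{\cat I}$) on the left with the strictly commutative square determined by $\cocone f$ (2-cell $\id$, valid since $\cocone f$ sends the cone point of $\cocone{\cat I}$ to that of $\cocone{\cat J}$) on the right. I would then verify that both composite 2-cells equal $\zeta$: in (1), $\xi_{\cat J}$ pre-whiskered with $f$ sends $i$ to the unique map $f(i) \to \coconept$; in (2), $\xi_{\cat I}$ post-whiskered with $\cocone f$ sends $i$ to $\cocone f$ applied to the unique map $i \to \coconept_{\cat I}$, which also yields the unique map $f(i) \to \coconept_{\cat J}$.

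Next I would apply $\Fun{\blank}{\infcat C}$ and invoke the pasting law for mates (Pasting Law I in \Cref{section:mates}, in its vertical form, which holds by the symmetry of the pasting law). The left adjoints needed are $\colim_{\cat I}$, $\colim_{\cat J}$, and identities; in particular, $\Lan{f}$ and $\Lan{\cocone f}$ are never required. By the pasting law, the mate of the composite 2-cell equals the paste of the individual mates: factorization (1) yields the composition along the top-right of the target diagram (using $\Res{f} \Res{\inc[\cat J]} = \Res{\inc[\cat J] \circ f}$), while factorization (2) yields the composition along the left-bottom (since the identity mate of the second square acts trivially). Both therefore equal the mate of $\zeta$, and hence coincide.

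The main obstacle is bookkeeping: applying $\Fun{\blank}{\infcat C}$ transposes the diagrams (compare the left and right pictures in \Cref{def:can_map}), so that horizontal pastes in $\mathrm{Cat}$ become vertical pastes in the homotopy 2-category of $\infty$-categories. Setting up the pasting law in this orientation and verifying that the resulting mates coincide with $f_*$ and the canonical maps as fixed in \Cref{def:can_map} requires care in tracking which arrow in $\mathrm{Cat}$ corresponds to which in the restricted picture.
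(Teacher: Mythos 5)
Your proof is correct and is essentially the paper's own argument: the two horizontal factorizations you describe (through $\cat J$ with 2-cells $\id$ and $\xi_{\cat J}$, and through $\cocone{\cat I}$ with 2-cells $\xi_{\cat I}$ and $\id$) are exactly the two pasted diagrams the paper writes down, whose pastes it likewise observes to agree. The conclusion via applying $\Fun{\blank}{\infcat C}$ and the pasting law for mates, with the identifications $\mate{(\id)} = f_*$ on one side and the identity on the other, is identical to the paper's.
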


\begin{proof}
  Consider the two diagrams
  \[
  \begin{tikzcd}[sep = 30]
  \cat I \rar{\inc} \dar & \cocone{\cat I} \dar[swap]{\id} \dlar[Rightarrow, shorten < = 15, shorten > = 15, swap]{\xi_{\cat I}} \rar{\cocone{f}} & \cocone{\cat J} \dlar[Rightarrow, shorten < = 13, shorten > = 13, swap]{\id[1]} \dar{\id} & & \cat I \rar{f} \dar & \cat J \dar \dlar[Rightarrow, shorten < = 15, shorten > = 15, swap]{\id[2]} \rar{\inc} & \cocone{\cat J} \dlar[Rightarrow, shorten < = 15, shorten > = 15, swap]{\xi_{\cat J}} \dar{\id} \\
  \termCat \rar{\coconept} & \cocone{\cat I} \rar{\cocone{f}} & \cocone{\cat J} & & \termCat \rar & \termCat \rar{\coconept} & \cocone{\cat J}
  \end{tikzcd}
  \]
  where $\xi_{\cat I}$ and $\xi_{\cat J}$ are as in \Cref{def:can_map}, and note that their pastes agree, i.e.\ $\xi_{\cat I} \paste \id[1] = {\id[2]} \paste \xi_{\cat J}$.
  Applying $\Fun{\blank}{\infcat C}$ and using the pasting law for mates yields $\mate{(\id[1])} \paste \mate{(\xi_{\cat I})} \eq \mate{(\xi_{\cat J})} \paste \mate{(\id[2])}$.
  This is what we wanted to show since $\mate{(\id[2])} = f_*$ and $\mate{(\id[1])}$ is the identity.
\end{proof}

\begin{lemma} \label{lemma:terminal_object_colimit}
  Let $\cat I$ be a category with a terminal object $\term$, $\infcat C$ an $\infty$-category, and $D \colon \cocone{\cat I} \to \infcat C$ a diagram such that $D$ applied to the unique morphism $\term \to \coconept$ is an equivalence.
  Then the canonical morphism $\colim{\cat I} \restrict{D}{\cat I} \to D(\coconept)$ is an equivalence as well.
\end{lemma}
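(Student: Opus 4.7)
The plan is to reduce to the trivial case where the indexing category is a point by using that the inclusion of a terminal object is homotopy terminal. Let $i_\term \colon \termCat \to \cat I$ denote the functor representing $\term$. Since $\term$ is terminal in $\cat I$, the unique functor $\cat I \to \termCat$ is left adjoint to $i_\term$, so $i_\term$ is right adjoint and hence homotopy terminal by \Cref{lemma:homotopy_terminal_stuff}. Consequently \Cref{lemma:homotopy_terminal} guarantees that $\infcat C$ admits colimits indexed by $\termCat$ (trivially) and that the induced map
\[ (i_\term)_* \colon \colim{\termCat} \Res{i_\term} \longto \colim{\cat I} \]
is an equivalence.

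Next I would apply \Cref{lemma:functors_and_map_from_colim} to $i_\term \colon \termCat \to \cat I$, yielding the homotopy commutative diagram
\[
\begin{tikzcd}
\colim{\termCat} \Res{i_\term} \Res{\inc[\cat I]} \rar{(i_\term)_*} \dar[equal] & \colim{\cat I} \Res{\inc[\cat I]} \ar{dd} \\[-12]
\colim{\termCat} \Res{\inc[\termCat]} \Res{\cocone{i_\term}} \dar & \\
\Res{\coconept} \Res{\cocone{i_\term}} \rar[equal] & \Res{\coconept}
\end{tikzcd}
\]
of functors $\Fun{\cocone{\cat I}}{\infcat C} \to \infcat C$, where the vertical arrows are the canonical maps out of the colimit. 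The top horizontal map is an equivalence by the previous paragraph.

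Now I would evaluate this diagram at $D$. The upper-right composition is exactly the canonical map $\colim{\cat I} \restrict{D}{\cat I} \to D(\coconept)$ that we want to show is an equivalence. The left-hand vertical composition, on the other hand, is the canonical map associated to the diagram $D \circ \cocone{i_\term} \colon \cocone{\termCat} \to \infcat C$; by \Cref{rem:one_point_colimit} this is simply $D$ applied to the unique morphism $\term \to \coconept$ in $\cocone{\cat I}$, which is an equivalence by hypothesis. By the 2-of-3 property, the canonical map $\colim{\cat I} \restrict{D}{\cat I} \to D(\coconept)$ is therefore an equivalence. The only place where care is needed is correctly identifying the left vertical composition with $D(\term \to \coconept)$; this is a straightforward unravelling of the definitions but is the only nontrivial bookkeeping in the argument.
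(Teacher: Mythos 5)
Your proof is correct and follows essentially the same route as the paper's: both apply \Cref{lemma:functors_and_map_from_colim} to the inclusion $\const[\term] \colon \termCat \to \cat I$ of the terminal object, use that this inclusion is homotopy terminal to conclude that the top horizontal map is an equivalence, and identify the left vertical composite with $D$ applied to $\term \to \coconept$ via \Cref{rem:one_point_colimit}. The only point the paper makes that you elide is invoking \Cref{lemma:htpy_terminal_admitting} at the outset to ensure $\infcat C$ admits colimits indexed by $\cat I$ at all (the statement does not assume this), but that follows immediately from the homotopy-terminality you already established.
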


\begin{proof}
  First note that $\infcat C$ admits colimits indexed by $\cat I$ by \Cref{lemma:htpy_terminal_admitting}.
  Applying \Cref{lemma:functors_and_map_from_colim} to the functor $\const[\term] \colon \termCat \to \cat I$, we obtain a diagram
  \[
  \begin{tikzcd}
    \colim[s]{\termCat} D(\term) \rar{\eq} \dar[swap]{\eq} & \colim[s]{\cat I} \restrict D {\cat I} \dar \\
    D(\coconept) \rar[equal] & D(\coconept)
  \end{tikzcd}
  \]
  where the top horizontal morphism is an equivalence since $\const[\term]$ is homotopy terminal, and the left vertical morphism is an equivalence by \Cref{rem:one_point_colimit}.
\end{proof}

\begin{lemma} \label{lemma:can_is_counit_of_kan}
  Let $\cat I$ be a category, $\infcat C$ an $\infty$-category that admits colimits indexed by $\cat I$, and $D \colon \cocone{\cat I} \to \infcat C$ a diagram.
  Then the canonical map $(\colim{\cat I} \Res{\inc})(D) \to \Res{\coconept} (D)$ is an equivalence if and only if the counit $\Lan{\inc} \Res{\inc} \to \id$ of the adjunction $\Lan{\inc} \dashv \Res{\inc}$ is an equivalence at $D$.
\end{lemma}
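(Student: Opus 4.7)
The plan is to factor the canonical map through a component of the counit $\epsilon \colon \Lan{\inc} \Res{\inc} \to \id$ using the equivalence of \Cref{lemma:colim_is_kan_to_cocone}, and then to reduce the pointwise equivalence condition on $\epsilon_D$ to a condition on its single component at $\coconept$ by exploiting that $\inc$ is fully faithful.

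First I would observe that, after applying $\Fun{\blank}{\infcat C}$, the natural transformation $\alpha$ appearing in \Cref{lemma:colim_is_kan_to_cocone} coincides with the $\xi$ of \Cref{def:can_map}: both are the natural transformation $\Res{\inc} \Rightarrow \Diag \circ \Res{\coconept}$ given at $i \in \cat I$ by the unique morphism $i \to \coconept$ in $\cocone{\cat I}$. Then, applying the right-hand identity of \Cref{lemma:mate_and_units} to the mate pair of \Cref{lemma:colim_is_kan_to_cocone} (with horizontal adjunctions $\Lan{\inc} \dashv \Res{\inc}$ and $\colim{\cat I} \dashv \Diag$) and comparing the resulting identity with the unfolded definition of $\mate \xi$ yields a homotopy
\[ \mate \xi \;\eq\; (\Res{\coconept} \epsilon) \circ (\mate \alpha \circ \Res{\inc}) \]
of natural transformations $\colim{\cat I} \Res{\inc} \to \Res{\coconept}$. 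Since $\mate \alpha$ is an equivalence by \Cref{lemma:colim_is_kan_to_cocone}, this immediately shows that the canonical map at $D$ is an equivalence if and only if the component of $\epsilon_D$ at $\coconept$ is an equivalence.

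Second, I would use the triangle identity $(\Res{\inc} \epsilon) \circ (\eta \Res{\inc}) = \id$ together with \Cref{lemma:fully_faithful_kan_unit} (which, since $\inc$ is fully faithful, tells us that $\eta \colon \id \to \Res{\inc} \Lan{\inc}$ is an equivalence) to conclude that $\Res{\inc} \epsilon$ is an equivalence. In particular, $\epsilon_D$ is automatically an equivalence at every $i \in \cat I$. Since a morphism in $\Fun{\cocone{\cat I}}{\infcat C}$ is an equivalence if and only if it is a pointwise equivalence, combining this with the first step shows that $\epsilon_D$ is an equivalence precisely when the canonical map at $D$ is.

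The main obstacle is the bookkeeping in the factorization: one has to carefully align the mate conventions of \Cref{def:can_map}, \Cref{lemma:colim_is_kan_to_cocone}, and \Cref{lemma:mate_and_units} and verify that the identities of \Cref{lemma:mate_and_units} actually produce the composition $(\Res{\coconept} \epsilon) \circ (\mate \alpha \circ \Res{\inc})$ in the form that matches the definition of $\mate \xi$. Once this is in place, the remainder of the argument is purely formal.
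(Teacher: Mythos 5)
Your proof is correct, but it takes a genuinely different route from the paper's. The paper proves this by applying \Cref{lemma:kan_mate} to the square whose mate is the counit, thereby computing $\epsilon_D$ pointwise at each $k \in \cocone{\cat I}$ as a map of colimits induced by the inclusion of comma categories ${\inc} \slice k \to {\id} \slice k$; for $k \neq \coconept$ this inclusion is an isomorphism, and for $k = \coconept$ it reduces to ${\inc}_* \colon \colim{\cat I} \Res{\inc} \to \colim{\cocone{\cat I}}$, which is then compared to the canonical map via \Cref{lemma:functors_and_map_from_colim} applied to an auxiliary diagram $D' \colon \cocone{(\cocone{\cat I})} \to \infcat C$ together with \Cref{lemma:terminal_object_colimit}. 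You instead observe that $\xi$ and the $\alpha$ of \Cref{lemma:colim_is_kan_to_cocone} are literally the same 2-morphism presented with two different horizontal decompositions, so that \Cref{lemma:mate_and_units} (right-hand square, with $a = \Res{\inc}$, $c = \Diag$) yields the factorization $\mate \xi \eq (\Res{\coconept} \epsilon) \circ (\mate \alpha \, \Res{\inc})$ with $\mate\alpha$ invertible, and you then kill the remaining components of $\epsilon_D$ via the triangle identity and \Cref{lemma:fully_faithful_kan_unit}. I checked the mate bookkeeping you flag as the main obstacle: unwinding the definition of $\mate\xi$ (whose top adjunction is $\Lan{\id} = \id$, so its unit contributes nothing) against the cited square of \Cref{lemma:mate_and_units} does produce exactly your identity, so the argument goes through. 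Both proofs ultimately rest on pointwise detection of equivalences in $\Fun{\cocone{\cat I}}{\infcat C}$; yours is shorter and avoids both the comma-category analysis and the auxiliary diagram $D'$, at the cost of the slightly delicate alignment of mate conventions, while the paper's is more mechanical and reuses machinery (\Cref{lemma:kan_mate}) that it needs elsewhere anyway.
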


\begin{proof}
  The mate of the natural transformation on the right (which is the image under $\Fun{\blank}{\infcat C}$ of the natural transformation on the left)
  \[
  \begin{tikzcd}[sep = 30]
  \cat I \rar{\inc} \dar[swap]{\inc} & \cocone{\cat I} \dar{\id} \dlar[Rightarrow, shorten < = 13, shorten > = 13, swap]{\id} & & \Fun{\cocone{\cat I}}{\infcat C} \rar{\id} \dar[swap]{\id} & \Fun{\cocone{\cat I}}{\infcat C} \dar{\Res{\inc}} \dlar[Rightarrow, shorten < = 19, shorten > = 19, swap]{\id} \\
  \cocone{\cat I} \rar{\id} & \cocone{\cat I} & & \Fun{\cocone{\cat I}}{\infcat C} \rar{\Res{\inc}} & \Fun{\cat I}{\infcat C}
  \end{tikzcd}
  \]
  is precisely the counit $\Lan{\inc} \Res{\inc} \to \id$.
  Hence, by \Cref{lemma:kan_mate}, it is an equivalence at $D$ if and only if, for all $k \in \cocone{\cat I}$, the map
  \[ \colim[s]{{\inc} \slice k} \Res{\pr[{\inc} \slice k]} \Res{\inc} = \colim[s]{{\inc} \slice k} \Res{f_k} \Res{\pr[{\id} \slice k]} \xlongto{(f_k)_*} \colim[s]{{\id} \slice k} \Res{\pr[{\id} \slice k]} \]
  is an equivalence at $D$, where $f_k \colon {\inc} \slice k \to {\id} \slice k$ is the canonical inclusion.
  When $k$ is not $\coconept$, then $f_k$ is an isomorphism and $(f_k)_*$ is an equivalence.
  When $k$ is $\coconept$, then $f_k$ is just (isomorphic to) $\inc$, and $\pr[{\id} \slice k]$ is an isomorphism.
  So it is enough to show that ${\inc}_* \colon \colim{\cat I} \Res{\inc} \to \colim{\cocone{\cat I}}$ is an equivalence at $D$ if and only if the canonical map $(\colim{\cat I} \Res{\inc})(D) \to \Res{\coconept} (D)$ is an equivalence.
  This follows from \Cref{lemma:functors_and_map_from_colim} by considering the diagram $D' \colon \cocone{(\cocone{\cat I})} \to \infcat C$ obtained from $D$ by pulling back along the functor $\cocone{(\cocone{\cat I})} \to \cocone{\cat I}$ that is the identity on $\cocone{\cat I}$ and sends the new cocone point to the old one (here we also use \Cref{lemma:terminal_object_colimit} to see that canonical map $(\colim{\cocone{\cat I}} \Res{\inc[\cocone{\cat I}]}) (D') \to \Res{\coconept} (D')$ is an equivalence).
\end{proof}

\begin{lemma} \label{lemma:limits_and_currying}
  Let $I$ and $J$ be simplicial sets, $\infcat C$ an $\infty$-category that admits colimits indexed by $J$, and $f \colon I \to \Fun{\cocone{J}}{\infcat C}$ a functor.
  Denote by $g \colon \cocone{J} \to \Fun{I}{\infcat C}$ the functor obtained from $f$ via currying.
  Then there is a homotopy commutative diagram of the form
  \begin{equation} \label[diagram]{diag:limits_and_currying}
  \begin{tikzcd}
    {\Res{\coconept}} \circ f \dar[equal] & {\colim[s]{J}} \circ {\Res{\inc}} \circ f \dar{\eq} \lar \\
    \Res{\coconept} g & \colim[s]{J} (\Res{\inc} g) \lar
  \end{tikzcd}
  \end{equation}
  where the horizontal morphisms are the canonical maps from the colimit.
\end{lemma}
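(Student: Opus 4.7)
The plan is to show that both horizontal maps in the displayed square arise as instances of the mate construction $\mate\xi$ from \Cref{def:can_map}, related by the currying isomorphism and \Cref{lemma:colimits_and_currying}. The left vertical equality holds tautologically, since currying sends $f$ to a functor $g$ with $g(\coconept)(i) = f(i)(\coconept)$, and hence $\Res{\coconept} \circ f = \Res{\coconept} g$ on the nose. The right vertical equivalence is exactly \Cref{lemma:colimits_and_currying} applied to the bifunctor $I \times J \to \infcat C$ underlying $\Res{\inc} \circ f$ (equivalently, $\Res{\inc} g$), yielding $\colim{J} \circ \Res{\inc} \circ f \eq \colim{J}(\Res{\inc} g)$ in $\Fun{I}{\infcat C}$. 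What remains is to show that these verticals intertwine the two horizontals.

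For the horizontal maps I would argue as follows. By \Cref{lemma:composing_with_adjunction}, post-composing with $\Fun{I}{\blank}$ turns the adjunction $\colim{J} \dashv \Diag$ on $\infcat C$ into an adjunction $(\colim{J} \circ\;) \dashv (\Diag \circ\;)$ between $\Fun{I}{\infcat C}$ and $\Fun{I}{\Fun{J}{\infcat C}}$, and the 2-cell $\xi$ of \Cref{def:can_map} post-composes to a 2-cell $\xi \circ (\blank)$ of the same formal shape. The mate of this post-composed 2-cell, evaluated at $f$, is precisely the top row. Now I would invoke the currying isomorphisms $\Fun{I}{\Fun{\cocone J}{\infcat C}} \iso \Fun{\cocone J}{\Fun{I}{\infcat C}}$ and $\Fun{I}{\Fun{J}{\infcat C}} \iso \Fun{J}{\Fun{I}{\infcat C}}$. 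Under these, the functors $(\Res{\coconept} \circ\;)$ and $(\Res{\inc} \circ\;)$ correspond to the unpostcomposed $\Res{\coconept}$ and $\Res{\inc}$, and (by \Cref{lemma:kan_and_currying} applied to $\const \colon J \to \termCat$) $(\colim{J} \circ\;)$ and $(\Diag \circ\;)$ correspond to $\colim{J}$ and $\Diag$ on $\Fun{I}{\infcat C}$. The post-composed 2-cell is carried to the 2-cell of \Cref{def:can_map} for the $\infty$-category $\Fun{I}{\infcat C}$, and the corresponding mates are therefore identified as well. Evaluating at $g$ gives the bottom row, establishing the desired commutativity.

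The main obstacle will be making the transport of adjunctions and 2-cells along the currying isomorphisms precise enough to invoke naturality of the mate construction (in particular its compatibility with the pasting laws). Each individual identification is natural, but juggling all of them simultaneously in the homotopy 2-category of $\infty$-categories is a genuinely nontrivial bookkeeping exercise.
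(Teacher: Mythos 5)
Your proposal is correct and follows essentially the same route as the paper: both horizontal maps are exhibited as mates of the 2-cell $\xi$ from \Cref{def:can_map} (once post-composed with $\Fun{I}{\blank}$ via \Cref{lemma:composing_with_adjunction}, once formed internally to $\Fun{I}{\infcat C}$), and the currying isomorphisms together with the pasting law for mates identify the two. The paper merely packages your ``bookkeeping exercise'' as two explicit pasting diagrams with equal pastes, invoking \Cref{lemma:mate_equiv} for the right-hand vertical equivalence exactly as you indicate.
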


\begin{proof}
  Consider the two diagrams
  \[
  \begin{tikzcd}[row sep = 30]
    \Fun{\cocone{J}}{\Fun{I}{\infcat C}} \rar{\id} \dar[swap]{\Res{\coconept}} & \Fun{\cocone{J}}{\Fun{I}{\infcat C}} \dar{\Res{\inc}} \dlar[Rightarrow, shorten < = 30, shorten > = 25, swap]{\id[1]} \\
    \Fun{I}{\infcat C} \rar{\Diag} \dar[swap]{\id} & \Fun{J}{\Fun{I}{\infcat C}} \dar{\iso} \dlar[Rightarrow, shorten < = 30, shorten > = 25, swap]{\id[2]} \\
    \Fun{I}{\infcat C} \rar{\Diag \circ} & \Fun{I}{\Fun{J}{\infcat C}}
  \end{tikzcd}
  \]
  and
  \[
  \begin{tikzcd}[row sep = 30]
    \Fun{\cocone{J}}{\Fun{I}{\infcat C}} \rar{\id} \dar[swap]{\iso} & \Fun{\cocone{J}}{\Fun{I}{\infcat C}} \dar{\iso} \dlar[Rightarrow, shorten < = 30, shorten > = 25, swap]{\id[3]} \\
    \Fun{I}{\Fun{\cocone{J}}{\infcat C}} \rar{\id} \dar[swap]{\Res{\coconept} \circ} & \Fun{I}{\Fun{\cocone{J}}{\infcat C}} \dar{\Res{\inc} \circ} \dlar[Rightarrow, shorten < = 30, shorten > = 25, swap]{\id[4]} \\
    \Fun{I}{\infcat C} \rar{\Diag \circ} & \Fun{I}{\Fun{J}{\infcat C}}
  \end{tikzcd}
  \]
  and note that their pastes agree.
  Now the mate of $\id[1]$ gives the lower horizontal map in \cref{diag:limits_and_currying}, the mate of $\id[4]$ the upper horizontal map (where we use the adjunction $(\colim{J} \;\circ) \dashv (\Diag \;\circ)$ obtained from \Cref{lemma:composing_with_adjunction}), the mate of $\id[2]$ the right equivalence (using \Cref{lemma:mate_equiv}), and the mate of $\id[3]$ the left identity.
  An application of the pasting law for mates yields the desired statement.
\end{proof}

\begin{lemma} \label{lemma:contractible_colimit_over_equivalences}
  Let $\cat I$ be a contractible category, $\infcat C$ an $\infty$-category that admits colimits indexed by $\cat I$, and $D \colon \cat I \to \infcat C$ a diagram such that, for all morphisms $k$ of $\cat I$, the induced map $D(k)$ is an equivalence.
  Then, for all $i \in \cat I$, the structure map $D(i) \to \colim{\cat I} D$ is an equivalence.
\end{lemma}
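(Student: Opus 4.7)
The plan is to reduce to the case of a constant diagram and then invoke \cite[Proposition 4.3.1.12]{LurHTT}, already used in the proof of \Cref{lemma:join_preserves}.

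First I would show that $D$ is equivalent to a constant diagram. Since $D$ sends every morphism to an equivalence, it factors through the maximal sub-$\infty$-groupoid $\infcat C^\simeq \subseteq \infcat C$, which is itself a space (i.e.\ an $\infty$-groupoid). A functor from any category $\cat I$ to a space factors uniquely (up to equivalence) through the $\infty$-groupoid completion $|\cat I|$, which is (equivalent to) the geometric realization of the nerve of $\cat I$. The latter is contractible by hypothesis, hence equivalent to $\termCat$; therefore $D$ is equivalent to $\Diag[\cat I] c$ for $c \defeq D(i_0)$, with any fixed $i_0 \in \cat I$.

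With this reduction in hand, the claim follows quickly: by \cite[Proposition 4.3.1.12]{LurHTT}, the constant diagram $\Diag[\cocone{\cat I}] c \colon \cocone{\cat I} \to \infcat C$ is a colimit diagram extending $\Diag[\cat I] c$. Its value on each morphism $i \to \coconept$ is the identity on $c$, hence an equivalence, so by \Cref{rem:structure_maps} every structure map $c \to \colim{\cat I} \Diag[\cat I] c$ is an equivalence. Transporting along the natural equivalence $D \eq \Diag[\cat I] c$ then yields the statement for $D$ at every $i \in \cat I$.

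The main technical obstacle will be making the factorization of $D$ through the $\infty$-groupoid completion fully rigorous within the Riehl–Verity framework used throughout this paper, as it invokes the universal property of $\infty$-categorical localization. A concrete alternative is to argue directly: construct an extension $\bar D \colon \cocone{\cat I} \to \infcat C$ of $D$ by choosing (coherently, using that $\infcat C^\simeq$ is an $\infty$-groupoid) all new structure maps to be equivalences, then invoke \cite[Proposition 4.3.1.12]{LurHTT} to conclude that $\bar D$ is itself a colimit diagram. This avoids any direct mention of localization at the cost of a more ad hoc construction.
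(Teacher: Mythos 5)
Your argument is correct, but it takes a genuinely different route from the paper. The paper applies \cite[Proposition 4.3.1.12]{LurHTT} in the opposite direction: it observes that the colimit cocone $\Lan{\inc} D \colon \cocone{\cat I} \to \infcat C$ already exists by hypothesis, restricts to $D$ on $\cat I$ (so it sends every morphism of $\cat I$ to an equivalence), and is a colimit diagram; the cited proposition then forces it to send \emph{every} morphism of $\cocone{\cat I}$, in particular $i \to \coconept$, to an equivalence, which is exactly the statement about the structure map. This sidesteps entirely the step you flag as the main technical obstacle, namely replacing $D$ by a constant diagram via the groupoid completion of $\cat I$ (or, in your alternative, coherently constructing a cocone all of whose legs are equivalences). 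Your reduction is sound --- $D$ lands in the core $\infcat C^{\simeq}$, which is a Kan complex, and the weak homotopy equivalence $N(\cat I) \to \Delta^0$ induces an equivalence on mapping spaces into any Kan complex, so $D$ is equivalent to a constant diagram and the structure map transports along this equivalence by naturality --- and the second half of your argument (the constant cocone is a colimit diagram by the ``if'' direction of the same proposition, as the paper itself uses in \Cref{lemma:join_preserves}) is fine. What the paper's version buys is brevity and independence from the universal property of localization; what yours buys is a conceptual explanation (such a $D$ carries no more information than a single object), at the cost of having to make the localization step rigorous in the chosen framework.
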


\begin{proof}
  We will show that the composition
  \[ \Fun{\cat I}{\infcat C} \xlongto{\Lan{\inc}} \Fun{\cocone{\cat I}}{\infcat C} \xlongto{\Res{t_i}} \Fun{\Simplex{1}}{\infcat C} \]
  sends $D$ to an equivalence, where $t_i \colon \Simplex{1} \to \cocone{\cat I}$ is as in \Cref{def:structure_map}, i.e.\ the functor representing the unique morphism $i \to \coconept$.
  Note that $\Lan{\inc} (D)$ is a colimit diagram indexed by $\cocone{\cat I}$ that sends any morphism in $\cat I$ to an equivalence.
  Hence, by \cite[Proposition 4.3.1.12]{LurHTT} (together with \cite[Proposition 2.4.1.5]{LurHTT}), the diagram $\Lan{\inc} (D)$ sends every morphism of $\cocone{\cat I}$ to an equivalence, which implies the claim.
\end{proof}

\subsection{Preservation of Kan extensions and (co)limits}

\lemmaResPreservesKan*

\begin{proof}
  That $\Fun{\infcat L}{\infcat C}$ is weakly left $f$-extensible when $\infcat C$ is was part of \Cref{lemma:kan_and_currying}.
  This also implies the corresponding statement for left $f$-extensible since this was defined as certain colimits existing which in turn was defined via weakly $\const$-extensible.
  
  For the second part we want that the mate of the transformation $\id[2]$ in the diagram
  \[
  \begin{tikzcd}[row sep = 30]
    \Fun{\cat J \times \infcat L}{\infcat C} \rar{\Res{f \times \id}} \dar[swap]{\iso} \ar[start anchor = south west, end anchor = north west, bend right = 40]{ddd}[swap]{\Res{{\id} \times g}} & \Fun{\cat I \times \infcat L}{\infcat C} \dar{\iso} \ar[start anchor = south east, end anchor = north east, bend left = 40]{ddd}{\Res{{\id} \times g}} \dlar[Rightarrow, shorten < = 30, shorten > = 25, swap]{\id[1]} \\
    \Fun{\cat J}{\Fun{\infcat L}{\infcat C}} \rar{\Res{f}} \dar[swap]{\Res{g} \circ} & \Fun{\cat I}{\Fun{\infcat L}{\infcat C}} \dar{\Res{g} \circ} \dlar[Rightarrow, shorten < = 30, shorten > = 25, swap]{\id[2]} \\
    \Fun{\cat J}{\Fun{\infcat K}{\infcat D}} \rar{\Res{f}} \dar[swap]{\iso} & \Fun{\cat I}{\Fun{\infcat K}{\infcat D}} \dar{\iso} \dlar[Rightarrow, shorten < = 30, shorten > = 25, swap]{\id[3]} \\
    \Fun{\cat J \times \infcat K}{\infcat D} \rar{\Res{f \times \id}} & \Fun{\cat I \times \infcat K}{\infcat D}
  \end{tikzcd}
  \]
  is an equivalence.
  For this note that the mates of $\id[1]$ and $\id[3]$ are equivalences by \Cref{lemma:mate_equiv} and that the paste of all three transformations is just $\Fun{\blank}{\infcat C}$ applied to the transformation
  \[
  \begin{tikzcd}[row sep = 30]
  \cat I \times \infcat K \rar{{\id} \times g} \dar[swap]{f \times \id} & \cat I \times \infcat L \dar{f \times \id} \dlar[Rightarrow, shorten < = 15, shorten > = 15, swap]{\id} \\
  \cat J \times \infcat K \rar{{\id} \times g} & \cat J \times \infcat L
  \end{tikzcd}
  \]
  which is a so called exact square by \cite[Lemma 9.2.8]{RV}.
  Hence, again by the Beck-Chevalley condition \cite[Lemma 12.3.11]{RV}, the mate of this paste is an equivalence.
  Now the pasting law for mates implies that the mate of $\id[2]$ is an equivalence as we wanted to show.
\end{proof}

\begin{lemma} \label{lemma:pointwise_preservation}
  Let $\infcat K$ be an $\infty$-category, $f \colon \cat I \to \cat J$ a functor between categories, $\infcat C$ and $\infcat D$ two left $f$-extensible $\infty$-categories, and $F \colon \infcat C \to \Fun{\infcat K}{\infcat D}$ a functor.
  Then $F$ preserves left Kan extension along $f$ if and only if, for all $k \in \infcat K$, the functor ${\Res{k}} \circ F \colon \infcat C \to \infcat D$ preserves left Kan extension along $f$.
\end{lemma}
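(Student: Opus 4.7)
The plan is to unfold the definition and reduce the statement to the pasting law for mates together with the earlier lemma asserting that each $\Res{g}$ preserves left Kan extension along $f$. By \Cref{def:preservation}, $F$ preserves left Kan extension along $f$ precisely when the mate
\[ \chi_F \colon {\Lan{f}} \circ (F \;\circ) \longto (F \;\circ) \circ {\Lan{f}} \]
of the identity 2-cell in the evident square is an equivalence of functors $\Fun{\cat I}{\infcat C} \to \Fun{\cat J}{\Fun{\infcat K}{\infcat D}}$; analogously for each $\Res{k} \circ F$ in place of $F$.

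The crucial observation will be that the two squares
\[
\begin{tikzcd}[sep = 30]
  \Fun{\cat J}{\infcat C} \rar{\Res{f}} \dar[swap]{F \circ} & \Fun{\cat I}{\infcat C} \dar{F \circ} \dlar[Rightarrow, shorten < = 20, shorten > = 20, swap]{\id} \\
  \Fun{\cat J}{\Fun{\infcat K}{\infcat D}} \rar{\Res{f}} \dar[swap]{\Res{k} \circ} & \Fun{\cat I}{\Fun{\infcat K}{\infcat D}} \dar{\Res{k} \circ} \dlar[Rightarrow, shorten < = 25, shorten > = 25, swap]{\id} \\
  \Fun{\cat J}{\infcat D} \rar{\Res{f}} & \Fun{\cat I}{\infcat D}
\end{tikzcd}
\]
both commute strictly and paste vertically to the defining square of $\chi_{\Res{k} \circ F}$. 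An application of the pasting law for mates will then identify
\[ \chi_{\Res{k} \circ F} \;\eq\; \bigl((\Res{k} \;\circ) \circ \chi_F\bigr) \paste \bigl(\chi_{\Res{k}} \circ (F \;\circ)\bigr), \]
where $\chi_{\Res{k}}$ denotes the mate witnessing preservation of left Kan extension along $f$ by $\Res{k} \colon \Fun{\infcat K}{\infcat D} \to \infcat D$. Since the previous lemma (applied to $\const[k] \colon \termCat \to \infcat K$) tells us that $\Res{k}$ preserves left Kan extension along $f$, the transformation $\chi_{\Res{k}}$ is an equivalence. Hence $\chi_{\Res{k} \circ F}$ is an equivalence if and only if $(\Res{k} \;\circ) \circ \chi_F$ is one.

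The two directions now follow straightforwardly. The forward implication is immediate since postcomposition with $(\Res{k} \;\circ)$ sends equivalences to equivalences. For the reverse, assume $\chi_{\Res{k} \circ F}$ is an equivalence for every $k$, so that $(\Res{k} \;\circ) \circ \chi_F$ is an equivalence for every $k$. Evaluating at any $D \in \Fun{\cat I}{\infcat C}$ and $j \in \cat J$ exhibits each of the morphisms $\Res{k}(\chi_F(D)(j))$ in $\infcat D$ as an equivalence; since the family $\{\Res{k}\}_{k \in \infcat K}$ jointly detects equivalences in $\Fun{\infcat K}{\infcat D}$, the morphism $\chi_F(D)(j)$ is itself an equivalence, and varying $D$ and $j$ yields that $\chi_F$ is an equivalence.

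I do not anticipate a serious obstacle here: the only nontrivial step is the bookkeeping identification of $\chi_{\Res{k} \circ F}$ via the pasting law, and all other ingredients (the Kan-extension preservation of $\Res{k}$, and the pointwise detection of equivalences in a functor category) are either already stated in the appendices or entirely standard.
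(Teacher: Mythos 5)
Your proposal is correct and follows essentially the same route as the paper's proof: the same three-row pasted diagram, the pasting law for mates to identify $\chi_{\Res{k}\circ F}$ with the composite of $(\Res{k}\;\circ)\circ\chi_F$ and the (invertible) mate witnessing that $\Res{k}$ preserves left Kan extension along $f$, and pointwise detection of equivalences to conclude. The only difference is that you spell out the final pointwise-detection step slightly more explicitly than the paper does.
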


\begin{proof}
  Consider the diagram
  \[
  \begin{tikzcd}[row sep = 30]
  \Fun{\cat J}{\infcat C} \rar{\Res{f}} \dar[swap]{F \circ} & \Fun{\cat I}{\infcat C} \dar{F \circ} \dlar[Rightarrow, shorten < = 30, shorten > = 25, swap]{\id[1]} \\
  \Fun{\cat J}{\Fun{\infcat K}{\infcat D}} \rar{\Res{f}} \dar[swap]{\Res{k} \circ} & \Fun{\cat I}{\Fun{\infcat K}{\infcat D}} \dar{\Res{k} \circ} \dlar[Rightarrow, shorten < = 30, shorten > = 25, swap]{\id[2]} \\
  \Fun{\cat J}{\infcat D} \rar{\Res f} & \Fun{\cat I}{\infcat D}
  \end{tikzcd}
  \]
  and note that $\mate{(\id[1])} \paste \mate{(\id[2])} \eq \mate{(\id[2] \paste \id[1])}$ by the pasting law for mates.
  Since $\Res{k}$ preserves left Kan extension along $f$, the mate $\mate{(\id[2])}$ is an equivalence.
  Now, noting that $({\Res{k}}\; \circ) \circ \mate{(\id[1])}$ is the other part of the composition in the paste $\mate{(\id[1])} \paste \mate{(\id[2])}$, we obtain that $({\Res{k}}\; \circ) \circ \mate{(\id[1])}$ is an equivalence if and only if $\mate{(\id[2] \paste \id[1])}$ is an equivalence.
  As the former being true for all $k$ is equivalent to $F$ preserving left Kan extension along $f$, and the latter is the definition of ${\Res{k}} \circ F$ preserving left Kan extension along $f$, this implies the claim.
\end{proof}

\begin{lemma} \label{lemma:fun_preserving_limits}
  Let $\infcat I$ and $\infcat J$ be $\infty$-categories, $\cat K$ a category, and $\infcat C$ and $\infcat D$ two $\infty$-categories that admit colimits indexed by $\cat K$.
  \begin{enumerate}[label=\alph*)]
    \item Let $g \colon \infcat C \to \infcat D$ be a functor that preserves colimits indexed by $\cat K$.
    Then the induced functor $(g \;\circ) \colon \Fun{\infcat I}{\infcat C} \to \Fun{\infcat I}{\infcat D}$ preserves colimits indexed by $\cat K$.
    \item The functor $h \colon \Fun{\infcat J}{\infcat C} \to \Fun{\Fun{\infcat I}{\infcat J}}{\Fun{\infcat I}{\infcat C}}$ given by $f \mapsto (f \;\circ)$ preserves colimits indexed by $\cat K$.
  \end{enumerate}
\end{lemma}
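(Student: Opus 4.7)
The plan is to reduce both parts to \Cref{lemma:pointwise_preservation}, which detects preservation of left Kan extension along $\const \colon \cat K \to \termCat$ (i.e., preservation of colimits indexed by $\cat K$) pointwise, combined with \Cref{lemma:res_preserves_kan}, which says that restriction functors preserve left Kan extension along any functor between ordinary categories. The one extra ingredient we need is the elementary fact that a composition of two functors preserving left Kan extension along a fixed $f$ again preserves it; this follows by vertically pasting the two defining squares and applying the pasting law for mates from \Cref{section:mates}. Iterated use of \Cref{lemma:res_preserves_kan} also supplies the colimits needed in the various functor categories so that the assertions make sense.

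For part (a), by \Cref{lemma:pointwise_preservation} applied to $(g \;\circ) \colon \Fun{\infcat I}{\infcat C} \to \Fun{\infcat I}{\infcat D}$, it suffices to show that for each $i \in \infcat I$ the composition ${\Res{i}} \circ (g \;\circ)$ preserves colimits indexed by $\cat K$. Unwinding definitions, this composition equals $g \circ \Res{i}$. Now $\Res{i}$ preserves left Kan extension along $\const$ by \Cref{lemma:res_preserves_kan} (applied with the $\infty$-functor $\termCat \to \infcat I$ representing $i$), and $g$ does so by hypothesis, so the composition does by the auxiliary fact above.

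For part (b), we again apply \Cref{lemma:pointwise_preservation}, this time to $h$ with the role of ``$\infcat K$'' played by $\Fun{\infcat I}{\infcat J}$. It then suffices to verify that for each $f \in \Fun{\infcat I}{\infcat J}$ the composition ${\Res{f}} \circ h$ preserves colimits indexed by $\cat K$. But for any $g \in \Fun{\infcat J}{\infcat C}$ we have $({\Res{f}} \circ h)(g) = h(g)(f) = g \circ f = \Res{f}(g)$, so this composition is simply the restriction ${\Res{f}} \colon \Fun{\infcat J}{\infcat C} \to \Fun{\infcat I}{\infcat C}$, and \Cref{lemma:res_preserves_kan} finishes the job.

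No serious obstacle is expected: the proof is mostly bookkeeping in the 2-categorical formalism. The only mildly delicate point is checking that ``composition preserves preservation'' really does follow from pasting of mates in the homotopy 2-category, but this is standard and already implicit in several other proofs in the paper.
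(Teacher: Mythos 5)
Your proposal is correct and follows essentially the same route as the paper's proof: both parts are reduced to \Cref{lemma:pointwise_preservation} by identifying ${\Res{i}} \circ (g\;\circ)$ with $g \circ \Res{i}$ and ${\Res{f}} \circ h$ with $\Res{f}$, with restrictions preserving the relevant colimits. You merely make explicit two points the paper leaves implicit (that restrictions preserve colimits and that preservation is closed under composition via pasting of mates), which is fine.
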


\begin{proof}
  The first statement follows from \Cref{lemma:pointwise_preservation} since, for all $i \in \infcat I$, it holds that ${\Res{i}} \circ (g \;\circ) = g \circ \Res{i}$ and both functors in the latter composition preserve colimits indexed by $\cat K$.
  The second statement follows from the same lemma by noting that, for any $f \in \Fun{\infcat I}{\infcat J}$, the functor ${\Res{\set f}} \circ h = \Res{f}$ preserves colimits indexed by $\cat K$.
\end{proof}

\begin{lemma} \label{lemma:preserving_Kan_extensions}
  Let $f \colon \cat I \to \cat J$ be a functor between categories, $\infcat C$ and $\infcat D$ two left $f$-extensible $\infty$-categories, and $F \colon \infcat C \to \infcat D$ a functor that preserves colimits indexed by $f \slice j$ for all $j \in \cat J$.
  Then $F$ preserves left Kan extensions along $f$.
\end{lemma}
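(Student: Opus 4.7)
The plan is to reduce the question to a pointwise one and then identify the pointwise comparison map with one governed by colimits indexed by the comma categories $f \slice j$, where by hypothesis $F$ preserves colimits.

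More concretely, I would first observe that, since equivalences in functor $\infty$-categories are detected pointwise, it suffices to show that for every $j \in \cat J$ the mate $\chi \colon {\Lan{f}} \circ (F \;\circ) \to (F \;\circ) \circ \Lan{f}$ from \Cref{def:preservation} becomes an equivalence after postcomposing with $\Res{j}$. To do this I would paste $\chi$ with the two natural transformations $\rho_j$ of \Cref{lemma:kan_local}, one for $\infcat C$ and one for $\infcat D$. Namely, starting from the identity-labelled squares obtained by applying $\Fun{\blank}{\infcat C}$ and $\Fun{\blank}{\infcat D}$ to the diagram defining $\rho_j$ and whiskering appropriately with $(F \;\circ)$, the pasting laws for mates (\Cref{lemma:mate_and_units} and the two pasting laws in \Cref{section:mates}) allow me to rewrite $\Res{j} \circ \chi$, up to the equivalences $\vartheta_j$ of \Cref{lemma:kan_local}, as the natural transformation
\[
\colim{f \slice j}\bigl((F \;\circ) \circ \Res{\pr[f \slice j]}\bigr) \longto (F \;\circ) \circ \colim{f \slice j} \Res{\pr[f \slice j]} \, ,
\]
i.e. the mate of the identity square comparing the colimit indexed by $f \slice j$ before and after postcomposition with $F$.

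But the latter transformation is precisely the comparison map expressing that $F$ preserves colimits indexed by $f \slice j$ (applied to the restricted diagram $\Res{\pr[f \slice j]}$). By assumption this is an equivalence for every $j$, and both $\vartheta_j$'s are equivalences by \Cref{lemma:kan_local}, so $\Res{j} \circ \chi$ is an equivalence for every $j$, hence $\chi$ is an equivalence.

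The main obstacle is the bookkeeping in the second step: carefully choosing the two squares whose mates one wants to paste with $\chi$ so that the pasting laws identify the result with the colimit-preservation map for $F$. Once that pasting identification is written out, the conclusion is immediate from the hypothesis. Everything else (pointwise detection of equivalences, the equivalences $\vartheta_j$) is either standard or already available from the earlier lemmas.
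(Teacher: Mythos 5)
Your proposal is correct and follows essentially the same route as the paper: reduce to checking $\Res{j} \circ \chi$ for each $j \in \cat J$, paste with the squares whose mates are the equivalences $\vartheta$ of \Cref{lemma:kan_local}, and use the pasting law for mates to identify the result with the comparison map for $F$ preserving colimits indexed by $f \slice j$. The paper's proof is exactly this bookkeeping, carried out by exhibiting the two pasted diagrams explicitly and noting their pastes agree.
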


\begin{proof}
  Consider the two diagrams
  \[
  \begin{tikzcd}[row sep = 30]
  \Fun{\cat J}{\infcat C} \rar{\Res{f}} \dar[swap]{F \circ} & \Fun{\cat I}{\infcat C} \dar{F \circ} \dlar[Rightarrow, shorten < = 20, shorten > = 20, swap]{\id[1]} & & \Fun{\cat J}{\infcat C} \rar{\Res{f}} \dar[swap]{\Res{j}} & \Fun{\cat I}{\infcat C} \dar{\Res{\pr}} \dlar[Rightarrow, shorten < = 25, shorten > = 25, swap]{\rho_j} \\
  \Fun{\cat J}{\infcat D} \rar{\Res{f}} \dar[swap]{\Res{j}} & \Fun{\cat I}{\infcat D} \dar{\Res{\pr}} \dlar[Rightarrow, shorten < = 25, shorten > = 25, swap]{\rho_j} & & \infcat C \rar{\Diag} \dar[swap]{F} & \Fun{f \slice j}{\infcat C} \dar{F \circ} \dlar[Rightarrow, shorten < = 25, shorten > = 25, swap]{\id[2]} \\
  \infcat D \rar{\Diag} & \Fun{f \slice j}{\infcat D} & & \infcat D \rar{\Diag} & \Fun{f \slice j}{\infcat D}
  \end{tikzcd}
  \]
  (where $\rho_j$ is as in \Cref{lemma:kan_local}) and note that their pastes agree.
  We want to show that $\mate{(\id[1])} \colon {\Lan{f}} \circ (F \;\circ) \to (F \;\circ) \circ \Lan{f}$ is an equivalence.
  For this it is enough to show that ${\Res{j}} \circ \mate{(\id[1])}$ is an equivalence for all $j \in \cat J$.
  This is one of the transformations that is composed in the paste $\mate{(\id[1])} \paste \mate{(\rho_j)}$, which is homotopic to $\mate{(\id[2])} \paste \mate{(\rho_j)}$ by the pasting law for mates.
  Since $\mate{(\rho_j)}$ is an equivalence by \Cref{lemma:kan_local} and $\mate{(\id[2])}$ is one by assumption, this finishes the proof.
\end{proof}

\lemmaPreservation*

\begin{proof}
  By \Cref{lemma:preserving_Kan_extensions}, if the functor $F$ preserves colimits indexed by $\cat I$, then it also preserves left Kan extension along $\inc$.
  The proof of the same lemma also shows that if $F$ preserves left Kan extension along $\inc$, then it preserves the colimits of all diagrams $\cat I \to \infcat C$ that lie in the essential image of $\Res{\inc} \colon \Fun{\cocone{\cat I}}{\infcat C} \to \Fun{\cat I}{\infcat C}$.
  But, as $\inc$ is fully faithful, we have $\Res{\inc} \Lan{\inc} \eq \id$ and thus all diagrams lie in the essential image of $\Res{\inc}$.
  This shows the equivalence of the first two conditions.
  
  Now note that, by definition, the functor $F$ preserves left Kan extension along $\inc$ if and only if the natural transformation
  \[
  \begin{tikzcd}[row sep = 7]
    {\Lan{\inc}} \circ (F \;\circ) \rar{\eta} & {\Lan{\inc}} \circ (F \;\circ) \circ \Res{\inc} \circ \Lan{\inc} \dar[equal] & \\
     & {\Lan{\inc}} \circ {\Res{\inc}} \circ (F \;\circ) \circ \Lan{\inc} \rar{\epsilon} & (F \;\circ) \circ \Lan{\inc}
  \end{tikzcd}
  \]
  is an equivalence, where $\eta$ and $\epsilon$ are the unit respectively counit of the adjunction $\Lan{\inc} \dashv \Res{\inc}$.
  Since $\eta$ is an equivalence (as $\inc$ is fully faithful), this is equivalent to $\epsilon$ being an equivalence on any diagram in the essential image of $(F \;\circ) \circ \Lan{\inc}$.
  By \Cref{lemma:cocone_comparison,lemma:extensions_are_cocartesian}, this is equivalent to $F$ sending $\cocone{\cat I}$-indexed colimit diagrams to colimit diagrams.
\end{proof}

\begin{lemma} \label{lemma:limits_preserve_limits}
  Let $I$, $J$, and $K$ be simplicial sets, $f \colon I \to J$ a map, and $\infcat C$ a weakly left $f$-extensible $\infty$-category that admits colimits indexed by $K$.
  Then the functor ${\Lan{f}} \colon \Fun{I}{\infcat C} \to \Fun{J}{\infcat C}$ preserves colimits indexed by $K$.
\end{lemma}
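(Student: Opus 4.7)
The plan is to recognise this as an instance of the standard ``left adjoints preserve colimits'' principle, and to verify it using the mate calculus of \Cref{section:mates}. By \Cref{def:preservation}, the claim is equivalent to the mate
\[
\mate{\id} \colon \colim{K} \circ ({\Lan{f}} \;\circ) \longto ({\Lan{f}} \;\circ) \circ \colim{K}
\]
of the identity $2$-cell in the (strictly commuting) square
\[
\begin{tikzcd}[sep = 28]
\Fun{\termCat}{\Fun{I}{\infcat C}} \rar{\Res{\const}} \dar[swap]{\Lan{f} \circ} & \Fun{K}{\Fun{I}{\infcat C}} \dar{\Lan{f} \circ} \dlar[Rightarrow, shorten < = 17, shorten > = 17, swap]{\id} \\
\Fun{\termCat}{\Fun{J}{\infcat C}} \rar{\Res{\const}} & \Fun{K}{\Fun{J}{\infcat C}}
\end{tikzcd}
\]
being an equivalence in the homotopy $2$-category of $\infty$-categories.

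First I would invoke the composing-with-adjunctions lemma (appealed to in \Cref{lemma:kan_and_currying}) applied to the fixed adjunction $\Lan{f} \dashv \Res{f}$ supplied by weak left $f$-extensibility; this yields an adjunction $({\Lan{f}} \;\circ) \dashv ({\Res{f}} \;\circ)$ between the relevant functor categories. Together with the canonical adjunction $\colim{K} = \Lan{\const} \dashv \Res{\const} = \Diag$, this exhibits both $\colim{K} \circ ({\Lan{f}} \;\circ)$ and $({\Lan{f}} \;\circ) \circ \colim{K}$ as left adjoints: the former to $({\Res{f}} \;\circ) \circ \Diag$, the latter to $\Diag \circ ({\Res{f}} \;\circ)$. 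These two right adjoints are literally equal as functors, both sending $F \in \Fun{\termCat}{\Fun{J}{\infcat C}}$ to the constant $K$-diagram at $\Res{f} \circ F$. By uniqueness of adjoints in the homotopy $2$-category, the two left adjoints are therefore canonically equivalent.

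The only remaining task is to identify this canonical equivalence with $\mate{\id}$. I would do this via the pasting law for mates: starting from the analogous square of right adjoints $({\Res{f}} \;\circ)$ (in which the two composites again agree strictly, so the $2$-cell is $\id$), one can paste along both axes against the units and counits of $\colim{K} \dashv \Diag$ and $({\Lan{f}} \;\circ) \dashv ({\Res{f}} \;\circ)$ to reconstruct $\mate{\id}$. Since every factor in this paste is an isomorphism (using the strict equality of the relevant right-adjoint composites), the resulting transformation is an equivalence, and unfolding the definitions verifies that it is indeed the adjoint-uniqueness comparison.

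The main obstacle is purely notational bookkeeping of which units and counits occur in each position of the pasting diagram; no substantive input beyond the hypotheses and the formal mate calculus of \Cref{section:mates} is required.
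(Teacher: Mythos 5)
Your proof is correct, but it takes a genuinely different route from the paper. The paper disposes of this lemma in one line by citing Lurie's ``left adjoints preserve colimits'' (\cite[Proposition 5.2.3.5]{LurHTT}) and noting that $\Lan{f}$ is a left adjoint (implicitly translating between the mate-based \Cref{def:preservation} and the colimit-diagram formulation via \Cref{lemma:preservation}). You instead give a self-contained argument inside the homotopy $2$-category: both $\colim{K} \circ ({\Lan{f}}\;\circ)$ and $({\Lan{f}}\;\circ) \circ \colim{K}$ are left adjoints to the strictly equal composites $({\Res{f}}\;\circ) \circ \Diag = \Diag \circ ({\Res{f}}\;\circ)$, hence canonically equivalent, and the mate $\mate{\id}$ is that comparison. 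This works, and it has the virtue of never leaving the paper's own framework. One caution on the step you defer as ``bookkeeping'': the general principle ``the mate of an invertible $2$-cell is invertible'' is false (this is exactly how Beck--Chevalley conditions fail), so identifying $\mate{\id}$ with the adjoint-uniqueness comparison is genuinely the crux and not automatic from \Cref{lemma:mate_equiv}. It does go through here: writing out the uniqueness-of-adjoints comparison $c_!k \to c_!k R\, ha_! \to ha_!$ with $R = ({\Res{f}}\;\circ)\circ\Diag$ and comparing termwise with $\mate{\id} = (\epsilon_c h a_!)\circ(c_! k \eta_a)$, the discrepancy reduces to the composite $ka \xrightarrow{ka\eta_h} kah^rh = kk^rch \xrightarrow{\epsilon_k ch} ch$ being the identity, which is precisely a triangle identity for $({\Lan{f}}\;\circ) \dashv ({\Res{f}}\;\circ)$. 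Alternatively, \Cref{lemma:mate_and_units} characterizes the mate by its compatibility with units, which matches the characterization of the uniqueness comparison and gives the identification with less unwinding. Either way the argument is sound; it is simply longer than the paper's citation, in exchange for not importing Lurie's result.
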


\begin{proof}
  By \cite[Proposition 5.2.3.5]{LurHTT} left adjoints preserve colimits.
  Noting that $\Lan{f}$ is left adjoint, this implies the statement.
\end{proof}

\begin{lemma} \label{lemma:colim_commutes_lim}
  Let $\cat I$ and $\cat J$ be two categories and $\infcat C$ an $\infty$-category that admits colimits indexed by $\cat I$ and limits indexed by $\cat J$.
  Then ${\colim{\cat I}} \colon \Fun{\cat I}{\infcat C} \to \infcat C$ preserves limits indexed by $\cat J$ if and only if ${\lim{\cat J}} \colon \Fun{\cat  J}{\infcat C} \to \infcat C$ preserves colimits indexed by $\cat I$.
\end{lemma}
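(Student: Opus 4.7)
The plan is to rephrase both conditions as the invertibility of a single canonical natural transformation---the ``exchange map''
\[ \kappa \colon \colim{\cat I} \circ \lim{\cat J} \Longrightarrow \lim{\cat J} \circ \colim{\cat I} \]
of functors $\Fun{\cat I \times \cat J}{\infcat C} \to \infcat C$, where the two sides are interpreted via the currying equivalences $\Fun{\cat I \times \cat J}{\infcat C} \iso \Fun{\cat I}{\Fun{\cat J}{\infcat C}} \iso \Fun{\cat J}{\Fun{\cat I}{\infcat C}}$. I would construct $\kappa$ by taking the (dual) mate of the identity 2-cell in the image under $\Fun{\blank}{\infcat C}$ of the commutative square
\[
\begin{tikzcd}
\cat I \times \cat J \rar{\pr[\cat J]} \dar[swap]{\pr[\cat I]} & \cat J \dar \\
\cat I \rar & \termCat
\end{tikzcd}
\]
using the adjunctions $\colim{\cat I} \dashv \Diag$ on the left column and $\Diag \dashv \lim{\cat J}$ on the top row; pointwise, $\kappa$ is the map built from the projections $\lim{\cat J} D(i,\blank) \to D(i,j)$ together with the structure maps into $\colim{\cat I}$.

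Next, I would verify that each condition of the lemma unfolds to the invertibility of $\kappa$. For ``$\colim{\cat I}$ preserves limits indexed by $\cat J$,'' the paper's definition applied to $\const \colon \cat J \to \termCat$ asks that the (dual) mate of the identity in
\[
\begin{tikzcd}[sep = 35]
\Fun{\cat J}{\Fun{\cat I}{\infcat C}} \rar{\Res{\const}} \dar[swap]{\colim{\cat I} \circ} & \Fun{\cat I}{\infcat C} \dar{\colim{\cat I}} \\
\Fun{\cat J}{\infcat C} \rar{\Res{\const}} & \infcat C
\end{tikzcd}
\]
(with respect to $\Diag \dashv \lim{\cat J}$ on both rows; the vertical arrows' left adjoints exist pointwise thanks to \Cref{lemma:fun_preserving_limits}) be an equivalence. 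Using Pasting Law I together with \Cref{lemma:kan_and_currying} to identify the currying-compatibility 2-cells as identity mates, this mate is identified with $\kappa$. The condition ``$\lim{\cat J}$ preserves colimits indexed by $\cat I$'' unfolds dually via the square with the roles of $\cat I$, $\cat J$ swapped and yields, by the same pasting argument using the other currying, the same $\kappa$. Hence both conditions are equivalent to $\kappa$ being an equivalence, and therefore to each other.

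The main obstacle will be the coherence check in the second step: the two preservation conditions a priori live in differently curried functor categories, and identifying both mates with the common exchange map $\kappa$ requires a careful diagram chase using Pasting Laws~I and~II applied to the paste of the main square with the identity 2-cells coming from the currying equivalences. This is routine bookkeeping, exactly in the spirit of the proofs of \Cref{lemma:kan_and_currying} and \Cref{lemma:limits_and_currying}, but must be carried out with care to ensure the unit and counit data of the four relevant adjunctions are matched correctly on both sides.
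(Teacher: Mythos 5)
Your proposal is correct, and the tools it relies on (mates, the pasting laws, and the compatibility of Kan extensions with currying) are exactly the ones the paper uses; but the architecture is different from the paper's proof. You construct a single interchange transformation $\kappa \colon {\colim{\cat I}} \circ {\lim{\cat J}} \to {\lim{\cat J}} \circ {\colim{\cat I}}$ on $\Fun{\cat I \times \cat J}{\infcat C}$ and show that each preservation condition unfolds, through the currying equivalences, to the invertibility of this one map; the equivalence of the two conditions is then symmetric and immediate. The paper instead argues asymmetrically and never forms $\kappa$: assuming $\colim{\cat I}$ preserves $\cat J$-limits, it upgrades this (via \Cref{lemma:preservation}, \Cref{lemma:pointwise_preservation}, and \Cref{lemma:colim_is_kan_to_cocone}) to the statement that $\Lan{\inc[\cat I]} \colon \Fun{\cat I}{\infcat C} \to \Fun{\cocone{\cat I}}{\infcat C}$ preserves right Kan extension along $\inc[\cat J]$, transports the resulting equivalence of composite functors through \Cref{lemma:kan_and_currying} to the other currying, restricts to the cone point, and reads off that $\lim{\cat J}$ sends $\cocone{\cat I}$-indexed colimit diagrams to colimit diagrams; the converse is dual. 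The trade-off: the paper's route pushes the bookkeeping into already-proved lemmas about (co)cone Kan extensions and only ever manipulates one equivalence at a time, whereas your route front-loads the work into the single coherence check you correctly identify as the crux — that the two a priori different mates both agree with $\kappa$ under currying. That check is genuinely the whole proof in your version and must be done with the same care as \Cref{lemma:kan_and_currying} (note also that in your displayed square the horizontal arrows should run from $\Fun{\cat I}{\infcat C}$ \emph{into} $\Fun{\cat J}{\Fun{\cat I}{\infcat C}}$, since $\Res{\const}$ for $\const \colon \cat J \to \termCat$ is the diagonal); once it is carried out, your argument is complete and arguably cleaner, since it exhibits the two conditions as literally the same assertion about one canonical map.
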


\begin{proof}
  We show that, if ${\colim{\cat I}}$ preserves limits indexed by $\cat J$, then ${\lim{\cat J}}$ preserves colimits indexed by $\cat I$.
  The other direction follows dually.
  
  By \Cref{lemma:preservation,lemma:pointwise_preservation,lemma:colim_is_kan_to_cocone}, our assumption implies that $\Lan{\inc[\cat I]}$ preserves right Kan extension along $\inc[\cat J]$.
  Hence there is an equivalence
  \[ ({\Lan{\inc[\cat I]}} \;\circ) \circ \Ran{\inc[\cat J]} \eq {\Ran{\inc[\cat J]}} \circ ({\Lan{\inc[\cat I]}} \;\circ) \]
  of functors $\Fun{\cat J}{\Fun{\cat I}{\infcat C}} \longto \Fun{\cone{\cat J}}{\Fun{\cocone{\cat I}}{\infcat C}}$.
  This transforms, through a few applications of \Cref{lemma:kan_and_currying}, to an equivalence
  \[ {\Lan{\inc[\cat I]}} \circ ({\Ran{\inc[\cat J]}} \;\circ) \eq ({\Ran{\inc[\cat J]}} \;\circ) \circ \Lan{\inc[\cat I]} \]
  of functors $\Fun{\cat I}{\Fun{\cat J}{\infcat C}} \longto \Fun{\cocone{\cat I}}{\Fun{\cone{\cat J}}{\infcat C}}$.
  This becomes, after postcomposing with $({\Res{\conept_{\cat J}}} \;\circ)$, an equivalence
  \[ {\Lan{\inc[\cat I]}} \circ ({\lim[s]{\cat J}} \;\circ) \eq ({\lim[s]{\cat J}} \;\circ) \circ \Lan{\inc[\cat I]} \]
  of functors $\Fun{\cat I}{\Fun{\cat J}{\infcat C}} \longto \Fun{\cocone{\cat I}}{\infcat C}$ (using \Cref{lemma:colim_is_kan_to_cocone} and that restrictions preserve left Kan extensions).
  Thus $\lim{\cat J}$ sends colimit diagrams indexed by $\cocone{\cat I}$ to colimit diagrams, as we wanted to show.
\end{proof}

\begin{lemma} \label{lemma:preserving_structure_map}
  Let $\cat I$ be a category, $i$ an object of $\cat I$, $\infcat C$ and $\infcat D$ two $\infty$-categories that admit colimits indexed by $\cat I$, $F \colon \infcat C \to \infcat D$ a functor that preserves colimits indexed by $\cat I$, and $D \colon \cat I \to \infcat C$ a diagram.
  Then $F$ applied to the structure map $D(i) \to \colim{\cat I} D$ is an equivalence if and only if the structure map $(F \circ D)(i) \to \colim{\cat I} (F \circ D)$ is.
\end{lemma}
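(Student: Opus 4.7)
The plan is to realize both structure maps as components of units of the adjunctions $\colim{\cat I} \dashv \Diag$ on $\infcat C$ and $\infcat D$, and then use the calculus of mates to see that $F$ applied to the first one agrees, up to the comparison map $\chi$, with the second one; since $\chi$ is an equivalence by hypothesis, this will give the claim.

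First I would invoke Remark~\ref{rem:structure_maps} to replace the structure map $D(i) \to \colim{\cat I} D$ by the component at $i$ of the unit $\eta_D \colon D \to \Diag \circ \colim{\cat I}(D)$, and likewise for $F \circ D$. Next I observe that $F \circ \Diag_{\infcat C} = \Diag_{\infcat D} \circ F$ literally as functors, since both take $c \in \infcat C$ to the constant diagram at $F(c)$. This yields the identity 2-cell in the square
\[
\begin{tikzcd}[sep = 30]
\infcat C \rar{\Diag_{\infcat C}} \dar[swap]{F} & \Fun{\cat I}{\infcat C} \dar{F \,\circ} \dlar[Rightarrow, shorten < = 17, shorten > = 17, swap]{\id} \\
\infcat D \rar{\Diag_{\infcat D}} & \Fun{\cat I}{\infcat D}
\end{tikzcd}
\]
whose mate with respect to the two adjunctions $\colim{\cat I} \dashv \Diag$ is precisely the canonical comparison transformation $\chi \colon \colim{\cat I} \circ (F\,\circ) \to F \circ \colim{\cat I}$; the hypothesis that $F$ preserves $\cat I$-indexed colimits exactly says that $\chi$ is an equivalence (compare the definition of preservation in Notation~\ref{def:preservation}).

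Now I would apply the first commutative square of Lemma~\ref{lemma:mate_and_units} to this identity 2-cell, which produces the homotopy commutative square
\[
\begin{tikzcd}[sep = 30]
F \circ - \rar{(F\,\circ)\eta_{\infcat C}} \dar[swap]{\eta_{\infcat D}(F\,\circ)} & (F\,\circ)\,\Diag_{\infcat C}\,\colim{\cat I} \dar{=} \\
\Diag_{\infcat D}\,\colim{\cat I}\,(F\,\circ) \rar{\Diag_{\infcat D}\,\chi} & \Diag_{\infcat D}\,F\,\colim{\cat I}
\end{tikzcd}
\]
of functors $\Fun{\cat I}{\infcat C} \to \Fun{\cat I}{\infcat D}$. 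Evaluating at $D$ and then restricting to $i \in \cat I$ gives a homotopy commutative square
\[
\begin{tikzcd}[sep = 30]
F(D(i)) \rar{F(\mathrm{str}_i^D)} \dar[swap]{\mathrm{str}_i^{F \circ D}} & F(\colim{\cat I} D) \dar{=} \\
\colim{\cat I}(F \circ D) \rar{\chi_D}[swap]{\eq} & F(\colim{\cat I} D)
\end{tikzcd}
\]
which exhibits $F(\mathrm{str}_i^D)$ and $\mathrm{str}_i^{F \circ D}$ as differing by the equivalence $\chi_D$. Hence one is an equivalence if and only if the other is, completing the proof.

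The main obstacle is purely bookkeeping: checking that the mate of the identity 2-cell above really is the comparison map $\chi$ (which is essentially the definition), and that the application of Lemma~\ref{lemma:mate_and_units} after evaluation at $i$ produces exactly the two structure maps one expects. There is no genuine $\infty$-categorical content beyond what the calculus of mates and Remark~\ref{rem:structure_maps} already provide.
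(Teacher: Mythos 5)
Your proof is correct, but it takes a different route from the paper's. The paper disposes of this lemma in one line by combining the \emph{second} assertion of \Cref{rem:structure_maps} (the structure map is an equivalence iff a colimit diagram extending the given diagram sends $i \to \coconept$ to an equivalence) with characterization (c) of \Cref{lemma:preservation} ($F$ sends $\cocone{\cat I}$-indexed colimit diagrams to colimit diagrams): unwinding \Cref{def:structure_map}, the structure map agrees up to pre- and post-composition with equivalences with $(\Lan{\inc}D)(i \to \coconept)$, so $F$ of it is an equivalence iff $(F \circ \Lan{\inc}D)(i \to \coconept)$ is, and $F \circ \Lan{\inc}D$ is a colimit diagram extending $F \circ D$. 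You instead use the \emph{first} assertion of \Cref{rem:structure_maps} (the structure map is the unit component of $\colim{\cat I} \dashv \Diag$) and run the mate calculus: the identity $2$-cell in the $\Diag$-square has mate $\chi$, which is an equivalence exactly by \Cref{def:preservation}, and \Cref{lemma:mate_and_units} then identifies $F(\mathrm{str}_i^D)$ with $\chi_D \circ \mathrm{str}_i^{F\circ D}$ on the nose. Both arguments are valid and of comparable length; yours has the small advantage of producing an explicit homotopy-commutative square exhibiting the two maps as differing by the equivalence $\chi_D$, rather than chaining two "iff" characterizations, at the cost of leaning on the (asserted but unproved) identification of the structure map with the unit component in \Cref{rem:structure_maps}. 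The paper's route avoids that identification but implicitly uses the unwinding of \Cref{def:structure_map} described above.
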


\begin{proof}
  This follows from \Cref{lemma:preservation} and \Cref{rem:structure_maps}.
\end{proof}

\begin{lemma} \label{lemma:preservation_comp_unit}
  Let $f \colon I \to J$ be a map of simplicial sets and $F \colon \infcat C \to \infcat D$ a functor between weakly left $f$-extensible $\infty$-categories.
  Then the diagram
  \[
  \begin{tikzcd}
    (F \;\circ) \rar{\eta^{\infcat D}} \dar[swap]{\eta^{\infcat C}} & {\Res{f}} \circ {\Lan{f}} \circ (F \;\circ) \dar{\chi} \\
    (F \;\circ) \circ \Res{f} \circ \Lan{f} \rar[equal] & {\Res{f}} \circ (F \;\circ) \circ \Lan{f}
  \end{tikzcd}
  \]
  commutes up to homotopy, where $\chi$ is as in \Cref{def:preservation}, and $\eta^{\infcat C}$ and $\eta^{\infcat D}$ are the units of the adjunctions $\Lan{f} \dashv \Res{f}$ with the respective target categories.
\end{lemma}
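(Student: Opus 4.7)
The plan is to deduce this directly from \Cref{lemma:mate_and_units} applied to the defining square of $\chi$. Recall from \Cref{def:preservation} that $\chi$ is by construction the mate of the identity 2-cell filling
\[
\begin{tikzcd}[sep = 30]
  \Fun{J}{\infcat C} \rar{\Res{f}} \dar[swap]{F \circ} & \Fun{I}{\infcat C} \dar{F \circ} \dlar[Rightarrow, shorten < = 20, shorten > = 20, swap]{\id} \\
  \Fun{J}{\infcat D} \rar{\Res{f}} & \Fun{I}{\infcat D}
\end{tikzcd}
\]
with respect to the adjunctions $\Lan{f}^{\infcat C} \dashv \Res{f}$ and $\Lan{f}^{\infcat D} \dashv \Res{f}$ on the source and target rows, respectively. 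Instantiating the first commuting square of \Cref{lemma:mate_and_units} with $k = h = (F \;\circ)$, $a = c = \Res{f}$, $a_! = \Lan{f}^{\infcat C}$, $c_! = \Lan{f}^{\infcat D}$, and $\alpha = \id$, we obtain the homotopy commutative square
\[
\begin{tikzcd}
  (F \;\circ) \rar{(F \;\circ)\, \eta^{\infcat C}} \dar[swap]{\eta^{\infcat D}\, (F \;\circ)} & (F \;\circ) \circ \Res{f} \circ \Lan{f}^{\infcat C} \dar{\id} \\
  \Res{f} \circ \Lan{f}^{\infcat D} \circ (F \;\circ) \rar{\Res{f}\, \chi} & \Res{f} \circ (F \;\circ) \circ \Lan{f}^{\infcat C}
\end{tikzcd}
\]
which is literally the diagram in the statement, once we observe that $(F \;\circ) \circ \Res{f}$ and $\Res{f} \circ (F \;\circ)$ agree strictly (both send a diagram $D \colon J \to \infcat C$ to $F \circ D \circ f$), so that the unlabeled equality on the bottom of the diagram in the statement really is an equality, and similarly for the right-hand identity $2$-cell.

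There is no real obstacle here; the content of the proof is just unwinding that the diagram asked for in the statement is precisely the first of the two universal compatibility squares for a mate, once the two adjunctions are chosen as above. The only care needed is to match up the strict versus homotopy commutations: the bottom equality and the right identity in the two squares above are strict because $\Res{f}$ and $(F \;\circ)$ are given by post-/pre-composition on disjoint variables, so they strictly commute as endofunctors of the functor categories in question.
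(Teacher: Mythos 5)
Your proof is correct and is essentially the paper's own argument: the paper's proof of this lemma is literally the one-line statement that it is a special case of \Cref{lemma:mate_and_units}, and your write-up just unwinds that instantiation explicitly (your square is the transpose of the one in the statement, which is the same commutativity assertion).
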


\begin{proof}
  This is a special case of \Cref{lemma:mate_and_units}.
\end{proof}

\begin{lemma} \label{lemma:preservation_and_can}
  Let $\cat I$ be a category, $F \colon \infcat C \to \infcat D$ a functor between $\infty$-categories that admit limits indexed by $\cat I$, and denote by ${\inc} \colon \cat I \to \cone{\cat I}$ the inclusion.
  Then the following diagram in $\Fun{\Fun{\cone{\cat I}}{\infcat C}}{\infcat D}$ commutes up to homotopy:
  \[
  \begin{tikzcd}
    F \circ \Res{\conept} \rar \dar[equal] & F \circ \lim[s]{\cat I} \circ \Res{\inc} \rar{\chi} & \lim[s]{\cat I} \circ (F \;\circ) \circ \Res{\inc} \dar[equal] \\
    {\Res{\conept}} \circ (F \;\circ) \ar{rr} & & \lim[s]{\cat I} \circ {\Res{\inc}} \circ (F \;\circ)
  \end{tikzcd}
  \]
  where the upper left and the bottom horizontal morphism are given by the respective canonical map to the limit, and $\chi$ is as in \Cref{def:preservation}.
\end{lemma}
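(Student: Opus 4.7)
My plan is to exhibit both horizontal compositions in the diagram as (dual) mates of 2-cells under the adjunctions $\Diag \dashv \lim_{\cat I}$ on $\infcat C$ and $\infcat D$, and then invoke the pasting law for mates.

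Consider the 2-cell $\Res{\xi} \colon \Diag \circ \Res{\conept} \to \Res{\inc}$ obtained by applying $\Fun{\blank}{\infcat C}$ to the natural transformation $\xi \colon \conept \circ \const \to \inc$ of functors $\cat I \to \cone{\cat I}$ whose components at $i \in \cat I$ are the canonical morphisms $\conept \to i$. Dually to the characterization in \Cref{lemma:colim_is_kan_to_cocone}, the canonical map $\Res{\conept} \to \lim_{\cat I} \Res{\inc}$ is precisely the mate of $\Res{\xi}$ under $\Diag \dashv \lim_{\cat I}$. Similarly, by the dual of \Cref{def:preservation}, the natural transformation $\chi$ is the mate of the identity 2-cell $\Diag \circ F = (F \;\circ) \circ \Diag$ (with respect to the two $\Diag \dashv \lim_{\cat I}$ adjunctions on $\infcat C$ and $\infcat D$).

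I will then paste these two 2-cells horizontally in the homotopy 2-category of $\infty$-categories:
\[
\begin{tikzcd}[sep = 30]
\Fun{\cone{\cat I}}{\infcat C} \rar{\Res{\conept}} \dar[swap]{\Res{\inc}} & \infcat C \dar{\Diag} \dlar[Rightarrow, shorten < = 20, shorten > = 20, swap]{\Res{\xi}} \rar{F} & \infcat D \dar{\Diag} \dlar[Rightarrow, shorten < = 20, shorten > = 20, swap]{\id} \\
\Fun{\cat I}{\infcat C} \rar{\id} & \Fun{\cat I}{\infcat C} \rar{F \;\circ} & \Fun{\cat I}{\infcat D}
\end{tikzcd}
\]
By the (dual of the) pasting law for mates, the mate of the paste equals the paste of the individual mates; unwinding definitions, the latter is exactly the top horizontal composition $(\chi \circ \Res{\inc}) \circ (F \circ \mate{\Res{\xi}})$ in the lemma. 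On the other hand, since the right 2-cell is an identity, the paste simplifies to the whiskering $(F \;\circ) \circ \Res{\xi}$; under the canonical identifications $\Diag F = (F\;\circ)\Diag$ and $\Res{\inc}(F\;\circ) = (F\;\circ)\Res{\inc}$, this coincides with the $\infcat D$-version of $\Res{\xi}$ precomposed with $(F \;\circ)$, whose mate is precisely the bottom horizontal composition. Both sides therefore agree up to homotopy.

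The main subtlety is bookkeeping the mate calculus in the dual (right-adjoint) setting, including the corresponding dual forms of the pasting laws stated in \Cref{section:mates}; this carries over by duality. A more concrete but equivalent alternative, should a more elementary proof be preferred, is to argue pointwise via the universal property of the limit: both compositions, postcomposed with the projection to $F(D(i))$ for a diagram $D \colon \cone{\cat I} \to \infcat C$, reduce to $F(D(\xi_i))$, whence they agree by the uniqueness clause of the universal property.
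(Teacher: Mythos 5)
Your proof is correct and follows essentially the same route as the paper: both arguments express the two horizontal composites as composites of (dual) mates of two-cell pastes that visibly agree, and conclude by the pasting law for mates, with the mates of $\xi$ and of the identity square being the canonical map to the limit and $\chi$ respectively. The only quibble is with your displayed left square, where $\Res{\inc}$ and $\id$ should trade places (so that the arrows admitting the relevant right adjoints, namely $\id$ and $\Diag$, form the parallel pair along which mates are taken); with that correction its mate is indeed the canonical map $\Res{\conept} \to \lim_{\cat I} \Res{\inc}$, exactly as you assert.
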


\begin{proof}
  Consider the two diagrams
  \[
  \begin{tikzcd}[row sep = 30]
    \Fun{\cone{\cat I}}{\infcat C} \rar{\id} \dar[swap]{\Res{\conept}} & \Fun{\cone{\cat I}}{\infcat C} \dar{\Res{\inc}} & & \Fun{\cocone{\cat I}}{\infcat C} \rar{\id} \dar[swap]{F \circ} & \Fun{\cone{\cat I}}{\infcat C} \dar{F \circ} \\
    \infcat C \rar{\Diag} \dar[swap]{F} \urar[Rightarrow, shorten < = 25, shorten > = 25]{\xi} & \Fun{\cat I}{\infcat C} \dar{F \circ} & & \Fun{\cone{\cat I}}{\infcat D} \rar{\id} \dar[swap]{\Res{\conept}} \urar[Rightarrow, shorten < = 20, shorten > = 20]{\id[2]} & \Fun{\cone{\cat I}}{\infcat D} \dar{\Res{\inc}} \\
    \infcat D \rar{\Diag} \urar[Rightarrow, shorten < = 25, shorten > = 25]{\id[1]} & \Fun{\cat I}{\infcat D} & & \infcat D \rar{\Diag} \urar[Rightarrow, shorten < = 25, shorten > = 25]{\xi} & \Fun{\cat I}{\infcat D}
  \end{tikzcd}
  \]
  where $\xi$ is as in \Cref{def:can_map}, and note that their pastes agree.
  Now the pasting law for mates implies the desired statement since the mates of the transformations labeled $\xi$ are the canonical maps to the limit, the mate of $\id[1]$ is $\chi$, and the mate of $\id[2]$ is the identity.
\end{proof}

\section{Generalities} \label{section:generalitites}

%\begin{lemma} \label{lemma:almost_invertible_functor}
%  Let $\cat I$ and $\cat J$ be categories, $F \colon \cat I \to \cat J$ a functor, and $D \colon \cat J \to \infcat C$ a diagram.
%  Assume that there is a homotopy initial functor $G \colon \cat J \to \cat I$ and a natural transformation $\eta \colon \id \to FG$ such that $D \circ \eta$ is a natural equivalence.
%  Then the induced map
%  \[ F^* \colon \lim{\cat J} D \longto \lim{\cat I} F^* D \]
%  is an equivalence.
%\end{lemma}
%
%\begin{proof}
%  In the following diagram, which commutes up to homotopy, \TODO[Why?]
%  \[
%  \begin{tikzcd}
%     & \lim[s]{\cat I} F^* D \drar{G^*} & \\
%    \lim[s]{\cat J} D \urar{F^*} \ar{rr}{(FG)^*} \ar[swap]{drr}{\id^*} & & \lim[s]{\cat J} G^* (F^* D) \\
%     & & \lim[s]{\cat J} D \uar[swap]{\eta_*}
%  \end{tikzcd}
%  \]
%  the morphism $\id^*$ is an equivalence, as are, by assumption, $G^*$ and $\eta_*$.
%  Hence $F^*$ is one as well.
%\end{proof}

In this appendix we collect a number of general lemmas that we need throughout the rest of this paper.

\subsection{about posets}

\begin{lemma} \label{lemma:pos_full_implies_injective}
  Let $\pos I$ be a poset, $\cat C$ a category, and $f \colon \pos I \to \cat C$ a full functor.
  Then $f$ is injective.
\end{lemma}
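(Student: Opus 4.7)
The plan is to use the antisymmetry of the partial order on $\pos I$ together with the existence of the identity morphism in $\cat C$. Suppose $i, j \in \pos I$ satisfy $f(i) = f(j)$; I need to deduce $i = j$.

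First I would observe that the identity $\id_{f(i)} \colon f(i) \to f(j)$ is a well-defined morphism in $\cat C$ (since $f(i) = f(j)$). By fullness of $f$, this morphism must lift to a morphism $i \to j$ in $\pos I$, which in a poset simply means $i \le j$. Applying the same argument to $\id_{f(j)} \colon f(j) \to f(i)$ produces the reverse inequality $j \le i$. Antisymmetry of the partial order then forces $i = j$, which is the required injectivity.

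The argument is essentially one line and there is no real obstacle; the only subtle point worth noting is that fullness only asserts surjectivity of $f$ on hom-sets between objects in its domain, but this is exactly what we use (applied to $\Hom_{\pos I}(i, j)$ and $\Hom_{\pos I}(j, i)$), so no extra hypothesis is needed.
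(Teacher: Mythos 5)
Your proof is correct and follows essentially the same route as the paper's: both lift the identity morphism $\id_{f(i)}$ in each direction via fullness to obtain $i \le j$ and $j \le i$, then invoke antisymmetry (the paper phrases this as a contradiction with the definition of a poset, but the content is identical).
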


\begin{proof}
  Assume that there exist $i \neq i' \in \pos I$ such that $f(i) = f(i')$.
  Since $f$ is full there must be both a map $i \to i'$ and a map $i' \to i$ being mapped to $\id[f(i)]$ by $f$.
  This contradicts the definition of a poset.
\end{proof}

\begin{lemma} \label{lemma:poset_full_composition}
  Let $f \colon \cat I \to \pos J$ and $g \colon \pos J \to \cat K$ be functors between categories such that $\pos J$ is a poset and $g \circ f$ is full.
  Then $f$ is full.
\end{lemma}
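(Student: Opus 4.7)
The plan is to use the defining property of a poset: between any two objects there is at most one morphism. Given any morphism $j \colon f(i) \to f(i')$ in $\pos J$, I want to exhibit a preimage under $f$ of $j$.

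First I would apply $g$ to obtain a morphism $g(j) \colon (g \circ f)(i) \to (g \circ f)(i')$ in $\cat K$. Since $g \circ f$ is assumed full, there is some morphism $\iota \colon i \to i'$ in $\cat I$ with $(g \circ f)(\iota) = g(j)$. Then $f(\iota)$ is a morphism $f(i) \to f(i')$ in $\pos J$, and since $\pos J$ is a poset, any two parallel morphisms agree; in particular $f(\iota) = j$. Hence $j$ lies in the image of $f$ on morphisms, which is precisely fullness of $f$.

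There is no real obstacle here — the only subtlety is recognizing that one does not need to use the fullness of $g \circ f$ in any delicate way beyond producing a lift $\iota$, because the poset structure of $\pos J$ then forces $f(\iota) = j$ automatically.
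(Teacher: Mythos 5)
Your proof is correct and is essentially the paper's argument: the paper phrases it abstractly as ``a surjective map of sets factoring through a set with at most one element has surjective first factor,'' while you unwind the same fact element-wise, using the poset structure of $\pos J$ to force $f(\iota) = j$. No gap.
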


\begin{proof}
  This follows from the fact that, when a surjective map of sets factors over a set with at most one element, the first map in this factorization is also surjective.
\end{proof}

\begin{lemma} \label{lemma:contractible_products}
  Let $\pos I$ and $\pos J$ be posets.
  Assume that both $\pos I$ and $\pos J$ have initial objects $\ini_{\pos I}$ respectively $\ini_{\pos J}$ and that $\pos I$ has a terminal object $\term_{\pos I} \neq \ini_{\pos I}$.
  Then $\gini{(\pos I \times \pos J)}$ is contractible.
\end{lemma}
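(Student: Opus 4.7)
The plan is to exhibit an explicit homotopy from the identity of $\gini{(\pos I \times \pos J)}$ to a constant functor, which will immediately imply that its nerve is contractible. First I would note that the initial object of $\pos I \times \pos J$ is $(\ini_{\pos I}, \ini_{\pos J})$, so $\gini{(\pos I \times \pos J)}$ consists of all pairs $(i, j) \in \pos I \times \pos J$ except this one. Since $\term_{\pos I} \neq \ini_{\pos I}$, the element $(\term_{\pos I}, \ini_{\pos J})$ lies in $\gini{(\pos I \times \pos J)}$, and I will use it as the basepoint.

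Next I would define a functor $f \colon \gini{(\pos I \times \pos J)} \to \gini{(\pos I \times \pos J)}$ by $(i, j) \mapsto (\term_{\pos I}, j)$. This is well-defined: monotonicity is immediate, and the image avoids $(\ini_{\pos I}, \ini_{\pos J})$ precisely because $\term_{\pos I} \neq \ini_{\pos I}$. Write $c$ for the constant functor with value $(\term_{\pos I}, \ini_{\pos J})$. There is a natural transformation $\id \Rightarrow f$ given pointwise by the inequality $(i, j) \le (\term_{\pos I}, j)$, and a natural transformation $c \Rightarrow f$ given pointwise by $(\term_{\pos I}, \ini_{\pos J}) \le (\term_{\pos I}, j)$.

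Finally I would conclude using the standard fact that a natural transformation between functors of categories induces a homotopy of the geometric realizations of the nerves. The zigzag $\id \Rightarrow f \Leftarrow c$ therefore shows that $\id$ and $c$ become homotopic after taking nerves and geometric realization, so the identity of $|\mathrm{N}(\gini{(\pos I \times \pos J)})|$ is homotopic to a constant map, which exactly says that $\gini{(\pos I \times \pos J)}$ is contractible in the sense used in the paper.

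I do not expect any real obstacle here: the whole argument rests on having the element $(\term_{\pos I}, \ini_{\pos J})$ available as a non-initial ``attractor'' inside the category, which is guaranteed by the hypothesis $\term_{\pos I} \neq \ini_{\pos I}$. The only point worth being careful about is verifying that the intermediate functor $f$ really does land in $\gini{(\pos I \times \pos J)}$, i.e.\ that one cannot accidentally hit the initial object; this uses the hypothesis on $\term_{\pos I}$ in an essential way and shows why the condition $\term_{\pos I} \neq \ini_{\pos I}$ cannot be dropped.
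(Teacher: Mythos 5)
Your proof is correct and is essentially the same argument as the paper's: both hinge on the map $(i,j) \mapsto (\term_{\pos I}, j)$ and on the point $(\term_{\pos I}, \ini_{\pos J})$ being available because $\term_{\pos I} \neq \ini_{\pos I}$. The paper packages this as an adjunction between $\gini{(\pos I \times \pos J)}$ and the contractible poset $\pos J$, whereas you unpack it into a single zigzag of natural transformations $\id \Rightarrow f \Leftarrow c$; the content is the same.
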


\begin{proof}
  Since $\pos J$ has an initial object and is thus contractible, it is enough to show that there is an adjoint pair of functors between $\gini{(\pos I \times \pos J)}$ and $\pos J$ as this implies that they are homotopy equivalent.
  
  To this end, let $l \colon \gini{(\pos I \times \pos J)} \to \pos J$ be given by the projection, i.e.\ $l(i, j) = j$, and $r \colon \pos J \to \gini{(\pos I \times \pos J)}$ by $r(j) = (\term_{\pos I}, j)$.
  Note that $r$ is well-defined as, by assumption, we have $\term_{\pos I} \neq \ini_{\pos I}$.
  To check that $l$ is indeed left adjoint to $r$, we need to prove that, for all $(i, j) \in \gini{(\pos I \times \pos J)}$ and $j' \in \pos J$, we have $j = l(i,j) \le j'$ if and only if $(i, j) \le r(j') = (\term_{\pos I}, j')$.
  This is true by the assumption of $\term_{\pos I}$ being terminal in $\pos I$.
\end{proof}

\begin{lemma} \label{lemma:gini_cop_htpy_initial}
  Let $f \colon \pos S \to \pos T$ and $f' \colon \pos S' \to \pos T$ be maps of posets where $\pos S, \pos S' \in \Posini$ and $\pos T \in \Poscop$.
  Assume that $\inv f(\ini_{\pos T}) = \set{\ini_{\pos S}}$ and $\inv{(f')}(\ini_{\pos T}) = \set{\ini_{\pos S'}}$ and that for all $t \in \ginipos T$ one of the posets $f \slice t$ and $f' \slice t$ has a terminal object which is different from the initial object (in particular this is fulfilled if $f = \id[\pos T]$).
  Then $p \colon \gini{(\pos S \times \pos S')} \to \gini{\pos T}$ given by $(s, s') \mapsto f(s) \cop f'(s')$ is homotopy initial.
\end{lemma}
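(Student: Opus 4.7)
By the dual of \Cref{lemma:homotopy_terminal_stuff}, showing that $p$ is homotopy initial reduces to showing that for each $t \in \gini \pos T$ the slice category
\[ t \slice p \;\cong\; \pos Q_t := \{(s, s') \in \pos S \times \pos S' \mid t \le f(s) \cop f'(s')\} \]
is contractible. Observe that the preimage hypotheses $\inv f(\ini_{\pos T}) = \set{\ini_{\pos S}}$ and $\inv{(f')}(\ini_{\pos T}) = \set{\ini_{\pos S'}}$ immediately force $\pos Q_t \subseteq \gini{(\pos S \times \pos S')}$, since the inequality $t \le f(s) \cop f'(s')$ with $t \ne \ini_{\pos T}$ rules out $f(s) = \ini = f'(s')$. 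So there is no subtlety with restricting to the non-initial part of the product, and we may work inside the full product throughout.

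Fix $t \in \gini \pos T$ and, by the symmetry of the hypothesis in $f$ and $f'$, assume that $f \slice t$ has a non-initial terminal object $s_t$ (so $s_t = \max \{s \in \pos S \mid f(s) \le t\}$ and $s_t \ne \ini_{\pos S}$). My strategy is to exhibit a chain
\[ \pos Q_t \;\rightleftarrows\; \pos A_t \]
of adjoint functors to a subposet $\pos A_t$ of $\pos Q_t$ that has an initial element. Concretely, put
\[ \pos A_t := \{(s_t, s') \mid (s_t, s') \in \pos Q_t\} \subseteq \pos Q_t \]
and define $\phi \colon \pos Q_t \to \pos A_t$ by $\phi(s, s') := (s_t, s')$. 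The plan is to verify that $\phi$ is well-defined and monotone, so that the natural pointwise comparison $(s, s') \le \phi(s, s')$ (coming from $s \le s_t$, modulo the case analysis below) exhibits $\phi$ as left adjoint to the inclusion $\pos A_t \hookrightarrow \pos Q_t$, giving $\pos Q_t \simeq \pos A_t$. Finally, $\pos A_t$ will be contractible because, by terminality of $s_t$ in $f \slice t$ together with the preimage hypothesis on $f'$, one concludes that $(s_t, \ini_{\pos S'})$ lies in $\pos Q_t$ and is an initial object of $\pos A_t$ (any $(s_t, s') \in \pos A_t$ admits $(s_t, \ini_{\pos S'}) \le (s_t, s')$ as $\ini_{\pos S'}$ is initial in $\pos S'$).

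The main obstacle is checking that $\phi$ is well-defined and adjoint to the inclusion on \emph{all} of $\pos Q_t$, not just on the evident subposet of pairs with $f(s) \le t$. For pairs $(s, s') \in \pos Q_t$ with $f(s) \le t$ the argument is straightforward: $s \le s_t$ by terminality, hence $t \le f(s) \cop f'(s') \le f(s_t) \cop f'(s')$, so $(s_t, s') \in \pos A_t$, and the adjunction $\phi \dashv \inc$ follows from the definition of the product order. The delicate step is handling pairs with $f(s) \not\le t$: here $s$ and $s_t$ are incomparable in $\pos S$ in general, so one cannot directly produce a morphism $(s, s') \to (s_t, s')$ in $\pos Q_t$. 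Resolving this will require a zigzag through an intermediate element of $\pos Q_t$ built using the coproduct structure of $\pos T$ — the inequality $t \le f(s) \cop f'(s')$ combined with the maximality of $s_t$ inside $f \slice t$ and the preimage hypotheses on $f, f'$ should force the $f'(s')$-part to already dominate the ``missing piece'' of $t$, so that $(s_t, s')$ still lies in $\pos Q_t$. Pinning this final combinatorial argument down is where the full force of the two-sided terminality hypothesis is used, and is where most of the work of the proof lies.
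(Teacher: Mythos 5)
There is a genuine gap, and it occurs at the very first step: you have dualized the criterion for homotopy initiality the wrong way around. Homotopy terminal (the colimit-side notion) is characterized by contractibility of the under-categories $j \slice f$; dually, homotopy initial is characterized by contractibility of the \emph{over}-categories $p \slice t$, i.e.\ of
\[ p \slice t \;\cong\; \set{(s, s') \in \gini{(\pos S \times \pos S')} \mid f(s) \cop f'(s') \le t}, \]
not of your $\pos Q_t = \set{(s,s') \mid t \le f(s) \cop f'(s')}$. (This is also what the intended application demands: the lemma is used to restrict a \emph{limit} along $p$, which is exactly what homotopy initiality in the over-category sense buys.) The difficulties you flag as ``the delicate step'' and ``where most of the work lies'' are symptoms of working with the wrong object: the condition $t \le f(s) \cop f'(s')$ does not interact with the universal property of the coproduct, whereas the correct condition $f(s) \cop f'(s') \le t$ decomposes immediately as $f(s) \le t$ and $f'(s') \le t$, so that $p \slice t \cong \gini{((f \slice t) \times (f' \slice t))}$. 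Both factors have initial objects by the preimage hypotheses, one of them has a non-initial terminal object by assumption, and contractibility then follows from \Cref{lemma:contractible_products}; this is the paper's (short) argument, and it is where the terminal-object hypothesis is actually used.

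Even setting aside the dualization error, the sketch for $\pos Q_t$ is not salvageable as written: the map $\phi(s,s') = (s_t, s')$ need not land in $\pos Q_t$ (from $f(s_t) \le t$ one cannot conclude $t \le f(s_t) \cop f'(s')$), and the claimed initial object $(s_t, \ini_{\pos S'})$ of $\pos A_t$ lies in $\pos Q_t$ only if $f(s_t) = t$, which the hypotheses do not provide. You correctly identify that a further combinatorial argument would be needed there, but that argument is precisely what is missing, and with the correct comma category no such argument is required.
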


\begin{proof}
  We need that, for all $t \in \gini{\pos T}$, the category $p \slice t$ is contractible.
  This comma category can be identified with the full subposet
  \[ \set{(s, s') \in \gini{(\pos S \times \pos S')} \mid f(s) \le t \text{ and } f'(s') \le t } \subseteq \gini{(\pos S \times \pos S')} \]
  using the universal property of the coproduct.
  This, in turn, is isomorphic to the category $\gini{((f \slice t) \times (f' \slice t))}$ which is contractible by \Cref{lemma:contractible_products}.
  Here, we use that, by our assumptions both $f \slice t$ and $f' \slice t$ have an initial object ($\ini_{\pos S}$ respectively $\ini_{\pos S'}$) and one of them has a terminal object different from the initial object.
\end{proof}

\subsection{\texorpdfstring{about $\infty$-categories}{about infty-categories}}

%\begin{lemma} \label{lemma:reflective_subcat_same}
%  Let $\infcat C$ be an $\infty$-category and $\infcat C_0, \infcat C_1 \subseteq \infcat C$ two full subcategories closed under equivalences in $\infcat C$ such that the two inclusions $i_0$ and $i_1$ admit left adjoints $l_0$ respectively $l_1$.
%  Then $i_0 \circ l_0 \eq i_1 \circ l_1$ implies $\infcat C_0 = \infcat C_1$.
%\end{lemma}
%
%\begin{proof}
%  For any $C_0 \in \infcat C_0$, we have $i_0(C_0) \eq i_0 l_0 i_0(C_0) \eq i_1 l_1 i_0(C_0) \in \infcat C_1$ by \Cref{lemma:fully_faithful_adjunction}.
%  Hence $\infcat C_0 \subseteq \infcat C_1$ and the other inclusion follows analogously.
%\end{proof}

\begin{lemma} \label{lemma:composing_with_adjunction}
  Let $l \colon \infcat C \to \infcat D$ and $r \colon \infcat D \to \infcat C$ be two functors between $\infty$-categories such that $l$ is left adjoint to $r$ with unit $\eta$ and counit $\epsilon$.
  Then, for any simplicial set $K$, the functor $(l \;\circ) \colon \Fun{K}{\infcat C} \to \Fun{K}{\infcat D}$ is left adjoint to $(r \;\circ) \colon \Fun{K}{\infcat D} \to \Fun{K}{\infcat C}$ with unit $(\eta \;\circ)$ and counit $(\epsilon \;\circ)$.
\end{lemma}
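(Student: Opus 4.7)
The plan is to show that postcomposition with $K$ defines a 2-functor on the homotopy 2-category of $\infty$-categories, so that the adjunction $l \dashv r$ is carried to the adjunction $(l \;\circ) \dashv (r \;\circ)$ with the expected unit and counit, 2-functors preserving adjunctions.

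First I would unwind the definition of postcomposition with $K$. On objects it sends $\infcat C$ to $\Fun{K}{\infcat C}$. On functors $F \colon \infcat C \to \infcat D$ it produces $(F \;\circ) \colon \Fun{K}{\infcat C} \to \Fun{K}{\infcat D}$. On a natural transformation $\alpha \colon F \to G$, viewed as a functor $\Simplex{1} \to \Fun{\infcat C}{\infcat D}$, postcomposition gives $(\alpha \;\circ) \colon \Simplex{1} \to \Fun{\Fun{K}{\infcat C}}{\Fun{K}{\infcat D}}$, i.e.\ a natural transformation $(F \;\circ) \to (G \;\circ)$. Because postcomposition is induced from a map on underlying simplicial sets, the identities $(G \circ F) \;\circ = (G \;\circ) \circ (F \;\circ)$, $(\id) \;\circ = \id$, and the analogous compatibilities for vertical and horizontal composition of natural transformations (and hence their homotopy classes) hold strictly.

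Next I would verify the triangle identities for the proposed adjunction. By the compatibilities above, we have in the homotopy 2-category
\[ \bigl((\epsilon \;\circ) \star (l \;\circ)\bigr) \cdot \bigl((l \;\circ) \star (\eta \;\circ)\bigr) = \bigl((\epsilon \star l) \;\circ\bigr) \cdot \bigl((l \star \eta) \;\circ\bigr) = \bigl((\epsilon \star l) \cdot (l \star \eta)\bigr) \;\circ \]
where $\star$ denotes horizontal composition. The triangle identity for $l \dashv r$ says $(\epsilon \star l) \cdot (l \star \eta) \simeq \id[l]$ in the homotopy 2-category, and postcomposition preserves such homotopies (being induced from a simplicial map), so this composite is $\id[(l \;\circ)]$. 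The other triangle identity follows identically. Hence $(l \;\circ) \dashv (r \;\circ)$ with unit $(\eta \;\circ)$ and counit $(\epsilon \;\circ)$.

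The main obstacle, such as it is, is purely bookkeeping: spelling out that postcomposition with $K$ really is strictly compatible with horizontal and vertical composition of natural transformations, so that the triangle identities transfer directly. There is no homotopical subtlety beyond that, since all the strictness holds at the level of simplicial sets and only the final equality uses a homotopy, which is preserved. Alternatively, one may cite the fact (from Riehl--Verity, e.g.\ in the cosmological framework) that $\Fun{K}{\blank}$ defines a 2-functor on the homotopy 2-category, and invoke the standard result that 2-functors preserve adjunctions together with their units and counits.
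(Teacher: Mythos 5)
Your argument is correct and is in substance the same as the paper's: the paper's entire proof is the citation \cite[Proposition 2.1.7 (iii)]{RV}, which is precisely the statement that exponentiation by a simplicial set is (2-)functorial on the homotopy 2-category and therefore carries the adjunction $l \dashv r$ to $(l \;\circ) \dashv (r \;\circ)$ with unit $(\eta \;\circ)$ and counit $(\epsilon \;\circ)$ --- exactly the content you verify by hand via strictness of whiskering and vertical composition at the simplicial level. Your closing alternative (cite Riehl--Verity) is literally the paper's proof, so the only difference is that you supply the routine details the paper outsources.
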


\begin{proof}
  This is \cite[Proposition 2.1.7 (iii)]{RV}.
\end{proof}

\begin{lemma} \label{lemma:pullbacks_of_inftycats}
  Let the following be a pullback square in the 1-category of simplicial sets:
  \[
  \begin{tikzcd}
    K \rar \dar[swap]{f} & \infcat D \dar{g} \\
    \infcat E \rar & \cat C
  \end{tikzcd}
  \]
  where $\infcat D$ and $\infcat E$ are $\infty$-categories and $\cat C$ is a category.
  Then $K$ is an $\infty$-category.
\end{lemma}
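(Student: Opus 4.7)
The plan is to verify the inner horn lifting condition for $K$ directly, using that $\cat C$, as a $1$-category (that is, the nerve of one), has unique fillers for inner horns.

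Concretely, fix an inner horn $h \colon \Lambda^n_i \to K$ with $0 < i < n$ (so $n \geq 2$); I need to produce an extension $\bar h \colon \Delta^n \to K$. By the universal property of the pullback in simplicial sets, the map $h$ corresponds to a pair $h_{\infcat D} \colon \Lambda^n_i \to \infcat D$ and $h_{\infcat E} \colon \Lambda^n_i \to \infcat E$ with a common composition $\Lambda^n_i \to \cat C$. Since $\infcat D$ and $\infcat E$ are $\infty$-categories, I can choose extensions $\bar h_{\infcat D} \colon \Delta^n \to \infcat D$ and $\bar h_{\infcat E} \colon \Delta^n \to \infcat E$.

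Postcomposing gives two simplices $\Delta^n \to \cat C$ which both restrict to the original map $\Lambda^n_i \to \cat C$ on the horn. This is where $\cat C$ being a $1$-category is used: inner horn fillers in the nerve of a category are unique (for $n \geq 2$ and $0 < i < n$), so these two simplices coincide. The pair $(\bar h_{\infcat D}, \bar h_{\infcat E})$ is therefore compatible over $\cat C$ and, by the pullback property again, assembles into the desired extension $\bar h \colon \Delta^n \to K$ of $h$.

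The main conceptual input is the uniqueness of inner horn fillers in nerves of $1$-categories; the rest is formal manipulation with the pullback. There is no real obstacle: if one were to replace $\cat C$ by an arbitrary $\infty$-category the argument would break precisely because extensions to $\Delta^n$ in $\cat C$ need no longer be unique (only unique up to homotopy), and without the strict equality the two lifts $\bar h_{\infcat D}, \bar h_{\infcat E}$ cannot be glued along the pullback (which is a strict $1$-categorical limit of simplicial sets).
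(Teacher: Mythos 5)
Your proof is correct, but it takes a more elementary route than the paper. The paper's proof is entirely formal: it cites \cite[Proposition 2.3.1.5]{LurHTT} to conclude that $g \colon \infcat D \to \cat C$ is an inner fibration (a map from an $\infty$-category to the nerve of an ordinary category always is), then uses stability of inner fibrations under pullback to get that $f$ is an inner fibration, and finally composes with the inner fibration $\infcat E \to \termCat$ to conclude that $K \to \termCat$ is an inner fibration, i.e.\ that $K$ is an $\infty$-category. Your argument unpacks exactly the content of that cited proposition by hand: the uniqueness of inner horn fillers in the nerve of a $1$-category is precisely why $g$ is an inner fibration, and your trick of filling the horn independently in $\infcat D$ and $\infcat E$ and then matching the two composites downstairs by uniqueness is a clean way to avoid ever phrasing things in terms of relative lifting problems. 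What the paper's approach buys is brevity and reusability (the same three facts about inner fibrations settle many similar situations at once); what yours buys is self-containedness and a transparent view of where the hypothesis that $\cat C$ is a $1$-category enters -- your closing remark about why the argument fails over an $\infty$-categorical base is exactly the right observation and is not made explicit in the paper.
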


\begin{proof}
  By \cite[Proposition 2.3.1.5]{LurHTT}, the functor $g$ is an inner fibration.
  But then $f$ is also an inner fibration since they are stable under pullbacks.
  Since $\infcat E$ is an $\infty$-category, the constant map $c \colon \infcat E \to \termCat$ is also an inner fibration.
  Since inner fibrations are closed under composition, the constant map $K \to \termCat$ is thus also an inner fibration and hence $K$ an $\infty$-category.
\end{proof}

\begin{lemma} \label{lemma:generalized_subcat}
  Let $\infcat C$ be an $\infty$-category, $\cat D$ a category, and $f \colon \cat D \to \hcat{\infcat C}$ a functor.
  Furthermore, let $\infcat E$ be a pullback (in the 1-category of simplicial sets) as in the diagram
  \[
  \begin{tikzcd}
    \infcat E \rar{g} \dar[swap]{p} & \infcat C \dar{\hcatmap[\infcat C]} \\
    \cat D \rar{f} & \hcat{\infcat C}
  \end{tikzcd}
  \]
  where $\hcatmap[\infcat C]$ denotes the canonical functor to the homotopy category.
  Then $\infcat E$ is an $\infty$-category, there is a unique isomorphism $\cat D \iso \hcat{\infcat E}$ under $\infcat E$, and, for two objects $E, E' \in \infcat E$ and morphism $d \colon p(E) \to p(E')$ in $\cat D$, the functor $g$ induces an equivalence from the path component of $\Map[\infcat E]{E}{E'}$ over $d$ (there is only one such component by the identification $\cat D \iso \hcat{\infcat E}$) to the path component of $\Map[\infcat C]{g(E)}{g(E')}$ over $f(d)$.
\end{lemma}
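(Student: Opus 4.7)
The plan is to exploit the explicit description of $\infcat E$ as a levelwise pullback of simplicial sets together with the fact that $\hcatmap[\infcat C]$ is the identity on objects. First I would apply \Cref{lemma:pullbacks_of_inftycats} directly to the defining pullback to conclude that $\infcat E$ is an $\infty$-category. Since $\hcatmap[\infcat C]$ is bijective on objects, so is $p\colon \infcat E \to \cat D$; the objects of $\infcat E$ are simply pairs $(d, c)$ with $\hcatmap[\infcat C](c) = f(d)$ (which, on the level of objects, forces $c$ to be the unique object with that name).

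Next, the universal property of $\hcat{\infcat E}$ as the initial category under $\infcat E$ yields a canonical factorization $\bar p\colon \hcat{\infcat E} \to \cat D$ of $p$, and establishes that this is the unique functor under $\infcat E$ with target $\cat D$ (in particular, the isomorphism in the statement is automatically unique if it exists). I would then check that $\bar p$ is bijective on morphisms. For surjectivity: given $\delta\colon d \to d'$ in $\cat D$, pick any 1-simplex $\gamma$ of $\infcat C$ between the lifts of $d, d'$ representing the class $f(\delta)$; then $(\delta, \gamma)$ is a 1-simplex in $\infcat E$ with $\bar p([(\delta, \gamma)]) = \delta$. For injectivity: two 1-simplices $(\delta, \gamma_1), (\delta, \gamma_2)$ of $\infcat E$ with identical projection to $\cat D$ have $\gamma_1, \gamma_2$ both representing $f(\delta)$, hence a 2-simplex $\sigma$ in $\infcat C$ witnesses $\gamma_1 \sim \gamma_2$; since $\hcat{\infcat C}$ is a $1$-category, $\hcatmap[\infcat C](\sigma)$ is determined by its edges, which are the images of the unique commuting triangle $\tau$ in $\cat D$ with the same edge pattern, so $(\tau, \sigma)$ lifts to a $2$-simplex in $\infcat E$ exhibiting the homotopy.

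For the claim about mapping spaces, the exponential $(-)^{\Simplex 1}$ preserves limits of simplicial sets, and taking the fibers over prescribed source and target objects preserves limits as well; this yields for $E = (d, c)$ and $E' = (d', c')$ a pullback
\[ \Map_{\infcat E}(E, E') = \Map_{\cat D}(d, d') \times_{\Map_{\hcat{\infcat C}}(f(d), f(d'))} \Map_{\infcat C}(c, c'). \]
Since $\Map_{\cat D}(d, d')$ is discrete this decomposes as the disjoint union, over $\delta \in \Hom_{\cat D}(d, d')$, of the fiber $F_\delta$ of $\Map_{\infcat C}(c, c') \to \Map_{\hcat{\infcat C}}(f(d), f(d'))$ over $f(\delta)$. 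By the defining property of $\hcat{\infcat C}$, the induced map $\pi_0 \Map_{\infcat C}(c, c') \to \Hom_{\hcat{\infcat C}}(f(d), f(d'))$ is a bijection, so each $F_\delta$ is a single path component of $\Map_{\infcat C}(c, c')$, namely the one lying over $f(\delta)$. Identifying $\Hom_{\cat D}(d, d')$ with $\pi_0 \Map_{\infcat E}(E, E')$ via $\bar p$, the component of $\Map_{\infcat E}(E, E')$ over $\delta$ is $F_\delta$, and the action of $g$ on this component is the tautological inclusion into the component of $\Map_{\infcat C}(g(E), g(E'))$ over $f(\delta)$, which is an equivalence.

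The main obstacle I anticipate is the bookkeeping in the injectivity step: making sure the correct convention for the homotopy $2$-simplex is used and that the commuting triangle chosen in $\cat D$ really has edges matching those of $\sigma$ in $\hcat{\infcat C}$. This reduces, however, to the elementary fact that a $2$-simplex in a $1$-category is determined by its $1$-skeleton.
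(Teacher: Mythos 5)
Your proposal is correct and follows essentially the same route as the paper: both exploit that the pullback is computed levelwise and that hom-objects in the nerve of a $1$-category are discrete, so that the mapping space of $\infcat E$ decomposes as a disjoint union, indexed by $\Hom_{\cat D}(d,d')$, of the path components of $\Map_{\infcat C}(c,c')$ lying over the corresponding morphisms of $\hcat{\infcat C}$. The only organizational difference is that the paper deduces the identification $\cat D \iso \hcat{\infcat E}$ directly from this mapping-space decomposition, whereas you establish it first by a hands-on surjectivity/injectivity argument on $1$- and $2$-simplices; your extra argument is sound but is subsumed by the $\pi_0$ computation you carry out afterwards.
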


\begin{proof}
  That $\infcat E$ is an $\infty$-category follows directly from \Cref{lemma:pullbacks_of_inftycats}.
  Furthermore note that, since $\hcatmap[\infcat C]$ is a bijection on objects, the map $p$ is as well, i.e.\ we can identify objects of $\infcat E$ with objects of $\cat D$.
  Now, by the universal property of the pullback the functor $g$ induces, for any morphism $d \colon D \to D'$ in $\infcat D$, an isomorphism from the simplicial subset of $\HomR[\infcat E]{D}{D'}$ lying over $d$ to the simplicial subset of $\HomR[\infcat C]{f(D)}{f(D')}$ lying over $f(d)$ (cf.\ \cite[Section 1.2.2]{LurHTT} for the definition of $\HomRwo$).
  This shows the last statement.
  To obtain the identification $\cat D \iso \hcat{\infcat E}$, note that what we have already shown implies that the part of $\Map[\infcat E]{D}{D'}$ lying over $d$ is path-connected and that these parts are, for different morphisms in $\cat D$, disjoint path-components that cover the whole space.
\end{proof}

\begin{lemma} \label{lemma:hcatmap_categorical_fibration}
  Let $\infcat C$ be an $\infty$-category.
  Then the canonical map $\hcatmap[\infcat C] \colon \infcat C \to \hcat{\infcat C}$ to its homotopy category is a categorical fibration.
\end{lemma}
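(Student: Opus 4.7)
My plan is to invoke the standard characterization of categorical fibrations between $\infty$-categories as those maps that are simultaneously inner fibrations and isofibrations (i.e.\ that lift equivalences); see for instance \cite[Section 2.4.6]{LurHTT}. I would verify these two conditions for $\hcatmap$ separately.

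For the inner fibration property, given an inner horn $\Lambda^n_i \to \infcat C$ (with $0 < i < n$) together with a compatible extension $\sigma \colon \Simplex n \to \hcat{\infcat C}$, I would first fill the horn in $\infcat C$ to obtain some $\tilde\sigma \colon \Simplex n \to \infcat C$, which is possible since $\infcat C$ is an $\infty$-category. It then suffices to verify the equality $\hcatmap \circ \tilde\sigma = \sigma$. The key observation is that the spine of $\Simplex n$ (the subcomplex consisting of the edges $\set{j, j+1}$ for $0 \le j < n$) is already contained in $\Lambda^n_i$ by elementary combinatorics (each such edge lies in the face $d_l \Simplex n$ for any $l \not\in \set{i, j, j+1}$, and such an $l$ exists whenever $n \ge 2$), and that an $n$-simplex in the nerve of a $1$-category is uniquely determined by its restriction to the spine, since a functor out of $[n]$ is determined by its values on the generating morphisms. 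Since $\hcatmap \circ \tilde\sigma$ and $\sigma$ agree on all of $\Lambda^n_i$, they in particular agree on the spine, and therefore coincide.

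For the equivalence-lifting condition, I would invoke the standard fact that a morphism in $\infcat C$ is an equivalence precisely when its image is an isomorphism in $\hcat{\infcat C}$ (see e.g.\ \cite[Section 1.2.4]{LurHTT}). Then, given $x \in \infcat C$ and an isomorphism $\bar e \colon x \to y$ in $\hcat{\infcat C}$, it suffices to pick any representative $e$ of the homotopy class $\bar e$; such an $e$ is automatically an equivalence in $\infcat C$ and satisfies $\hcatmap(e) = \bar e$ by construction.

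I do not anticipate any serious obstacle here; the only mildly delicate point is noticing, in the inner fibration step, that no explicit modification of the horn filler is needed to achieve compatibility with $\sigma$---this drops out immediately from the spine observation above.
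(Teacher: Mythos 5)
Your proof is correct and follows essentially the same route as the paper: both reduce to the characterization of categorical fibrations as inner fibrations satisfying the equivalence-lifting condition (via \cite[Corollary 2.4.6.5]{LurHTT}) and then verify the two conditions separately. The only difference is that where the paper simply cites \cite[Proposition 2.3.1.5]{LurHTT} for the inner fibration property, you reprove that fact by hand with the spine argument, which is a correct (and self-contained) unwinding of the same statement.
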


\begin{proof}
  By \cite[Corollary 2.4.6.5]{LurHTT} the statement is equivalent to $\hcatmap[\infcat C]$ being an inner fibration such that for every equivalence $f \colon D \to D'$ in $\hcat{\infcat C}$ and $C \in \infcat C$ with $\hcatmap[\infcat C](C) = D$ there exists an equivalence $g \colon C \to C'$ in $\infcat C$ such that $\hcatmap[\infcat C](g) = f$.
  That it is an inner fibration follows directly from \cite[Proposition 2.3.1.5]{LurHTT} and the other property is clear from the definition of the homotopy category.
\end{proof}

\begin{lemma} \label{lemma:sequential_diagrams}
  Denote by $S$ the simplicial set obtained from the directed graph with vertices $\NN$ and an edge $n \to n+1$ for every $n \in \NN$, and by $i \colon S \to \NN$ the canonical inclusion of simplicial sets.
  Then, for every $\infty$-category $\infcat C$, the restriction $\Res{i} \colon \Fun{\NN}{\infcat C} \to \Fun{S}{\infcat C}$ is a trivial Kan fibration.
  
  In particular, for every functor $f \colon S \to \infcat C$, there is an essentially unique functor $g \colon \NN \to \infcat C$ such that $g \circ i = f$.
  Less formally: to specify a sequential diagram in $\infcat C$ it is enough to specify a sequence of composable morphisms $(D_n \to D_{n+1})_{n \in \NN}$ in $\infcat C$.
\end{lemma}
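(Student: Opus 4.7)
The plan is to deduce the lemma from the claim that the inclusion $i \colon S \to \NN$ is inner anodyne. Once this is established, the statement that $\Res{i}$ is a trivial Kan fibration follows from the pushout-product property for inner anodyne maps in the Joyal model structure: the pushout-product of an inner anodyne map with any monomorphism of simplicial sets is again inner anodyne. Applying this to $i$ and the boundary inclusions $\partial \Simplex{n} \hookrightarrow \Simplex{n}$, and using that $\infcat C \to \termCat$ is an inner fibration (since $\infcat C$ is an $\infty$-category), the cotensor-tensor adjunction translates the resulting left lifting property into the right lifting property of $\Res{i}$ against all boundary inclusions, which is exactly what it means to be a trivial Kan fibration.

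To show $i$ is inner anodyne, I would express it as a transfinite composition. Write $\NN$ as the increasing union of the full subcomplexes $\Simplex{\set{0, \ldots, n}}$, and set $F_n \defeq S \cup \Simplex{\set{0, \ldots, n}}$, so that $F_0 = S$, $\bigcup_n F_n = \NN$, and for each $n \geq 0$ the inclusion $F_n \hookrightarrow F_{n+1}$ is a pushout of
\[ \Simplex{\set{0, \ldots, n}} \cup \Simplex{\set{n, n+1}} \hookrightarrow \Simplex{\set{0, \ldots, n+1}}, \]
since every spine edge of $\Simplex{\set{0, \ldots, n+1}}$ apart from $\Simplex{\set{n, n+1}}$ already lies in $\Simplex{\set{0, \ldots, n}}$. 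As inner anodyne maps are closed under pushouts and transfinite compositions, it suffices to show that each such ``last-edge inclusion'' is inner anodyne.

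I would then prove this by induction on $n$. The base case $n = 1$ is the inner horn inclusion $\Lambda^{2}_{1} \hookrightarrow \Simplex{2}$. For the inductive step, the intersection of $\Simplex{\set{0, \ldots, n}} \cup \Simplex{\set{n, n+1}}$ with the face $\Simplex{\set{1, \ldots, n+1}}$ is (after relabeling) exactly the last-edge inclusion at stage $n-1$, which is inner anodyne by the inductive hypothesis; so $\Simplex{\set{1, \ldots, n+1}}$ may first be attached via an inner anodyne pushout. The remaining faces $d_1, \ldots, d_n$ and the top simplex of $\Simplex{\set{0, \ldots, n+1}}$ are then attached one at a time, the order chosen so that each attaching subcomplex is an inner horn.

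The main obstacle is precisely this last combinatorial bookkeeping: verifying that a valid ordering of the remaining attachments exists and that at each stage the attaching subcomplex is genuinely an inner horn (rather than an outer horn $\Lambda^{m}_{0}$ or $\Lambda^{m}_{m}$, or the full boundary $\partial \Simplex{m}$). A cleaner alternative, if this becomes unwieldy, is to directly invoke the classical result of Joyal that the spine inclusion into the nerve of any linearly ordered set is inner anodyne, specialized to the poset $\NN$.
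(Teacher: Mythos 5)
Your argument is correct, but it takes a different (much more self-contained) route than the paper, whose entire proof is a one-line citation of \cite[Theorem 00J4]{LurK} — the statement that for any directed graph $G$, restriction from the nerve of its path category to $G$ itself is a trivial Kan fibration. What you have written is essentially a reconstruction of the proof of that cited theorem in the special case at hand: reduce via the pushout-product property for inner anodyne maps to showing that the spine inclusion $i \colon S \to \NN$ is inner anodyne, exhibit $i$ as a countable composition of pushouts of the inclusions $\Simplex{\set{0,\dots,n}} \cup \Simplex{\set{n,n+1}} \hookrightarrow \Simplex{\set{0,\dots,n+1}}$ (your identification of the intersection, and hence of the pushout square, is correct, as is the observation that $\NN$ is the union of the $F_n$), and then prove each of these is inner anodyne by induction starting from $\Lambda^2_1 \hookrightarrow \Simplex{2}$. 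The only step you leave genuinely open is the combinatorial ordering of the remaining face attachments in the inductive step, and you correctly flag this and offer the right escape hatch: this is exactly the classical spine lemma of Joyal (also \cite[Proposition 2.3.2.1 et seq.]{LurHTT} territory, and available in Kerodon), so citing it is legitimate. In short: the paper buys brevity by outsourcing everything to a single reference; your version buys transparency at the cost of one standard combinatorial lemma that you would either have to write out in full or cite anyway.
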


\begin{proof}
  This is \cite[\href{https://kerodon.net/tag/00J4}{Theorem 00J4}]{LurK} applied to the directed graph used to define $S$.
\end{proof}

\begin{lemma} \label{lemma:unit_classical}
  Let $L \colon \infcat C \to \infcat D$ and $R \colon \infcat D \to \infcat C$ be two functors between $\infty$-categories such that there is an adjunction $L \dashv R$ with unit $\eta \colon \id[\infcat C] \to R \circ L$.
  Then, for all $c \in \infcat C$ and $d \in \infcat D$, the composition
  \[ \Map[\infcat D]{L(c)}{d} \xlongto{R} \Map[\infcat C]{R L(c)}{R (d)} \xlongto{\circ \eta(c)} \Map[\infcat C]{c}{R (d)} \]
  is an equivalence.
\end{lemma}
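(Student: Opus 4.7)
The plan is to exhibit an explicit homotopy inverse to the stated composition, built from the counit of the adjunction. Write $\epsilon \colon L \circ R \to \id[\infcat D]$ for the counit paired with the chosen unit $\eta$. The proposed inverse is the composition
\[ \Map[\infcat C]{c}{R(d)} \xlongto{L} \Map[\infcat D]{L(c)}{L R(d)} \xlongto{\epsilon(d) \;\circ} \Map[\infcat D]{L(c)}{d}, \]
i.e.\ the map $\psi$ sending $f$ to $\epsilon(d) \circ L(f)$.

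The verification that $\psi$ is a homotopy inverse reduces to the two triangle identities. For one direction, given $g \colon L(c) \to d$, one computes
\[ \psi(R(g) \circ \eta(c)) = \epsilon(d) \circ L R(g) \circ L(\eta(c)) \eq g \circ \epsilon(L(c)) \circ L(\eta(c)) \eq g, \]
where the first equivalence is naturality of $\epsilon$ applied to the morphism $g$ and the second is the triangle identity $\epsilon L \circ L \eta \eq \id[L]$ evaluated at $c$. The other roundtrip is handled symmetrically, using the triangle identity $R \epsilon \circ \eta R \eq \id[R]$ together with naturality of $\eta$.

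The main subtlety is that this argument takes place in the homotopy 2-category of $\infty$-categories, where naturality and the triangle identities only hold as equalities of 2-cells (i.e.\ homotopy classes of natural transformations). However, composition and whiskering of natural transformations descend well-definedly to homotopy classes, so the chain of equivalences above genuinely produces a homotopy between the two relevant maps of mapping spaces. Alternatively, and perhaps more economically, one could bypass this bookkeeping by simply citing the analogous characterization of adjunctions in \cite{RV}, where it is shown that an adjunction in the homotopy 2-category of $\infty$-categories is equivalent to the data of natural hom-space equivalences implemented by precisely the formula in the statement.
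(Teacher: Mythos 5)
Your proposal is correct and takes essentially the same route as the paper: the paper also exhibits $(\epsilon(d)\;\circ)\circ L$ as the candidate inverse and verifies one roundtrip via naturality of $\epsilon$ together with the triangle identity $\epsilon L \circ L\eta \eq \id$, merely packaging the computation as a homotopy commutative diagram instead of a chain of equivalences, and likewise dismisses the other roundtrip as analogous.
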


\begin{proof}
  We claim that $(\epsilon(d) \;\circ) \circ L$ is a quasi-inverse, where $\epsilon \colon L \circ R \to \id[\infcat D]$ is the counit of the above adjunction $L \dashv R$.
  To see that it is a left inverse consider the homotopy commutative diagram
  \[
  \begin{tikzcd}
    \Map[\infcat D]{L(c)}{d} \dar[swap]{R} \drar[start anchor = south east]{LR} \ar[start anchor = east, bend left = 10]{drr}{\circ \epsilon(L(c))} & & \\
    \Map[\infcat C]{R L(c)}{R (d)} \rar{L} \dar[swap]{\circ \eta(c)} & \Map[\infcat D]{L R L(c)}{L R (d)} \dar[swap]{\circ L(\eta(c))} \rar{\epsilon(d) \circ} & \Map[\infcat D]{L R L(c)}{d} \dar{\circ L(\eta(c))} \\
    \Map[\infcat C]{c}{R (d)} \rar{L} & \Map[\infcat D]{L(c)}{L R (d)} \rar{\epsilon(d) \circ} & \Map[\infcat D]{L(c)}{d}
  \end{tikzcd}
  \]
  and note that the composition along the right side of the diagram is homotopic to the identity by one of the triangle identities.
  Analogously one can show that it is also a right inverse.
\end{proof}

\subsection{about cartesian diagrams}

\begin{lemma} \label{lemma:preserves_cartesian}
  Let $\cat I$ and $\cat J$ be categories that have initial objects, $f \colon \cat I \to \cat J$ an initial object preserving functor, and $\infcat C$ an $\infty$-category that admits limits indexed by both $\gini{\cat I}$ and by $\gini{\cat J}$.
  Furthermore, assume that $f$ restricts to a functor $\gini{\cat I} \to \gini{\cat J}$ and that this functor is homotopy initial.
  
  Then a diagram $D \colon \cat J \to \infcat C$ is cartesian if and only if $D \circ f \colon \cat I \to \infcat C$ is cartesian.
\end{lemma}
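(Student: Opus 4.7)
The plan is to compare the two canonical maps defining cartesianness of $D$ and $D \circ f$ directly, using the fact that $f|_{\gini{\cat I}}$ is homotopy initial together with the dual of \Cref{lemma:functors_and_map_from_colim} to ensure the canonical maps are intertwined.

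First I would observe that, since $f$ preserves initial objects, $\ini_{\cat J} \defeq f(\ini_{\cat I})$ is an initial object of $\cat J$, which (by the parenthetical in \Cref{def:cartesian}) we may use when checking cartesianness of $D$. Since $(D \circ f)(\ini_{\cat I}) = D(\ini_{\cat J})$ and $(D \circ f)|_{\gini{\cat I}} = D|_{\gini{\cat J}} \circ f|_{\gini{\cat I}}$, the two canonical maps in question take the form
\[ D(\ini_{\cat J}) \longto \lim[s]{\gini{\cat J}} D|_{\gini{\cat J}} \qquad \text{and} \qquad D(\ini_{\cat J}) \longto \lim[s]{\gini{\cat I}} \left(D|_{\gini{\cat J}} \circ f|_{\gini{\cat I}}\right) \;. \]

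Next, I would apply the dual of \Cref{lemma:functors_and_map_from_colim} to the functor $f|_{\gini{\cat I}} \colon \gini{\cat I} \to \gini{\cat J}$, noting that $\infcat C$ admits the relevant limits by assumption. Identifying the full subcategory of $\cat J$ spanned by $\ini_{\cat J}$ and $\gini{\cat J}$ with $\cone{(\gini{\cat J})}$ (and analogously for $\cat I$), and restricting $D$ to this subcategory, this lemma produces a homotopy commutative triangle
\[
\begin{tikzcd}[column sep = 20]
D(\ini_{\cat J}) \rar \drar & \lim[s]{\gini{\cat J}} D|_{\gini{\cat J}} \dar{(f|_{\gini{\cat I}})^*} \\
& \lim[s]{\gini{\cat I}} \left(D|_{\gini{\cat J}} \circ f|_{\gini{\cat I}}\right)
\end{tikzcd}
\]
whose two slanted maps are precisely the two canonical maps above.

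Finally, the vertical map in this triangle is an equivalence: this is the dual of \Cref{lemma:homotopy_terminal}, which applies since $f|_{\gini{\cat I}}$ is homotopy initial by hypothesis. Two-out-of-three then tells us that the top horizontal canonical map is an equivalence if and only if the diagonal one is, which is precisely the claim that $D$ is cartesian if and only if $D \circ f$ is cartesian. The only mild obstacle is the careful bookkeeping of the identification of the full subcategory of $\cat J$ on $\{\ini_{\cat J}\} \cup \gini{\cat J}$ with $\cone{(\gini{\cat J})}$ in order to invoke the cited lemma, but this is entirely formal.
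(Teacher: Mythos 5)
Your proof is correct and follows essentially the same route as the paper's: both apply the dual of \Cref{lemma:functors_and_map_from_colim} to $f|_{\gini{\cat I}}$ to intertwine the two canonical maps, observe that the induced comparison map of limits is an equivalence because $f|_{\gini{\cat I}}$ is homotopy initial, and conclude by two-out-of-three. The only cosmetic difference is that the paper draws this as a square with an equality on top rather than a triangle.
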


\begin{proof}
  We have, by (the dual of) \Cref{lemma:functors_and_map_from_colim}, a homotopy commutative diagram
  \[
  \begin{tikzcd}
  D(\ini_{\cat J}) \rar[equal] \dar & (D \circ f)(\ini_{\cat I}) \dar \\
  \lim{\gini{\cat J}} \restrict{D}{\gini{\cat J}} \rar{\eq}[swap]{f^*} & \lim{\gini{\cat I}} \restrict{(D \circ f)}{\gini{\cat I}}
  \end{tikzcd}
  \]
  in which the bottom horizontal map is an equivalence by assumption.
  Hence, the left vertical map is an equivalence if and only if the right vertical map is an equivalence, as we wanted to show.
\end{proof}

\begin{lemma} \label{lemma:pointwise_cartesian}
  Let $\cat I$ and $\cat J$ be categories such that $\cat J$ has an initial object, $\infcat C$ an $\infty$-category that admits limits indexed by $\gini{\cat J}$, and $D \colon \cat I \times \cat J \to \infcat C$ a functor.
  Denote by $D_{\cat I} \colon \cat I \to \Fun{\cat J}{\infcat C}$ and $D_{\cat J} \colon \cat J \to \Fun{\cat I}{\infcat C}$ the curried functors.
  Furthermore assume that $D_{\cat I}(i) = {\Res{i}} \circ D_{\cat J} \colon \cat J \to \infcat C$ is cartesian for all $i \in \cat I$.
  \begin{enumerate}[label=\alph*)]
    \item If $\infcat C$ admits limits indexed by $\cat I$, then $D_{\cat J}$, $\lim{\cat I} \circ D_{\cat J}$, and $\lim{\cat I} D_{\cat I}$ are all cartesian.
    \item If $\infcat C$ admits colimits indexed by $\cat I$ and the functor ${\colim{\cat I}} \colon \Fun{\cat I}{\infcat C} \to \infcat C$ preserves limits indexed by $\gini{\cat J}$, then $\colim{\cat I} \circ D_{\cat J}$ and $\colim{\cat I} D_{\cat I}$ are cartesian.
  \end{enumerate}
\end{lemma}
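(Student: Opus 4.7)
The plan is to reduce both parts to a single auxiliary observation: any functor between $\infty$-categories that preserves $\gini{\cat J}$-indexed limits sends cartesian diagrams to cartesian diagrams. I will apply this to $\lim{\cat I}$ in part (a) and to $\colim{\cat I}$ in part (b), and combine it with the currying identification of \Cref{lemma:colimits_and_currying}.

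First I would verify that $D_{\cat J}$ is cartesian as a diagram in $\Fun{\cat I}{\infcat C}$. Since limits in functor categories are computed pointwise---concretely, each evaluation $\Res{i} \colon \Fun{\cat I}{\infcat C} \to \infcat C$ preserves $\gini{\cat J}$-indexed limits by (the dual of) \Cref{lemma:fun_preserving_limits}---cartesianness of $D_{\cat J}$ is equivalent to cartesianness of $\Res{i} \circ D_{\cat J} = D_{\cat I}(i)$ for every $i \in \cat I$, which is precisely the hypothesis.

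Next I would establish the auxiliary claim: if $F \colon \infcat E \to \infcat F$ preserves $\gini{\cat J}$-indexed limits and $E \colon \cat J \to \infcat E$ is cartesian, then $F \circ E$ is cartesian. The argument applies $F$ to the equivalence $E(\ini_{\cat J}) \xlongto{\eq} \lim{\gini{\cat J}} \Res{\gini{\cat J}} E$ and invokes the dual of \Cref{lemma:preservation_and_can} to identify the composition with the canonical map $(F \circ E)(\ini_{\cat J}) \to \lim{\gini{\cat J}} \Res{\gini{\cat J}} (F \circ E)$, which is then an equivalence.

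For part (a), the functor $\lim{\cat I}$ is the right adjoint to $\Diag$ and hence preserves all limits existing in $\infcat C$; the auxiliary claim then yields cartesianness of $\lim{\cat I} \circ D_{\cat J}$. The dual of \Cref{lemma:colimits_and_currying} supplies an equivalence $\lim{\cat I} D_{\cat I} \eq \lim{\cat I} \circ D_{\cat J}$ in $\Fun{\cat J}{\infcat C}$, so $\lim{\cat I} D_{\cat I}$ is cartesian as well. For part (b), the auxiliary claim applies directly to $\colim{\cat I}$ by the preservation hypothesis, giving cartesianness of $\colim{\cat I} \circ D_{\cat J}$, and \Cref{lemma:colimits_and_currying} identifies this with $\colim{\cat I} D_{\cat I}$. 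The only subtle point is the compatibility asserted in the auxiliary claim---that $F$ applied to the canonical map from a value to its limit really agrees with the canonical map for the post-composed diagram---and this is exactly the content of \Cref{lemma:preservation_and_can}, so no genuinely new difficulty arises.
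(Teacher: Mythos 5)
Your proposal is correct and follows essentially the same route as the paper: both reduce everything to \Cref{lemma:preservation_and_can} (applied first to the evaluations $\Res{i}$ to get cartesianness of $D_{\cat J}$, then to $\lim{\cat I}$ respectively $\colim{\cat I}$) and then invoke \Cref{lemma:colimits_and_currying} to transfer the statement to $\lim{\cat I} D_{\cat I}$ and $\colim{\cat I} D_{\cat I}$. Your ``auxiliary claim'' is just a clean packaging of the step the paper performs inline, so there is no substantive difference.
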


\begin{proof}
  We first show that $D_{\cat J}$ is cartesian, i.e.\ that the canonical map $D_{\cat J}(\ini) \to (\lim{\gini{\cat J}} \Res{\gini{\cat J}})(D_{\cat J})$ is an equivalence.
  For this it is enough that its restriction to $i$ is an equivalence for all $i \in \cat I$, which follows by assumption and \Cref{lemma:preservation_and_can} since $\Res{i}$ preserves limits.
  
  Now, if $\infcat C$ admits limits indexed by $\cat I$, then $\lim{\cat I} \circ D_{\cat J}$ is also cartesian by again \Cref{lemma:preservation_and_can} since $\lim{\cat I}$ preserves limits by \Cref{lemma:limits_preserve_limits}.
  This also implies that $\lim{\cat I} D_{\cat I}$ is cartesian by \Cref{lemma:colimits_and_currying}.
  The statement about colimits can be shown in the same way since we only used that $\lim{\cat I}$ preserves limits indexed by $\gini{\cat J}$.
\end{proof}

\begin{lemma} \label{lemma:cartesian_products}
  Let $\pos I$ and $\pos J$ be categories with initial objects and $\infcat C$ an $\infty$-category that admits limits indexed by $\gini{\cat J}$.
  Furthermore, let $D \colon \pos I \times \pos J \to \infcat C$ be a diagram such that, for each $i \in \pos I$, the restriction $\restrict D {\set i \times \cat J} \colon \cat J \to \infcat C$ is cartesian.
  Then $D$ is a limit diagram.
\end{lemma}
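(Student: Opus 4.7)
The plan is to identify $D$ restricted to $\gini{(\pos I \times \pos J)}$ as a right Kan extension of its further restriction to the full subposet $\pos I \times \gini{\pos J}$ along the inclusion $\iota \colon \pos I \times \gini{\pos J} \hookrightarrow \gini{(\pos I \times \pos J)}$. Granting this, the dual of \Cref{lemma:colim_of_kan} gives $\lim_{\gini{(\pos I \times \pos J)}} D \simeq \lim_{\pos I \times \gini{\pos J}} D$, and a Fubini computation then reduces the right-hand side to $D(\ini_{\pos I}, \ini_{\pos J})$, proving that the canonical map in the definition of cartesianness is an equivalence.

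To establish the Kan extension claim I would verify pointwise (using the dual of \Cref{lemma:cocone_comparison}) that the unit map $D(i, j) \to (\Ran{\iota} D|_{\pos I \times \gini{\pos J}})(i, j)$ is an equivalence at each $(i, j) \in \gini{(\pos I \times \pos J)}$. For $(i, j) \in \pos I \times \gini{\pos J}$ this is automatic since the slice $(i, j) \slice \iota$ has $(i, j)$ as an initial object. The interesting case is $(i, \ini_{\pos J})$ with $i \in \gini{\pos I}$, in which case $(i, \ini_{\pos J}) \slice \iota$ is isomorphic to $(i \slice \pos I) \times \gini{\pos J}$. Applying Fubini (the dual of \Cref{lemma:colimits_and_currying}), the limit over this product factors as $\lim_{i' \in i \slice \pos I} \lim_{j' \in \gini{\pos J}} D(i', j')$; the inner limit evaluates to $D(i', \ini_{\pos J})$ by the cartesianness of $D|_{\set{i'} \times \pos J}$, and the outer limit then evaluates to $D(i, \ini_{\pos J})$ since $i \slice \pos I$ has $i$ as an initial object. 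The requisite limits all exist because $\infcat C$ admits $\gini{\pos J}$-indexed limits and categories with an initial object trivially admit limits of any diagram.

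Once the Kan extension claim is in hand, applying the dual of \Cref{lemma:colim_of_kan} provides the first reduction, and a second application of Fubini---this time using the cartesianness of $D|_{\set{\ini_{\pos I}} \times \pos J}$ together with the fact that $\pos I$ has an initial object---yields $\lim_{\pos I \times \gini{\pos J}} D \simeq \lim_{\pos I} D(\blank, \ini_{\pos J}) \simeq D(\ini_{\pos I}, \ini_{\pos J})$. The main obstacle will be the bookkeeping required to verify that the equivalences produced at each stage assemble into the specific canonical map from \Cref{def:cartesian}, rather than merely giving an abstract equivalence between source and target; this requires carefully tracking the naturality of the various units and counits together with applications of the pasting laws for mates, analogous to the proof of \Cref{lemma:pointwise_cartesian}.
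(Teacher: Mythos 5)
Your core computation is the right one and coincides with the heart of the paper's own argument: at a point $(i, \ini_{\pos J})$ with $i \neq \ini_{\pos I}$, the slice $(i, \ini_{\pos J}) \slice \iota$ is $(i \slice \pos I) \times \gini{\pos J}$, and the limit over it collapses to $D(i, \ini_{\pos J})$ by Fubini together with the cartesianness hypothesis. The problem is in the top-level strategy. You reduce the claim to showing that ``the canonical map in the definition of cartesianness'' is an equivalence, and you invoke the dual of \Cref{lemma:colim_of_kan} to identify the limit of $D$ over $\gini{(\pos I \times \pos J)}$ with its limit over $\pos I \times \gini{\pos J}$. Both steps presuppose that $\infcat C$ admits limits indexed by $\gini{(\pos I \times \pos J)}$: the canonical map of \Cref{def:cartesian} is only defined under that hypothesis, and the dual of \Cref{lemma:colim_of_kan} explicitly requires limits indexed by both the source and the target of the fully faithful functor. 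The lemma, however, only assumes limits indexed by $\gini{\pos J}$, and this is genuinely weaker: for $\pos I = \pos J = \Simplex{1}$ the poset $\gini{\pos J}$ is a point while $\gini{(\pos I \times \pos J)}$ is a cospan, so the hypothesis says nothing about pullbacks existing in $\infcat C$. This is exactly why the conclusion is phrased as ``$D$ is a limit diagram'' rather than ``$D$ is cartesian'' (cf.\ \Cref{rem:cartesian}), and the distinction matters in at least one application: in \Cref{prop:indirect} the target $\infcat D$ only admits limits indexed by $\ginipos S$ and $\ginipos T$, not by $\gini{(\pos T \times \pos S)}$.

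The paper avoids the issue by staying entirely inside Lurie's pointwise Kan extension formalism, where existence of the relevant limits is part of the property being verified rather than a standing assumption: one checks that $D$ is a right Kan extension of $\restrict{D}{\pos I \times \gini{\pos J}}$ along the composite inclusion into $\pos I \times \pos J$ (the same pointwise computation you perform, plus the trivial points), that $\restrict{D}{\gini{(\pos I \times \pos J)}}$ is a right Kan extension of that same restriction, and then cancels via the transitivity property \cite[4.3.2.8]{LurHTT} to conclude that $D$ is a right Kan extension of $\restrict{D}{\gini{(\pos I \times \pos J)}}$, i.e.\ a limit diagram. This also disposes of the ``bookkeeping'' worry you flag at the end, since no particular comparison map ever has to be identified. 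Your argument is repairable in the same spirit --- the limit of $\restrict{D}{\gini{(\pos I \times \pos J)}}$ does exist, because that diagram is a pointwise right Kan extension of a diagram whose limit exists --- but the appeal to the dual of \Cref{lemma:colim_of_kan}, which is formulated in terms of globally defined adjoint functors, is not justified as written.
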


\begin{proof}
  We consider the inclusions
  \[ \pos I \times (\ginipos J) \xlongto{\iota} \gini{(\pos I \times \pos J)} \xlongto{\kappa} \pos I \times \pos J. \]
  By assumption and \cite[Proposition~4.3.2.9]{LurHTT}, the functor $D$ is a right Kan extension of $\Res{\kappa \circ \iota}$ along $\kappa \circ \iota$ in the sense of \cite[Definition~4.3.2.2]{LurHTT}.
  In the same way we also obtain that $\Res{\kappa} (D)$ is a right Kan extension of $\Res{\kappa \circ \iota} (D)$ along $\iota$.
  Then, by \cite[Proposition~4.3.2.8]{LurHTT}, the diagram $D$ is a right Kan extension of $\Res{\kappa} (D)$ along $\kappa$, i.e.\ $D$ is a limit diagram.
\end{proof}

\end{appendices}

\printbibliography

@incollection {AC,
  author = "Arone, Gregory and Ching, Michael",
  title = "Goodwillie Calculus",
  booktitle = "Handbook of Homotopy Theory",
  publisher = "CRC Press",
  year = "2019",
  editor = "Haynes Miller",
  eprinttype = "arxiv",
  eprint = "1902.00803"
}

@article {Ayo,
  AUTHOR = "Ayoub, Joseph",
  TITLE = "Les six op\'erations de Grothendieck et le formalisme des cycles \'evanescents dans le monde motivique. I.",
  JOURNAL = "Ast\'erisque",
  VOLUME = "314",
  YEAR = "2007"
}

@book {DP,
  author = "Davey, B. A. and Priestley, H. A.",
  title = "Introduction to Lattices and Order",
  edition = "2",
  publisher = "Cambridge University Press",
  year = "2002",
  doi = "10.1017/CBO9780511809088"
}

@article {Goo90,
  AUTHOR = {Goodwillie, Thomas G.},
  TITLE = {Calculus I: The first derivative of pseudoisotopy theory},
  JOURNAL = {$K$-Theory},
  VOLUME = {4},
  YEAR = {1990},
  NUMBER = {1},
  PAGES = {1--27},
  DOI = {10.1007/BF00534191}
}

@article {Goo92,
  AUTHOR = {Goodwillie, Thomas G.},
  TITLE = {Calculus II: Analytic functors},
  JOURNAL = {$K$-Theory},
  VOLUME = {5},
  YEAR = {1992},
  NUMBER = {4},
  PAGES = {295--332},
  DOI = {10.1007/BF00535644}
}

@article {Goo03,
  Author = "Goodwillie, Thomas G.",
  Title = "Calculus III: Taylor series",
  Journal = "Geometry \& Topology",
  Volume = "7",
  Pages = "645 -- 711",
  Year = "2003",
  doi = "10.2140/gt.2003.7.645",
  eprinttype = "arxiv",
  eprint = "math/0310481"
}

@article{GPS,
  author = "Groth, Moritz and Ponto, Kate and Shulman, Michael",
  title = "Mayer-Vietoris sequences in stable derivators",
  journal = "Homology, Homotopy and Applications",
  number = "1",
  pages = "265 -- 294",
  volume = "16",
  year = "2014",
  doi = "10.4310/HHA.2014.v16.n1.a15",
  eprinttype = "arxiv",
  eprint = "1306.2072"
}

@InProceedings {KS,
  author = "Kelly, G. M. and Street, Ross",
  title = "Review of the elements of 2-categories",
  booktitle = "Category Seminar",
  booktitleaddon = "Proceedings Sydney Category Theory Seminar 1972/1973",
  editor = "Kelly, Gregory M.",
  series = "Lecture Notes in Mathematics",
  number = "420",
  year = "1974",
  publisher = "Springer-Verlag",
  pages = "75 -- 103",
  doi = "10.1007/BFb0063101"
}

@inproceedings {Kuh,
  AUTHOR = {Kuhn, Nicholas J.},
  TITLE = {Goodwillie towers and chromatic homotopy: an overview},
  BOOKTITLE = {Proceedings of the Nishida Fest (Kinosaki 2003)},
  SERIES = {Geometry \& Topology Monographs},
  NUMBER = {10},
  PAGES = {245--279},
  PUBLISHER = {Geometry \& Topology Publications},
  YEAR = {2007},
  DOI = {10.2140/gtm.2007.10.245},
  eprinttype = "arxiv",
  eprint = "math/0410342"
}

@online {LurHA,
  shorthand = "LurHA",
  author = "Lurie, Jacob",
  title = "Higher Algebra",
  url = "http://www.math.harvard.edu/~lurie/papers/HA.pdf",
  note = "Unpublished manuscript. Version dated September 18, 2017"
}

@book {LurHTT,
  shorthand = "LurHTT",
  author = "Lurie, Jacob",
  title = "Higher Topos Theory",
  publisher = "Princeton University Press",
  series = "Annals of Mathematics Studies",
  number = "170",
  year = "2009",
  eprinttype = "arxiv",
  eprint = "math/0608040"
}

@online {LurK,
  shorthand = "LurK",
  author = {Lurie, Jacob},
  title = {Kerodon},
  url = {https://kerodon.net},
  urldate = {2022-04-06},
  note = {An online resource for homotopy-coherent mathematics}
}

@book {May,
  AUTHOR = {May, J. P.},
  TITLE = {A concise course in algebraic topology},
  SERIES = {Chicago Lectures in Mathematics},
  PUBLISHER = {University of Chicago Press},
  YEAR = {1999}
}

@online {Per,
  author = "Lu\'is Alexandre Pereira",
  title = "A general context for Goodwillie Calculus",
  year = "2013",
  note = "Unpublished manuscript",
  eprinttype = "arxiv",
  eprint = "1301.2832"
}

@article {Rez,
  author = "Rezk, Charles",
  title = "A streamlined proof of Goodwillie's $n$–excisive approximation",
  journal = "Algebraic \& Geometric Topology",
  volume = "13",
  pages = "1049--1051",
  year = "2013",
  eprinttype = "arxiv",
  eprint = "0812.1324",
  doi = "10.2140/agt.2013.13.1049"
}

@article{RV15,
  author = "Riehl, Emily and Verity, Dominic",
  title = "The 2-category theory of quasi-categories",
  journal = "Advances in Mathematics",
  volume = "280",
  pages = "549--642",
  year = "2015",
  eprinttype = "arxiv",
  eprint = "1306.5144",
  doi = "10.1016/j.aim.2015.04.021"
}

@book{RV,
  author = {Emily Riehl and Dominic Verity},
  title = {Elements of $\infty$-Category Theory},
  year = {2022},
  publisher = {Cambridge University Press},
  series = {Cambridge Studies in Advanced Mathematics},
  number = {194},
  doi = {10.1017/9781108936880}
}

@thesis{Sto,
  author = {Stoll, Robin},
  title = {A version of Goodwillie calculus for non-cubes},
  year = {2019},
  institution = {Universität Bonn},
  type = {Master’s Thesis}
}

\end{document}